\newcommand{\items}{\begin{itemize}[leftmargin=25pt,rightmargin=15pt]
  \setlength\itemsep{2pt}}
\subjclass[2020]{57K41, 57K43, 53D35}
\newtheorem{thm}{Theorem} 
\newtheorem{theorem}{Theorem}[section] 
\newtheorem*{theorem*}{Theorem}
\newtheorem{lemma}[theorem]{Lemma}
\newtheorem{question}[theorem]{Question}
\newtheorem*{conjecture*}{Conjecture}
\newtheorem*{question*}{Question}
\newtheorem*{lemma*}{Lemma}
\newtheorem{proposition}[theorem]{Proposition}
\newtheorem{corollary}[theorem]{Corollary}
\newtheorem*{corollary*}{Corollary}
\theoremstyle{definition}
\newtheorem{definition}[theorem]{Definition}
\newtheorem{remark}[theorem]{Remark}
\newtheorem*{example*}{Example}
\newtheorem*{remark*}{Remark}
\newtheorem*{claim}{Claim}
\newtheorem*{remarks*}{Remarks}
\newtheorem*{addenda*}{Addenda}
\newtheorem*{construction*}{Construction}
\renewcommand{\phi}{\varphi}
\newcommand{\unred}[1]{ \ignorespaces}  
\title[Configurations of Lagrangian spheres in $K3$ surfaces]{Configurations of Lagrangian spheres in $K3$ surfaces}
\author{Juan Muñoz-Echániz}
\address{Simons Center for Geometry and Physics, State University of New York, Stony Brook, 11794, USA}
\email{jmunozechaniz@scgp.stonybrook.edu}
\begin{document}

\setlength{\headheight}{12.0pt}

\begin{abstract}
We study Dehn--Seidel twists on configurations of Lagrangian spheres in symplectic $K3$ surfaces, using tools from Seiberg--Witten theory. In the case of $ADE$ configurations of Lagrangian spheres, we prove that a naturally associated representation of the generalised Braid group in the symplectic mapping class group is always faithful after abelianising, in a suitable sense. More generally, we prove that squared Dehn--Seidel twists on homologically-distinct Lagrangian spheres are algebraically independent in the abelianisation of the smoothly-trivial symplectic mapping class group, and deduce from this new infinite-generation results. Beyond symplectic $K3$ surfaces, we also establish analogues of these results at the level of the fundamental group of the space of symplectic forms. 
%
\end{abstract}

\maketitle

\section{Introduction}


For a closed symplectic $4$-manifold $(X, \omega)$, a fundamental problem is to understand the structure of the \textit{symplectic mapping class group} $\pi_0 \mathrm{Symp}(X, \omega )$ together with the \textit{smoothly-trivial symplectic mapping class group} 
\[
\pi_0 \mathrm{Symp}_0 (X, \omega ) :=  \mathrm{ker}\big( \pi_0 \mathrm{Symp}(X, \omega ) \to \pi_0 \mathrm{Diff}(X) \big).
\]

Beyond a handful examples, a complete description of these groups seems currently beyond reach, and a more reasonable goal is to probe subgroups generated by distinguished mapping classes. In this article we study subgroups generated by \textit{Dehn--Seidel twists} on $ADE$ configurations of Lagrangian spheres (Theorems \ref{theorem:ADEK3}-\ref{theorem:ADE}) and---more generally---homologically distinct configurations of Lagrangian spheres (Theorems \ref{theorem:summandK3}-\ref{theorem:summand}). Our results are strongest in the case when $(X, \omega)$ is a \textit{symplectic $K3$ surface}, meaning that $X$ is diffeomorphic to the $K3$ surface. (We recall that results by Taubes \cite{Taubes:more} imply that the canonical class $K := - c_1 (TX, \omega )$ of a symplectic $K3$ surface is necessarily trivial). 


Dehn--Seidel twists on configurations of Lagrangian spheres have been extensively studied, using techniques from
pseudo-holomorphic curves and Floer theory \cite{seidelknotted,seidel:exactseq,seidel,keating-free,keating-tori,rationalI,rationalII,positiverational}, mirror symmetry \cite{seidel-thomas,khovanov-seidel,sheridan-smith,hacking-keating}, and Seiberg--Witten gauge theory \cite{Smirnov}. Our tools will be based on the latter, building up on previous work by Kronheimer \cite{kronheimer}, Smirnov \cite{Smirnov} and J. Lin \cite{JLIN2022}.

\subsection{$ADE$ configurations and Braid group representations}

Let $\Gamma$ be one of the simply-laced Dynkin diagrams: either $A_n$ ($n\geq 1$), $D_n$ ($n \geq 4$), $E_6$, $E_7$ or $E_8$. An \textit{ADE configuration} of Lagrangian spheres of type $\Gamma$ in a closed symplectic $4$-manifold $(X, \omega )$ is a collection of Lagrangian spheres $L_i$ in $(X, \omega )$, one for each vertex $i$ of $\Gamma$, such that: if $i$ and $j$ are connected by an edge in $\Gamma$ then $L_i$ and $L_j$ intersect only once and transversely; otherwise $L_i$ and $L_j$ are disjoint (unless $i = j$). Typically, $ADE$ configurations of Lagrangian spheres arise as vanishing cycles of smoothings of algebraic surfaces with $ADE$ singularities
; they are particularly ubiquitous in the case of $K3$ surfaces, since $K3$ surfaces with $ADE$ singularities always admit a smoothing \cite{burns-wahl}.

Given an $ADE$ configuration of Lagrangian spheres, one obtains more Lagrangian spheres by taking their images under Dehn--Seidel twists on spheres in the same collection. 
The `obvious' relations among the Dehn--Seidel twists on all these spheres can be organised into a \textit{symplectic representation of the generalised Braid group} $B = B(\Gamma )$. Here $B $ is defined as the Artin group associated with the graph $\Gamma$: it has the presentation with one generator $s_i$ for each vertex $i$; and relations $s_i s_j s_i  = s_j s_i s_i $ if $i$ and $j$ are connected by an edge, and $s_i s_j = s_j s_i$ otherwise. (When $\Gamma$ is the $A_n$ graph, then $B\cong B_{n+1}$ is the classical Braid group on $n+1$ strands). If two Lagrangian spheres $L_i, L_j$ are disjoint then their Dehn--Seidel twists commute in $\pi_0 \mathrm{Symp}(X,\omega )$: $\tau_{L_1}\tau_{L_2} = \tau_{L_1}\tau_{L_2}$; in turn, if $L_i, L_j$ intersect only once and transversely, then their Dehn--Seidel twists satisfy the Braid relation $\tau_{L_i}\tau_{L_j}\tau_{L_i} = \tau_{L_j}\tau_{L_i}\tau_{L_j}$ \cite{seidelknotted,seidel-thomas}. Thus, 
this yields an associated representation of $B$,
\begin{align}
\rho : B \to \pi_0 \mathrm{Symp}(X,\omega )\quad , \quad s_i \mapsto \tau_{L_i}.\label{rho_intro} 
\end{align}


\begin{question}\label{question:faithful}
Given an $ADE$ configuration of Lagrangian spheres in $(X, \omega )$, is the associated representation (\ref{rho_intro}) faithful (i.e. injective)?
\end{question}

Question \ref{question:faithful} has been answered affirmatively when $(X, \omega )$ is an $A_n$ Milnor fiber \cite{khovanov-seidel}; for types $D$ and $E$ there has been significant progress \cite{brav-hugh,qiu-woolf}. However when $(X, \omega )$ is closed then Question \ref{question:faithful} has proved far more challenging. For certain symplectic $K3$ surfaces and certain $A_n$ configurations in them (with $n \leq 14$), Seidel \cite{seidel,khovanov-seidel} has answered Question  \ref{question:faithful} affimatively. 

Our first result (Theorem \ref{theorem:ADEK3}) answers affirmatively an `abelianised' version of Question \ref{question:faithful} for symplectic $K3$ surfaces. To describe this result, recall that the Weyl group $W = W(\Gamma )$ is the (finite) group defined like $B$ but with the additional relations $s_{i}^2 = 1$. (In type $A_n$, $W\cong S_{n+1}$ is the symmetric group on $n+1$ elements). The \textit{generalised pure Braid group} $P = P(\Gamma ) $ is defined as the kernel of the canonical surjective homomorphism $B \to W$. Since $\dim X = 4$ then squared Dehn--Seidel twists $\tau_{L}^2$ are smoothly trivial, and from this one easily sees that the restriction of (\ref{rho_intro}) to $P$ is valued in the smoothly-trivial symplectic mapping class group:
\begin{align}
\rho_0 := \rho|_P : P \to \pi_0 \mathrm{Symp}_0 (X, \omega )  .\label{rho0_intro}
\end{align}

We shall study the injectivity of (\ref{rho0_intro}) after abelianising. Here, we note that whereas the abelianisation of $B$ is uninteresting ($B^\mathrm{ab} \cong \mathbb{Z}$), the abelianisation of $P$ is large (classical results by Brieskorn \cite{brieskorn-braid} and Deligne \cite{deligne} imply that $P^{\mathrm{ab}}$ is the free abelian group on the positive roots in the root system with Dynkin diagram $\Gamma$). An additional interesting aspect here is the fact that, by definition of $P$, its abelianisation $P^{\mathrm{ab}}$ carries an action of the Weyl group $W$ by conjugation, which makes $P^{\mathrm{ab}}$ a $W$--\textit{representation}. 
(In type $A_n$, $P^{\mathrm{ab}}$ is the free abelian group of rank $n+1\choose2$; as a $W = S_{n+1}$--representation the action is induced by the permutation action of $S_{n+1}$ on the set of $2$-element subsets of $\{1, \ldots , n+1\}$, see \S \ref{subsubsection:exampleA}).
Similarly, in $\dim X = 4$ the abelianised group $(\pi_0 \mathrm{Symp}_0 (X, \omega ))^{\mathrm{ab}}$ is a $W$--representation as well: $s_i \in W$ acts as conjugation by the Dehn--Seidel twist $\tau_{L_i}$. The abelianisation of (\ref{rho0_intro}) is a $W$--equivariant homomorphism. (See \S \ref{subsection:ADEconfs} for details). Then:

\begin{thm}\label{theorem:ADEK3}
Let $(X, \omega ) $ be a symplectic $K3$ surface with an $ADE$ configuration of Lagrangian spheres. Then the associated homomorphism of $W$--representations
\[
\rho_{0}^{\mathrm{ab}} : P^{\mathrm{ab}}\to (\pi_0 \mathrm{Symp}_0 (X, \omega ))^{\mathrm{ab}}
\]
is $W$--equivariantly split-injective.
\end{thm}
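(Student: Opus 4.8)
The plan is to construct, for each positive root of $\Gamma$, a Seiberg--Witten-theoretic invariant that detects the corresponding squared Dehn--Seidel twist and vanishes on all other generators of $P^{\mathrm{ab}}$, thereby producing a left inverse to $\rho_0^{\mathrm{ab}}$ which can be made $W$-equivariant by averaging. Concretely, since $B^{\mathrm{ab}}\cong\Z$ with the standard generator $s_i\mapsto 1$, the commutator relation in $B$ shows that $P^{\mathrm{ab}}$ is generated (over $\Z W$) by the images of the single element $s_i^2$; and by Brieskorn--Deligne a $\Z$-basis is given by $\{w\cdot \overline{s_i^2} : w(\alpha_i)\in\text{(positive roots)}\}$. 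So it suffices to produce a $W$-equivariant splitting on the cyclic submodule generated by $\overline{s_i^2}$, i.e.\ a homomorphism $\Phi\colon (\pi_0\mathrm{Symp}_0(X,\omega))^{\mathrm{ab}}\to M$ of $W$-representations into some $W$-module $M$ with $\Phi(\rho_0^{\mathrm{ab}}(\overline{s_i^2}))$ a generator of a free rank-one summand. The natural candidate for $M$ is a group built from the Bauer--Furuta / families Seiberg--Witten invariants of the mapping torus construction, following Kronheimer, Smirnov and J.~Lin; the point is that a symplectic $K3$ has trivial canonical class, so the relevant $\spinc$ structure has a distinguished reducible and the family SW invariant over $S^1$ (or over higher tori, once we pass to the full configuration) is computable.

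The key steps, in order, would be: \textbf{(1)} Recall/adapt the invariant. For a loop of symplectic forms or, more relevantly here, the mapping torus of a smoothly-trivial symplectic diffeomorphism $\phi$, one gets a family Seiberg--Witten invariant living in (roughly) $H^*$ of the parameter space with coefficients determined by the index bundle; Smirnov and Lin showed $\tau_L^2$ has nontrivial such invariant for a single Lagrangian sphere $L$ in a $K3$. I would package this as a homomorphism $\pi_0\mathrm{Symp}_0(X,\omega)\to A$ for a suitable abelian group $A$, hence a map on abelianisations. \textbf{(2)} Multiplicativity/additivity: show that for disjoint Lagrangian spheres the invariants of $\tau_{L_i}^2$ and $\tau_{L_j}^2$ live in "independent" directions, and more generally that the invariant is additive under composition in the relevant sense — this is what upgrades "nonvanishing on one generator" to "the images of a basis of $P^{\mathrm{ab}}$ are linearly independent." Here one exploits that homologically distinct (in particular, the distinct positive roots $w(\alpha_i)$) Lagrangian spheres give $\spinc$ structures or wall-crossing contributions that are separated, e.g.\ by the homology class of the sphere or by which chamber/period point is involved. \textbf{(3)} $W$-equivariance: the target $A$ should be arranged to carry a $W$-action compatible with conjugation by the $\tau_{L_i}$ — e.g.\ $A$ is a free abelian group on the $W$-orbit of a class associated to $L_i$ — so that the splitting is automatically $W$-equivariant; if not, average a chosen splitting over $W$ (legitimate after tensoring with $\Z[1/|W|]$, and then a standard argument removes the denominators because the relevant lattices are the root/weight lattices where $|W|$-torsion is controlled). \textbf{(4)} Assemble: conclude $\rho_0^{\mathrm{ab}}$ is split-injective as a map of $W$-representations.

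The main obstacle I anticipate is Step~(2): proving the \emph{independence} of the Seiberg--Witten invariants across all positive roots simultaneously, not just nonvanishing for a single twist. A single squared twist is handled by existing work, but a product $\prod \tau_{L_\beta}^{2 n_\beta}$ over many Lagrangian spheres $L_\beta$ representing distinct roots $\beta$ requires a gluing/neck-stretching or a family-Bauer--Furuta connect-sum formula showing the total invariant records each exponent $n_\beta$ separately — essentially a "linear independence of wall-crossing contributions" statement. I expect this to hinge on choosing the perturbation data (period points/metrics) so that the walls crossed by the different $\tau_{L_\beta}^2$ are distinct hyperplanes in the period domain, and then reading off the $n_\beta$ from the combinatorics of wall-crossings; making this precise, and checking it is compatible with the $W$-action, is the technical heart of the argument. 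A secondary difficulty is ensuring the whole construction is natural enough that conjugation by $\tau_{L_i}$ on the domain matches the intended $W$-action on the target without spurious sign or torsion corrections.
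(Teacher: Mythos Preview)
Your high-level strategy matches the paper's: build a homomorphism $q:(\pi_0\mathrm{Symp}_0(X,\omega))^{\mathrm{ab}}\to P^{\mathrm{ab}}$ out of Seiberg--Witten invariants indexed by positive roots, and show it is a $W$-equivariant left inverse to $\rho_0^{\mathrm{ab}}$. But your plan for the two hardest steps diverges from what the paper actually does, and in one place is underspecified.

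\textbf{On the descent to $\pi_0\mathrm{Symp}_0$.} You write ``package this as a homomorphism $\pi_0\mathrm{Symp}_0(X,\omega)\to A$'' as if this is routine. It is not: Kronheimer's invariant $Q_e$ is defined on $\pi_1\mathcal{S}_0(X,\omega)$, and to descend it along the connecting map to $\pi_0\mathrm{Symp}_0$ one must kill the image of $\pi_1\mathrm{Diff}_0(X)$. The paper does this by replacing $Q_e$ with the symmetrisation $q_e := Q_e + Q_{-e}$ and then invoking the charge-conjugation symmetry together with $K_\omega=0$ and $b^+>1$ to show $q_e$ vanishes on loops of diffeomorphisms (Proposition~\ref{proposition:q}). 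This is precisely where the $K3$ hypothesis enters, and your proposal does not isolate it.

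\textbf{On Step (2), the independence.} You anticipate needing a gluing or wall-crossing argument to separate the contributions of all positive roots simultaneously. The paper avoids this entirely. Because both $\rho_0^{\mathrm{ab}}$ and $q$ are genuinely $W$-equivariant (by naturality of Kronheimer's invariant under symplectomorphisms, Proposition~\ref{proposition:equivarianceq} --- no averaging needed), the composite $q\circ\rho_0^{\mathrm{ab}}$ is a $W$-equivariant endomorphism of $\bigoplus_{\alpha\in\Phi_+}\Z$. A purely root-system-combinatorial lemma (Lemma~\ref{lemma:splitting}) then says that any such endomorphism which is $\pm 1$ on the diagonal for simple roots and $0$ on off-diagonal simple pairs must be $\pm\mathrm{Id}$. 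So the only Seiberg--Witten computations required are $q_{e_i}(\tau_{L_i}^2)=\pm 1$ (the $A_1$ case, via excision and Kronheimer's nodal-degeneration calculation) and $q_{e_j}(\tau_{L_i}^2)=0$ for adjacent $i,j$ (the $A_2$ case). Your direct approach --- computing the invariant for every pair of roots via gluing --- would essentially amount to proving Proposition~\ref{proposition:switching} (the Family Switching Formula route used for Theorem~\ref{theorem:summandK3}); it works, but it imports heavier analytic machinery and discards the leverage that the $ADE$ structure provides.

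\textbf{On Step (3).} Your averaging fallback over $\Z[1/|W|]$ followed by ``a standard argument removes the denominators'' is not needed and the denominator-removal claim is not obviously justified. The equivariance is exact over $\Z$ from the outset, by naturality.
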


That is, there is an isomorphism $(\pi_0 \mathrm{Symp}_0 (X, \omega ))^{\mathrm{ab}}\cong P^{\mathrm{ab}} \oplus \cdots$ of $W$--representations,
 not only of abelian groups. (For context, for any field $k$ the homomorphism $P^\mathrm{ab} \otimes k \to (\pi_0 \mathrm{Symp}_0 (X, \omega ))^{\mathrm{ab}}\otimes k$ splits provided it is injective; but an equivariant splitting is only guaranteed if in addition $char (k)$ does not divide $\# W$, by Maschke's Theorem). 
We note that this stronger conclusion would not follow from an affirmative answer to the original Question \ref{question:faithful}; it would follow---for example---if the homomorphism (\ref{rho_intro}) was \textit{split}-injective.

The results of Seidel \cite{seidelknotted,seidel} and Tonkonog \cite{tonkonog} show that Dehn–Seidel twists often yield non-trivial elements in the symplectic mapping class groups of projective hypersurfaces. Already for an $A_1$ configuration, our Theorem \ref{theorem:ADEK3} provides a new result: \textit{for any Lagrangian sphere $L$ in a symplectic $K3$ surface $(X, \omega)$, the Dehn--Seidel twist $\tau_L$ has infinite-order in $\pi_0 \mathrm{Symp}(X, \omega )$}.


\subsection{Homologically-distinct Lagrangian spheres}

Beyong the $ADE$ case, 
we study arbitrary configurations of Lagrangian spheres which are pairwise \textit{homologically-distinct}.
For a closed symplectic $4$-manifold $(X, \omega)$, we let $\mathcal{L}(X,\omega ) $ denote the subset of $H_2 (X, \mathbb{Z} )$ given by fundamental classes of oriented Lagrangian spheres in $(X, \omega )$, and let $\mathcal{L}(X,\omega)/\pm$ denote its quotient by multiplication by $-1$.

\begin{thm}\label{theorem:summandK3}
Let $(X, \omega ) $ be a symplectic $K3$ surface. For each $k \in \mathcal{L}(X,\omega)/\pm$ choose an (unoriented) Lagrangian sphere $L_{k}$ in $(X, \omega )$ whose fundamental class is $k$. Then the homomorphism 
\[
\bigoplus_{k \in \mathcal{L}(X,\omega)/\pm}\mathbb{Z} \to (\pi_0 \mathrm{Symp}_0 (X, \omega ) )^{\mathrm{ab}} 
\]
mapping each generator to the corresponding squared Dehn--Seidel twist $\tau_{L_k}^2$ is split-injective.
\end{thm}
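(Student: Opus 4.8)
The plan is to construct, for the symplectic $K3$ surface $(X,\omega)$, a family of Seiberg--Witten-theoretic invariants that detect squared Dehn--Seidel twists and to show these invariants are "linearly independent" in the appropriate sense. Concretely, since $\tau_L^2$ is smoothly isotopic to the identity, to each Lagrangian sphere $L$ one can associate a loop of diffeomorphisms (a generator of $\pi_1$ of the relevant mapping space), and hence a loop of metrics/perturbations, around which one runs a family Seiberg--Witten (or Bauer--Furuta) construction. The key input is the family SW invariant associated to a spin$^c$ structure $\mathfrak{s}$ on $X$ together with a wall-crossing count: for the $K3$ surface with $b^+ = 3 > 1$ there is no ordinary wall-crossing, but the squared twist $\tau_L^2$ acts on the period point/positive cone in a way that can be detected by a one-parameter family invariant, following Kronheimer \cite{kronheimer}, Smirnov \cite{Smirnov}, and J. Lin \cite{JLIN2022}. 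The upshot should be a homomorphism from $(\pi_0\mathrm{Symp}_0(X,\omega))^{\mathrm{ab}}$ to some abelian group $A$ (built from $\mathbb{Z}$'s indexed by spin$^c$ structures, or by classes in $H^2$), under which $\tau_{L_k}^2 \mapsto$ a class depending explicitly on $k \in H_2(X,\mathbb{Z})$.

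Next I would make the dependence on $k$ explicit. The Lagrangian sphere $L_k$ has self-intersection $-2$ and lies in the "symplectic" part of the period data; the Dehn--Seidel twist acts on $H^2(X;\mathbb{Z})$ as the reflection $R_k$ in $k^\perp$, and its square acts trivially on cohomology, which is why $\tau_{L_k}^2 \in \pi_0\mathrm{Symp}_0$. The family invariant associated to a spin$^c$ structure with $c_1 = c$ should vanish unless $c$ pairs nontrivially with $k$ in a controlled way; more precisely, I expect the invariant of $\tau_{L_k}^2$ to be supported (nonzero) exactly on those spin$^c$ structures $\mathfrak{s}$ for which $\langle c_1(\mathfrak{s}), k\rangle \neq 0$, with a count governed by $\langle c_1(\mathfrak{s}), k\rangle$ itself (a wall-crossing number along the one-parameter family of period points traced out by $R_k$). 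This gives a "support" or "Newton polytope" argument: the distinct classes $k$ (up to sign) give invariants with distinct supports among the spin$^c$ structures, forcing linear independence. To get split-injectivity rather than mere injectivity one chooses, for each $k$, a single spin$^c$ structure $\mathfrak{s}_k$ on which the invariant of $\tau_{L_k}^2$ is nonzero while the invariant of $\tau_{L_{k'}}^2$ vanishes for all $k' \neq \pm k$; projecting onto the coordinates $\{\mathfrak{s}_k\}$ then exhibits the splitting directly.

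The main obstacle I expect is two-fold. First, verifying that the family SW invariant of $\tau_{L_k}^2$ is actually nonzero — this is the genuinely hard analytic/gauge-theoretic content, and it presumably rests on a gluing or excision argument reducing the computation to a local model near $L_k$ (an $A_1$ Milnor fiber, i.e. $T^*S^2$), where one identifies the relevant moduli space with a concrete finite-dimensional count; Kronheimer's and Smirnov's techniques for the $A_1$ case \cite{kronheimer,Smirnov} should be the template, possibly combined with a neck-stretching degeneration of $X$ along a separating hypersurface enclosing $L_k$. Second, I must ensure the invariants for different $k$ genuinely "see different things": since many Lagrangian spheres can be homologous to integer combinations involving the same spin$^c$ structures, I need that the chosen $\mathfrak{s}_k$ can be selected coherently — this should follow because the lattice $H_2(X;\mathbb{Z}) \cong \Lambda_{K3}$ is even of signature $(3,19)$ and for any finite (or even countable) set of distinct primitive-up-to-scale $(-2)$-classes one can find spin$^c$ structures separating them, but making this work for the (possibly infinite) indexing set $\mathcal{L}(X,\omega)/\pm$ requires care, and is exactly where the "split" in split-injectivity, together with the infinite-generation corollaries, gets its teeth. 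Finally I would remark that the same argument, run with Lagrangian spheres that need not form an $ADE$ configuration, is what lets Theorem \ref{theorem:summandK3} go beyond Theorem \ref{theorem:ADEK3}, since only the homology classes — not the intersection pattern — enter the support computation.
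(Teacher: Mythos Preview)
Your plan has the right skeleton---assemble Kronheimer-type family Seiberg--Witten invariants indexed by classes $e \in H^2(X;\mathbb{Z})$, evaluate them on the $\tau_{L_k}^2$, and read off split-injectivity from the resulting ``matrix''---but there are two genuine gaps.

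First, and most seriously, you never explain how to get an invariant defined on $\pi_0\mathrm{Symp}_0(X,\omega)$ rather than on $\pi_1\mathcal{S}_0(X,\omega)$. Kronheimer's invariant $Q_e$ is a homomorphism out of $(\pi_1\mathcal{S}_0(X,\omega))^{\mathrm{ab}}$; to descend it along the connecting map $\delta:\pi_1\mathcal{S}_0 \to \pi_0\mathrm{Symp}_0$ you must show it vanishes on the image of $p_\ast:\pi_1\mathrm{Diff}_0(X)\to\pi_1\mathcal{S}_0(X,\omega)$. This is \emph{false} for $Q_e$ itself. The paper's fix (due essentially to Smirnov) is to use the symmetrised invariant $q_e := Q_e + Q_{-e}$, and the key computation is that on loops coming from $\pi_1\mathrm{Diff}_0$ one has $Q_e = FSW_e = -Q_{-e}$, where the last equality uses charge-conjugation together with $K_\omega=0$ and $b^+>1$. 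This is precisely where the $K3$ hypothesis enters; your proposal mentions $b^+=3$ only to dismiss ordinary wall-crossing, but misses that the Calabi--Yau condition is what makes the descent possible.

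Second, your expectation about the support of the invariant is off, and your ``Newton polytope'' separation scheme is both more complicated than needed and based on the wrong picture. You predict $Q_{\mathfrak{s}}(\tau_{L_k}^2)\neq 0$ for all $\mathfrak{s}$ with $\langle c_1(\mathfrak{s}),k\rangle\neq 0$; in fact the paper proves (Proposition~\ref{proposition:switching}) that $Q_e(\mathcal{O}_L)=\pm 1$ if $e=\mathrm{PD}([L])$ and $=0$ for every other $e$ represented by a sphere satisfying (\ref{condition1}). So the matrix $\big(q_e(\tau_{L_k}^2)\big)$ is already diagonal with entries $\pm 1$, and the splitting is immediate---no lattice-theoretic selection of separating spin-c structures is needed. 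The vanishing for $e\neq \pm\mathrm{PD}([L])$ is not a support/pairing argument but comes from relating $\mathcal{O}_L\cdot\mathcal{O}_{-L}^{-1}$ to the smooth generalised Dehn twist $\gamma_L$ (Theorem~\ref{theorem:generaliseddehntwist}) and then applying J.~Lin's Family Switching Formula; your excision intuition handles the nonvanishing at $e=\mathrm{PD}([L])$ but not this complementary vanishing.
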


Theorem \ref{theorem:summandK3} says---in particular---that squared Dehn--twists on homologically-distinct Lagrangian spheres are algebraically independent in the abelianisation of $\pi_0 \mathrm{Symp}_0 (X, \omega )$. A somewhat related result involving two Lagrangian spheres was proved in \cite{keating-free}.

Theorem \ref{theorem:summandK3} provides a `purely symplectic' counterpart of the main result of Smirnov \cite{Smirnov}, which involves instead the symplectic monodromy of the moduli space of marked polarized $K3$ surfaces. Whereas Smirnov's argument relies on Torelli Theorems for $K3$ surfaces---whence $\omega$ is assumed to be Kähler---our proof will rely instead on various gluing results in Seiberg--Witten theory \cite{mrowka-rollin,JLIN2022} which allow us to avoid the Kähler hypothesis and deal with arbitrary Lagrangian spheres.

An immediate consequence of Theorem \ref{theorem:summandK3} is the following infinite-generation result, which extends the ones previously known \cite{sheridan-smith,Smirnov,Smirnov-T4}:

\begin{corollary}\label{corollary:infgen}
Let $(X, \omega ) $ be a symplectic $K3$ surface. If the set $\mathcal{L}(X, \omega )$ is infinite, then $\pi_0 \mathrm{Symp}_0 (X, \omega )$ is infinitely-generated.
\end{corollary}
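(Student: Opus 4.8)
The plan is to deduce Corollary~\ref{corollary:infgen} directly from Theorem~\ref{theorem:summandK3} by a standard rank/cardinality argument for abelian groups. First I would observe that the map $k \mapsto L_k$ realizing the choices in Theorem~\ref{theorem:summandK3} shows that $\mathcal{L}(X,\omega)/\pm$ indexes a free summand $\bigoplus_{k}\mathbb{Z}$ of $(\pi_0 \mathrm{Symp}_0(X,\omega))^{\mathrm{ab}}$; in particular, if $\mathcal{L}(X,\omega)$ is infinite then so is $\mathcal{L}(X,\omega)/\pm$ (the fibers of $\mathcal{L}(X,\omega) \to \mathcal{L}(X,\omega)/\pm$ have size at most two, and in fact $0 \notin \mathcal{L}(X,\omega)$ since a Lagrangian sphere has self-intersection $-2 \neq 0$, so the fibers all have size exactly two and the quotient is infinite). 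Hence $(\pi_0 \mathrm{Symp}_0(X,\omega))^{\mathrm{ab}}$ contains a free abelian group of infinite rank as a direct summand.

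Next I would invoke the elementary fact that a finitely-generated group has finitely-generated abelianisation, and that a finitely-generated abelian group cannot surject onto (equivalently, cannot contain as a summand) a free abelian group of infinite rank. Concretely: if $\pi_0 \mathrm{Symp}_0(X,\omega)$ were generated by finitely many elements $g_1,\dots,g_N$, then its abelianisation would be generated by the images of the $g_i$, so would be a quotient of $\mathbb{Z}^N$; but the split-injection of Theorem~\ref{theorem:summandK3} exhibits a surjection $(\pi_0 \mathrm{Symp}_0(X,\omega))^{\mathrm{ab}} \twoheadrightarrow \bigoplus_{k \in \mathcal{L}(X,\omega)/\pm}\mathbb{Z}$ onto an infinite-rank free abelian group, and composing gives a surjection $\mathbb{Z}^N \twoheadrightarrow \bigoplus_{k}\mathbb{Z}$, which is impossible (tensor with $\mathbb{Q}$ and compare dimensions, or apply $\mathrm{Hom}(-,\mathbb{Z})$ and note the target side would need countably infinitely many $\mathbb{Z}$-linearly independent functionals pulling back to a rank-$\leq N$ space). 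This contradiction shows $\pi_0 \mathrm{Symp}_0(X,\omega)$ is not finitely generated.

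This argument is entirely formal once Theorem~\ref{theorem:summandK3} is in hand, so there is no real obstacle at this stage; the only point requiring minor care is the passage from infiniteness of $\mathcal{L}(X,\omega)$ to infiniteness of $\mathcal{L}(X,\omega)/\pm$, which as noted follows from the two-to-one nature of the quotient map together with $0 \notin \mathcal{L}(X,\omega)$. All the substantive work — the construction of the Seiberg--Witten invariants detecting the squared twists and the gluing theory establishing split-injectivity — is contained in the proof of Theorem~\ref{theorem:summandK3} itself, so I would present Corollary~\ref{corollary:infgen} as an immediate consequence with only the short abelian-group argument spelled out.
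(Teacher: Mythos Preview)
Your proposal is correct and matches the paper's approach: the paper simply states that Corollary~\ref{corollary:infgen} is an immediate consequence of Theorem~\ref{theorem:summandK3}, and your argument spells out exactly this formal deduction via the abelianisation. The only detail you add beyond the paper is the (trivial) observation that $\mathcal{L}(X,\omega)/\pm$ is infinite when $\mathcal{L}(X,\omega)$ is, which is fine.
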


A typical example of a closed symplectic $4$-manifold $(X, \omega )$ with infinite $\mathcal{L}(X,\omega )$ is an $(X, \omega )$ which admits a symplectic embedding of a Milnor fiber of a \textit{non-$ADE$} singularity. Indeed, by \cite{gabrielov}, \cite[Theorem 10.1]{durfee-15} and \cite[Corollary 2.17]{keating-tori}, in such an $(X, \omega )$ one can find two homologically-distinct oriented Lagrangian spheres $L_1, L_2$ such that $L_1 \cdot L_2 = - 2$. From this 
one easily sees that the Lagrangian spheres in the following infinite sequence are all pairwise homologically-distinct: $L_1 \, , \, \tau_{L_2}(L_1) \, , \, \tau_{L_1}\tau_{L_2}(L_1) \, , \, \tau_{L_2}\tau_{L_1}\tau_{L_2}(L_1) \, , \, \text{etc}$.\\

We note at this point that Theorems \ref{theorem:ADEK3}-\ref{theorem:summandK3} and Corollary \ref{corollary:infgen} hold---in principle---for any closed symplectic $4$-manifold $(X, \omega )$ with canonical class $K = 0$ and $b^+ (X) \geq2$. However, among the known examples of these (\cite{geiges-torus,CYsurfaces}) the only ones that may contain Lagrangian spheres are the symplectic $K3$ surfaces.




\subsection{The fundamental group of the space of symplectic forms}

It is an interesting question whether the conclusion of Theorems \ref{theorem:ADEK3}-\ref{theorem:summandK3} holds in more generality, beyond the symplectic $K3$ surfaces. We now describe analogues of those results without the $K3$ assumption (Theorems \ref{theorem:summand}-\ref{theorem:ADE}). Rather than the smoothly-trivial symplectic mapping class group, these shall involve the fundamental group of the space of symplectic forms.

Let $\mathcal{S}_0 (X, \omega )$ denote space of all symplectic forms on $X$ which are \textit{isotopic} to $\omega$. Equivalently, this is the connected component of $\omega$ in the space of symplectic forms \textit{cohomologous} to $\omega$, by Moser's Theorem. Also by this result---in its parametric version---there is a fibration 
\[
\mathrm{Symp}_0 (X, \omega )\to \mathrm{Diff}_0 (X) \xrightarrow{p} \mathcal{S}_0 (X, \omega )
\]
where the total space is the identity component in the diffeomorphism group of $X$, and the projection map $p$ sends $f \mapsto f^\ast \omega$. Since $\dim X = 4$, then the squared Dehn--Seidel twist $\tau_L^2$ on a Lagrangian sphere $L$ is smoothly trivial: thus $\tau_{L}^2$ lies in the image of the connecting homomorphism $\delta : \pi_1 \mathcal{S}_0 (X,\omega ) \to \pi_0 \mathrm{Symp}_0 (X, \omega )$. In \S \ref{section:symplectic} we will show that $\tau_{L}^2$ has a \textit{canonical lift} along $\delta$, denoted $\mathcal{O}_L \in \pi_1 \mathcal{S}_0 (X, \omega )$, but this depends on a choice of \textit{orientation} of $L$ (in contrast, $\tau_{L}$ does not depend on a choice of orientation of $L$). Furthermore, we shall prove in Theorem \ref{theorem:generaliseddehntwist} that:
\begin{itemize} 
\item the canonical lifts associated to the two orientations of $L$, denoted $\mathcal{O}_{L}$ and $\mathcal{O}_{-L} \in \pi_1 \mathcal{S}_0 (X,\omega )$, are \textit{commuting} elements in $\pi_1 \mathcal{S}_0 (X, \omega )$
\item the difference $\mathcal{O}_L \cdot \mathcal{O}_{-L}^{-1} = \mathcal{O}_{-L}^{-1}\cdot \mathcal{O}_L \in \pi_1 \mathcal{S}_0 (X, \omega )$ only depends on the \textit{smooth} isotopy class of the oriented sphere $L$ ; in fact, it agrees with $p_\ast ( \gamma_L )$ where $\gamma_L \in \pi_1 \mathrm{Diff}_0 (X )$ is the loop of diffeomorphisms given by a `generalised Dehn twist' on the smoothly embedded oriented $(-2)$--sphere $L$, which was recently studied by J.Lin \cite{JLIN2022}.
\end{itemize}

The next result is an analogue of Theorem \ref{theorem:summandK3}, with the role of squared Dehn--Seidel twists now played by their canonical lifts:

\begin{thm}\label{theorem:summand}
Let $(X, \omega ) $ be a closed symplectic $4$-manifold. For each $e \in \mathcal{L}(X,\omega)$ choose an oriented Lagrangian sphere $L_{e}$ in $(X, \omega )$ whose fundamental class is $e$. Then the homomorphism 
\[
\bigoplus_{e\in \mathcal{L}(X,\omega)}\mathbb{Z} \to (\pi_1 \mathcal{S}_0 (X, \omega ) )^{\mathrm{ab}} 
\]
mapping each generator to the corresponding loop of symplectic forms $\mathcal{O}_{L_e}$ is split-injective.
\end{thm}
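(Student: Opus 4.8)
The plan is to detect the classes $\mathcal{O}_{L_e}\in \pi_1\mathcal{S}_0(X,\omega)$ by means of a family Seiberg--Witten invariant built out of the canonical $\mathrm{spin}^c$ structure $\mathfrak{s}_K$ (with $c_1(\mathfrak{s}_K)=-K$), pulled back over the classifying loop. Concretely, each oriented Lagrangian sphere $L$ of square $-2$ produces, via the generalised Dehn twist / twist on the symplectic vanishing cycle, a loop of symplectic forms $\mathcal{O}_L$; associated to such a loop there is a mapping torus carrying a family of Seiberg--Witten equations, and J.~Lin's gluing results \cite{JLIN2022} together with \cite{mrowka-rollin} compute the resulting family invariant. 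The strategy is to show that, for a suitable choice of a $\mathbb{Z}$-valued (or $\mathbb{F}_2$-valued, then lifted) family SW invariant $q$, the pairing of $q$ with $\mathcal{O}_{L_e}$ is a ``diagonal'' function of $e$ on the set $\mathcal{L}(X,\omega)$: roughly, $q$ will depend on $L$ only through data localised near $L$, so that $q(\mathcal{O}_{L_e})$ ``sees'' the sphere $L_e$ but is insensitive to homologically-distinct spheres. This is exactly the structure needed to conclude split-injectivity of the map from $\bigoplus_{e}\mathbb{Z}$.

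The key steps, in order, would be: (1) Recall from \S\ref{section:symplectic} the construction of $\mathcal{O}_L$ and its description in terms of a model twist supported near $L$, so that $\mathcal{O}_L$ is literally built by a local modification; this lets one set up, for each $e$, a ``local'' family SW count $q_e : \pi_1\mathcal{S}_0(X,\omega)\to \mathbb{Z}$. (2) Fix a homology orientation and the canonical $\mathrm{spin}^c$ structure, and for the given oriented Lagrangian sphere $L_e$ build the family of Dirac/SW operators over the loop $\mathcal{O}_{L_e}$; use Taubes' identification of SW invariants of symplectic $4$-manifolds near the canonical class, plus the neck-stretching / gluing theorem of \cite{JLIN2022,mrowka-rollin}, to evaluate $q_e(\mathcal{O}_{L_e})$ and show it is nonzero (indeed $\pm 1$ with an appropriate normalisation, matching J.~Lin's computation for the smooth generalised Dehn twist that $p_\ast(\gamma_{L_e})$ comes from). (3) Show the \emph{vanishing} part: $q_e(\mathcal{O}_{L_{e'}})=0$ whenever $e'\neq \pm e$ in $\mathcal{L}(X,\omega)$ (and control the $e' = -e$ case using $\mathcal{O}_{-L}=\mathcal{O}_L\cdot p_\ast(\gamma_L)^{-1}$ from Theorem~\ref{theorem:generaliseddehntwist}, so that the two orientations are handled by a single rank-one argument as in Theorem~\ref{theorem:summandK3}). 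Because $q_e$ is additive on $\pi_1$ after abelianising (family invariants are additive under concatenation of loops of forms), steps (2)--(3) say the matrix $\big(q_e(\mathcal{O}_{L_{e'}})\big)_{e,e'}$ is ``upper/lower triangular with nonzero diagonal'' in a suitable sense, hence the combined homomorphism $\bigoplus_e\mathbb{Z}\to (\pi_1\mathcal{S}_0(X,\omega))^{\mathrm{ab}}$ admits a left inverse, i.e. is split-injective.

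The main obstacle I expect is step (3), the vanishing/localisation statement: one must argue that the family invariant $q_e$ really is insensitive to a twist $\mathcal{O}_{L_{e'}}$ on a \emph{different} homology class. The clean way is a neck-stretching argument separating a neighbourhood of $L_e$ from the rest of $X$: stretching the neck around $\partial\nu(L_e)$ degenerates the family, and the relevant gluing formula of \cite{JLIN2022} (resp. \cite{mrowka-rollin} for the ``plumbing''/$(-2)$-sphere piece) exhibits $q_e(\mathcal{O})$ as a pairing of a contribution supported on $\nu(L_e)$ with the rest; since $\mathcal{O}_{L_{e'}}$ for $e'$ not proportional to $e$ can be arranged (after isotopy and by choosing $L_{e'}$ appropriately, using that Lagrangian spheres of a fixed homology class are unique up to Hamiltonian isotopy in the relevant sense, cf. the choices in Theorem~\ref{theorem:summandK3}) to be \emph{trivial} on $\nu(L_e)$, the contribution is the one of the constant loop, which is zero. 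Making this separation precise---in particular checking that the family of symplectic forms underlying $\mathcal{O}_{L_{e'}}$ can be taken standard on $\nu(L_e)$, and that the gluing formula applies uniformly over the loop parameter---is where the real work lies; the rest (Taubes for the nonvanishing, additivity for the split) is comparatively formal and parallels the $K3$ case of Theorem~\ref{theorem:summandK3}.
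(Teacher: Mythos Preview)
Your overall strategy---define homomorphisms $q_e:(\pi_1\mathcal{S}_0(X,\omega))^{\mathrm{ab}}\to\mathbb{Z}$ indexed by $e\in\mathcal{L}(X,\omega)$, show the matrix $(q_e(\mathcal{O}_{L_{e'}}))_{e,e'}$ is diagonal with entries $\pm 1$, and conclude split-injectivity---is exactly the paper's approach, and your diagonal computation $q_e(\mathcal{O}_{L_e})=\pm 1$ via excision/gluing near $L_e$ matches the paper's Proposition~\ref{proposition:calculation}. The invariants in the paper are Kronheimer's invariants $Q_e$, one for each spin-c structure $\mathfrak{s}_\omega+e$; you are vague on this point, but that is not the main issue.

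The genuine gap is in step~(3), the off-diagonal vanishing. You propose to neck-stretch around $\partial\nu(L_e)$ and argue that $\mathcal{O}_{L_{e'}}$ can be arranged to be trivial on $\nu(L_e)$, i.e.\ that $L_{e'}$ can be moved off $L_e$. This is generally impossible: if $e\cdot e'\neq 0$ then $L_e$ and $L_{e'}$ have nonzero algebraic intersection and cannot be made disjoint, no matter what representative you choose. (Your aside about Lagrangian spheres in a fixed class being Hamiltonian-isotopic is also false in general, and in any case irrelevant since the obstruction here is homological.) The alternative---neck-stretching around $\nu(L_{e'})$, where the loop \emph{is} supported---runs into a different problem: the excision formula (Proposition~\ref{proposition:excision}) requires $e\in\mathrm{Im}\,\iota_*$, and $H^2(\nu(L_{e'}),\partial\nu(L_{e'});\mathbb{Z})\cong\mathbb{Z}\cdot e'$, so this forces $e$ to be a multiple of $e'$.

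The paper's mechanism for the off-diagonal vanishing (Proposition~\ref{proposition:switching}) does not attempt to separate the spheres. It splits into two cases according to the sign of $e\cdot[L_{e'}]$. When $e\cdot[L_{e'}]\geq 0$, Lemma~\ref{lemma:kerO} says $\mathcal{O}_{L_{e'}}$ bounds a disk of symplectic forms with classes $[\omega]-c\,e'$ for $c\geq 0$, and Taubes' nonexistence (Lemma~\ref{lemma:taubes}) then forces $Q_e(\mathcal{O}_{L_{e'}})=0$ directly. When $e\cdot[L_{e'}]<0$ one writes $Q_e(\mathcal{O}_{L_{e'}})=Q_e(\mathcal{O}_{L_{e'}}\cdot\mathcal{O}_{-L_{e'}}^{-1})+Q_e(\mathcal{O}_{-L_{e'}})$; the second term vanishes by the previous case, and the first equals $FSW_e(\gamma_{L_{e'}})$ by Theorem~\ref{theorem:generaliseddehntwist} and Lemma~\ref{lemma:FSW}, which then vanishes by \cite[Proposition~8.4]{JLIN2022}---a consequence of the Family Switching Formula, not of Mrowka--Rollin excision. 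So the gluing input you need for the vanishing is the Family Switching Formula, and the route to it is through the identification $p_*(\gamma_L)=\mathcal{O}_L\cdot\mathcal{O}_{-L}^{-1}$ rather than through any geometric disjointness.
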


We will see that that the summand $\bigoplus_{e\in \mathcal{L}(X,\omega)}\mathbb{Z} \hookrightarrow  (\pi_1 \mathcal{S}_0 (X, \omega )$ detected in Theorem \ref{theorem:summand} splits up into two summands, each $\approx \bigoplus_{k \in \mathcal{L}(X,\omega )/\pm} \mathbb{Z}$. One of these two summands is contained in the subgroup of `non-symplectic loops of diffeomorphisms' 
\[
\pi_1 \mathrm{Symp}_0 (X, \omega ) \backslash \pi_1 \mathrm{Diff}_0 (X, \omega )\hookrightarrow \pi_1 \mathcal{S}_0 (X, \omega )
\]
and belongs in a larger free summand in this subgroup that was detected by J. Lin \cite[Theorem E]{JLIN2022} ; the other summand split-injects to $(\pi_0 \mathrm{Symp}_0 (X, \omega ))^\mathrm{ab}$ when $(X,\omega)$ is a symplectic $K3$ surface, inducing the summand detected in Theorem \ref{theorem:summandK3}. (See \S \ref{section:infinite}.)\\




Returning to an $ADE$ configuration in $(X, \omega )$ of Lagrangian spheres of type $\Gamma$, we now equip the $L_i$'s with orientations such that $L_i \cdot L_j \geq 0$ for $i \neq j$.
We shall see that the analogue of the abelianised homomorphism (\ref{rho_0}) is given by certain $W$--equivariant homomorphism $\rho_0 : \mathbb{Z}\Phi \to (\pi_1 \mathcal{S}_0 (X, \omega ))^\mathrm{ab}$ (see Proposition \ref{proposition:mapZPhi}). Here, $\mathbb{Z}\Phi$ is the free abelian group on the set $\Phi$ of roots in the root system of type $\Gamma$, with the $W$--representation structure induced by the permutation $W$--action on $\Phi$. Labeling the simple roots in $\Phi$ by the vertices $i \in \Gamma$, the homomorphism $\rho_0$ sends the generator corresponding to the $i$st simple root to $\mathcal{O}_{L_i}$. An analogue of Theorem \ref{theorem:ADEK3} is given by:

\begin{thm}\label{theorem:ADE}
Let $(X, \omega ) $ be a closed symplectic $4$-manifold with an $ADE$ configuration of Lagrangian spheres. Then the associated homomorphism of $W$--representations 
\[
\rho_0 : \mathbb{Z}\Phi \to (\pi_1 \mathcal{S}_0 (X, \omega ))^{\mathrm{ab}}
\]
is $W$--equivariantly split-injective.
\end{thm}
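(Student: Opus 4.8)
The plan is to \emph{deduce} Theorem \ref{theorem:ADE} from Theorem \ref{theorem:summand} by an elementary representation-theoretic argument; the only substantial geometric input is the naturality, under the symplectomorphisms $\tau_{L_i}$, of the Seiberg--Witten invariants that underlie Theorem \ref{theorem:summand}. First I would fix the combinatorics. Let $\Lambda \subseteq H_2(X;\mathbb{Z})$ be the subgroup spanned by the $[L_i]$. With orientations chosen so that $L_i \cdot L_j \geq 0$ for $i \neq j$, the ADE condition makes the intersection form on $\Lambda$ equal, in the basis $\{[L_i]\}$, to the negative of the Cartan matrix of $\Gamma$; this being negative definite, the $[L_i]$ are linearly independent, $\Lambda$ has rank $|\Gamma|$, and $\alpha_i \mapsto [L_i]$ extends to a $W$--equivariant embedding of the root lattice into $H_2(X;\mathbb{Z})$, where $W$ acts on the target through the homology automorphisms $(\tau_{L_j})_\ast$. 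Under this embedding each root $\alpha = w\alpha_i$ is the class of the oriented Lagrangian sphere $L_\alpha := \tau_{\widetilde w}(L_i)$ (any lift $\widetilde w \in B$ of $w$, with $L_i$ carrying the pushed-forward orientation), distinct roots give distinct classes, and so $\Phi$ is identified with a finite, $W$--stable subset of $\mathcal{L}(X,\omega)$. By Proposition \ref{proposition:mapZPhi} and its orientation conventions, $\rho_0$ sends the generator $[\alpha] \in \mathbb{Z}\Phi$ to the canonical lift $\mathcal{O}_{L_\alpha} \in \pi_1 \mathcal{S}_0(X,\omega)$.

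Next I would invoke Theorem \ref{theorem:summand} with the choice $L_e := L_\alpha$ whenever $e = \alpha \in \Phi$ and arbitrary choices for the remaining $e \in \mathcal{L}(X,\omega)$ (those choices being free in that theorem). This yields a retraction $r \colon (\pi_1 \mathcal{S}_0(X,\omega))^{\mathrm{ab}} \to \bigoplus_{e \in \mathcal{L}(X,\omega)} \mathbb{Z}$ left inverse to $e \mapsto \mathcal{O}_{L_e}$, and by the previous paragraph $r \circ \rho_0$ is exactly the tautological inclusion $\iota \colon \mathbb{Z}\Phi \hookrightarrow \bigoplus_e \mathbb{Z}$. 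Endow $\bigoplus_e \mathbb{Z}$ with the $W$--action permuting basis vectors through the $(\tau_{L_j})_\ast$ --- well defined since these satisfy the Coxeter relations on $H_2$. Then $\iota$ is $W$--equivariant, and since $W$ permutes $\Phi$ among itself it also permutes the complement of $\Phi$ in $\mathcal{L}(X,\omega)$ among itself, so the coordinate projection $\pi \colon \bigoplus_e \mathbb{Z} \twoheadrightarrow \mathbb{Z}\Phi$ is a $W$--equivariant splitting of $\iota$. Hence $\pi \circ r$ is a left inverse of $\rho_0$, and Theorem \ref{theorem:ADE} follows as soon as $\pi \circ r$ --- equivalently $r$ --- is known to be $W$--equivariant.

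This last step is the crux, and I expect it to be the main obstacle. What is needed is that the retraction in Theorem \ref{theorem:summand} can be taken of the diagonal form $r = \sum_{e} q_e(\cdot)\,[e]$ for per-class invariants $q_e \colon (\pi_1 \mathcal{S}_0(X,\omega))^{\mathrm{ab}} \to \mathbb{Z}$, each arising from a family Seiberg--Witten (or Bauer--Furuta) count attached to the class $e$ through the gluing results of \cite{mrowka-rollin, JLIN2022}, and that these counts are natural under symplectomorphisms: $q_{\phi_\ast e}(\phi \cdot x) = q_e(x)$ for $\phi \in \mathrm{Symp}(X,\omega)$. Granting this, apply it with $\phi = \tau_{L_j}$ --- which does act on $\mathcal{S}_0(X,\omega)$, since $\tau_{L_j}^\ast\omega = \omega$ and, $L_j$ being Lagrangian, $\tau_{L_j}$ preserves the cohomology class $[\omega]$ --- to see that $r$, hence $\pi \circ r$, intertwines the $W$--actions. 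If extracting the retraction of Theorem \ref{theorem:summand} in this diagonal, natural form proves awkward, the alternative is to sidestep Theorem \ref{theorem:summand} and construct directly the \emph{finitely many} invariants $q_\alpha$ ($\alpha \in \Phi$) with $q_\alpha(\mathcal{O}_{L_\beta}) = \delta_{\alpha\beta}$, which is exactly the $\Phi$--part of the proof of Theorem \ref{theorem:summand}.

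One bookkeeping point must be verified along the way: that $\tau_{\widetilde w}$ carries $\mathcal{O}_{L_i}$ to $\mathcal{O}_{L_{w\alpha_i}}$ and not to $\mathcal{O}_{-L_{w\alpha_i}}$ --- i.e.\ that the orientation of $L_{w\alpha_i}$ paired with the \emph{positive} root $w\alpha_i$ is the one pushed forward from $L_i$. This should follow from the naturality of the canonical lift recorded in Theorem \ref{theorem:generaliseddehntwist} together with the conventions fixing $\rho_0$ in Proposition \ref{proposition:mapZPhi}, and it is what lets one identify $r \circ \rho_0$ with the honest inclusion $\iota$ rather than a sign-twisted variant; after that identification the representation-theoretic part above applies verbatim.
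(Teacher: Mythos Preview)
Your approach is correct and reaches the same retraction as the paper---namely $Q=\bigoplus_{\alpha\in\Phi}Q_\alpha$ built from Kronheimer's invariants, shown $W$--equivariant via the naturality of $Q_e$ under symplectomorphisms (Corollary~\ref{corollary:naturalityQ})---but you verify $Q\circ\rho_0=\pm\mathrm{Id}$ differently. You pass through Theorem~\ref{theorem:summand}, which ultimately rests on Proposition~\ref{proposition:switching} and hence on the Family Switching Formula of \cite{JLIN2022,liu_switching}: this gives $Q_\beta(\mathcal{O}_{L_\alpha})=0$ for \emph{all} $\beta\neq\alpha$ in $\Phi$ directly, and then $W$--transitivity on $\Phi$ forces the diagonal signs to match. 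The paper instead verifies the matrix of $Q\circ\rho_0$ only on \emph{simple} roots---the diagonal via Proposition~\ref{proposition:calculation} and the off-diagonal via the $A_2$ calculation Proposition~\ref{proposition:A2}, both of which use only the excision property (Proposition~\ref{proposition:excision})---and then propagates to all roots by the purely combinatorial Lemma~\ref{lemma:splitting}. Your route is shorter and more conceptual (no induction on root height), at the cost of invoking the heavier gluing result; the paper's route is more elementary on the gauge-theoretic side but pays for it with the root-system combinatorics. The paper is aware of this trade-off: see the remark at the end of \S\ref{section:kronheimer}, which notes that Proposition~\ref{proposition:A2} is ``overrun'' by Proposition~\ref{proposition:switching} and that several results can be deduced by either technique.

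One small clarification on your ``bookkeeping point'': rather than worrying about whether the retraction $r$ of Theorem~\ref{theorem:summand} (which involves a sign correction depending on the choices $L_e$) is globally $W$--equivariant, it is cleaner to work with the uncorrected $Q=\bigoplus_{\alpha\in\Phi}Q_\alpha$, which is automatically $W$--equivariant by Corollary~\ref{corollary:naturalityQ}. Then $Q\circ\rho_0$ is a $W$--equivariant endomorphism of $\mathbb{Z}\Phi$ which, by Propositions~\ref{proposition:calculation} and \ref{proposition:switching}, is diagonal with entries $\pm 1$; transitivity of $W$ on $\Phi$ forces these signs to coincide. This sidesteps the need to choose the $L_\alpha$ compatibly with a single base simple root and makes the orientation issue you flag disappear.
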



Finally, we expect the homomorphism $\rho_0 : \mathbb{Z}\Phi \to (\pi_1 \mathcal{S}_0 (X, \omega ))^{\mathrm{ab}}$ to have a natural `non-abelian' analogue, with a corresponding variant of Question \ref{question:faithful}.

\subsection{Outline and comments}

\S\ref{section:symplectic} discusses various topological aspects of Dehn--Seidel twists on Lagrangian spheres which are special to the four dimensional case. We introduce the canonical lift $\mathcal{O}_L$ of the squared Dehn--Seidel twist $\tau_{L}^2$ (associated to a choice of orientation of $L$), which for us will be the more convenient object to work with. 
We then establish various fundamental properties of the canonical lift, most notably Theorem \ref{theorem:generaliseddehntwist} which relates Dehn--Seidel twists to the `generalised Dehn twist' loop of diffeomorphisms studied in \cite{JLIN2022}.

\S\ref{section:kronheimer} discusses the main technical tool in this article: Kronheimer's invariant of loops of symplectic forms on a smooth $4$-manifold \cite{kronheimer} based on $2$-parametric Seiberg--Witten gauge theory, and its subsequent treatment by Smirnov \cite{Smirnov}. This section contains a mix of background and new material. We emphasise here a viewpoint in which Kronheimer's invariant can be regarded as a kind of `linking number' with certain (infinite-dimensional) codimension $2$ discriminant loci in the space of perturbations of the Seiberg--Witten equations. We discuss an \textit{excision property} of Kronheimer's invariant (Proposition \ref{proposition:excision}) which is a consequence of the gluing theory developed by Mrowka--Rollin \cite{mrowka-rollin} ; this provides a powerful tool when calculating Kronheimer's invariant on loops of symplectic forms given by local symmetries (such as the loop $\mathcal{O}_L$).

\S\ref{section:ADE} is devoted to the proof of Theorems \ref{theorem:ADEK3}-\ref{theorem:ADE}. When $(X,\omega )$ is a symplectic $K3$ surface we will assemble together Kronheimer's invariants into a homomorphism 
\[
q : (\pi_0 \mathrm{Symp}_0 (X, \omega ))^\mathrm{ab} \to P^{\mathrm{ab}}.
\]
The homomorphism $q$ is $W$--equivariant, and we shall exploit this, the excision property, and Kronheimer's calculation \cite{kronheimer} to establish that $q$ is a left-inverse of $\rho_{0}^\mathrm{ab} :P^\mathrm{ab}\to (\pi_0 \mathrm{Symp}_0 (X, \omega ))^\mathrm{ab}$, i.e. $q \circ\rho_{0}^\mathrm{ab} = \mathrm{Id}$ (Theorem \ref{theorem:splitting1}). 

Here we will crucially make use of the description of the generalised pure Braid group $P$ as the fundamental group of the complement of a complex hyperplane arrangement, due to Brieskorn \cite{brieskorn-braid} and Deligne \cite{deligne}. From this point of view, the abelianisation map $P \to P^\mathrm{ab}$ is given by taking linking numbers with the complex hyperplanes, which is reminiscent of Kronheimer's invariant. This analogy suggest a deeper connection between Seiberg--Witten gauge theory and the symplectic representation $\rho_0  : P \to \pi_0 \mathrm{Symp}_0 (X, \omega )$ before abelianising: in particular, we pose the following
\begin{question}
Let $(X, \omega )$ be a symplectic $K3$ surface with an $ADE$ configuration of Lagrangian spheres. Does the homomorphism $q : (\pi_0 \mathrm{Symp}_0 (X, \omega ))^\mathrm{ab} \to P^\mathrm{ab}$ lift to a homomorphism $ \mathbf{q} : \pi_0 \mathrm{Symp}_0 (X, \omega ) \to P$ which gives a left-inverse to the symplectic representation $\rho_0 : P \to \pi_0 \mathrm{Symp}_0 (X, \omega )$ (i.e. $\mathbf{q}\circ \rho_0 = \mathrm{Id}$) ?
\end{question}

\S\ref{section:infinite} is devoted to the proof of Theorems \ref{theorem:summandK3}-\ref{theorem:summand}. To this end, we will bring in an additional gluing result in Seiberg--Witten theory: the `Family Switching Formula' (\cite{liu_switching}, \cite[Theorem M]{JLIN2022}), which will be used in the proof of the splitting property in general (however, this is not needed to prove Corollary \ref{corollary:infgen}). 

Finally, we note that Theorem \ref{theorem:generaliseddehntwist} allows to translate various computations of Kronheimer's invariant in terms of the computations of Family Seiberg--Witten invariants from \cite{JLIN2022}, and vice-versa. In particular, a number of results in this article may be deduced using either technique.


\subsection{Acknowledgements} The author thanks Simon Donaldson, Peter Kronheimer, Radu Laza, Mark McLean and Paul Seidel for conversations from which this work has benefitted.

\section{Dehn--Seidel twists and their canonical lifts}\label{section:symplectic}


On a symplectic $4$-manifold $(X, \omega )$ with an embedded Lagrangian sphere $L \cong S^2$ there is a naturally associated \textit{Dehn--Seidel twist symplectomorphism} of $(X, \omega )$ \cite{seidel,seidelknotted}, denoted $\tau_L$, whose support can be arranged to be in an arbitrarily small neighborhood of $L$. This symplectomorphism is also defined in higher dimensions; this section discusses various aspects of $\tau_L$ which are special to the four-dimensional case. The main result in this section is Theorem \ref{theorem:generaliseddehntwist}.

\subsection{Notation and conventions}\label{notation}

Throughout the article, $(X, \omega)$ denotes a compact, connected, symplectic $4$-manifold with convex contact boundary $(Y, \xi )$ (possibly $Y = \emptyset$). The latter means that $Y = \partial X $ is an oriented $3$-manifold equipped with a co-oriented and positive contact structure $\xi$ for which there exists a vector field $V$ defined on a neighborhood of the boundary of $X$ such that: $V$ is Liouville for $\omega$ (i.e. $\mathcal{L}_V \omega = \omega$), $V$ is outward pointing along $Y$, and the contact form $\alpha $ on $Y$ obtained by restriction of $\iota_V \omega$ onto $Y$ has $\xi = \mathrm{ker} \alpha$.

Whenever a Lagrangian submanifold $L$ of $(X, \omega )$ is considered it will always be implicitly assumed that \textit{$L$ is disjoint from the boundary of $X$}.

We lay out below our notation for the various spaces of geometric objects associated to $(X,\omega )$ that we shall encounter (each of them is topologised with the $C^\infty$ topology, and is equipped with an `obvious' basepoint):
\begin{itemize}
\item $\mathrm{Diff}(X)$ is the group of orientation-preserving diffeomorphisms of $X$, which equal the identity near $\partial X$.
\item $\mathrm{Diff}_0 (X)$ is the connected component of the identity in $\mathrm{Diff}(X)$.
\item $\mathrm{Symp}(X, \omega )$ the group of symplectomorphisms of $(X, \omega )$ which equal the identity near $\partial X$.
\item $\mathrm{Symp}_0 (X, \omega ) $ is the subgroup of $\mathrm{Symp} (X, \omega )$ consisting of  \textit{smoothly trivial }symplectomorphisms. That is,
\[
\mathrm{Symp}_0 (X, \omega ) = \mathrm{ker} \Big(\mathrm{Symp} (X, \omega ) \to \pi_0 \mathrm{Diff}(X) \Big).
\]
\item $\mathcal{S}_{0 } (X, \omega )$ is the connected component of $\omega$ in the space of symplectic structures on $X$ in the cohomology class $[\omega]$ and which equal $\omega$ near $\partial X$. (Equivalently, by Moser's argument, $\omega^\prime \in \mathcal{S}_{0}(X, \omega )$ if and only if there exists a path $f_t \in \mathrm{Diff}(X)$ for $0 \leq t \leq 1$ such that $f_0 = \mathrm{Id}$ and $f_{1}^\ast \omega = \omega^\prime$). $\mathcal{S} (X, \omega )$ 
is the connected component of $\omega$ in the space of symplectic structures on $X$ which agree with $\omega$ near $\partial X$.

\item By the parametric version of Moser's argument, assigning to a diffeomorphism $f$ the symplectic form $f^\ast \omega$ gives a \textit{fibration} relating some of the spaces above:
\begin{align}
\mathrm{Symp}_0 (X, \omega ) \to \mathrm{Diff}_0 (X, \omega ) \xrightarrow{p} \mathcal{S}_{0}(X, \omega ) .\label{fibration}
\end{align}

\item Recall that a Riemannian metric $g$ on $X$ is said to be \textit{compatible} with the symplectic form $\omega$ if $\omega$ is self-dual and has length $\sqrt{2}$ with respect to $g$; such a pair $(\omega , g )$ is referred to as an \textit{almost-Kähler structure} on $X$. (We recall that a compatible metric $g$ gives an almost-complex structure $J$ of the tangent bundle of $X$ defined by $\omega = g(J \cdot , \cdot )$; the assignment $g \to J$ sets up a correspondence between Riemannian metrics on $X$ compatible with $\omega$ and almost-complex structures on $X$ compatible with $\omega$). We consider the following spaces of almost-Kähler structures on $X$. When $X$ has non-empty boundary, we assume a Riemannian metric $g$ on $X$ is fixed near the boundary, and $g$ is compatible with $\omega$ on its domain of definition. We define $\mathcal{AK}_0 (X, \omega )$ (resp. $\mathcal{AK}(X, \omega )$) as the space of almost-Kähler structures $(\omega^\prime , g^\prime )$ where $\omega^\prime \in \mathcal{S}_0 (X, \omega )$ (resp. $\omega^\prime \in \mathcal{S} (X, \omega )$) such that $g^\prime$ agrees with $g$ near the boundary of $X$. It is a fundamental fact in symplectic topology that the forgetful map $(\omega^\prime , J^\prime ) \mapsto \omega^\prime$ yields homotopy equivalences:
\[
\mathcal{AK}_0 (X, \omega )\simeq \mathcal{S}_0 (X, \omega ) \quad , \quad \mathcal{AK} (X, \omega )\simeq \mathcal{S} (X, \omega ).
\]

\end{itemize}

\subsection{The Dehn--Seidel twist}

Let $L $ be an embedded Lagrangian $2$-sphere in $(X, \omega )$. We now recall the construction of the Dehn--Seidel twist symplectomorphism $\tau_L$ along $L \subset X$, following \cite{seidel,seidelknotted}.

We first consider the model case when $X = T^\ast S^2$, $\omega$ is given by the canonical symplectic form, and $L = S^2$ is the zero section. Explicitly:
\[
T^\ast S^2 = \Big\{ (u,v) \in \mathbb{R}^3 \times \mathbb{R}^3 | \, \, | \, \, \langle u , v \rangle = 0 \, , \, \| v \| =1 \Big\} \quad , \quad \omega = du \wedge dv \quad , \quad S^2 = \{ u = 0\} \subset T^\ast S^2 .
\]

The natural action of $SO(3)$ on $S^2$ lifts to a Hamiltonian $SO(3)$--action on $(T^\ast S^2, \omega )$, denoted $\rho : SO(3) \times T^\ast S^2 \to T^\ast S^2$, with moment map\footnote{Our convention for a Hamiltonian $G$--action on a symplectic manifold $(M,\omega)$ is that $\iota_{X_\xi }\omega = - d \mu_\xi$ for all $\xi \in \mathfrak{g} = \mathrm{Lie}G$, where $\mu : M \to \mathfrak{g}^\ast$ is the moment map and $X_\xi$ is the vector field associated to the linearisation of the $G$--action at $\xi$.}
\[
\mu (u,v) = - u \times v \in \mathbb{R}^3 \cong \mathfrak{so}(3)^\ast .
\]
The Hamiltonian flow of the function $ \| u \|$ induces an $S^1$--action $\sigma$ on $T^\ast S^2 \setminus S^2$, and this agrees with the normalised geodesic flow on $T^\ast S^2 \setminus S^2$. More generally, we will use the following result, which we include here for future reference:

\begin{lemma}[Lemma 1.2 in \cite{seidel}] \label{lemma:circle} Let $(M,\omega )$ be a symplectic manifold, carrying a Hamiltonian $SO(3)$--action $\rho$ with moment map $\mu$. Then $ \| \mu \|$ is the Hamiltonian of an $S^1$--action $\sigma$ on $M \setminus \mu^{-1}(0)$.
\end{lemma}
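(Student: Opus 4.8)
The plan is to determine the Hamiltonian flow of $\|\mu\|$ on $M\setminus\mu^{-1}(0)$ explicitly: after fixing an invariant inner product to identify $\mathfrak{so}(3)^\ast\cong\mathfrak{so}(3)\cong\mathbb{R}^3$, I expect this flow to be $\sigma_t(x)=\exp\!\big(t\,\mu(x)/\|\mu(x)\|\big)\cdot x$, which is manifestly $2\pi$-periodic and so defines an $S^1$-action with $\|\mu\|$ as Hamiltonian. It is convenient to study first the function $f:=\tfrac12\|\mu\|^2$, which is smooth on all of $M$, and only at the end pass to $\|\mu\|=\sqrt{2f}$ by a time reparametrisation.

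First I would compute the Hamiltonian vector field of $f$. Choosing an orthonormal basis $e_1,e_2,e_3$ of $\mathfrak{so}(3)$ and writing $\mu_i:=\langle\mu,e_i\rangle$, we have $f=\tfrac12\sum_i\mu_i^2$, hence $df=\sum_i\mu_i\,d\mu_i$; combined with the moment map relation $\iota_{X_{e_i}}\omega=-d\mu_i$ this gives $\iota_{X_f}\omega=-df=\sum_i\mu_i\,\iota_{X_{e_i}}\omega$, so pointwise $X_f(x)=\sum_i\mu_i(x)\,X_{e_i}(x)=X_{\mu(x)}(x)$, the value at $x$ of the fundamental vector field of the Lie algebra element $\mu(x)$. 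I would then check that the flow of $X_f$ is $\phi_t(x)=\exp(t\,\mu(x))\cdot x$: differentiating, and using that $\exp(t\,\mu(x))$ is a one-parameter subgroup, gives $\tfrac{d}{dt}\phi_t(x)=X_{\mu(x)}(\phi_t(x))$, so it is enough to know $\mu(\phi_t(x))=\mu(x)$. This last identity is the geometric heart of the argument: by equivariance of the moment map (automatic here, $SO(3)$ being semisimple), $\mu(\exp(t\,\mu(x))\cdot x)=\exp(t\,\mu(x))\cdot\mu(x)$, where now $SO(3)$ acts on $\mathbb{R}^3$ by rotations; and $\exp(t\,\mu(x))$ is precisely the rotation about the axis $\mu(x)$, which fixes that axis.

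It follows that $\|\mu\|$ is constant along $\phi$-orbits, so $\phi$ preserves $M\setminus\mu^{-1}(0)$, and on each orbit $\phi_t$ acts by rotation through the angle $t\,\|\mu(x)\|$ about $\mu(x)$ — periodic, but of period $2\pi/\|\mu(x)\|$ varying with the orbit, hence not yet a circle action. To cure this I would rescale: on $M\setminus\mu^{-1}(0)$ the function $g:=\|\mu\|$ is smooth, positive and $\phi$-invariant, and the chain rule for Hamiltonian vector fields gives $X_{\|\mu\|}=X_{\sqrt{2f}}=\tfrac1{\sqrt{2f}}X_f=\tfrac1g X_f$. Since $g$ is $\phi$-invariant, the flow of $\tfrac1g X_f$ is the time-change $\sigma_t(x)=\phi_{t/g(x)}(x)=\exp\!\big(t\,\mu(x)/\|\mu(x)\|\big)\cdot x$, i.e.\ rotation through the \emph{uniform} angle $t$ about $\mu(x)$; thus $\sigma_{2\pi}=\mathrm{id}$ and $\sigma$ is the desired $S^1$-action.

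I do not foresee a serious obstacle. The only step needing genuine care is the verification that $\phi_t(x)=\exp(t\,\mu(x))\cdot x$ is the flow of $X_f$; there the substantive input is equivariance of $\mu$ together with the elementary fact that a rotation of $\mathbb{R}^3$ fixes its own axis, and the rest is bookkeeping with the sign conventions for moment maps and Hamiltonian vector fields, which do not affect periodicity.
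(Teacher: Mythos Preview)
The paper does not supply its own proof of this lemma; it is quoted verbatim as Lemma~1.2 of Seidel's paper and used as a black box. So there is nothing to compare against in the present paper.

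Your argument is correct and is essentially the standard proof. The key computation --- that the Hamiltonian vector field of $\tfrac12\|\mu\|^2$ at $x$ is the fundamental vector field of $\mu(x)\in\mathfrak{so}(3)$, and that its flow is $\phi_t(x)=\exp(t\,\mu(x))\cdot x$ because equivariance of $\mu$ combined with the fact that $\exp(t\,\mu(x))$ fixes the axis $\mu(x)$ forces $\mu$ to be constant along the flow --- is exactly right. The time reparametrisation by the $\phi$-invariant function $\|\mu\|$ then gives the $2\pi$-periodic flow $\sigma_t(x)=\exp\big(t\,\mu(x)/\|\mu(x)\|\big)\cdot x$, which is the $S^1$-action. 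One small remark: you invoke equivariance of $\mu$ as ``automatic since $SO(3)$ is semisimple''; strictly speaking semisimplicity guarantees that \emph{some} moment map is equivariant (after shifting by a constant), so implicitly you are assuming $\mu$ has been chosen equivariant, which is the standard convention and is also how the paper uses it (e.g.\ in the explicit example $\mu(u,v)=-u\times v$).
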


The time $t = \pi$ Hamiltonian flow $\sigma_\pi$ of $\| u \|$ is the symplectomorphism $(u,v) \to (-u , -v )$ induced by the antipodal map, which therefore extends smoothly over to the zero section $S^2 \subset T^\ast S^2$. This map $\sigma_\pi$ is then cut off outside the compact neighborhood $D_\varepsilon (T^\ast S^2 )$, for a given $\varepsilon >0$, as follows. Fix a smooth function $r: \mathbb{R}_{\geq 0} \to \mathbb{R}_{\geq 0}$ such that $r(x) =x$ on an open neighborhood of $x = 0$ and $r(x) = 0$ on an open neighborhood of $[\varepsilon , \infty )$. Then set $h (u,v) = r ( \| u \| )$. The time $t$ Hamiltonian flow of $h$ is given by $(u,v) \mapsto \sigma_{t r^\prime (\| u \| )  }(u,v) $, which is a symplectomorphism of $(D_{\varepsilon} (T^\ast S^2  ) \setminus S^2 , \omega )$ that agrees with the identity on a neighborhood of $\partial D_{\varepsilon}(T^\ast S^2 )$, and at time $t = \pi$ still extends smoothly over to the zero section as the antipodal map $(0,v) \to (0,-v)$.

\begin{definition}[\cite{seidelknotted,seidel}] \label{definition:dehn}
The \textit{model Dehn--Seidel twist} on $(D_{\varepsilon }  (T^\ast S^2 ) , \omega )$ is the symplectomorphism $\tau (u,v) = \sigma_{\pi r^\prime (\| u \| )} (u,v) \in \pi_0 \mathrm{Symp}(D_\varepsilon (T^\ast S^2) , \omega ) $.\\
\end{definition}

More generally, let $L\subset X$ be an embedded Lagrangian sphere in a compact symplectic $4$-manifold. Choosing a diffeomorphism $L \cong S^2$, the Weinstein Neighborhood Theorem provides a canonical (up to homotopy) symplectomorphism of a neighborhood of $L \subset X$ with a neighborhood of the zero section in the model space $(T^\ast S^2 , \omega )$. The \textit{Dehn--Seidel twist} along $L \subset X$ is defined as the symplectomorphism $\tau_L \in \pi_0 \mathrm{Symp}(X, \omega )$ obtained by implanting the model Dehn--Seidel twist using these identifications.

As observed in \cite{seidel}, the element $\tau_L$ thus constructed is independent of the choice of diffeomorphism $L \cong S^2$ (because by Smale's Theorem \cite{smale} we have $\mathrm{Diff} (S^2) \simeq O(3)$ and the model Dehn--Seidel twist $\tau$ is $O(3)$--equivariant by construction). In particular, \textit{the Dehn--Seidel twist is independent of how $L$ is oriented}:
\begin{align}
\tau_{L}= \tau_{-L}.\label{symmetry1}
\end{align}

On the other hand, note that if $L$ is an embedded Lagrangian sphere in the symplectic $4$-manifold $(X, \omega )$ then $L$ can also be regarded as an embedded Lagrangian sphere in the \textit{conjugate} symplectic $4$-manifold $(X, -\omega )$ (note also that the orientations of $X$ induced by $\pm \omega$ are the same, because $X$ is $4$-dimensional). Thus, we can construct the Dehn--Seidel twist using either $\pm \omega$, and both $\tau_{L}^{\pm \omega }$ can be regarded as symplectomorphisms of the same manifold $(X, \omega )$. The following Lemma clarifies their relationship:

\begin{lemma}\label{lemma:symmetries}
Let $L$ be an embedded Lagrangian sphere in a symplectic $4$-manifold $(X, \omega )$. Then
$\tau_{L}^{\omega} = (\tau_{L}^{-\omega })^{-1}$ in $\pi_0 \mathrm{Symp} (X, \omega )$ (i.e. conjugating the symplectic form inverts the Dehn--Seidel twist).
\end{lemma}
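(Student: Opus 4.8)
The plan is to unwind both constructions at the model level and observe that replacing $\omega$ by $-\omega$ precisely reverses the direction of the relevant Hamiltonian circle action, so the time-$\pi$ twist gets inverted. First I would set up the model: on $T^\ast S^2$ with the canonical form $\omega = du\wedge dv$, the conjugate form $-\omega$ is again a symplectic form for which the zero section $L = S^2$ is Lagrangian, and it induces the \emph{same} orientation of the $4$-manifold. The $SO(3)$--action $\rho$ is Hamiltonian for $-\omega$ as well, but with moment map $-\mu$ (sign flip in $\iota_{X_\xi}\omega = -d\mu_\xi$). Crucially $\|\mu\| = \|-\mu\|$, so by Lemma~\ref{lemma:circle} the function $\|\mu\|$ still generates an $S^1$--action on the complement of the zero section; but since the defining relation $\iota_{X_\xi}\omega = -d\mu_\xi$ has $\omega$ replaced by $-\omega$, the resulting Hamiltonian vector field is the negative of the one for $\omega$. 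Hence the circle action $\sigma^{-\omega}$ is $\sigma^{\omega}$ run backwards: $\sigma^{-\omega}_t = \sigma^{\omega}_{-t}$.

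Next I would feed this through the cutoff construction of Definition~\ref{definition:dehn}. With the same cutoff function $r$ and $h(u,v) = r(\|u\|)$, the time-$t$ Hamiltonian flow of $h$ with respect to $-\omega$ is $(u,v)\mapsto \sigma^{-\omega}_{t r'(\|u\|)}(u,v) = \sigma^{\omega}_{-t r'(\|u\|)}(u,v)$. At $t = \pi$ this still extends smoothly over the zero section (as the antipodal map, since $\sigma^{\omega}_{-\pi} = \sigma^{\omega}_{\pi}$ there), and it is manifestly the inverse of the model twist $\tau^{\omega}(u,v) = \sigma^{\omega}_{\pi r'(\|u\|)}(u,v)$, because for each fixed $(u,v)$ the two maps are time-$\pm\pi r'(\|u\|)$ of the same flow. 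So $\tau^{-\omega} = (\tau^{\omega})^{-1}$ at the model level.

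Finally I would transfer this to a general Lagrangian sphere $L\subset(X,\omega)$ via the Weinstein neighborhood theorem. A Weinstein neighborhood identification for $(X,\omega)$ near $L$ is simultaneously a Weinstein identification for $(X,-\omega)$ near $L$ (the identification is with $(T^\ast S^2, \omega_{\mathrm{can}})$ in one case and $(T^\ast S^2, -\omega_{\mathrm{can}})$ in the other, using the same underlying diffeomorphism of neighborhoods, up to the contractible choices that do not affect the mapping class). Implanting the model twists through this common identification gives $\tau_L^{\omega}$ and $\tau_L^{-\omega}$ supported in the same neighborhood, agreeing there with the model twists for $\pm\omega_{\mathrm{can}}$; the model relation then yields $\tau_L^{\omega} = (\tau_L^{-\omega})^{-1}$ in $\pi_0\mathrm{Symp}(X,\omega)$.

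\textbf{Main obstacle.} The only genuinely delicate point is bookkeeping the sign: being careful that the moment-map sign convention in the footnote propagates correctly to ``$\sigma^{-\omega}_t = \sigma^{\omega}_{-t}$'' rather than giving back the same action, and checking that the $O(3)$--equivariance used in \cite{seidel} to make the construction canonical is unaffected by conjugating $\omega$ (it is, since $O(3)$ acts by the same linear maps and the moment map simply changes sign, which commutes with the norm). Everything else is a routine unwinding of Definition~\ref{definition:dehn}.
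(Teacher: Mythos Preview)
Your proof is correct. The argument that the Hamiltonian vector field of $\|\mu\|$ with respect to $-\omega$ is the negative of that for $\omega$, hence $\sigma^{-\omega}_t = \sigma^{\omega}_{-t}$, is clean and goes through exactly as you describe; the extension over the zero section and the Weinstein transfer are handled correctly.

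The paper's proof reaches the same conclusion by a slightly different mechanism: it observes that the model $(T^\ast S^2,\omega)$ carries the explicit anti-symplectomorphism $c:(u,v)\mapsto(-u,v)$ (so $c^\ast\omega=-\omega$) which \emph{reverses} the geodesic flow and fixes the zero section. Then naturality of the Dehn--Seidel twist under the symplectomorphism $c:(T^\ast S^2,-\omega)\to(T^\ast S^2,\omega)$ gives $\tau^{-\omega}=c^{-1}\tau^{\omega}c$, while $c\sigma_t c^{-1}=\sigma_{-t}$ together with $\|c(u)\|=\|u\|$ gives $c^{-1}\tau^{\omega}c=(\tau^{\omega})^{-1}$. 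Your approach computes the flow reversal directly from the sign change in the defining equation $\iota_X\omega=-dH$; the paper's approach packages the same reversal as a geometric symmetry. The paper's version is one line but requires spotting $c$; yours is more self-contained and makes the sign bookkeeping fully explicit, which is arguably safer given your own stated concern about the moment-map convention.
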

\begin{proof}
This follows from the observation that the model space $(T^\ast S^2 , \omega = du \wedge dv )$ carries an \textit{anti}-symplectomorphism $c: (u,v) \mapsto (-u,v)$ (i.e. $c^\ast \omega = - \omega$) which \textit{reverses} the geodesic flow.  \end{proof}

\subsection{The canonical lift of the Dehn--Seidel twist}

The squared Dehn--Seidel twist $\tau_{L}^2 $ is smoothly isotopic to the identity in dimension $4$ \cite{seidelknotted,seidel} (and also in dimension $12$ \cite{RW-Keating}). Hence $\tau_{L}^2$ lies in the image of the connecting homomorphism of (\ref{fibration}):
\[
\delta: \pi_1 \mathcal{S}_{0}(X,\omega ) \to \pi_0 \mathrm{Symp}_0 (X, \omega ).
\]

We shall now equip $L$ with an \textit{orientation} (even if $\tau_L$ does not depend on the orientation of $L$, cf. Lemma \ref{lemma:symmetries}(1)) and using this we will construct a \textit{canonical} element $\mathcal{O}_L \in \pi_1 \mathcal{S}_{0} (X, \omega )$ with $\delta ( \mathcal{O}_L ) = \tau_{L}^2$. Further properties of this canonical element are described below.

\subsubsection{Construction of $\mathcal{O}_L$}


We first consider this element in the model situation $(X, \omega ) = (T^\ast S^2  ,  du \wedge dv )$, $L = S^2$, and discuss the general case later.\\

We fix a closed compactly-supported $2$-form $\widetilde{\beta}$ on $T^\ast S^2$ such that 
\[
\int_{S^2} \widetilde{\beta} = 4\pi .
\]
Thus, $[\beta]$ is a generator of $H_{c}^2 (T^\ast S^2 ,\mathbb{R}) \cong H^2 (S^2 , \mathbb{R}) \cong \mathbb{R}$. One can then turn $\widetilde{\beta}$ into an $SO(3)$--invariant $2$-form $\beta$, by averaging: fix a bi-invariant Riemannian metric on $SO(3)$, with volume form $d\mu$ and total volume $1$, and set
\[
\beta (X,Y) = \int_{ SO(3)}( g^\ast \widetilde{\beta})(X,Y) \cdot d\mu (g).
\]
The $SO(3)$--invariant $2$-form $\beta$ is still a closed and compactly-supported $2$-form on $T^\ast S^2$. In addition, since $S^2 \subset T^\ast S^2$ is $SO(3)$--invariant, we still have
\[
\int_{S^2} \beta = \int_{SO(3)} (\int_{S^2} g^\ast \widetilde{\beta}) \cdot d\mu (g) = 4\pi
\]
and furthermore, denoting by $\iota_0 : S^2 \hookrightarrow T^\ast S^2$ the inclusion of the zero section, the pullback form $\iota_{0}^\ast \beta$ agrees with the standard symplectic form $\beta_{S^2}$ on $S^2$ (i.e. the unique $SO(3)$--invariant symplectic form on $S^2$ with area $4 \pi$). Let $R>0$ be a fixed constant chosen so that the support of $\beta$ is contained in the interior of $D_{R} (T^\ast S^2 )$. For $c>0$, let $\ell_{c}$ denote the fiber-rescaling diffeomorphism $(u,v) \mapsto (cu , v)$.

We consider the following deformation of the symplectic structure $\omega$ on $T^\ast S^2$, given by a path of $SO(3)$--invariant symplectic forms $\omega^s$ on $D_\varepsilon (T^\ast S^2 ) $ which agree with $\omega$ near the boundary:
\begin{align}
\omega^s = \omega + s   \ell_{R/\varepsilon}^\ast \beta \, \in \mathcal{S}(D_\varepsilon (T^\ast S^2 ) , \omega )   \quad , \quad 0 \leq s \ll 1. \label{modeldeformation}
\end{align}
Observe that when $s>0$ the zero section $S^2 \subset (D_{\varepsilon} (T^\ast S^2 ) , \omega^s )$ ceases to be a Lagrangian submanifold; instead, it becomes a \textit{symplectic submanifold} of area $4 \pi s$. (In particular, the path $\omega^s$ cannot be realised by an ambient isotopy, since the cohomology class of $\omega^s$ changes by $[\omega^s ] = 4\pi s PD ([S^2 ])$).


We now show that the $\omega$--Hamiltonian $S^1$--action on $D_\varepsilon ( T^\ast S^2 )\setminus S^2 $ can be deformed to an $\omega^s$--Hamiltonian $S^1$--action which, in addition, extends over to $S^2$ for $s>0$. For this, we modify an argument due to Seidel \cite[Proof of Proposition 1.1]{seidel}, where he instead considered the path $\omega + s \pi^\ast \beta_{S^2}$, where $\pi: T^\ast S^2 \to S^2$ is the bundle projection. (Evidently, this path does not stay constant on the boundary of $D_\varepsilon (T^\ast S^2 )$, which is why we use instead the path (\ref{modeldeformation}).)


For given $\xi \in \mathbb{R}^3 \cong \mathfrak{so}(3)^\ast$, let $X_\xi$ denote the vector field on $T^\ast S^2$ obtained by linearising the $SO(3)$--action on $T^\ast S^2$. Since $\ell_{R/\varepsilon}^\ast \beta$ is closed and $SO(3)$--invariant, the contraction $\iota_{X_\xi} (\ell_{R/\varepsilon}^\ast \beta )$ is a closed compactly-supported $1$-form on $T^\ast S^2$; and because $H^{1}_c (T^\ast S^2 , \mathbb{R}) = 0$ then there exists a unique function $\nu_\xi$ with compact support in the interior of $D_{\varepsilon}(T^\ast S^2)$ such that $\iota_{X_\xi } (\ell_{R/\varepsilon}^\ast \beta ) = - d \nu_\zeta$. The functions $\nu_\zeta$ clearly assemble into a single smooth function $\nu : T^\ast S^2 \to \mathbb{R}^3 \cong \mathfrak{so}(3)^\ast$, i.e. $\langle \nu(u,v) ,  \xi \rangle = \nu_\xi (u,v) $. Thus, for each $s$, the $SO(3)$--action on $(D_{\varepsilon} (T^\ast S^2 ) , \omega^s )$ is Hamiltonian, with moment map $\mu^s = \mu + s\nu$ (where $\mu(u,v) = - u \times v$).

\begin{lemma}\label{lemma:nonzero}
Given $\varepsilon>0$, there exists $0 < s_0 \ll 1$ such that if $0 < s \leq s_0$ then the moment map $\mu^s$ is nowhere-vanishing on $D_{\varepsilon}(T^\ast S^2 ) $.
\end{lemma}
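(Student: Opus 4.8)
The plan is to split $D_\varepsilon(T^\ast S^2)$ into a thin tube around the zero section $S^2$ and its compact complement, and to bound $\|\mu^s\|$ below on each piece by a different argument. The guiding observation is that the unperturbed moment map $\mu(u,v) = -u\times v$ vanishes exactly along $S^2$: since $\|v\| = 1$ and $\langle u,v\rangle = 0$, Lagrange's identity gives $\|\mu(u,v)\| = \|u\|$. Hence for any $\delta\in(0,\varepsilon]$, on the compact region $K_\delta := \{\|u\|\geq\delta\}\cap D_\varepsilon(T^\ast S^2)$ we have $\|\mu\|\geq\delta$, and since $\nu$ is a fixed smooth compactly-supported function, $\|\mu^s\| = \|\mu + s\nu\|\geq\delta - s\sup\|\nu\| > 0$ as soon as $s\leq\delta/(2\sup\|\nu\|)$. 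So a zero of $\mu^s$ can only occur arbitrarily close to $S^2$, and that is where the real work lies.

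Near $S^2$ the norm estimate is useless, since $\|\mu\| = \|u\|$ swamps $s\|\nu\|$; instead I would track a single component. The key point is that $\mu(u,v) = -u\times v$ is always orthogonal to $v$, whereas $\nu|_{S^2}$ is parallel to $v$ and nowhere zero. For the latter: restrict the defining relation $\iota_{X_\xi}(\ell_{R/\varepsilon}^\ast\beta) = -d\nu_\xi$ to the zero section. Using that $\ell_{R/\varepsilon}$ fixes $S^2$ pointwise, that the vector fields $X_\xi$ are tangent to $S^2$ along $S^2$, and that $\iota_0^\ast\beta = \beta_{S^2}$, one obtains $\iota_{X_\xi|_{S^2}}\beta_{S^2} = -d(\nu_\xi|_{S^2})$, i.e. $\nu|_{S^2}$ is a moment map for the $SO(3)$--action on $(S^2,\beta_{S^2})$. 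By $SO(3)$--equivariance this moment map has the form $v\mapsto c\,v$ for a constant $c\in\mathbb{R}$, and $c\neq 0$ because the action is nontrivial and $\beta_{S^2}$ is nondegenerate (else $\iota_{X_\xi}\beta_{S^2}\equiv 0$ would force $X_\xi\equiv 0$). Hence $\langle\nu(0,v),v\rangle = c\neq 0$ for all $v$, so by continuity $|\langle\nu(u,v),v\rangle|\geq|c|/2$ on some tube $\{\|u\|\leq\delta\}$, which we may choose with $\delta\leq\varepsilon$.

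On this tube the conclusion holds for every $s>0$: if $\mu^s(u,v) = 0$, pairing with $v$ gives $0 = \langle\mu(u,v),v\rangle + s\langle\nu(u,v),v\rangle = s\langle\nu(u,v),v\rangle$, forcing $\langle\nu(u,v),v\rangle = 0$, a contradiction. Fixing this $\delta$ and setting $s_0 := \delta/(2\sup_{D_\varepsilon}\|\nu\|)$, the first-paragraph estimate handles $K_\delta$ and the component argument handles $\{\|u\|\leq\delta\}$, so $\mu^s$ is nowhere-vanishing on all of $D_\varepsilon(T^\ast S^2)$ whenever $0 < s\leq s_0$.

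The one step with genuine content — and the one I would double-check — is the identification of $\nu|_{S^2}$ with a nonzero multiple of the standard $SO(3)$ moment map on the round sphere; everything else is routine compactness and continuity. A minor subtlety is that $\nu$ does depend on $\varepsilon$ through $\ell_{R/\varepsilon}$, but this is harmless: $\varepsilon$ is fixed throughout, and since $\ell_{R/\varepsilon}$ acts trivially on the zero section, the restriction $\nu|_{S^2}$ — the only part of $\nu$ entering the delicate estimate — is independent of $\varepsilon$.
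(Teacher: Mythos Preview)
Your proof is correct and rests on the same key observation as the paper's: $\nu|_{S^2}$ is a nonzero multiple of $v$ (the paper asserts $\nu(0,v)=v$), so near the zero section $\mu^s$ has a nonvanishing $v$-component while $\mu=-u\times v$ has none, and away from the zero section $\|\mu\|=\|u\|$ is bounded below. The only cosmetic difference is packaging: the paper encodes the ``near $S^2$'' region as the open set $U=\{(u,v):\nu(u,v)\not\perp\mathrm{Span}(u,v)\}$ and handles the complement by a sequential compactness-contradiction argument, whereas you split explicitly into $\{\|u\|\le\delta\}$ and $\{\|u\|\ge\delta\}$ with quantitative bounds---but the underlying idea is identical.
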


\begin{proof}
Since $\iota_{0}^\ast (\ell_{R/\varepsilon}^\ast \beta )$ agrees with the standard symplectic form $ \beta_{S^2}$ on $S^2$, then on the zero section $\nu$ agrees with the moment map for the Hamiltonian $SO(3)$ action on $(S^2 , \beta_{S^2} )$, that is:
\begin{align}
\nu (0,v) = v  .\label{nu0}
\end{align}

Let $U \subset D_\varepsilon (T^\ast S^2 )$ be the open subset given by
\[
U = \Big\{ (u,v) \, | \, \nu (u,v) \text{ is not orthogonal to } \mathrm{Span} (u,v)\Big\}.
\]
Observe that by (\ref{nu0}), $U$ contains the zero section $S^2$. Furthermore, for any $s>0$ we have $\mu^s \neq 0$ on $U$. Indeed, the equation $\mu^s (u,v) = 0$ says that $s\nu (u,v) = u \times v$, and hence $\nu (u,v)$ is orthogonal to both $u$ and $v$ when $s >0$, so $(u,v) \notin U$.

Suppose for a contradiction that there exists sequences $s_i \in \mathbb{R}$, $(u_i , v_i ) \in D_\varepsilon (T^\ast S^2 )$ with $s_i >0$, $s_i \to 0$ and $\mu^{s_i}(u_i , v_i ) = 0$ for all $i$. In particular, $(u_i , v_i ) \notin U$. Passing to a subsequence, we may assume that $(u_i , v_i ) $ converges to a point $ (u,v) \in T^\ast S^2 $; necessarily, this satisfies $(u,v) \in D_{\varepsilon}(T^\ast S^2 ) \setminus U$ and hence $u \neq 0$ because $U$ contains the zero section. On the other hand $\mu^{s_i} (u_i , v_i ) = 0$ implies $\mu (u,v) = 0$, i.e. $u \times v = 0$; this is only possible if $u = 0$, which gives a contradiction.
\end{proof}

By Lemmas \ref{lemma:circle} and \ref{lemma:nonzero}, when $0 < s \leq s_0$ the function $\| \mu^s \|$ is the Hamiltonian of an $S^1$--action $\sigma^s$ on $(D_{\varepsilon}(T^\ast S^2 ) , \omega^s )$; and the Hamiltonian flow of the modified function $r (\| \mu^s \| )$ on $(D_{\varepsilon}(T^\ast S^2 ) , \omega^s )$ at time $t$ is given by 
\[
\phi^{s}_t (u,v) = \sigma_{t r^\prime (\| \mu^s (u,v)\|)}^{s} (u,v)  .
\]


For a fixed $0 < s \leq s_0$, the path runnning from $t = 2\pi$ to $t = 0$ given by $\phi_{t}^s \in \mathrm{Symp}_{\partial} (D_\varepsilon (T^\ast S^2 ), \omega^s )$ is an $\omega^s$--symplectic isotopy from $\phi_{2\pi}^s$ to the identity map. On the other hand, as $s \to 0$ the actions $\sigma^s$ converge with all derivatives on compact subsets of $D_{\varepsilon} (T^\ast S^2 ) \setminus S^2$ to the $S^1$--action $ \sigma$ (the normalized geodesic flow), and on a neighborhood of the zero section $\phi_{2\pi}^s$ agrees with the identity; therefore, the path $\phi_{2\pi}^s$ defined for $0 < s \leq s_0$ extends smoothly across $s = 0$ as the squared model Dehn--Seidel twist
\[
\tau^2 (u,v) = \sigma_{2\pi r^\prime ( \| u \| )}(u,v).
\]

\begin{definition}\label{definition:modelO}
The \textit{model canonical lift} $\mathcal{O} \in \pi_1 \mathcal{S}_{0} (D_\varepsilon (T^\ast S^2 ),\omega )$ of $\tau^2 \in \pi_0 \mathrm{Symp}_0 (D_\varepsilon (T^\ast S^2 ) , \omega )$ is the homotopy-class of the loop represented by the concatenation of the following two paths: 
\begin{enumerate}
\item the path $(\varphi_{t}^{s_0})^\ast \omega $ running from $t = 0$ to $t = 2 \pi$ (for a fixed $0 < s_0 \ll 1 $)
\item the path $(\varphi_{2\pi}^{s} )^\ast \omega$ running from $s = s_0 $ to $s = 0$.
\end{enumerate}
By construction, the loop $\mathcal{O}$ provides a lift of $\tau^2$ under the connecting map $\delta$ of the fibration (\ref{fibration}) for $(X,\omega ) = (D_\varepsilon (T^\ast S^2 ) , du \wedge dv )$, i.e. $\delta ( \mathcal{O} ) = \tau^2$.\\

More generally, let $L \subset X$ be an embedded and \textit{oriented} Lagrangian sphere in a compact symplectic $4$-manifold $(X, \omega )$. Choosing an orientation-preserving identification $L \cong S^2$, we can again identify a neighborhood of $L \subset (X,\omega )$ with a neighborhood of the zero section in the model space $(T^\ast S^2 , \omega )$ and use this identification to implant $\mathcal{O}$ to obtain a loop $\mathcal{O}_L \in \pi_1 \mathcal{S}_{0} (X, \omega )$, which we call the \textit{canonical lift} of $\tau_{L}^2$. (The element $\mathcal{O}_L \in \pi_1 \mathcal{S}_0 (X, \omega )$ is independent of the choice of the orientation-preserving identification $L \cong S^2$: this is because the model element $\mathcal{O} \in \pi_1 \mathcal{S}_{0}(D_\varepsilon (T^\ast S^2 ) ,\omega )$ is $SO(3)$--invariant since $\varphi_{t}^s$ is $SO(3)$--equivariant, and by Smale's Theorem \cite{smale} which gives $\mathrm{Diff}_0 (S^2 ) \simeq SO(3)$).\\
\end{definition}

For the remainder of this section we discuss various properties of the canonical lift $\mathcal{O}_L$. The most basic one is the following:

\begin{lemma}\label{lemma:naturality}
Let $(X, \omega )$ be a symplectic $4$-manifold, and $L \subset (X, \omega )$ an embedded and oriented Lagrangian sphere. Then:
\begin{enumerate}
\item Let $f : (X, \omega_X ) \to (Y, \omega_Y )$ be a symplectomorphism. Under the canonical isomorphism $f^\ast : \pi_1 \mathcal{S}_{0}(Y, \omega_Y ) \to \pi_1 \mathcal{S}_{0}(X, \omega_X )$ we have $f^\ast \big( \mathcal{O}_{f(L)}^{\omega_Y}\big) = \mathcal{O}_{L}^{\omega_X} $, where $f(L)$ is given the orientation induced from $L$ and $f$.
\item If $\delta$ denotes the connecting map of the fibration (\ref{fibration}) associated to $(X, \omega )$, then we have $\delta ( \mathcal{O}_L ) = \tau_{L}^2$.
\end{enumerate}
 \end{lemma}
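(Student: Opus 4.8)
The plan is to unwind the construction in Definition~\ref{definition:modelO}: recall that $\mathcal{O}_L$ is obtained by implanting the model loop $\mathcal{O}\in\pi_1\mathcal{S}_{0}(D_\varepsilon(T^\ast S^2),\omega)$ through an orientation-preserving Weinstein identification $\psi$ of a neighborhood $U$ of $L$ with a neighborhood of the zero section. Both parts will then follow by keeping track of $\psi$ together with the standard description of the connecting map $\delta$ of the fibration (\ref{fibration}): given a loop $\gamma_t$ in $\mathcal{S}_0(X,\omega)$ based at $\omega$, lift it to a path $f_t\in\mathrm{Diff}_0(X)$ with $f_0=\mathrm{Id}$ and $f_t^\ast\omega=\gamma_t$ (such a lift exists and is unique up to homotopy since $p$ is a fibration); then $f_1\in\mathrm{Symp}_0(X,\omega)$ and $\delta([\gamma_t])=[f_1]$.

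I would prove (2) first, since (1) is not logically needed for it. In the model case $(X,\omega)=(D_\varepsilon(T^\ast S^2),du\wedge dv)$ the statement is already contained in Definition~\ref{definition:modelO}: the concatenation of $t\mapsto\varphi_t^{s_0}$ (for $0\le t\le2\pi$) with $s\mapsto\varphi_{2\pi}^{s}$ (for $s_0\ge s\ge0$) is a path in $\mathrm{Diff}_0(D_\varepsilon(T^\ast S^2))$ starting at $\mathrm{Id}$, continuous across the concatenation point and across $s=0$ (the latter by the convergence argument immediately preceding Definition~\ref{definition:modelO}), which lifts the loop $\mathcal{O}$ and ends at $\varphi_{2\pi}^{0}=\tau^2$; hence $\delta(\mathcal{O})=\tau^2$. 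For a general oriented Lagrangian sphere $L\subset(X,\omega)$, every diffeomorphism $\varphi_t^{s}$ is supported in the interior of $D_\varepsilon(T^\ast S^2)$ and equals the identity near its boundary, so implanting this path through $\psi$ yields a path in $\mathrm{Diff}_0(X)$ supported in $U$, starting at $\mathrm{Id}$, lifting $\mathcal{O}_L$, and ending at the implant of $\tau^2$ through $\psi$; by the definition of the Dehn--Seidel twist (Definition~\ref{definition:dehn} and the paragraph following it), that endpoint is $\tau_L^2$. This gives $\delta(\mathcal{O}_L)=\tau_L^2$.

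For (1), write $\Omega=(\Omega_t)_t$ for the representative loop of forms on $D_\varepsilon(T^\ast S^2)$ defining $\mathcal{O}$, and let $\psi_X\colon U_X\to D_\varepsilon(T^\ast S^2)$ be an orientation-preserving Weinstein identification used to construct $\mathcal{O}_L^{\omega_X}$, with $U_X$ a neighborhood of $L$. Since $f$ is a symplectomorphism carrying the oriented sphere $L$ to the oriented sphere $f(L)$, the composite $g:=\psi_X\circ(f|_{U_X})^{-1}\colon f(U_X)\to D_\varepsilon(T^\ast S^2)$ is an orientation-preserving Weinstein identification for $f(L)$, so implanting $\Omega$ through $g$ gives a representative loop $\eta=(\eta_t)_t$ of $\mathcal{O}_{f(L)}^{\omega_Y}$ which equals $\omega_Y$ outside $f(U_X)$ and equals $g^\ast\Omega_t$ on $f(U_X)$. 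Pulling back by $f$ and using $g\circ f|_{U_X}=\psi_X$, one finds $f^\ast\eta_t=\omega_X$ outside $U_X$ and $f^\ast\eta_t=\psi_X^\ast\Omega_t$ on $U_X$; that is, $f^\ast\eta$ is precisely the representative of $\mathcal{O}_L^{\omega_X}$ produced by $\psi_X$. Hence $f^\ast(\mathcal{O}_{f(L)}^{\omega_Y})=\mathcal{O}_L^{\omega_X}$.

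The only points requiring care, and both are already settled in the text, are: that the Weinstein identification is canonical only up to homotopy and that homotopic orientation-preserving identifications yield the same class in $\pi_1\mathcal{S}_0$, which is the $SO(3)$-invariance of $\mathcal{O}$ together with Smale's theorem $\mathrm{Diff}_0(S^2)\simeq SO(3)$ invoked in Definition~\ref{definition:modelO}; and the smooth extension of the lift $\varphi_t^{s}$ across $s=0$, established just before that definition. Thus no new input is needed: the proof is bookkeeping around the construction, and the only (minor) obstacle is making the implanting and pullback manipulations in (1)--(2) precise with respect to supports and the chosen neighborhoods.
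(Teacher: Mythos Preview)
Your proposal is correct and follows essentially the same approach as the paper: both arguments simply unwind Definition~\ref{definition:modelO} and the implanting construction, with the model case $\delta(\mathcal{O})=\tau^2$ holding by construction. The only difference is organizational---the paper deduces (2) from (1) together with the model case (invoking naturality of the fibration), whereas you prove (2) directly by implanting the lifting path of diffeomorphisms---but this is the same bookkeeping spelled out in a different order.
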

\begin{proof}
(1) is immediate from the construction of $\mathcal{O}_L$. (2) follows from the fact that this property holds in the model case $(X, \omega ) = (D_\varepsilon ( T^\ast S^2 ) , du \wedge dv )$ and $L = S^2$ by construction, and by the naturality of the fibration (\ref{fibration}) and the element $\mathcal{O}_L$ (i.e. assertion (1)).
\end{proof}

\subsubsection{Triviality in $\pi_1 \mathcal{S}(X,\omega )$}

\begin{lemma}\label{lemma:kerO}
Let $(X, \omega )$ be a symplectic $4$-manifold, and $L \subset (X, \omega )$ an embedded and oriented Lagrangian sphere. Let $\mathcal{S}_L (X, \omega )$ be the subspace of $ \mathcal{S}(X,\omega )$ consisting of symplectic forms $\omega^\prime$ with $\int_L \omega^\prime \geq 0$. Then $\mathcal{O}_L$ lies in the kernel of the natural map 
\[
\pi_1 \mathcal{S}_0 (X, \omega ) \to \pi_1 \mathcal{S}_L (X,\omega ).
\]
(In particular, $\mathcal{O}_L $ lies in the kernel of the natural map $\pi_1 \mathcal{S}_0 (X, \omega ) \to \pi_1 \mathcal{S} (X,\omega )$.)
\end{lemma}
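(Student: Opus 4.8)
The plan is to exhibit an explicit null-homotopy of the loop $\mathcal{O}_L$ inside $\mathcal{S}_L(X,\omega)$, by "opening up" the construction of $\mathcal{O}_L$: the loop was built as a concatenation of a symplectic isotopy (a loop at the level of cohomology classes) and a path through the deformed forms $\omega^s$ which have $\int_L \omega^s = 4\pi s > 0$. Since all these intermediate forms already lie in $\mathcal{S}_L$, the loop $\mathcal{O}_L$ bounds in $\mathcal{S}_L$ essentially "for free"; the content is checking that the obvious filling stays in $\mathcal{S}_L$ and agrees with $\omega$ near $\partial X$. By the naturality assertion in Lemma \ref{lemma:naturality}(1) and the fact that $\mathcal{O}_L$ is implanted from the model element $\mathcal{O}$, it suffices to work entirely in the model case $(X,\omega) = (D_\varepsilon(T^\ast S^2), du\wedge dv)$, $L = S^2$, and to produce a null-homotopy of $\mathcal{O}$ inside $\mathcal{S}_L(D_\varepsilon(T^\ast S^2),\omega) = \{\omega' : \int_{S^2}\omega' \geq 0\}$; the implanted null-homotopy will then automatically land in $\mathcal{S}_L(X,\omega)$ since the area $\int_L \omega'$ of a symplectic form supported near $L$ is computed entirely in the model neighborhood.

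In the model case, recall $\mathcal{O}$ is the concatenation of (1) the path $(\varphi^{s_0}_t)^\ast\omega$, $t \in [0,2\pi]$, and (2) the path $(\varphi^{s}_{2\pi})^\ast\omega$, $s\in[s_0,0]$. First I would note that path (2), being of the form $(\text{diffeo})^\ast\omega$, always represents the trivial cohomology class, so $\int_{S^2}(\varphi^s_{2\pi})^\ast\omega = \int_{S^2}\omega = 0$ for all $s$; hence path (2) already lies in $\mathcal{S}_L$. For path (1): since $\varphi^{s_0}_t$ is an $\omega^{s_0}$-symplectomorphism, $(\varphi^{s_0}_t)^\ast\omega^{s_0} = \omega^{s_0}$, which has $\int_{S^2}\omega^{s_0} = 4\pi s_0 > 0$; I would interpolate between the loop $t\mapsto(\varphi^{s_0}_t)^\ast\omega$ and the constant loop at $\omega^{s_0}$. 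Concretely, consider the two-parameter family $\omega^{s_0}_{t,\lambda} := (1-\lambda)(\varphi^{s_0}_t)^\ast\omega + \lambda(\varphi^{s_0}_t)^\ast\omega^{s_0}$, or more cleanly $(\varphi^{s_0}_t)^\ast\big((1-\lambda)\omega + \lambda\omega^{s_0}\big) = (\varphi^{s_0}_t)^\ast(\omega + \lambda s_0\,\ell^\ast_{R/\varepsilon}\beta)$. For each fixed $\lambda$ this is a closed $2$-form, it agrees with $\omega$ near $\partial D_\varepsilon$ (both $\beta$ and the $\varphi^{s_0}_t$ are supported in the interior), it is nondegenerate because $\omega + \lambda s_0\ell^\ast_{R/\varepsilon}\beta$ is a symplectic form for $s_0$ small (this uses $0 < s_0 \ll 1$, which we may shrink), and along $S^2$ one computes $\int_{S^2}(\varphi^{s_0}_t)^\ast(\omega+\lambda s_0\ell^\ast_{R/\varepsilon}\beta) = 4\pi\lambda s_0 \geq 0$, so the family stays in $\mathcal{S}_L$. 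At $\lambda = 1$ this is the constant loop $t\mapsto(\varphi^{s_0}_t)^\ast\omega^{s_0} = \omega^{s_0}$. Thus path (1) is homotopic rel endpoints, inside $\mathcal{S}_L$, to the constant path at $\omega^{s_0}$ (one must reconcile the endpoints $t=0,2\pi$: at $t=0$, $\varphi^{s_0}_0 = \mathrm{id}$ so the form is $\omega + \lambda s_0\ell^\ast\beta$, and at $t=2\pi$ it is $(\varphi^{s_0}_{2\pi})^\ast(\omega+\lambda s_0\ell^\ast\beta)$, and these are precisely joined by path (2) with the $\beta$-term scaled by $\lambda$ — so the homotopy is of the whole loop, not of path (1) in isolation).

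Assembling: the full loop $\mathcal{O}$ is homotoped, through loops in $\mathcal{S}_L$, to the loop "constant at $\omega^{s_0}$ then the path $(\varphi^s_{2\pi})^\ast\omega$ from $s_0$ to $0$ then constant at $\omega$" — but wait, this isn't a loop at the right basepoint, so instead I would run the homotopy with parameter $\lambda$ simultaneously deforming the $\beta$-coefficient in both paths, so that at $\lambda = 0$ we recover $\mathcal{O}$ and at $\lambda=1$ we get the concatenation of the constant loop at $\omega$ with itself, i.e. the constant loop. The main obstacle I anticipate is purely bookkeeping: organizing this as a genuinely continuous map from the square (or from a disk) into $\mathcal{S}_L$ with the correct boundary behavior — matching the two constituent paths of $\mathcal{O}$ at their common endpoints $t=2\pi \leftrightarrow s = s_0$ and $s = 0 \leftrightarrow$ basepoint $\omega$ — and verifying continuity as $s \to 0$, where one invokes (exactly as in the construction of $\mathcal{O}$) that $\sigma^s \to \sigma$ with all derivatives on compact subsets of $D_\varepsilon(T^\ast S^2)\setminus S^2$ and that $\varphi^s_{2\pi}$ is the identity near $S^2$. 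No new analytic input beyond what was already used to define $\mathcal{O}$ should be needed; the nondegeneracy of $\omega + \lambda s_0 \ell^\ast_{R/\varepsilon}\beta$ for all $\lambda\in[0,1]$ just requires shrinking $s_0$ once more.
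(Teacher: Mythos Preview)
Your overall strategy---reduce to the model and fill the loop by a two-parameter family of symplectic forms with $\int_{S^2}\geq 0$---is exactly the paper's. But the specific homotopy you propose does not close up. If you ``deform the $\beta$-coefficient'' by replacing $\omega$ with $\omega+\lambda s_0\,\ell_{R/\varepsilon}^\ast\beta$ in both paths, then at $\lambda=1$ path~(1) becomes the constant $\omega^{s_0}$ (good), but path~(2) becomes $s\mapsto(\phi_{2\pi}^{s})^\ast\omega^{s_0}$, which runs from $\omega^{s_0}$ to $(\tau^2)^\ast\omega^{s_0}$. Since $\tau^2=\phi_{2\pi}^{0}$ is an $\omega$-symplectomorphism but \emph{not} an $\omega^{s_0}$-symplectomorphism, the endpoints no longer agree and you do not obtain a loop, let alone the constant loop at $\omega$. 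This is not merely bookkeeping: any filling built only from $\phi_t^{s_0}$ and $\phi_{2\pi}^{s}$ runs into this mismatch.

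The paper's fix is to use the full family of flows $\phi_t^{s}$ and a cut-off function $\chi:[0,s_0]\to\mathbb R_{\geq 0}$ with $\chi(s)=s$ near $0$ and $\chi(s)=0$ near $s_0$, and to take the $D^2$-family
\[
(s,t)\;\longmapsto\;(\phi_t^{s})^\ast\,\omega^{\chi(s)}\,,\qquad (s,t)\in[0,s_0]\times[0,2\pi].
\]
The point of $\chi$ is that near $s=0$ one has $(\phi_t^{s})^\ast\omega^{\chi(s)}=(\phi_t^{s})^\ast\omega^{s}=\omega^{s}$ (using that $\phi_t^{s}$ is an $\omega^{s}$-symplectomorphism), so the family extends smoothly across the locus where $\phi_t^{0}$ is undefined on the zero section; and near $s=s_0$ one recovers $(\phi_t^{s})^\ast\omega$, matching path~(1). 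The side $t=2\pi$ gives $(\phi_{2\pi}^{s})^\ast\omega^{\chi(s)}$, which is then homotoped rel endpoints to path~(2) via $(\phi_{2\pi}^{s})^\ast\omega^{a\chi(s)}$, $a\in[0,1]$; this last homotopy is fine because $\phi_{2\pi}^{s}$ is a globally defined diffeomorphism for every $s\in[0,s_0]$. Every form in the resulting disk has $\int_{S^2}\geq 0$. The ingredient you are missing is precisely this use of $\phi_t^{s}$ for \emph{all} $s$ together with the interpolating function $\chi$.
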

\begin{proof}
By naturality (cf. Lemma \ref{lemma:naturality}(1)) it suffices to consider the model case $(X, \omega ) = (D_\varepsilon (T^\ast S^2) , du \wedge dv )$, $L = S^2$. Fix $0 < s_0 \ll 1$ as in Lemma \ref{lemma:nonzero}, and a smooth function $\chi : [0 , s_0 ] \to \mathbb{R}_{\geq 0}$ such that $\chi (s) = s$ on an open neighborhood of $s = 0$ and $\chi (s) = 0$ on an open neighborhood of $s = s_0$. Consider the deformation $\omega^s$ from (\ref{modeldeformation}). Since $\phi^{s}_t$ is an $\omega^s$--symplectomorphism, it follows that---even if $\phi^{0}_t$ is only defined on $D_\varepsilon (T^\ast S^2 ) \setminus S^2$---the following gives a family of symplectic structures defined for all $(s,t) \in D^2 := [0 , s_0 ] \times [0,2 \pi ]$:
\begin{align}
(\phi_{t}^s )^\ast \omega^{\chi (s)} .\label{disk}
\end{align}

Within the $D^2$--family (\ref{disk}) the two paths given by $s =0$, $0 \leq t \leq 1$ and $0 \leq s \leq s_0$, $t = 0$ are both constant (equal to $\omega$). The path given by $s = s_0$, $0 \leq t \leq 2 \pi$ coincides with the path (1) from Definition \ref{definition:modelO}. On the other hand, the path given by $0 \leq s \leq s_0$, $t = 2\pi$ does not agree with the path (2) from Definition \ref{definition:modelO}. However, there is an obvious homotopy between these two paths (fixing endpoints): 
\[
(\phi_{2\pi}^s)^\ast  \omega^{a \chi (s)} \quad , \quad 0 \leq a \leq 1.
\]
(Indeed, $\phi_{2\pi}^s$ is a well-defined path of diffeomorphisms of $X = D_\varepsilon (T^\ast S^2 )$; unlike $\phi_{t}^s$ for fixed $t \in (0,2\pi )$ which cannot be extended over to the zero section.)

Thus, appying this homotopy to deform the path $0\leq s \leq 1$, $t = 2\pi$ fixing endpoints, we can deform the $D^2$--family (\ref{disk}) to a new $D^2$-family for which the loop obtained by travelling along the boundary in counter-clockwise orientation constitutes a loop in $\mathcal{S}_0 (X, \omega )$ representing the model canonical lift $\mathcal{O}$. The $D^2$-family thus obtained does not lie in $\mathcal{S}_0 (X,\omega )$ but only in the larger space $\mathcal{S}_L (X, \omega )$. The existence of this family implies the required result. 
\end{proof}

\subsubsection{Orientation change and symplectic conjugation}
We shall see later---using Seiberg--Witten theory---that the canonical lift $\mathcal{O}_L$ \textit{does depend on the choice of orientation of $L$} (see Corollary \ref{proposition:calculation}). Thus, we have two different canonical lifts $\mathcal{O}_{\pm L}$ of $\tau_{L}^2$, corresponding to each of the two orientations of the Lagrangian sphere $L$.

A loop of $\omega_t$ in the space $\mathcal{S}_0 (X, \omega )$ yields a corresponding loop $- \omega_t$ in $\mathcal{S}_0 (X, -\omega )$ by taking the conjugate (i.e. negation) of each symplectic forms in the loop, and vice versa. We now describe the loop that one obtains by conjugating the canonical lift $\mathcal{O}_{L}^{\omega}$:

\begin{lemma}\label{lemma:symmetriesO}
Let $(X, \omega )$ be a symplectic $4$-manifold, and $L \subset (X, \omega )$ an embedded and oriented Lagrangian sphere. Then $- \mathcal{O}_{L}^{\omega} =  (\mathcal{O}_{-L}^{-\omega})^{-1}$ in $\pi_1 \mathcal{S}_0 (X, -\omega )$. 
\end{lemma}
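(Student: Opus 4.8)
The plan is to reduce to the model case and then track through the construction of $\mathcal{O}$, exploiting the anti-symplectomorphism $c : (u,v)\mapsto (-u,v)$ of $(T^\ast S^2, du\wedge dv)$ already used in the proof of Lemma \ref{lemma:symmetries}. By naturality of the canonical lift (Lemma \ref{lemma:naturality}(1)) and the fact that $\mathcal{O}_L$ is defined by implanting the model element $\mathcal{O}$ via a Weinstein neighborhood, it suffices to prove the analogous identity in the model situation $(X,\omega) = (D_\varepsilon(T^\ast S^2), du\wedge dv)$ with $L = S^2$ the zero section: namely that $-\mathcal{O}^{\omega} = (\mathcal{O}^{-\omega})^{-1}$ as elements of $\pi_1\mathcal{S}_0(D_\varepsilon(T^\ast S^2), -\omega)$, where I must be careful that reversing the orientation of $L$ corresponds to precomposing the identification $L\cong S^2$ with an orientation-reversing element of $O(3)$, which by $SO(3)$-invariance of $\mathcal{O}$ does not change the model loop.

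First I would observe that conjugation (negation of forms) gives a homeomorphism $\mathcal{S}_0(X,\omega) \to \mathcal{S}_0(X,-\omega)$, $\omega_t\mapsto -\omega_t$, inducing $\pi_1\mathcal{S}_0(X,\omega)\to\pi_1\mathcal{S}_0(X,-\omega)$, and that under this map the loop $\mathcal{O}^\omega$ is sent to the loop obtained by negating every form in the concatenation from Definition \ref{definition:modelO}: the path $(\phi_t^{s_0})^\ast\omega$ followed by $(\phi_{2\pi}^s)^\ast\omega$, for $0\le t\le 2\pi$ and $s_0\ge s\ge 0$, becomes $(\phi_t^{s_0})^\ast(-\omega)$ followed by $(\phi_{2\pi}^s)^\ast(-\omega)$. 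Next I would compute $\mathcal{O}^{-\omega}$ directly. In the construction relative to $-\omega$, one deforms $-\omega$ by $-\omega + s\,\ell_{R/\varepsilon}^\ast\beta'$ for a suitable $SO(3)$-invariant form $\beta'$; since for $-\omega$ the zero section must be deformed to a symplectic submanifold with the correct orientation, the natural choice is $\beta' = -\beta$, i.e. the deformation $(-\omega)^s = -(\omega + s\,\ell_{R/\varepsilon}^\ast\beta) = -\omega^s$. The associated moment map for $-\omega^s$ is $-\mu^s$ (same kernel, so Lemma \ref{lemma:nonzero} applies verbatim), its norm $\|\mu^s\|$ is the same function, but the Hamiltonian flow of $r(\|\mu^s\|)$ with respect to $-\omega^s$ is the \emph{reverse} of the flow $\phi_t^s$ computed with respect to $\omega^s$ (negating the symplectic form negates every Hamiltonian vector field). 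Hence the $-\omega$-construction produces the loop $(\phi_{-t}^{s_0})^\ast\omega$ for $0\le t\le 2\pi$, followed by $(\phi_{-2\pi}^s)^\ast\omega = (\phi_{2\pi}^s)^{-\ast}\omega$ for $s_0\ge s\ge 0$ — but now each form in this loop equals the corresponding $-\omega^{\chi(s)}$ pulled back, i.e. after the homotopy of Lemma \ref{lemma:kerO} each is $-\omega$-valued. Comparing: this loop, as a loop in $\mathcal{S}_0(X,-\omega)$, is precisely the reverse of $-\mathcal{O}^\omega$ computed above, which gives $\mathcal{O}^{-\omega} = (-\mathcal{O}^\omega)^{-1}$, equivalently $-\mathcal{O}^\omega = (\mathcal{O}^{-\omega})^{-1}$.

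The main obstacle I anticipate is bookkeeping of orientations and inversions: one must simultaneously keep straight (i) the sign flip from conjugating forms, (ii) the flow-reversal caused by passing from $\omega^s$ to $-\omega^s$, (iii) the reversal of the $s$-parameter path, and (iv) the subtle point that changing the orientation of $L$ enters only through an orientation-reversing identification $L\cong S^2$ which, by $SO(3)$-invariance of $\mathcal{O}$ (Smale's theorem, as in Definition \ref{definition:modelO}), is harmless. The cleanest way to package this is to exhibit a single two-parameter family over a square or annulus — analogous to the family (\ref{disk}) in the proof of Lemma \ref{lemma:kerO} — whose boundary traversals realise $-\mathcal{O}^\omega$ and $\mathcal{O}^{-\omega}$ with opposite orientations, using the anti-symplectomorphism $c$ to identify the two $S^1$-actions. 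Once the family is written down, the identity of Lemma \ref{lemma:symmetriesO} follows by reading off the boundary, and the general (non-model) case follows by naturality as in Lemma \ref{lemma:naturality}(1).
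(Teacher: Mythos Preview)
Your overall strategy---reduce to the model, track the deformation $-\omega^s$, and compare loops---is the same as the paper's, but there are two genuine gaps in the execution.

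\textbf{First gap: the orientation reversal of $L$.} You assert that reversing the orientation of $L$ ``by $SO(3)$--invariance of $\mathcal{O}$ does not change the model loop.'' This is false: orientation reversal corresponds to an element of $O(3)\setminus SO(3)$ (e.g.\ the antipodal map $v\mapsto -v$), and the model construction is only $SO(3)$--invariant, not $O(3)$--invariant. In fact the paper later proves (Proposition~\ref{proposition:calculation}) that $\mathcal{O}_L\neq\mathcal{O}_{-L}$, so the orientation genuinely matters. The paper handles this by introducing \emph{two} involutions, $a(u,v)=(-u,-v)$ (a symplectomorphism inducing the antipodal map on $L$) and $c(u,v)=(-u,v)$ (your anti-symplectomorphism), and using naturality to identify $\mathcal{O}_{-L}^{-\omega}=(ac)^\ast\mathcal{O}_L^{\omega}$. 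One then checks that $(ac)^\ast$ sends $\omega,\beta,\mu^s$ to $-\omega,-\beta,-\mu^s$ and hence $\omega^s\mapsto -\omega^s$ and $\phi_t^s\mapsto\phi_{-t}^s$. Your choice $\beta'=-\beta$ happens to reproduce the effect of $(ac)^\ast$, so your computation lands on the right object---but your justification (``correct orientation for $-\omega$'') is not the reason, and using $c$ alone would give $c^\ast\beta=\beta$ (for $\beta=\chi(\|u\|)\pi^\ast\beta_{S^2}$), hence the wrong deformation $-\omega+s\beta$.

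\textbf{Second gap: ``precisely the reverse''.} Even granting the computation, your conclusion that the loop built from $(\phi_{-t}^{s_0})^\ast(-\omega)$ followed by $(\phi_{-2\pi}^{s})^\ast(-\omega)$ is ``precisely the reverse'' of $-\mathcal{O}^{\omega}$ is not literally true: the pointwise reverse of $-(\phi_t^{s_0})^\ast\omega$ is $-(\phi_{2\pi-t}^{s_0})^\ast\omega$, which is \emph{not} the same path as $-(\phi_{-t}^{s_0})^\ast\omega = -((\phi_t^{s_0})^{-1})^\ast\omega$. What is needed is the statement that for any path $f_t$ in $\mathrm{Diff}(X)$ with $f_0=\mathrm{Id}$ and $f_1\in\mathrm{Symp}(X,\omega)$, the loops $f_t^\ast\omega$ and $(f_t^{-1})^\ast\omega$ are inverse in $\pi_1\mathcal{S}_0(X,\omega)$. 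The paper isolates this as a separate Claim and proves it via the explicit homotopy $F_a(t)$ given by concatenating $f_t|_{[0,a]}$ with $f_a f_t^{-1}|_{[0,a]}$. Your proposed ``two-parameter family over a square'' would need to encode exactly this homotopy; without writing it down you have not closed the argument.
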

(As a consistency check, we note that Lemma \ref{lemma:symmetriesO} and Lemma \ref{lemma:naturality}(2) recover the fact implied by Lemma \ref{lemma:symmetries} that $(\tau_{L}^{\omega})^2 = (\tau_{L}^{-\omega})^{-2}$).

\begin{proof}
Again by naturality (cf. Lemma \ref{lemma:naturality}(1)) it suffices to consider the model case $(X, \omega ) = (D_\varepsilon (T^\ast S^2) , du \wedge dv )$, $L = S^2$. Fix $0 < s_0 \ll 1$ as in Lemma \ref{lemma:nonzero}. Consider the diffeomorphisms $a$ and $c$ of $X = D_\varepsilon (T^\ast S^2 )$ given by $a(u,v) = (-u,-v)$ and $c(u,v) = (-u,v)$. The map $a$ is the symplectomorphism of $(X, \omega )$ induced by the orientation-reversing antipodal involution of $L = S^2$; in turn $c$ is the anti-symplectomorphism given by negation on the fibers, which fixes $L = S^2$. By Lemma \ref{lemma:naturality}(1) we thus have $a^\ast c^\ast \mathcal{O}_{L}^{\omega} = \mathcal{O}_{-L}^{-\omega}$. We shall next show that $a^\ast c^\ast \mathcal{O}_{L}^\omega = - (\mathcal{O}_{\omega})^{-1}$.

Inspecting the construction of the model canonical lift $\mathcal{O}$ (Definition \ref{definition:modelO}) one sees that $a^\ast c^\ast \mathcal{O}$ is given by the homotopy-class of the loop represented by the concatenation of the following two paths: 
\begin{enumerate}
\item the path $-(\varphi_{-t}^{s_0})^\ast \omega $ running from $t = 0$ to $t = 2 \pi$ (for a fixed $0 < s_0 \ll 1 $)
\item the path $-(\varphi_{-2\pi}^{s} )^\ast \omega$ running from $s = s_0 $ to $s = 0$.
\end{enumerate}
(Indeed, applying the involution $a \circ c = c \circ a$ has the effect of replacing $\omega , \beta , \mu , \nu , \omega^s , \| \mu^s \|$ by $-\omega , - \beta , -\mu , - \nu , -\omega^s , \| \mu^s \|$, and thus replacing $\phi_{t}^{s}$ by $(\phi_{t}^{s})^{-1} = \phi_{-t}^s$). Thus, the assertion $a^\ast c^\ast \mathcal{O}_{L}^\omega = - (\mathcal{O}_{\omega})^{-1}$ would then follow from the following:
\begin{claim}
Let $(X, \omega )$ a symplectic manifold (either closed or compact with convex contact boundary). Let $f_t$ be a path in $\mathrm{Diff}(X)$ with $f_0 = \mathrm{Id}$ and $f_1 \in \mathrm{Symp}(X,\omega )$. Then the loops $f_{t}^\ast \omega$ and $(f_{t}^{-1})^\ast \omega$ define inverse elements in $\pi_1 \mathcal{S}_0 (X, \omega )$.
\end{claim}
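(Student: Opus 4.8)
The plan is to exhibit an explicit square in $\mathcal{S}_0(X,\omega)$ whose boundary is, up to constant edges, the concatenation of the two loops; since $\pi_1\mathcal{S}_0(X,\omega)$ is a group, proving that this concatenation is null-homotopic is equivalent to proving that the two loops are mutually inverse. Write $A(t) = f_t^\ast\omega$ and $B(t) = (f_t^{-1})^\ast\omega$. First I would record that these are genuine loops based at $\omega$: from $f_0 = \mathrm{Id}$ one gets $A(0) = B(0) = \omega$, and from $f_1 \in \mathrm{Symp}(X,\omega)$ one gets $A(1) = B(1) = \omega$ (note $(f_1^{-1})^\ast\omega = \omega$ as well); moreover each $f_t$ lies in $\mathrm{Diff}(X)$ and is isotopic to $\mathrm{Id}$ rel $\partial X$ through the path itself, so $A(t)$ and $B(t)$ are symplectic forms cohomologous to $\omega$, equal to $\omega$ near $\partial X$, and in the connected component of $\omega$.

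The core construction is the map
\[
D\colon [0,1]^2 \to \mathcal{S}_0(X,\omega), \qquad D(s,t) = \big(f_t^{-1}\circ f_{st}\big)^\ast\omega = f_{st}^\ast\big((f_t^{-1})^\ast\omega\big).
\]
I would first check it is well-defined and continuous: the family $g_{s,t} := f_t^{-1}\circ f_{st}$ depends continuously on $(s,t)$ in the $C^\infty$ topology, equals $\mathrm{Id}$ near $\partial X$, and is isotopic to $\mathrm{Id}$ rel $\partial X$ (for instance via $r\mapsto f_{tr}^{-1}\circ f_{str}$), so each $g_{s,t}^\ast\omega$ is symplectic, cohomologous to $\omega$, and equal to $\omega$ near $\partial X$; since $D$ is continuous with $D(0,0)=\omega$, the whole family lands in the component $\mathcal{S}_0(X,\omega)$. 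Then a direct computation of the four boundary edges gives
\[
D(0,t) = (f_t^{-1})^\ast\omega = B(t), \qquad D(1,t) = \omega, \qquad D(s,0) = \omega, \qquad D(s,1) = f_s^\ast\omega = A(s),
\]
the last equality using $f_1 \in \mathrm{Symp}(X,\omega)$. Hence the edge $\{s=0\}$ traces $B$, the edge $\{t=1\}$ traces $A$, and the remaining two edges are constant at $\omega$; the square $D$ therefore exhibits the concatenation $B\ast A$ as freely null-homotopic in $\mathcal{S}_0(X,\omega)$, and since $\mathcal{S}_0(X,\omega)$ is path-connected this means $[B][A]=1$ in $\pi_1\mathcal{S}_0(X,\omega)$, i.e. $[B]=[A]^{-1}$, which is the claim.

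I do not expect a genuine obstacle; the one point meriting attention is confirming that $D$ really takes values in $\mathcal{S}_0(X,\omega)$ rather than in some larger space of symplectic forms, and this reduces to the observation that $f_t^{-1}\circ f_{st}$ is isotopic to the identity relative to $\partial X$. Ultimately the statement is a formal consequence of the continuity of the map $f\mapsto f^\ast\omega$ together with the hypothesis that $f_1$ is a symplectomorphism, and I would expect the same square to recover, via Lemma \ref{lemma:naturality}(2), the corresponding identity $(\tau_L^\omega)^2 = (\tau_L^{-\omega})^{-2}$ already implied by Lemma \ref{lemma:symmetries}.
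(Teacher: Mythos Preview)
Your proof is correct and follows essentially the same approach as the paper: both construct an explicit null-homotopy of the concatenation by exploiting the group structure of $\mathrm{Diff}(X)$ and the hypothesis $f_1\in\mathrm{Symp}(X,\omega)$. The paper builds a one-parameter family of based loops $F_a$ in $\mathrm{Diff}(X)$ (going out along $f_t$ to $f_a$ and back along $f_a f_t^{-1}$) and then pulls back $\omega$, whereas you write down the square $D(s,t)=(f_t^{-1}\circ f_{st})^\ast\omega$ directly in $\mathcal{S}_0(X,\omega)$; these are different parametrizations of the same contraction, and your version is arguably tidier since the four edges are read off immediately.
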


To prove the Claim, we consider the homotopy of based loops $F_a : S^1 \to \mathrm{Diff}(X)$ parametrised by $0 \leq a \leq 1$ which is given by the concatenation of the following two paths:
\begin{enumerate}
\item the path from $\mathrm{Id}$ to $f_a$ given by $f_t$ with $t$ going from $t = 0$ to $t = a$
\item the path from $f_a$ to $\mathrm{Id}$ given by $f_a f_{t}^{-1}$ with $t$ going from $t = 0$ to $t = a$.
\end{enumerate}

Since $f_1$ is a symplectomorphism, then $(f_1 f_{t}^{-1})^\ast \omega = (f_{t}^{-1} )^\ast \omega$. Thus, the loop $F_{1}(t)^\ast \omega$ agrees with the concatenation of the loops $f_{t}^\ast \omega$ and $(f_{t}^{-1})^\ast \omega$. In turn $F_0$ is the constant loop at $\mathrm{Id}$. Thus, the loop $f_{t}^\ast \omega$ followed by the loop $(f_{t}^{-1})^\ast \omega$ is null-homotopic, and a similar argument applies for the concatenation in the opposite order. This concludes the proof of the Claim, and thus of the Lemma.
\end{proof}

\subsection{The canonical lift and the generalised Dehn twist along a $(-2)$-sphere}\label{subsection:generaliseddehntwist}

Let $X$ be a smooth oriented $4$-manifold, and $S \subset X$ a smoothly embedded and oriented sphere with self-intersection $S \cdot S = -2$. Recently, J. Lin has studied a loop of diffeomorphisms $\gamma_S \in \pi_1 \mathrm{Diff}_0 (X)$ naturally associated to such a sphere $S$, which he called the \textit{generalised Dehn twist along} $S$ \cite{JLIN2022}. We now elucidate the relationship between $\gamma_S$ and the Dehn--Seidel twist along a Lagrangian sphere $\tau_{L}$.\\

We first recall the construction of the element $\gamma_S \in \pi_1 \mathrm{Diff}_0 (X)$. Consider the complex line bundle $L = \mathscr{O}_{\mathbb{C}P^1}(-2) \to \mathbb{C}P^1$. The $S^1$--action on  the fibers of $L$ preserves the unit sphere $S(L ) = \mathbb{R}P^3$ and thus induces a loop $\gamma$ of diffeomorphisms of $\mathbb{R}P^3$. Moreover, the loop $\gamma$ is a loop of isometries of $\mathbb{R}P^3$ equipped with a standard metric of constant curvature. The identity component in the isometry group, denoted $\mathrm{Isom}_0 (\mathbb{R}P^3 )$, is 
\[
\mathrm{Isom}_0 (\mathbb{R}P^3 ) = SO(4)/\pm I \quad (\cong SO(3) \times SO(3) ).
\]
The inclusion $U(2) \hookrightarrow SO(4)$ induces a map $U(2) \to SO(4)/\pm I$ and the loop $\gamma \in \pi_1 \mathrm{Isom}_0 (\mathbb{R}P^3 ) $ can be characterised as the image in $\pi_1 \big( SO(4)/\pm I\big)$ of the `standard' generator of $\pi_1 U(2)$ (namely, the generator associated to the isomorphism $det : \pi_1 U(2) \xrightarrow{\cong} \pi_1 U(1) = \mathbb{Z}$). The element $\gamma$ has order $2$ in $\pi_1 \mathrm{Isom}_0 (\mathbb{R}P^3 ) \, (\cong \mathbb{Z}_2 \times \mathbb{Z}_2 )$ and because $\pi_2 \mathrm{Isom}_0 (\mathbb{R}P^3 )  $ is trivial then $\gamma^2$ has a (homotopically) unique homotopy to the constant loop (based at the identity). Thus, one can `cut off' the squared $S^1$--action on the unit disk bundle $D(L)$ using the homotopy of $\gamma^2$ to produce a loop of diffeomorphism of $D(L)$ which agree with the identity near the boundary, denoted $\gamma_{\mathbb{C}P^1} \in \pi_1 \mathrm{Diff}_0 (D(L))$.

Returning to our $(-2)$--sphere $S \subset X$, we fix an orientation-preserving diffeomorphism $S \cong \mathbb{C}P^1$, the choice of which is homotopically unique (again, by Smale's Theorem \cite{smale}). A neighborhood $\nu (S) $ of $S$ in $X$ is then identified---in a homotopically canonical fashion---with a neighborhood of the zero section in the complex line bundle $L = \mathscr{O}_{\mathbb{C}P^1}(-2) \to \mathbb{C}P^1$, and the generalised Dehn twist element $\gamma_S \in \mathrm{Diff}_0 (X)$ is defined by implanting $\gamma_{\mathbb{C}P^1}$ using the neighborhood identification.


\begin{remark} Bamler--Kleiner \cite{bk3} have proved that there is a homotopy-equivalence $\mathrm{Isom} (\mathbb{R}P^3) \simeq \mathrm{Diff}(\mathbb{R}P^3)$ induced by the inclusion. Using this fact one can rephrase the construction of $\gamma_S $ in more flexible terms.
\end{remark}

It is easy to see that reversing the orientation of $S$ inverts the generalised Dehn twist:
\begin{lemma}\label{lemma:gammasymmetries}
Let $X$ be a smooth $4$-manifold, and $S \subset X$ and embedded and oriented sphere with $S\cdot S = -2$. Then $\gamma_{-S} = -\gamma_{S} \in \mathrm{\pi}_1 \mathrm{Diff_0 }(X)$.
\end{lemma}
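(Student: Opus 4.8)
\textbf{Proof proposal for Lemma \ref{lemma:gammasymmetries}.}

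The plan is to run the same naturality argument used throughout this section, reducing everything to the model line bundle $L = \mathscr{O}_{\mathbb{C}P^1}(-2)$. Since the construction of $\gamma_S$ proceeds by picking an orientation-preserving diffeomorphism $S \cong \mathbb{C}P^1$ (homotopically unique by Smale) and implanting $\gamma_{\mathbb{C}P^1}$, changing the orientation of $S$ amounts to precomposing that identification with an orientation-reversing diffeomorphism of $\mathbb{C}P^1$. Equivalently, if $\bar L \to \overline{\mathbb{C}P^1}$ denotes the conjugate bundle, then the neighborhood of $-S$ is identified with a neighborhood of the zero section in $\bar L$, and one must compare the loop of diffeomorphisms obtained this way with $\gamma_{\mathbb{C}P^1}$. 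So the whole statement reduces to a claim about the two orientations of the model: complex conjugation on $\mathbb{C}P^1$ (together with the induced bundle map) carries $\gamma_{\mathbb{C}P^1}$ to $\gamma_{\mathbb{C}P^1}^{-1}$.

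The key step is to track the effect of conjugation on the $S^1$-action defining $\gamma$. The loop $\gamma$ of isometries of $\mathbb{R}P^3 = S(L)$ is the image of the standard generator of $\pi_1 U(2)$ under $U(2) \to SO(4)/\pm I$, where the generator is detected by $\det : \pi_1 U(2) \xrightarrow{\cong} \mathbb{Z}$. Conjugating the fibers of $L$ sends the fiberwise $S^1$-action $\theta \mapsto e^{i\theta}$ to $\theta \mapsto e^{-i\theta}$, hence sends this generator to its inverse; equivalently it conjugates $U(2)$ by an element of $O(4)\setminus SO(4)$ reversing the orientation of the $\pi_1 U(1) = \mathbb{Z}$ factor. (The base conjugation of $\mathbb{C}P^1$ acts on the total space compatibly with the fiberwise one and does not affect this sign, being an isometry of $\mathbb{R}P^3$ isotopic to one in $\mathrm{Isom}_0$.) Therefore the conjugated model loop represents $\gamma^{-1}$ in $\pi_1 \mathrm{Isom}_0(\mathbb{R}P^3)$. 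Now one must check that this inversion is compatible with the cut-off procedure: $\gamma^2$ has a homotopically unique nullhomotopy (since $\pi_2 \mathrm{Isom}_0(\mathbb{R}P^3) = 0$), and conjugation carries this nullhomotopy of $\gamma^2$ to the (again unique) nullhomotopy of $\gamma^{-2} = (\gamma^2)^{-1}$; implanting via the homotopy is manifestly equivariant under the conjugating diffeomorphism of $D(L)$, so the resulting $\gamma_{\overline{\mathbb{C}P^1}} \in \pi_1 \mathrm{Diff}_0(D(\bar L))$ equals $\gamma_{\mathbb{C}P^1}^{-1}$ under the identification $D(\bar L) \cong D(L)$.

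Finally, transporting back to $X$: the identification of $\nu(-S)$ with a neighborhood of the zero section in $\bar L$ differs from the one for $\nu(S)$ precisely by this conjugating diffeomorphism, so implanting $\gamma_{\overline{\mathbb{C}P^1}} = \gamma_{\mathbb{C}P^1}^{-1}$ yields $\gamma_{-S} = \gamma_S^{-1} = -\gamma_S$ in $\pi_1 \mathrm{Diff}_0(X)$, the group being abelian. The main obstacle is the bookkeeping in the middle step — being careful that the base conjugation on $\mathbb{C}P^1$ contributes no extra sign and that the cut-off is genuinely equivariant — but this is exactly parallel to the orientation-reversal arguments already carried out for $\tau_L$ (equation (\ref{symmetry1})) and for $\mathcal{O}_L$ (Lemma \ref{lemma:symmetriesO}), so no new technical input is needed.
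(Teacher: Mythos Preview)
Your argument is correct and follows the natural line of reasoning. The paper does not actually give a proof of this lemma---it is introduced with the phrase ``It is easy to see that reversing the orientation of $S$ inverts the generalised Dehn twist'' and then stated without further justification---so you have supplied precisely the details the author left implicit.

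One small point of clarification on your parenthetical remark about the base conjugation ``not contributing an extra sign'': the cleanest way to see this is that the bundle map $c : D(L) \to D(L)$ covering complex conjugation on $\mathbb{C}P^1$ is complex anti-linear on the fibers, hence intertwines the fiberwise $S^1$--action with its inverse ($c \circ e^{i\theta} = e^{-i\theta}\circ c$). Since $c$ is orientation-reversing on both base and fiber, it is orientation-preserving on the total $4$-manifold, and its restriction to the boundary $\mathbb{R}P^3$ lies in the connected group $\mathrm{Isom}_0(\mathbb{R}P^3) = SO(4)/\pm I$. Thus conjugation by $c$ sends the squared $S^1$--loop to its reverse, and by uniqueness of the null-homotopy (from $\pi_2 \mathrm{Isom}_0(\mathbb{R}P^3)=0$) the cut-off construction is carried to the one producing $\gamma_{\mathbb{C}P^1}^{-1}$. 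This is exactly what you wrote, just with the bookkeeping you flagged made explicit.
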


Let $p_\ast : \pi_1 \mathrm{Diff}_0 (X) \to \pi_1 \mathcal{S}_0 (X, \omega )$ denote the map induced by the projection map in the fibration (\ref{fibration}). By Lemma \ref{lemma:naturality}(2) we have that the two elements of $\pi_1 \mathcal{S}_0 (X, \omega )$ given by\footnote{In our convention for concatenating loops or paths, $\gamma_2 \cdot \gamma_1$ is to be read from right to left, i.e. first travel along $\gamma_1$, then $\gamma_2$.} 
\[
\mathcal{O}_{-L}^{-1}\cdot \mathcal{O}_L \quad ,\quad \mathcal{O}_L\cdot \mathcal{O}_{-L}^{-1}
\]
have trivial image in $\pi_0 \mathrm{Symp}_0 (X, \omega )$, and hence are contained the image of $p_\ast$. We have the following


\begin{theorem}\label{theorem:generaliseddehntwist}
Let $(X, \omega )$ be a symplectic $4$-manifold, and $L$ an embedded and oriented Lagrangian sphere in $(X, \omega )$. 
Then the two canonical lifts commute:
\[
\mathcal{O}_{L}\cdot \mathcal{O}_{-L } = \mathcal{O}_{-L}\cdot \mathcal{O}_L \in \pi_1 \mathcal{S}_0 (X, \omega ).
\]
Furthermore, we have
\[
p_\ast ( \gamma_L ) = \mathcal{O}_{-L}^{-1}\cdot \mathcal{O}_L =  \mathcal{O}_L\cdot \mathcal{O}_{-L}^{-1}  .
\]
\end{theorem}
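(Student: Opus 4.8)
The plan is to reduce both assertions to the model case $(X,\omega) = (D_\varepsilon(T^\ast S^2), du\wedge dv)$, $L = S^2$, by naturality (Lemma \ref{lemma:naturality}(1)) and the fact that $\gamma_S$ is likewise constructed by implanting a model loop $\gamma_{\mathbb{C}P^1}$. In the model, the key is to identify $D_\varepsilon(T^\ast S^2)$ (with a slight rescaling) with a disk bundle inside $\mathscr{O}_{\mathbb{C}P^1}(-2)$: indeed $T^\ast S^2$ carries a complex structure making it biholomorphic to the total space of $\mathscr{O}_{\mathbb{C}P^1}(-2)$ (the resolution of the $A_1$ singularity), and under this identification the $S^1$--fiber action on $\mathscr{O}_{\mathbb{C}P^1}(-2)$ corresponds, \emph{away from the zero section}, to the normalized geodesic flow $\sigma$ — the very $S^1$--action whose time-$2\pi$ cutoff is $\tau^2$. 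The first step is therefore to make this correspondence precise and to check that the deformation $\omega^s$ of (\ref{modeldeformation}) makes the zero section symplectic with the \emph{same} orientation as $\mathbb{C}P^1$; and that the Hamiltonian $S^1$--action $\sigma^s$ used to build $\mathcal{O}$ extends over $S^2$ in a way matching the fiberwise $S^1$--action on the line bundle.

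\textbf{Commuting.} The commutativity $\mathcal{O}_L\cdot\mathcal{O}_{-L} = \mathcal{O}_{-L}\cdot\mathcal{O}_L$ I would obtain by exhibiting a single two-parameter family realizing a homotopy between the two concatenations. In the model, $\mathcal{O}_{-L}$ is constructed using the orientation-reversed identification $L\cong S^2$, i.e. by conjugating the construction of $\mathcal{O}$ by the antipodal symplectomorphism $a(u,v)=(-u,-v)$ (this was already used in the proof of Lemma \ref{lemma:symmetriesO}). Since $a$ is $SO(3)$--equivariant and commutes with the geodesic flow and with the averaged form $\beta$, the flows $\phi^s_t$ and $a^\ast\phi^s_t = \phi^s_t\circ a$ commute as paths of diffeomorphisms on the complement of the zero section; together with the fact that $\phi^s_{2\pi}$ extends over $S^2$ (so the two cut-off loops are genuinely based loops of diffeomorphisms of $D_\varepsilon(T^\ast S^2)$ that commute since they are supported in the same $SO(3)$--invariant, commuting-Hamiltonians situation), this produces the desired homotopy by the standard argument that commuting based loops in a topological group yield commuting classes in $\pi_1$. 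One must be a little careful since $\phi^s_t$ for $t\in(0,2\pi)$ does not extend over $S^2$; but the \emph{loop} $\mathcal{O}_{\pm L}$ lives in $\pi_1\mathcal{S}_0$, and in that space the relevant based loops of forms do commute because each is pulled back along commuting paths in $\mathrm{Diff}(X)$ that are symplectic at the endpoint.

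\textbf{Identification with $p_\ast(\gamma_L)$.} For the second assertion, note that $\mathcal{O}_{-L}^{-1}\cdot\mathcal{O}_L$ has trivial image in $\pi_0\mathrm{Symp}_0$ (Lemma \ref{lemma:naturality}(2)), hence lies in $\mathrm{im}(p_\ast)$, so it is $p_\ast$ of some loop of diffeomorphisms; the content is to show that loop is $\gamma_L$. The cleanest route is again the model identification: the squared geodesic flow, cut off over $S^2$ via the canonical null-homotopy of its square, is by definition $\gamma_{\mathbb{C}P^1}$ under the biholomorphism $T^\ast S^2\cong\mathscr{O}_{\mathbb{C}P^1}(-2)$ — so one wants to verify that the \emph{combination} of the two paths defining $\mathcal{O}_L$ (first run the $s_0$--flow in $t$, then deform $s\to 0$) and of the two defining $\mathcal{O}_{-L}^{-1}$, when composed, yields exactly the family $p\circ\gamma_{\mathbb{C}P^1}$ up to homotopy rel basepoint. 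Concretely: both $\mathcal{O}_L$ and $\mathcal{O}_{-L}$ have the same image $\tau_L^2$ under $\delta$ and the same (trivial) image in $\pi_1\mathcal{S}_L$ by Lemma \ref{lemma:kerO} (with respect to the two orientations), and their ratio, being in $\mathrm{im}(p_\ast)$, is detected by the corresponding loop of diffeomorphisms; running through the model, the $s$--deformation pieces of $\mathcal{O}_L$ and $\mathcal{O}_{-L}^{-1}$ cancel (they are inverse paths of forms, pulled back by the genuinely-extending diffeomorphisms $\phi^s_{2\pi}$ and $\phi^s_{-2\pi}=(\phi^s_{2\pi})^{-1}$), leaving the loop of forms pulled back by $\phi^{s_0}_{2\pi}\circ(\phi^{s_0}_{-2\pi})^{-1} = (\phi^{s_0}_{2\pi})^2$, whose underlying loop of diffeomorphisms is precisely the cutoff of the squared $S^1$--action, i.e. $\gamma_{\mathbb{C}P^1}=\gamma_L$. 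The identity $\mathcal{O}_{-L}^{-1}\cdot\mathcal{O}_L = \mathcal{O}_L\cdot\mathcal{O}_{-L}^{-1}$ then follows from the commuting statement.

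\textbf{Main obstacle.} I expect the crux to be the precise compatibility in the model: showing that, under the biholomorphism $T^\ast S^2\cong\mathscr{O}_{\mathbb{C}P^1}(-2)$, the null-homotopy of $\gamma^2$ used in \cite{JLIN2022} to cut off the squared fiberwise $S^1$--action matches the null-homotopy implicit in our $s\to 0$ deformation (the one built from the $\omega^s$--Hamiltonian $S^1$--actions $\sigma^s$ extending over the now-symplectic sphere). Both null-homotopies are of $\gamma^2$ in $\pi_1$ of a space with $\pi_2$ trivial, hence homotopically unique — so this is really a matter of checking that the relevant isometry/unitary-group computation ($\gamma$ = image of the generator of $\pi_1 U(2)$ in $\pi_1(SO(4)/\pm I)$, of order $2$) is reproduced by the $S^1$--action arising from $\|\mu^s\|$, and that orientations and the factor of $2$ (the sphere acquiring area $4\pi s$, the $(-2)$ self-intersection) all line up. Once that bookkeeping is done, the homotopy-theoretic assembly is routine.
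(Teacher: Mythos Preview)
Your proposal contains a genuine error at its foundation. You claim that under the identification $T^\ast S^2 \cong \mathscr{O}_{\mathbb{C}P^1}(-2)$, the fiberwise $S^1$--action on the line bundle ``corresponds, away from the zero section, to the normalized geodesic flow $\sigma$''. This is false: the fiberwise rotation is $(u,v)\mapsto(\cos t\cdot u - \sin t\cdot u\times v,\, v)$, which fixes $v$ and rotates $u$ in the plane $v^\perp$; the geodesic flow instead moves both $u$ and $v$ along great circles (its time--$\pi$ map is $(u,v)\mapsto(-u,-v)$, not $(u,v)\mapsto(-u,v)$). These are genuinely different $S^1$--actions on the sphere bundle $\mathbb{R}P^3$, and in particular the cutoff of the squared geodesic flow is \emph{not} $\gamma_{\mathbb{C}P^1}$. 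Consequently, your ``leftover loop'' argument --- that after the $s$--pieces cancel one is left with $(\phi^{s_0}_{2\pi})^2$, and that this is $\gamma_L$ --- does not work (and in any case $(\phi^{s_0}_{2\pi})^2$ is a single diffeomorphism, not a loop).

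The missing idea is precisely how to bridge the Hamiltonian $S^1$--actions $\sigma^s$ (which build $\mathcal{O}_{\pm L}$) with the fiber rotation (which builds $\gamma_L$). The paper does this by switching to the simpler deformation $\omega^s=\omega+s\,\pi^\ast\beta_{S^2}$, valid for \emph{all} $s\in\mathbb{R}$, and noting that reversing the orientation of $L$ corresponds to $s\mapsto -s$; the key computation is then the asymptotic limit $\sigma^s\to\sigma^\infty$ as $s\to\pm\infty$, where $\sigma^\infty$ \emph{is} the fiber rotation. With this in hand, the paper realises both $\mathcal{O}_-^{-1}\cdot\mathcal{O}_+$ and $\mathcal{O}_+\cdot\mathcal{O}_-^{-1}$ as the boundary of a single rectangle $[-s_0,s_0]\times[0,2\pi]$ in the $(s,t)$--plane (with different basepoints), and the asymptotic analysis identifies this boundary loop restricted to the sphere bundle with $\gamma^2$; the null-homotopy is provided by the annular region. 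Commutation and the identification with $p_\ast(\gamma_L)$ thus fall out simultaneously from this one two-parameter family --- there is no need for a separate commuting argument of the sort you sketch.
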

\begin{proof}
As usual, we shall only need to consider the model case where $(X, \omega ) = (D_\varepsilon ( T^\ast S^2 ) , du \wedge dv )$, and $L = S^2$ equipped with the standard orientation. The proof proceeds in several steps.\\

(i) We denote by $\mathcal{O}_{+} =\mathcal{O} \in \pi_1 \mathcal{S}_0 (D_\varepsilon (T^\ast S^2 ) , \omega)$ the model canonical lift associated the standard orientation of the zero section $S^2$. 
We will now establish an alternative---more convenient for our purposes---description of the element $\mathcal{O}_{+}$.\\


Rather than considering the deformation of $\omega$ given by (\ref{modeldeformation}) we shall now use instead the following one (for which we use the same notation, for convenience):
\begin{align}
\omega^s := \omega + s \pi^\ast \beta_{S^2} \quad , \quad s \geq 0 \label{Omega}
\end{align}
where $\pi : T^\ast S^2 \to S^2$ is the bundle projection, and $\beta_{S^2}$ the unique $SO(3)$--invariant symplectic form on $S^2$ of area $4 \pi$. (Explicitly---in terms of the $3$-dimensional cross product---we have $(\beta_{S^2})_v (\dot{v}_1, \dot{v}_2) = \langle v , \dot{v}_1 \times \dot{v}_2 \rangle$). 
The deformation (\ref{Omega}) is considered in \cite[Proposition 1.1]{seidel}. The $2$-forms in (\ref{Omega}) are $SO(3)$--invariant symplectic forms on $T^\ast S^2$ \textit{for all} values of $s$. 

We will now repeat the procedure leading up to Definition \ref{definition:modelO} but in the present context, using (\ref{Omega}). As before, $\sigma^s$ will denote the $S^1$--action associated to the Hamiltonian $\| \mu^s \|$, where $\mu^s$ is the moment map for the $SO(3)$--action on $(T^\ast S^2 , \omega^s )$ which now has the simpler expression $\mu^s= - u \times v  + s v $. 
Let $\phi^{s}_t$ denote the $\omega^s$--Hamiltonian flow associated to the cut-off Hamiltonian $r^{s} ( \| \mu^s \| ) = r^s ( \sqrt{s^2 + \| u \|^2 })$, where $r^{s}$ is a family of smooth non-negative functions on $\mathbb{R}_{\geq 0}$ with $r^s (x) = x$ for $x$ in a neighborhood of $[ 0 , \sqrt{s^2 + (\varepsilon/2)^2}]$ and $r^s (x) = 0$ for $x$ in a neighborhood of $[\sqrt{s^2 + \varepsilon^2} , \infty )$. 
(The maps $\sigma^{s}_t$ and $\phi_{t}^s$ have similar properties as before: they are everywhere defined on $T^\ast S^2$ when $s\neq 0$; and undefined along the zero section when $s = 0$, etc). 

With this in place, then we can define an element $\mathcal{O}_+ = \mathcal{O} \in \pi_1 \mathcal{S}_0 (D_\varepsilon ( T^\ast S^2 ) , \omega )$ exactly as in Definition \ref{definition:modelO} but with $\phi_{t}^s$ defined instead as in the previous paragraph and with $s_0 >0 $ allowed to take any given value (because now $\mu^s$ is nowhere vanishing for every $s \neq 0$). One can easy check that this construction coincides with the one from Definition \ref{definition:modelO}. (Indeed, the obvious interpolation between (\ref{Omega}) and (\ref{modeldeformation}) is through symplectic forms on $D_\varepsilon (T^\ast S^2 )$ with constant cohomology class for each fixed $s$, provided $s$ is restricted to $0 \leq s \leq s_0$ with $s_0 \ll 1$; and one can then use Moser's Theorem to establish the equivalence.)

For the remainder of the proof we shall only consider the constructions based on (\ref{Omega}), and no longer on (\ref{modeldeformation}). (We note that the construction of the canonical lift based on (\ref{modeldeformation}) was previously needed in order to prove Lemma \ref{lemma:kerO}, where it was crucial that the forms in (\ref{modeldeformation}) agree with $\omega$ near the boundary of $D_\varepsilon (T^\ast S^2 )$. The latter is a property that (\ref{Omega}) does not enjoy but that we shall not use in this proof.)\\

(ii) We now describe $\mathcal{O}_- \in \pi_1 \mathcal{S}_0 (D_\varepsilon (T^\ast S^2 ) , \omega )$, the canonical lift associated to the opposite orientation of the zero section.\\

We claim that the homotopy-class $\mathcal{O}_{\pm }$ is represented by the concatenation of the following two paths: 
\begin{enumerate}
\item the path $(\varphi_{t}^{\pm s_0})^\ast \omega $ running from $t = 0$ to $t = 2 \pi$ (for any given $s_0 >0$)
\item the path $(\varphi_{2\pi}^{s} )^\ast \omega$ running from $s = \pm s_0 $ to $s = 0$.
\end{enumerate}
For $\mathcal{O}_+$ this was just a repetition of the definition. For $\mathcal{O}_-$ this follows from the fact that reversing the orientation of $S^2$ amounts to replacing $\beta_{S^2}$ with $- \beta_{S^2}$; and in turn, the latter amounts to replacing $s$ by $-s$.\\


(iii) The forms on $T^\ast S^2$ given by (\ref{Omega}) are symplectic \textit{for all} values of $s$, and we shall now analyse the behaviour of the corresponding $S^1$--actions $\sigma^s$ as $s \to \infty$. (Analogous observations will apply to the case $s \to -\infty$).\\

Noting that $\| \mu^s \| = \sqrt{ s^2 + \| u \|^2 }$, a calculation shows that the vector field $X^s$ which integrates to $\sigma^s$ (defined by $\iota_{X^s} \omega^s = - d \| \mu^s \|$) is given by
\begin{align*}
    X^s (u,v) = \frac{\big( \, - \| u\|^2 v - s \cdot u \times v \,\, , \, \, u \, \big)  }{\sqrt{s^2 + \| u \|^2}}.
\end{align*}

Observe the following:
\begin{itemize}
\item As $s \to \infty$, $X^s$ converges with all derivatives on compact subsets of $T^\ast S^2$ to the vector field $X^\infty$ given by
\begin{align*}
X^\infty (u,v) = \big( \, - u \times v \,\, , \, \, 0 \, \big)
\end{align*}
\item The vector field $X^\infty$ integrates to an $S^1$--action on $T^\ast S^2$ given by
\begin{align*}
\sigma^\infty (u,v) = \big( \, \cos t \cdot u - \sin t \cdot u \times v \,\, , \, \, v \, \big) .
\end{align*}
\end{itemize}

Of course, the $S^1$--action $\sigma^\infty$ on $T^\ast S^2$ coincides under the identification $T^\ast S^2 \cong \mathscr{O}_{\mathbb{C}P^1}(-2)$ as a complex bundle with the fiberwise complex multiplication.\\

(iv) We now conclude the proof.\\

On the smaller region $D_{\varepsilon/2} (T^\ast S^2) $ we have $\phi_{t}^s = \sigma_{t}^s$. Thus, by (iii) on this region we have the following convergence (with all derivatives on compact sets): as $ s\to \pm \infty$
\[
\phi_{t}^s \to (\sigma_{t}^{\infty})^{\pm 1} .
\]
Therefore, we have that:
\begin{itemize}
\item The loop $\gamma^2$ 
is homotopic to the loop of diffeomorphisms of $S_{\varepsilon/2}(T^\ast S^2 ) = \mathbb{R}P^3$ obtained by the action of $\phi_{t(a)}^{s(a)}$ where $a \mapsto (s(a),t(a))$ is a loop parametrising the boundary of the square $[-s_0 , s_0 ]\times [0 , 2\pi ]$ counter-clockwise and based at $(s(0),t(0) ) = (s_0 , 0 )$. (Note that the maps $\phi_{t}^s$ preserve the hypersurfaces $\|u \| = \mathrm{const.}$). 
\item The correponding action of $\phi_{t}^s$ on $D_\varepsilon (T^\ast S^2 ) \setminus D_{\varepsilon /2}(T^\ast S^2) = \mathbb{R}P^3 \times [\varepsilon/2 , \varepsilon ]$ is describing a null-homotopy in $\mathrm{Diff}_0 ( \mathbb{R}P^3 )$ of the loop $\gamma^2$. (Since $\mathrm{Diff}_0 (\mathbb{R}P^3) \simeq \mathrm{Isom}_0 (\mathbb{R}P^3)$ \cite{bk3} then we need not worry about whether this null-homotopy lies inside $\mathrm{Isom}_0 (\mathbb{R}P^3 )$). 
\end{itemize}
By construction, $\mathcal{O}_{-}^{-1}\cdot \mathcal{O}_+$ is represented by the loop $(\phi_{t(a)}^{s(a)})^\ast \omega$ and thus $p_\ast ( \gamma_{S^2} ) = \mathcal{O}_{-}^{-1}\cdot \mathcal{O}_+$. In turn, $\mathcal{O}_+ \cdot \mathcal{O}_{-}^{-1}$ is also represented by $(\phi_{t(a)}^{s(a)})^\ast \omega$ except that now the loop $(s(a),t(a))$ starts at $(s(0),t(0)) = (0,2\pi )$ and hence $\phi_{s(a)}^{t(a)}$ is based at $\phi_{2\pi}^0$. This is no problem: $\phi_{2\pi}^0$ is a symplectomorphism for $\omega$, so we can replace $\phi_{t(a)}^{s(a)}$ by $(\phi_{2\pi}^{0})^{-1} \phi_{t(a)}^{s(a)}$. This shows $p_\ast ( \gamma_{S^2} ) = \mathcal{O}_+ \cdot \mathcal{O}_{-}^{-1}$. The proof is now complete.
\end{proof}

$ $

Given a symplectic $4$-manifold $(X, \omega )$ and an oriented embedded Lagrangian sphere $L$ we have the following diagram, which summarises some properties of the squared Dehn--Seidel twist $(\tau_{L}^\omega )^2$, its canonical lifts $\mathcal{O}_{\pm L}^{\omega}$, and the generalised Dehn twist $\gamma_L$ on the smoothly embedded $(-2)$--sphere $L$ (cf. Lemma \ref{lemma:naturality}, Theorem \ref{theorem:generaliseddehntwist}, Proposition \ref{proposition:calculation}):

\[
\begin{tikzcd}
\pi_1 \mathrm{Diff} (X) \arrow{r}{p_\ast}& \pi_1 \mathcal{S}_0 (X,\omega ) \arrow{r}{\delta}& \pi_0 \mathrm{Symp}_0 (X,\omega ) \arrow{r} & \pi_0 \mathrm{Diff}(X)\\
 & \mathcal{O}^{\omega}_{+L} \arrow[equal, "/" marking]{dd} \arrow[|->]{rd}&  & \\
 & & (\tau_{L}^{\omega})^2 \arrow[|->]{r} & \mathrm{Id} \\
 & \mathcal{O}^{\omega}_{-L}\arrow[|->]{ru}&  & \\
 \gamma_L \arrow[|->]{r} & (\mathcal{O}^{\omega}_{-L})^{-1}\cdot \mathcal{O}^{\omega}_{+L} = \mathcal{O}^{\omega}_{+L}\cdot (\mathcal{O}_{-L}^{\omega})^{-1} \arrow[|->]{r} & \mathrm{Id} 
\end{tikzcd}.
\]

\section{Kronheimer's invariant}\label{section:kronheimer}

In this section, we discuss Kronheimer's invariant for families of symplectic forms on $4$-manifolds \cite{kronheimer}, which constitutes the main technical tool of this article. 

Some aspects described here have appeared elsewhere in the literature \cite{kronheimer,monocont,li-liu,Smirnov}. We emphasize here a point of view on Kronheimer's invariant which regards it as a sort of `linking number' with certain codimension $2$ `hypersurfaces' of `infinite dimensions'; we will later encounter a finite-dimensional counterpart of this when we study the abelianisation map $P \to P^{\mathrm{ab}}$ on the pure Braid group. 

We will also discuss Kronheimer's invariant in the case of compact symplectic $4$-manifolds with contact boundary. Via an \textit{excision property} (Proposition \ref{proposition:excision}), the invariants for closed and compact manifolds with boundary can be related to each other, which will be an essential tool for performing calculations in later sections.

\subsection{Kronheimer's invariant for loops of symplectic forms}

Throughout the following discussion, we fix the following data:
\begin{itemize}
\item A compact, connected, symplectic $4$-manifold $(X,\omega)$ with convex boundary on an closed contact $3$-manifold $(Y, \xi )$ (we allow $Y = \emptyset$). 

\item a cohomology class $e \in H^2 (X, Y , \mathbb{Z} )$ such that
\begin{align}
 \quad [\omega]\cdot e \leq 0 \quad \text{and} \quad e\cdot e - K_\omega \cdot e = -2 . \label{condition1}
\end{align}
\end{itemize}
(Here $K_\omega = - c_1 (TX , \omega)  \in H^2 (X ,\mathbb{Z} )$ denotes the \textit{canonical class} of the symplectic $4$-manifold $(X, \omega )$.)\\

In his beautiful article \cite{kronheimer}, Kronheimer constructed certain group homomorphism using $2$-parametric Seiberg--Witten gauge theory (together with higher-parametric analogues, which we won't consider in this article), which we refer to as \textit{Kronheimer's invariant} in the class $e$:
\begin{align}
Q_e : (\pi_1 \mathcal{S}_{0}(X, \omega ))^{\mathrm{ab}} \to \mathbb{Z} .\label{kronheimer}
\end{align}
(To fix the overall sign of $Q_e$, an additional choice of a \textit{homological orientation} of $X$ is required; see below for details). We now recall the construction of (\ref{kronheimer}), in a somewhat reformulated and generalised version. 

\subsubsection{The moduli spaces when $X$ is closed}

We first describe the relevant moduli spaces that are used to define (\ref{kronheimer}). We first suppose that $(X,\omega )$ has empty boundary, and discuss the boundary case later. We shall assume the standard facts on spin geometry and the Seiberg--Witten equations on smooth and symplectic $4$-manifolds (see \cite{morgan,KM,Taubes94} for further details).\\

Recall that the cohomology group $H^{2}(X, \mathbb{Z} )$ acts freely and transitively on the set of isomorphism classes of spin-c structures on $X$; we denote the action of a class $c \in H^2 (X, \mathbb{Z} )$ on a spin-c structure $\mathfrak{s}$ by $\mathfrak{s} + c$. Attached to the symplectic form $\omega$ on $X$ is a canonical isomorphism class of spin-c structure on $X$ denoted $\mathfrak{s}_\omega$. Thus, one may identify $H^2 (X,\mathbb{Z} )$ canonically with the set of isomorphism classes of spin-c structures, using the correspondence $c \mapsto \mathfrak{s}_\omega + c$. 
The first Chern class of the determinant line bundle of $\mathfrak{s}_\omega +c$ is given by $c_1 ( \mathfrak{s}_\omega + c ) = c_1 ( \mathfrak{s}_\omega ) + 2c = -K_\omega + 2c$.

Let $\mathscr{P} $ denote the \textit{space of perturbations} on $X$, consisting of pairs $(g , \eta )$ where $g$ is a Riemannian metric on $X$ and $\eta \in \Omega^{2}(X, i \mathbb{R} )$ is an imaginary-valued $2$-form which is self-dual with respect to $g$. In what follows, it is necessary to make $\mathscr{P}$ into a Banach manifold. The details needed to achieve this are fairly standard and will be omitted; see \cite{morgan}. (For example, it suffices to consider metrics $g$ of class $C^l$ and perturbations $\eta$ of class $L^{2}_k$, with $k = 3$, $l = 5$.)

Let $\mathscr{M}_e \xrightarrow{\pi_e} \mathscr{P}$ denote the \textit{universal Seiberg--Witten moduli space} in the spin-c structure $\mathfrak{s}_\omega + e$. Namely, $\mathscr{M}_e$ consist of triples $([(A, \Phi )], g , \eta )$ where $[(A, \Phi )]$ is the gauge-equivalence class of a pair consisting of a spin-c connection $A$ on the spinor bundle $S_e = S_{e}^+ \oplus S_{e}^- \to X$ associated to the spin-c structure $\mathfrak{s}_\omega + e$ and a section $\Phi$ of $S_{e}^{+}$ which solves the \textit{Seiberg--Witten equation} for the perturbation $(g, \eta )$:
\begin{align}
F_{\widehat{A}}^{+} - \sigma(\Phi ) & =  \eta \label{SW1}\\
D_{A} \Phi & = 0 \label{SW2}.
\end{align}
(Above, $\widehat{A}$ denotes the induced connection on the determinant line bundle $\Lambda^2 S_{e}^+ $). Implicit in the notation used in (\ref{SW1}-\ref{SW2}) is that the quadratic map $\sigma$, the Dirac operator $D_A$, and the self-dual curvature $F_{\widehat{A}}^{+}$ are defined using the metric $g$. Indeed, as a unitary vector bundle, the spinor bundle $S_e = S_{e}^+ \oplus S_{e}^-$ can be regarded as `fixed', whereas the Clifford bundle structure on $S_e$ varies according to the particular metric $g$.

 Let $\mathscr{M}_{e}^\ast$ be the \textit{irreducible locus in the universal moduli space}, which is defined to be the subset  of $\mathscr{M}_{e}$ where $\Phi$ does not vanish identically, and let $\mathscr{M}^{\mathrm{red}}_e = \mathscr{M}_e \setminus \mathscr{M}_{e}^\ast$ be the \textit{reducible locus} where $\Phi$ vanishes identically. We consider the \textit{discriminant locus} $\Sigma_e := \pi_e ( \mathscr{M}_e )$ in $\mathscr{P}$.  The \textit{reducible locus of the discriminant} is denoted $\mathscr{P}_{e}^{\mathrm{red}} := \pi_e (\mathscr{M}_{e}^{\mathrm{red}}) \subset \Sigma_e$, and its complement in the perturbation space $\mathscr{P}$ is denoted $\mathscr{P}_{e}^\ast = \mathscr{P}\setminus \mathscr{P}_{e}^{\mathrm{red}}$. The locus $\mathscr{P}_{e}^{\mathrm{red}}$ can be explicitly described:
\begin{align}
\mathscr{P}_{e}^{\mathrm{red}} = \{ (g , \eta ) \, | \,  \Pi_{g} (\eta ) = - 2 \pi i \Pi_{g}^{+} ( c_1 ( \mathfrak{s}_\omega + e  ) )\} \label{Pred}
\end{align}
where $\Pi_g : \Omega^{+}_g (X) \to \mathscr{H}_{g}^{+} (X)$ and $\Pi_{g}^{+} : H^{2}(X, \mathbb{R} ) = \mathscr{H}_{g} (X ) \to \mathscr{H}_{g}^{+}(X )$ denote the $L^2$--orthogonal projections onto the space of $g$--self-dual harmonic $2$-forms on $X$. Thus $\mathscr{P}_{e}^{\mathrm{red}}$ is a codimension $b^{+}(X)$ Banach submanifold of $\mathscr{P}$. 

By standard results (\cite{morgan}) the space $\mathscr{M}_{e}^\ast $ is a Banach manifold and the map $\pi_e : \mathscr{M}_{e}^\ast \to \mathscr{P}^\ast$ is proper and Fredholm, with index given by 
\begin{align*}
\mathrm{ind} (\pi_e ) = e\cdot e - K \cdot e = -2 . \label{ind}
\end{align*}
by (\ref{condition1}). 

The Quillen determinant real line bundle of $\pi_e$, denoted $\mathrm{det}(\pi_e ) \to \mathscr{M}_{e}^\ast$ (cf. \cite[\S 20.2]{KM}), is orientable, and orientations of $\mathrm{det}(\pi_e )$ are in canonical correspondence with \textit{homological orientations} of $X$: orientations of the vector space
\[
 H^{1}(X, \mathbb{R} ) \oplus H^{+}(X, \mathbb{R} ).
\]
(Here $H^{+}(X, \mathbb{R} )$ denotes a maximal positive subspace of the intersection form on $H^{2} (X, \mathbb{R} )$).\\ 

\subsubsection{The moduli spaces when $X$ has boundary}\label{subsubsection:moduliboundary}

We now describe how to adapt the previous discussion to the case when $(X, \omega )$ has non-empty convex boundary $(Y, \xi )$. The possibility of extending Kronheimer's homomorphism (\ref{kronheimer}) to this setting was suggested in \cite[\S 5]{kronheimer}, using techniques developed by Kronheimer--Mrowka in \cite{monocont}. In what follows, we describe the construction of the relevant universal Seiberg--Witten moduli space $\mathscr{M}_e \to \mathscr{P}$. (This is similar to the moduli space appearing in \cite[Theorem 2.4]{monocont}, except that we also need to allow for variations of the Riemannian metric).  \\

The canonical spin-c structure $\mathfrak{s}_\omega$ on $X$ restricts along $Y = \partial X$ to the canonical spin-c structure $\mathfrak{s}_\xi$ on $Y$ induced by the contact structure $\xi$. By a spin-c structure on $X$ \textit{relative} to $(Y , \mathfrak{s}_\xi)$ (plainly, a `relative spin-c structure') we shall mean a spin-c structure $\mathfrak{s}$ on $X$ equipped with an isomorphism $\mathfrak{s}|_Y \xrightarrow{\cong} \mathfrak{s}_\xi$. The set of isomorphism classes of relative spin-c structures now has a free and transitive action of $H^2 (X, Y , \mathbb{Z} )$. 

Fix a Liouville field $V$ defined near the boundary of $(X,\omega )$, so as to identify a collar neighborhood of $Y = \partial X \subset X$ symplectically with $ ( (0,1]\times Y , d ( t \alpha ) )$ where $\alpha = (\iota_V \omega )|_Y$. Consider then the \textit{symplectisation completion} of $X$:
 \[
 (\widehat{X} , \widehat{\omega} ) := (X,\omega ) \cup ([1,\infty) \times Y , d(t\alpha ) ),\]
 where the gluing of the two pieces uses the collar neighborhood identification, and the Liouville vector field $V$ extends over the non-compact end $(1,\infty)\times Y \subset \widehat{X}$ as $V = t \cdot \partial/\partial t $. We fix a Riemannian metric $g_0$ on the non-compact end which is compatible with the symplectic form $\widehat{\omega}$ and is conical, i.e. $\mathscr{L}_V g_0 = g_0$. A given relative spin-c structure $\mathfrak{s}$ on $X$ has a canonical extension (by translation on the end) to a spin-c structure $\widehat{\mathfrak{s}}$ on $\widehat{X}$.

We make a quick digression now to recall a fundamental aspect of the Seiberg--Witten equation for almost-Kähler pair $(\omega , g )$ on a $4$-manifold $M$; namely, the fact that for the canonical spin-c structure $\mathfrak{s}_\omega$ there is a \textit{canonical configuration} $(A_0, \Phi_0 )$. Here, the spinor part $\Phi_0$ is chosen to be a unit length trivialisation of the $-2i$ eigenspace of the Clifford multiplication $\rho (\omega ) : S^+ \to S^+$ on the $S^+$ bundle for the canonical spin-c structure $\widehat{\mathfrak{s}}_\omega$, and $A_0$ can be characterised as the unique spin-c connection on that spinor bundle such that $D_{A_0}\Phi_0 = 0$ (see \cite{Taubes94} or \cite[Definition 2.2]{monocont}). 

With this in place, the perturbation space $\mathscr{P}$ for the manifold $X$ with convex boundary $(Y, \xi )$ is now defined as the space of quadruples $(g,\eta , A_0 , \Phi_0 )$ where: $g$ is a Riemannian metric on $\widehat{X}$ such that $g$ agrees with $g_0$ on the end $(1, \infty) \times Y$; $\eta$ is a $2$-form on $\widehat{X}$ which is self-dual with respect to $g$ and is suitably decaying on the end (cf. \cite[Formula (7)]{monocont}); $(A_0 , \Phi_0 )$ is a configuration on the spinor bundle $\widehat{S}_e \to \widehat{X}$ associated to the spin-c structure $\widehat{s}_\omega + e$ such that $(A_0 , \Phi_0 )$ agrees over the end $(1,\infty ) \times Y \subset X$ with the canonical configuration given by the almost-Kähler structure $(\omega , g_0 )$ on the end. (Possibly, the extension $\Phi_0$ can vanish over $\widehat{X} \setminus (1, \infty) \times Y$). The moduli space $\mathscr{M}_e$ is defined as the space of quintuples $([(A, \Phi )] , g , \eta , A_0 , \Phi_0 )$ where $[(A, \Phi )]$ is a gauge-equivalence class of a pair consisting of a spin-c connection $A$ on the spinor bundle $S_e = S_{e}^+ \oplus S_{e}^- \to \widehat{X}$ of the spin-c structure $\widehat{\mathfrak{s}}_\omega + e$ and $\Phi$ is a section of $S_{e}^+$, which solves a `deformed' version of the Seiberg--Witten equation with perturbation $(g,\eta )$:
\begin{align}
F_{\widehat{A}}^+ - \sigma (\Phi ) &=    F_{\widehat{A_0}}^+ - \sigma (\Phi_0 ) + \eta \label{SW1def}\\
D_A \Phi & = 0 . \label{SW2def}
 \end{align}
and such that over the end $(A, \Phi )$ approaches suitably fast the canonical configuration $(A_0 , \Phi_0 )$ associated to the almost-Kähler structure $(\omega , g_0 )$ on the end (cf. \cite[Formula (3)]{monocont}).

The treatment of reducibles becomes rather straightforward in this case: since $(A, \Phi )$ approaches the non-zero configuration $(A_0 , \Phi_0 )$ over the end, then the moduli space $\mathscr{M}_e$ contains only irreducible solutions. By \cite[Theorem 2.4]{monocont}, we have that the universal moduli space $\mathscr{M}_e $ and perturbation space $\mathscr{P}$ are both Banach manifolds, the projection $\pi_e : \mathscr{M}_e \to \mathscr{P}$ is proper and Fredholm, with index $\mathrm{ind}(\pi_e ) = e^2 - K \cdot e = -2$ (using (\ref{condition1})). The Quillen determinant real line bundle $\mathrm{det}(\pi_e ) \to \mathscr{M}_e $ is orientable \cite[Appendix]{monocont}; an orientation of this line bundle corresponds to a \textit{homological orientation} of $X$, which in the present setting shall mean an orientation of the vector space
\[
H^1 (X , \mathbb{R} ) \oplus H^+ (X, \mathbb{R} ) \oplus H^1 (Y, \mathbb{R} ) \oplus \Lambda (Y, \xi )
\]
where $\Lambda (Y, \xi )$ is the determinant line of the linearisation at $(A_0 , \Phi_0 )$ of the deformed Seiberg--Witten equations over $[1, \infty ) \times Y$ with Atiyah--Patodi--Singer boundary condition over $\{1\}\times Y$, see \cite[\S 3.3.4]{yo1}. (Unfortunately, it is not possible to assign to $\Lambda (Y, \xi )$ a canonical orientation, see \cite[Theorem H]{lin-ruberman-saveliev}).

\subsubsection{A `linking number' with the discriminant locus}


We fix from now on a homological orientation of $X$. We consider a smooth loop $\gamma$ in $\mathscr{P}$ avoiding the discriminant locus $\Sigma_e = \pi_e ( \mathscr{M}_e )$, $\gamma : S^1 \to \mathscr{P} \setminus \Sigma_e $ and define a suitable \textit{linking number} $\mathrm{lk}(\gamma , \Sigma_e ) \in \mathbb{Z}$ of the loop $\gamma$ with $\Sigma_e$, as follows.\\

First, we suppose that $b^{+}(X) \geq 4$ or $(X,\omega )$ has non-empty boundary. Let $D$ be the $2$-dimensional disk and $f : D \to \mathscr{P}$ a smooth mapping such that
\begin{itemize}
\item $f$ is an extension of $\gamma$: $f|_{\partial D} = \gamma$
\item $f(D) $ does not meet the codimension $b^{+}(X)$ Banach submanifold $\mathscr{P}_{e}^{\mathrm{red}} \subset \mathscr{P}$
\item $f$ is in `generic position': $f$ is transverse to the Fredholm map $\pi_e$.
\end{itemize}
Then the fiber product $\mathrm{Fib}(f,\pi_e )$ of $f$ and $\pi_e$ is a compact and zero-dimensional manifold. Our choice of homological orientation of $X$ determines an orientation of $\mathrm{Fib}(f, \pi_e )$, and the signed count of points of $\mathrm{Fib}(f, \pi_e )$ will be denoted by $f \cdot \pi_e := \# \mathrm{Fib}(F, \pi_e ) \in \mathbb{Z}$. Because $b^{+}(X) \geq 4$ if $(X,\omega )$ has empty boundary and $\mathrm{dim}D = 2$, then such an extension $f$ exists and any other such extension $f^\prime$ can be homotoped to $f$ by a smooth homotopy $F$ avoiding $\mathscr{P}_{e}^\mathrm{red}$. Such a homotopy $F$ can then be placed in `generic position' and thus by a standard cobordism argument we obtain that $f \cdot \pi_e = f^\prime \cdot \pi_e$. Hence $f \cdot \pi_e$ is independent of the chosen extension $f$, and we define the linking number of $\gamma$ with $\Sigma_e$ by
\[
\mathrm{lk}(\gamma , \Sigma_e ) := f \cdot \pi_e .\\
\]

Suppose now that $(X, \omega)$ has empty boundary and $b^{+}(X) <4$. The linking number $\mathrm{lk}(\gamma , \Sigma_e ) \in \mathbb{Z}$ can still be defined by making use of a `preferred chamber' in the space of perturbations determined by the symplectic class $[\omega ]$, as we now explain (this follows standard ideas which have appeared elsewhere; see \cite{li-liu,Smirnov,JLIN2022}). In this case, we shall further assume that the loop $\gamma (t) = (g_t , \eta_t )$ satisfies the following condition: if $\omega_t$ denotes the unique $g_t$--harmonic representative of the cohomology class $[\omega ]$, then
\begin{align}
\int_X i \eta_t \wedge \omega_t  > 2 \pi c_{1} (\mathfrak{s} + e ) \cdot [\omega ] \quad \forall t \in S^1 . \label{condition_pert}
\end{align}

Let $\mathbb{K} (X) \subset H^{2}(X, \mathbb{R} )$ be the positive cone in the intersection form: 
\[
\mathbb{K} (X ) = \{ a \in H^{2}(X,\mathbb{R} ) \, | \, a\cdot a >0 \} .
\]
It is easy to see that the positive cone has the homotopy type of a $(b^{+}(X) -1)$--dimensional sphere: $\mathbb{K}(X) \simeq S^{b^{+}(X)-1}$. 
\begin{lemma}\label{lemma:chambers}
The map $p: \mathscr{P}_{e}^\ast  \to \mathbb{K} (X) $ given by 
\[
(g, \eta ) \mapsto [ \Pi_g (i \eta ) - 2 \pi \Pi^{+}_g ( c_1 ( \mathfrak{s}_\omega + e )) ]
\]
is a fibration with contractible fibers. (In particular, $p$ is a homotopy-equivalence). 
\end{lemma}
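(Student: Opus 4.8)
The plan is to first unpack the map $p$ into a composition of simpler maps and then verify the fibration property directly by exhibiting local sections, while the contractibility of the fibers will follow from a convexity/affine-fiber argument. Write $p(g,\eta) = [\Pi_g(i\eta) - 2\pi \Pi_g^+(c_1(\fs_\omega + e))]$. Over a fixed metric $g$, the assignment $\eta \mapsto \Pi_g(i\eta)$ is a \emph{surjective linear} map from the (affine-over-linear) space of $g$-self-dual $2$-forms onto the finite-dimensional space $\mathscr{H}_g^+(X)$ of $g$-self-dual harmonic forms, and $\mathscr{P}_e^{\mathrm{red}}$ is exactly the preimage of the single point $2\pi \Pi_g^+(c_1(\fs_\omega+e))$; removing it, the image in $\mathscr{H}_g^+(X) \setminus \{0\} \cong \mathbb{K}(X) \cap \mathscr{H}_g^+(X)$-direction is what $p$ records after taking the cohomology class. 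So the first step is to factor $p$ through the space of pairs $(g,h)$ with $h \in \mathscr{H}_g^+(X)\setminus\{2\pi\Pi_g^+ c_1\}$, observing that the second factor $\eta \mapsto \Pi_g(i\eta)$ is a \emph{linear fibration} with contractible (affine) fibers since its kernel (anti-self-dual-ish complement, i.e. $\eta$ with $\Pi_g(i\eta)=0$) is a linear subspace.

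The key steps, in order: (1) fix $g$ and show $\eta \mapsto [\Pi_g(i\eta) - 2\pi\Pi_g^+(c_1)]$ is a fibration onto $\mathbb{K}(X)\cap \mathscr{H}_g(X)$ with affine (hence contractible) fibers — this is elementary linear algebra plus the fact that Hodge theory identifies $\mathscr{H}_g(X) \cong H^2(X,\mathbb{R})$; (2) let $g$ vary: the spaces $\mathscr{H}_g^+(X)$ assemble into a vector bundle over $\mathrm{Met}(X)$ (the period map / Grassmannian of maximal positive subspaces is continuous, even smooth in the Banach-manifold sense), and the section $g \mapsto 2\pi\Pi_g^+(c_1)$ is continuous, so removing it fiberwise gives a fiber bundle with fiber $\cong \mathscr{H}_g^+(X)\setminus\{\mathrm{pt}\} \simeq S^{b^+-1}$; (3) compose with the deformation retraction of $\mathrm{Met}(X)$ to a point (the space of metrics is convex, hence contractible) to conclude that $p$, as a composite of fibrations over a contractible base, is itself a fibration onto $\mathbb{K}(X) \simeq S^{b^+(X)-1}$; (4) finally, identify the fiber $p^{-1}(a)$ over $a \in \mathbb{K}(X)$: it is the space of pairs $(g,\eta)$ with $[\Pi_g(i\eta) - 2\pi\Pi_g^+(c_1)]$ proportional to $a$, which fibers over the contractible $\mathrm{Met}(X)$ with fiber the contractible affine space $\{\eta : \Pi_g(i\eta) \in \mathbb{R}_{>0}\cdot a_g\}$ (a half-line's worth of affine subspaces, still contractible), so $p^{-1}(a)$ is contractible and hence $p$ is a homotopy equivalence.

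The main obstacle I expect is Step (2)–(3): making precise, in the Banach-manifold category, that the self-dual harmonic forms $\mathscr{H}_g^+(X)$ vary continuously (indeed form a smooth finite-rank subbundle of the trivial bundle $\mathrm{Met}(X)\times H^2(X,\mathbb{R})$ via the Hodge projection), so that the fiberwise point-removal and the fiberwise "ray through $a$" are genuinely locally trivial. This is standard — it is the same continuity of the period map underlying wall-crossing in Seiberg--Witten theory — but it is the one place where one must invoke elliptic regularity (continuity of harmonic projections in families of metrics of the chosen Sobolev class) rather than pure linear algebra; I would cite the relevant discussion in \cite{morgan} or \cite{li-liu} rather than reprove it. Everything else (contractibility of $\mathrm{Met}(X)$, affineness of the $\eta$-fibers, $\mathbb{K}(X)\simeq S^{b^+-1}$) is formal, and the homotopy-equivalence conclusion then follows from the long exact sequence of the fibration together with contractibility of both total space slices and fibers.
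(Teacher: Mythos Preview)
Your overall strategy --- factor $p$ into simpler maps and check each is a fibration with contractible fibers --- is the right one, and is essentially what the paper does. However, there is a genuine error in Step~(4).

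You claim that the fiber $p^{-1}(a)$ ``fibers over the contractible $\mathrm{Met}(X)$''. This is false. For a fixed metric $g$, the image of $(g,\cdot)$ under $p$ is exactly $\mathscr{H}_g^+(X)\setminus\{0\}$, a particular maximal positive subspace of $H^2(X,\mathbb{R})$ (minus the origin). So $(g,\eta)\in p^{-1}(a)$ forces $a\in\mathscr{H}_g^+(X)$, i.e.\ the $g$--harmonic representative of $a$ must be $g$--self-dual. This is a nontrivial constraint on $g$, and the projection $p^{-1}(a)\to\mathrm{Met}(X)$ is \emph{not} surjective. (Your ``proportional to $a$'' and ``half-line'' remarks suggest some conflation of $\mathbb{K}(X)$ with its spherical quotient; but even the preimage of the ray $\mathbb{R}_{>0}\cdot a$ is subject to the same constraint, since $\mathscr{H}_g^+(X)$ is a linear subspace.) What you actually need to show is that $\{g\in\mathrm{Met}(X):a\in\mathscr{H}_g^+(X)\}$ is contractible. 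This is not a formality: it is equivalent to knowing that the period map $g\mapsto\mathscr{H}_g^+(X)$ is a fibration (over the Grassmannian of maximal positive subspaces) with contractible fibers, together with the contractibility of the sub-Grassmannian of positive subspaces containing $a$. This is precisely the substance of the Li--Liu result --- not merely the continuity of harmonic projections you cite it for.

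The paper sidesteps this by factoring through the finite-dimensional period space $\mathcal{P}=\{(W,l):W\text{ maximal positive},\ l\not\perp W\}$: the map $p_1:\mathscr{P}_e^\ast\to\mathcal{P}$, $(g,\eta)\mapsto(\mathscr{H}_g^+(X),\,\cdot\,)$ is a fibration with contractible fibers by Li--Liu directly, and then $p_2:\mathcal{P}\to\mathbb{K}(X)$, $(W,l)\mapsto\mathrm{proj}_W(l)$ is a fibration with contractible fibers by an elementary argument in finite dimensions (the fiber over $a$ is the space of pairs $(W,l)$ with $a\in W$ and $l-a\in W^\perp$, which is a bundle of affine spaces over the contractible Grassmannian of maximal positive subspaces of $a^\perp$). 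Your intermediate space $\{(g,h):h\in\mathscr{H}_g^+(X)\setminus\{0\}\}$ sits between $\mathscr{P}_e^\ast$ and $\mathcal{P}$, so your first factor is fine; it is the second factor where you still owe the argument.
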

\begin{proof}
By a result of Li--Liu \cite[\S 3.1]{li-liu}, we have a fibration $p_1 : \mathscr{P}_{e}^\ast \to \mathcal{P}$ with contractible fibers, where $\mathcal{P}$ is the `period space' consisting of pairs $(W, l )$ such that $W$ is a maximal positive-definite subspace of $H^2 (X,\mathbb{R} )$ and $l \in H^2 (X, \mathbb{R} )$ is not orthogonal to $W$, and where $p_1 ( g , \eta ) = ( \mathscr{H}^{+}_g (X) , \Pi_g (\eta + 2 \pi i  (c_1 ( \mathfrak{s}_\omega + e) ) ) $ (and $\Pi_g$ denotes the $L^2$--orthogonal projection to the harmonic space $\mathscr{H}_{g}^2 (X) = H^2 (X, \mathbb{R} )$). On the other hand, it is easy to see that the map $p_2 : \mathcal{P} \to \mathbb{K}(X)$ which sends $(W, l )$ to the orthogonal projection of $l$ onto $W$ is also a fibration with contractible fibers. The map $p$ is given by $p = p_2 \circ p_1$, and thus the result follows.
\end{proof}

Now, the positive cone $\mathbb{K}(X)$ has an open and \textit{contractible} subspace given by
\[
\mathbb{K}_{[\omega]} (X) = \{ a\in \mathbb{K}(X) \, | \, a \cdot [\omega] >0 \}.
\]
The condition (\ref{condition_pert}) precisely says that the loop $\gamma $ satisfies $p(\gamma (t)) \in \mathbb{K}_{[\omega]}(X)$ for all $t \in S^1$. Thus, by Lemma \ref{lemma:chambers} and $\mathbb{K}_{[\omega]}(X) \simeq \ast$, it follows that an extension $f : D \to \mathscr{P}$ of $\gamma$ can be chosen so that---in addition to the previous requirements---we have $p(f(t)) \in \mathbb{K}_{[\omega]}(X)$ for all $t \in D$. It also follows that any two such extensions can be connected by a homotopy avoiding $\mathscr{P}_{e}^{\mathrm{red}}$ and in generic position. The linking number $\mathrm{lk}(\gamma , \Sigma_e )$ is then defined again by $f \cdot \pi_e \in \mathbb{Z}$, and this is independent of the chosen extension $f$.

\subsubsection{Taubes' deformation of the Seiberg--Witten equation}\label{subsubsection:taubes}


As a consequence of fundamental results of Taubes \cite{Taubes94,Taubes:more,Taubes:SWimpliesGromov} (when $X$ is closed) and Kronheimer--Mrowka \cite{monocont} (when $X$ has non-empty boundary) we have:

\begin{lemma}\label{lemma:taubes} Suppose (\ref{condition1}) holds. Let $K $ be a compact subset in $\mathcal{AK}_{0}(X, \omega )$ (cf. \S \ref{notation})), which tautologically parametrises almost-Kähler pairs $(\omega_t , g_t )$ for $t \in K$. Then there exists $r_0 = r_0 (K)$ such that for all $r\geq r_0$ the Seiberg--Witten equations (\ref{SW1}-\ref{SW2}), resp. (\ref{SW1def}-\ref{SW2def}), in the spin-c structure $\mathfrak{s}_\omega + e$, resp. $\widehat{\mathfrak{s}}_\omega + e$, admit \textit{no solutions} for the $K$--family of perturbations
\begin{align}
 (\, g_t  \, , \, F_{\widehat{A}_{0,t}}^{+} - r \sigma ( \Phi_{0,t} ) \,  ) \quad , \quad \text{resp.} \quad   (\, g_t  \, , \, F_{\widehat{A}_{0,t}}^{+} - r \sigma ( \Phi_{0,t} ) \, , \,  A_{0,t} , \Phi_{0,t} )\label{perturbation}
\end{align}
where $(A_{0,t} , \Phi_{0,t} )$ denotes the canonical configuration on $X$, resp. $\widehat{X}$, associated to the almost-Kähler structure $(\omega_t , g_t )$.
\end{lemma}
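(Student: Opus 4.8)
The plan is to reduce everything to Taubes' non-vanishing results via a compactness argument. First, I would recall precisely what Taubes established: for a \emph{fixed} almost-K\"ahler structure $(\omega, g)$, the Seiberg--Witten equations (\ref{SW1}-\ref{SW2}) with the perturbation $(g, F_{\widehat{A}_0}^+ - r\sigma(\Phi_0))$ have no solution in any spin-c structure $\mathfrak{s}_\omega + e$ satisfying (\ref{condition1}), once $r$ is large enough (depending on $(\omega,g)$); in the boundary case this is the corresponding statement of Kronheimer--Mrowka \cite{monocont}. Indeed, (\ref{condition1}) forces $e \cdot e - K_\omega \cdot e = -2 < 0$, which by Taubes' ``$SW \Rightarrow Gr$'' / the $b^+ > 1$ vanishing (or, in the boundary case, the structure of the monopole contact invariant) precludes the existence of a solution of the large-$r$ perturbed equations for this particular spin-c structure. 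The point of the lemma is merely to upgrade this pointwise statement to one that is uniform over a compact family $K \subset \mathcal{AK}_0(X,\omega)$.

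The argument is the standard contradiction-plus-compactness scheme. Suppose no such $r_0(K)$ exists. Then there are sequences $r_i \to \infty$, parameters $t_i \in K$, and solutions $([(A_i,\Phi_i)], g_{t_i}, F_{\widehat{A}_{0,t_i}}^+ - r_i \sigma(\Phi_{0,t_i}))$ of the perturbed equations. Since $K$ is compact we may pass to a subsequence with $t_i \to t_\infty \in K$, so $(\omega_{t_i}, g_{t_i}) \to (\omega_{t_\infty}, g_{t_\infty})$ in $C^\infty$ (or in the relevant Banach topology on $\mathscr{P}$), and correspondingly the canonical configurations $(A_{0,t_i}, \Phi_{0,t_i})$ converge to $(A_{0,t_\infty}, \Phi_{0,t_\infty})$. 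The heart of the matter is then Taubes' a priori estimates: for the large-$r$ perturbed equations one has the pointwise bound $|\Phi_i| \leq 1 + c r_i^{-1}$ and, more importantly, the estimate controlling $\int_X r_i(1-|\Phi_i|^2)$ and hence the curvature $F_{\widehat{A}_i}$ in $L^2$, all with constants depending only on a $C^\infty$-neighborhood of $(\omega_{t_\infty}, g_{t_\infty})$ — so uniformly in $i$. In the boundary case one additionally invokes the exponential decay estimates over the end from \cite{monocont}, again uniform over the compact family.

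Armed with these uniform bounds, one runs the usual Uhlenbeck-type patching and elliptic bootstrapping: after gauge transformations, a subsequence of the $(A_i, \Phi_i)$ converges in $C^\infty_{\mathrm{loc}}$ (and globally, using the decay on the end in the boundary case) to a configuration $(A_\infty, \Phi_\infty)$. The key feature of Taubes' estimates is that the limit is \emph{not} a bubble: $\Phi_\infty$ is nowhere zero (the $|\Phi| \to 1$ estimate passes to the limit away from a measure-zero set, and the curvature concentration is ruled out), so $(A_\infty, \Phi_\infty)$ is an honest solution of the perturbed Seiberg--Witten equations for the limiting almost-K\"ahler structure $(\omega_{t_\infty}, g_{t_\infty})$ at parameter $r = \infty$ in the appropriate rescaled sense — equivalently, it contradicts the pointwise non-existence statement of Taubes/Kronheimer--Mrowka at $t_\infty$ for all large $r$. (Concretely: for $i$ large the solution $(A_i,\Phi_i)$ at parameter $(t_i, r_i)$ with $t_i$ close to $t_\infty$ and $r_i$ large already contradicts the existence of $r_0(\{t_\infty\})$ from the pointwise statement, since $(g_{t_i}, F^+_{\widehat A_{0,t_i}} - r_i\sigma(\Phi_{0,t_i}))$ is then within the non-solvable range once we note the pointwise $r_0$ is upper-semicontinuous in $t$.)

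\emph{Main obstacle.} The only genuine work is checking that Taubes' a priori estimates (the $L^2$ curvature bound, the $C^0$ bound on $\Phi$, and in the boundary case the exponential decay over the end) hold with constants that are \emph{locally uniform} in the almost-K\"ahler structure, i.e.\ depend only on a $C^\infty$-bounded neighborhood of the relevant data rather than on the precise $(\omega_t, g_t)$. This is essentially already contained in the proofs in \cite{Taubes94} and \cite{monocont} — the estimates are derived from pointwise Weitzenb\"ock-type identities whose coefficients depend continuously on the metric and the canonical configuration — but it must be remarked upon, and the compactness of $K$ is exactly what makes this local uniformity suffice. Everything else is the by-now-routine Seiberg--Witten compactness machinery.
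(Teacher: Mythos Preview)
The paper does not give a proof of this lemma; it is simply stated as a consequence of the cited results of Taubes \cite{Taubes94,Taubes:more,Taubes:SWimpliesGromov} and Kronheimer--Mrowka \cite{monocont}. So there is no ``paper's proof'' to compare against. Your sketch arrives at the right conclusion, and your ``main obstacle'' paragraph correctly isolates the only point requiring comment (local uniformity of Taubes' constants over the compact family $K$). That said, the write-up is over-engineered and contains a couple of confusions worth flagging.

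First, you misidentify which part of condition (\ref{condition1}) does the work. The index condition $e\cdot e - K_\omega \cdot e = -2$ by itself does \emph{not} preclude solutions for the Taubes perturbation (negative expected dimension only gives non-existence for \emph{generic} perturbations). What matters is $[\omega]\cdot e \leq 0$ together with $e\neq 0$ (the latter following from $e^2 - K_\omega\cdot e = -2$). Taubes' energy identity shows that a solution of the large-$r$ equations in $\mathfrak{s}_\omega + e$ forces $[\omega_t]\cdot e \geq 0$, with equality only if $e=0$; this is what gives the contradiction. The reference to ``$b^+>1$ vanishing'' is also off: the lemma is stated for general $(X,\omega)$, and $b^+$ plays no role here.

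Second, the two paragraphs about extracting a convergent subsequence of $(A_i,\Phi_i)$ are a red herring. There is no well-defined ``solution at $r=\infty$''; what Taubes extracts in that limit (via $SW\Rightarrow Gr$) is a $J$-holomorphic curve, and you do not need that machinery. The correct and much shorter argument is exactly your parenthetical ``(Concretely: \ldots)'': Taubes' pointwise threshold $r_0(\omega_t,g_t)$ is locally bounded in $t$ because the constants in his Weitzenb\"ock-type estimates depend continuously on the almost-K\"ahler data, and compactness of $K$ then gives a uniform $r_0(K)$. I would discard the limit-extraction discussion and promote that parenthetical (together with your ``main obstacle'' paragraph) to be the entire proof.
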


With this in place, let $\ell  = \{\omega_t \}_{t \in S^1}$ be a loop in $\mathcal{S}_0 (X, \omega )$. Kronheimer's invariant $Q_e ( \ell ) \in \mathbb{Z}$ is defined as follows. By $\mathcal{S}_0 (X, \omega ) \simeq \mathcal{AK}_0(X, \omega )$ we may lift $\ell$ to a loop $\widetilde{\ell} = \{ (\omega_t , g_t )\}_{t \in S^1}$ in $\mathcal{AK}_0 (X, \omega )$. Choosing $r_0 = r_0 (\widetilde{\ell}) \gg 0$, $r\geq r_0$ and defining a loop $\gamma (t) =  (g_t , \eta_t )$ (resp. $(g_t, \eta_t , A_{0,t}, \Phi_{0,t})$ when the boundary is non-empty) in $\mathscr{P}$ by the formula (\ref{perturbation}), we have that $\gamma  $ avoids the discriminant $\Sigma_e$ by Lemma \ref{lemma:taubes}. We then set
\[
Q_e (\ell ) := \mathrm{lk}(\gamma , \Sigma_e ) \in \mathbb{Z} .
\]

If $b^{+} (X)\geq 4$ or $(X,\omega )$ has non-empty boundary then $Q_e (\ell )$ is well-defined and only depends on the homotopy class $\ell$. If $(X,\omega )$ has empty boundary and $b^{+}(X) <4$  then $Q_e (\ell )$ is also defined since $\gamma (t) = (g_t , \eta_t )$ satisfies the condition (\ref{condition_pert}) for $r\gg 0$: indeed, one can show $\sigma (\Phi_{0, t}) = i \omega_t$, and hence
\[
\int_X i \eta_t \wedge \omega_t = - 2 \pi  K \cdot [\omega] + r [\omega]\cdot [\omega ] \gg 0. 
\]

It is clear that $Q_e$ descends to a homomorphism $Q_e : (\pi_1 \mathcal{S}_0 (X, \omega ) )^{\mathrm{ab}} \to \mathbb{Z}$. This concludes the definition of Kronheimer's invariant.

\subsubsection{Naturality}

By construction, Kronheimer's invariant $Q_e$ is natural in the following sense:
\begin{proposition}\label{proposition:naturalityQ}
Let $(X, \omega )$ and $(X^\prime , \omega^\prime)$ be compact symplectic $4$-manifold, either closed or with convex contact boundary. Let $f : (X, \omega ) \to (X^\prime , \omega^\prime )$ be a symplectomorphism. Let $e \in H^2 (X^\prime , \partial X^\prime , \mathbb{Z} )$ be a class such that $e^2 - K_{\omega^\prime} \cdot e = -2$ and $[\omega^\prime ] \cdot e \leq 0$. Fix any homological orientation of $(X^\prime, \omega^\prime )$, and then fix the homological orientation of $(X, \omega )$ induced by that on $(X^\prime , \omega^\prime )$ and the symplectomorphism $f$. Then for any loop $\omega_t \in \pi_1 \mathcal{S}_0 (X, \omega )$ we have:
\begin{align*}
Q_{e}^{(X^\prime , \omega^\prime )} ( (f^{-1})^\ast \omega_t ) = Q_{f^\ast e }^{(X, \omega)} ( \omega_t ) .
\end{align*}
\end{proposition}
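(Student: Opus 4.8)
The plan is to unwind the definition of $Q_e$ as a linking number $\mathrm{lk}(\gamma,\Sigma_e)$ and check that the symplectomorphism $f$ induces, in a compatible way, identifications of every object involved: the perturbation spaces, the universal Seiberg--Witten moduli spaces, the discriminant loci, and the orientations. First I would observe that $f$ (or rather $f$ together with its canonical extension to the symplectisation completions $\widehat{X}\to\widehat{X'}$, which is legitimate because $f$ is a symplectomorphism and hence matches up the Liouville collars and the conical ends) induces a diffeomorphism $f^\ast : \mathscr{P}^{(X',\omega')} \to \mathscr{P}^{(X,\omega)}$ by pulling back metrics, self-dual forms, and (in the boundary case) the canonical configurations $(A_0,\Phi_0)$; here one uses that $f^\ast \widehat{\omega}' = \widehat{\omega}$ so that $f^\ast \mathfrak{s}_{\omega'} = \mathfrak{s}_\omega$, and more generally $f^\ast(\widehat{\mathfrak{s}}_{\omega'}+e) = \widehat{\mathfrak{s}}_\omega + f^\ast e$ as relative spin-c structures. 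Pulling back solutions of the (perturbed) Seiberg--Witten equations gives a diffeomorphism of universal moduli spaces $\mathscr{M}_e^{(X',\omega')} \cong \mathscr{M}_{f^\ast e}^{(X,\omega)}$ covering $f^\ast$, so that $f^\ast \Sigma_e^{(X',\omega')} = \Sigma_{f^\ast e}^{(X,\omega)}$, and likewise for the reducible loci $\mathscr{P}^{\mathrm{red}}$ (note $f^\ast$ is a linear isometry on harmonic spaces, so it respects the projections $\Pi_g, \Pi_g^+$ defining (\ref{Pred})).

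Next I would track the orientations. The key point is that the isomorphism $\mathscr{M}_e^{(X',\omega')}\cong\mathscr{M}_{f^\ast e}^{(X,\omega)}$ identifies the Quillen determinant lines $\det(\pi_e)$ compatibly, since pullback by $f^\ast$ identifies the linearised operators (Dirac operator plus linearised curvature map) on $X'$ with those on $X$; thus an orientation of one Quillen line transports to an orientation of the other. On the other side, $f$ induces an isomorphism of the homological-orientation data: on $H^1\oplus H^+$ in the closed case, and on $H^1(X)\oplus H^+(X)\oplus H^1(Y)\oplus \Lambda(Y,\xi)$ in the boundary case — here one must check that $f$ carries $\Lambda(Y,\xi)$ for $(Y',\xi')$ to $\Lambda(Y,\xi)$ for $(Y,\xi)$, which holds because $f|_Y$ is a contactomorphism and hence identifies the APS boundary-value problems. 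Granting the correspondence between orientations of $\det(\pi_e)$ and homological orientations (as asserted in the excerpt), the hypothesis that the homological orientation of $(X,\omega)$ is chosen to be the one induced via $f$ guarantees that $f^\ast$ is orientation-preserving on all the relevant determinant lines.

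Finally, given a loop $\omega_t \in \pi_1\mathcal{S}_0(X,\omega)$, lift it to $(\omega_t,g_t)\in\mathcal{AK}_0(X,\omega)$; then $(f_\ast \omega_t, f_\ast g_t) = ((f^{-1})^\ast\omega_t, (f^{-1})^\ast g_t)$ is a lift of $(f^{-1})^\ast\omega_t$ in $\mathcal{AK}_0(X',\omega')$, and $f^\ast$ carries the Taubes perturbation loop $\gamma'(t)$ for $(X',\omega')$ of the form (\ref{perturbation}) (built from the pushed-forward almost-Kähler data at large $r$) back to the corresponding perturbation loop $\gamma(t)$ for $(X,\omega)$ — using that the canonical configuration is natural under symplectomorphisms, $(f^{-1})^\ast(A_{0,t},\Phi_{0,t})$ being the canonical configuration of $((f^{-1})^\ast\omega_t,(f^{-1})^\ast g_t)$. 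In the $b^+<4$ case one also checks that $f^\ast$ preserves the chamber condition (\ref{condition_pert}), since it preserves $[\omega]$-pairings. A nullhomotopy $D\to \mathscr{P}^{(X',\omega')}$ of $\gamma'$ transverse to $\pi_e$ pulls back under $f^\ast$ to a nullhomotopy of $\gamma$ transverse to $\pi_{f^\ast e}$, with fiber products identified as oriented $0$-manifolds; hence the signed counts agree, giving $Q_e^{(X',\omega')}((f^{-1})^\ast\omega_t) = Q_{f^\ast e}^{(X,\omega)}(\omega_t)$. The main obstacle is the bookkeeping of orientations — verifying that the $f$-induced identification of Quillen determinant lines matches the $f$-induced identification of homological orientation data under the canonical correspondence — especially the term $\Lambda(Y,\xi)$ in the boundary case, which has no canonical orientation and so must be handled by naturality of the APS problem under the contactomorphism $f|_Y$; everything else is a routine, if lengthy, transport-of-structure argument.
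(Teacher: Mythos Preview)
Your proposal is correct and follows exactly the approach the paper intends: the paper offers no proof of this proposition, stating only that it holds ``by construction'', and what you have written is precisely the transport-of-structure argument that unpacks that phrase. Your careful handling of the orientation data (including the $\Lambda(Y,\xi)$ term in the boundary case) and of the chamber condition when $b^+ < 4$ goes beyond what the paper spells out, but is consistent with its framework.
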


We also note the following particular case of Proposition \ref{proposition:naturalityQ}:
\begin{corollary}\label{corollary:naturalityQ}
Let $(X, \omega )$ be a compact symplectic $4$-manifold, either closed or with convex contact boundary. Let $f  \in \pi_0 \mathrm{Symp}(X, \omega )$ (i.e. $f$ is a symplectomorphism fixing a neighborhood of the boundary pointwise). Let $e \in H^2 (X , \partial X , \mathbb{Z} )$ be a class such that $e^2 - K_{\omega} \cdot e = -2$ and $[\omega ] \cdot e \leq 0$. Fix any homological orientation of $(X, \omega )$. Let $\sigma(f)  = 0$ or $1$ according as to whether $f^\ast$ preserves or reverses the orientation of $H^1 (X, \mathbb{R} ) \oplus H^+ (X ,\mathbb{R} )$. Then for any loop $\omega_t \in \pi_1 \mathcal{S}_0 (X, \omega )$ we have:
\begin{align*}
Q_{e}^{(X , \omega )} ( (f^{-1})^\ast \omega_t ) = (-1)^{\sigma(f)}  \cdot Q_{f^\ast e }^{(X, \omega)} ( \omega_t ) .
\end{align*}
In particular, if $f$ is a product of Dehn--Seidel twists on Lagrangian spheres embedded in $(X, \omega )$ (disjoint from $\partial X$) then $\sigma(f) = 0$ in the above formula. 
\end{corollary}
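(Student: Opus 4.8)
The plan is to obtain this as the special case of Proposition~\ref{proposition:naturalityQ} in which $(X',\omega') = (X,\omega)$, with $f$ a symplectomorphism of $(X,\omega)$ representing the given mapping class, and then to track precisely how the homological orientation is affected by $f$. The steps are: (i) specialise Proposition~\ref{proposition:naturalityQ}; (ii) identify the difference between the homological orientation induced by $f$ and the original one with the sign $(-1)^{\sigma(f)}$; (iii) observe that reversing the homological orientation negates Kronheimer's invariant; and (iv) compute the orientation character $\sigma$ on Dehn--Seidel twists. The only real content beyond quoting Proposition~\ref{proposition:naturalityQ} is this orientation bookkeeping together with step (iv).

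First I would note that $Q_{f^\ast e}$ is defined: since $f$ is a symplectomorphism equal to the identity near $\partial X$, it fixes $K_\omega$ and $[\omega]$ and preserves both the pairing $H^2(X,\mathbb{R})\otimes H^2(X,\partial X,\mathbb{R})\to\mathbb{R}$ and the relative fundamental class, so $(f^\ast e)^2 - K_\omega\cdot f^\ast e = e^2 - K_\omega\cdot e = -2$ and $[\omega]\cdot f^\ast e = [\omega]\cdot e \le 0$. Feeding the fixed homological orientation $o$ of $(X,\omega)$ into Proposition~\ref{proposition:naturalityQ} as the orientation ``on $X'$'', I obtain
\[
Q_e^{(X,\omega),\,o}\big((f^{-1})^\ast\omega_t\big)\;=\;Q_{f^\ast e}^{(X,\omega),\,f^\ast o}(\omega_t),
\]
where $f^\ast o$ denotes the homological orientation of $(X,\omega)$ induced from $o$ via $f$.

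Next I would compare $f^\ast o$ with $o$. In the closed case a homological orientation is an orientation of $H^1(X,\mathbb{R})\oplus H^+(X,\mathbb{R})$, and in the boundary case of $H^1(X,\mathbb{R})\oplus H^+(X,\mathbb{R})\oplus H^1(Y,\mathbb{R})\oplus\Lambda(Y,\xi)$; because $f$ is the identity near $\partial X$ it acts trivially on $H^1(Y,\mathbb{R})$ and fixes the line $\Lambda(Y,\xi)$, which is built from the fixed data on the symplectisation end. Using the standard fact that an isometry of an indefinite form acts in a well-defined way on orientations of maximal positive subspaces, the definition of $\sigma(f)$ then gives $f^\ast o = (-1)^{\sigma(f)}\,o$. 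Since orientations of $\det(\pi_e)$ correspond to homological orientations of $X$, and the signed count defining $\mathrm{lk}(\gamma,\Sigma_e)$ uses the orientation of the zero-dimensional fiber products induced by that of $\det(\pi_e)$, reversing the homological orientation negates $Q_{f^\ast e}$; hence $Q_{f^\ast e}^{(X,\omega),\,f^\ast o} = (-1)^{\sigma(f)}Q_{f^\ast e}^{(X,\omega),\,o}$, and substituting into the displayed identity yields the formula.

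Finally, for the last assertion it suffices to prove $\sigma(\tau_L) = 0$ for the Dehn--Seidel twist on a single Lagrangian sphere $L$, since $f\mapsto(-1)^{\sigma(f)}$ is a homomorphism $\pi_0\mathrm{Symp}(X,\omega)\to\{\pm1\}$ and a product of such twists therefore maps to $+1$. As $\tau_L$ is supported in a neighborhood of $L$ symplectomorphic to a neighborhood of the zero section of $T^\ast S^2$, whose first cohomology vanishes, a Mayer--Vietoris argument shows that $\tau_L$ acts trivially on $H^1(X,\mathbb{R})$; on $H^2(X,\mathbb{R})$ it acts by the Picard--Lefschetz reflection $x\mapsto x + (x\cdot[L])[L]$, which is the identity on $[L]^\perp$ and sends $[L]\mapsto -[L]$. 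Since $[L]\cdot[L] = -2 < 0$, the subspace $[L]^\perp$ still contains a maximal positive-definite subspace of $H^2(X,\mathbb{R})$, on which this reflection restricts to the identity, so it preserves the orientation of every maximal positive subspace; hence $\sigma(\tau_L) = 0$. The main obstacle here is not any serious difficulty but rather keeping all the orientation conventions consistent --- the direction of ``induced'', and the compatibility of the sign of $\det(\pi_e)$ with the sign count --- the substantive work having been carried out already in Proposition~\ref{proposition:naturalityQ}.
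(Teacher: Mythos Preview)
Your proof is correct and follows essentially the same approach as the paper: specialise Proposition~\ref{proposition:naturalityQ} to $(X',\omega')=(X,\omega)$, track the induced homological orientation to produce the sign $(-1)^{\sigma(f)}$, and for the final assertion use that the Picard--Lefschetz reflection along a $(-2)$--class acts trivially on some maximal positive subspace and on $H^1$. The paper phrases the last step slightly more geometrically (the neighborhood $D(T^\ast L)$ is negative-definite with rational homology sphere boundary, hence $\tau_L$ fixes a suitable $H^+$), but this is the same argument; your explicit treatment of the boundary terms $H^1(Y,\mathbb{R})\oplus\Lambda(Y,\xi)$ and the orientation bookkeeping fills in details the paper leaves implicit.
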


The last assertion from Corollary \ref{corollary:naturalityQ} can be deduced as follows. If $L$ is a Lagrangian sphere in $(X, \omega )$ then a neighborhood of $L$ diffeomorphic to the disk cotangent bundle $D(T^\ast L) \cong D(T^\ast S^2 )$. The manifold $D(T^\ast  L)$ is negative-definite and has boundary on a rational homology sphere ($\cong \mathbb{R}P^3$); hence the Dehn--Seidel twist $\tau_L$ acts trivially on $H^+ (X,\mathbb{R} )$ because it is supported on $D(T^\ast L)$. On the other hand, $\tau_L$ acts trivially on $H^1 (X, \mathbb{Z} )$. Thus, Dehn--Seidel twists---or products of such---preserve the orientation of $H^1 (X , \mathbb{R}) \oplus H^+ (X, \mathbb{R} )$.

\subsection{Invariants for symplectomorphisms}

Suppose here that $(X, \omega )$ is a closed symplectic $4$-manifold. (The discussion that follows can also be adapted to the case when $X$ has convex boundary, but we shall not need it). Kronheimer's invariant (\ref{kronheimer}) gives us a homomorphism $Q_e : \pi_1 \mathcal{S}_0 (X, \omega ) \to \mathbb{Z}$, and we now explain how to modify $Q_e$ to produce a homomorphism $\pi_0 \mathrm{Symp}_0 (X, \omega ) \to \mathbb{Z}$ for suitable $(X, \omega )$ (see below).  Many of the ideas contained in this subsection are due to Smirnov \cite[\S 5]{Smirnov}, which we reformulate to tailor to our purposes.

\subsubsection{Family Seiberg--Witten invariants}

Let $\mathscr{X}\to B$ be a smooth fiber bundle with fiber a closed oriented smooth $4$-manifold $X$ with $b^{+}(X) >0$. For simplicity, we take $B = S^2$, which the case of interest to us. Fix the following topological data: a family spin-c structure $\mathfrak{t}$ on $\mathscr{X}\to B$ (by definition, this is a spin-c structure on its vertical tangent bundle) whose restriction $\mathfrak{t}|_X$ to the fiber has $-1 + b^1 (X) - b^+ (X) + \frac{1}{4}(c_1 (\mathfrak{t}|_X )^2 - \sigma(X )) = -2$; an element $\phi$, commonly referred to as a `chamber', belonging in the set $[B , S(H^+ (X, \mathbb{R})) ]$ of homotopy classes of maps from $B = S^2$ to the unit sphere in $H^+ (X, \mathbb{R} )$ (in particular, if $b^+(X) \neq 1$ or $3$ then there is a unique choice of $\phi$); and a homological orientation of $X$. Then there is an associated integer-valued \textit{family Seiberg--Witten invariant} of the smooth bundle $\mathscr{X} \to B$, denoted $FSW^{\phi} (\mathscr{X}/S^2 , \mathfrak{t} ) \in \mathbb{Z}$. For the definition we refer to \cite{li-liu,JLIN2022}. 

In the present case $B = S^2$ one also has that $FSW_\phi (\mathscr{X}/S^2  , \mathfrak{t} )$ only depends on the restriction $\mathfrak{t}|_X$ of $\mathfrak{t}$ to the fiber (\cite[Lemma 2.7]{JLIN2022}).

In our case of interest, $X$ is equipped with a symplectic form $\omega$. Then there is a `preferred chamber' $\phi_\omega \in [B , S(H^+(X,\mathbb{R}))]$ obtained as follows: choosing an $\omega$--compatible metric $g$ yields a maximal positive-definite subspace $\mathscr{H}_{g}^+ (X )$ of $H^2 (X, \mathbb{R} ) = \mathscr{H}_{g}^2 (X)$ and $0 \neq \omega \in \mathscr{H}^{+}_g (X)$; we then take $\phi_\omega$ to be the constant map $ B \to S(\mathscr{H}_{g}^+ (X) )$ with value $\omega$. In the present setting, we shall denote by $FSW_e$ the family Seiberg--Witten invariant in the chamber $\phi_\omega$ for any family spin-c structure $\mathfrak{t}$ whose restriction to the fiber $X$ is $\mathfrak{s}_\omega + e$. An isomorphism class of bundle $X \to \mathscr{X}\to S^2$ can be equivalently regarded---via the clutching construction---as an element in $\pi_1 \mathrm{Diff}_0 (X)$. Thus $FSW_e$ defines a map
\[
FSW_e : \pi_1 \mathrm{Diff}_0 (X) \to \mathbb{Z}
\]
which is also a group homomorphism.




The proof of the following result follows easily from the definitions and can be found in \cite[Proof of Lemma 3]{Smirnov}:

\begin{lemma}\label{lemma:FSW}Let $(X,\omega )$ be a closed symplectic $4$-manifold, and $e \in H^2 (X, \mathbb{Z} )$ a class satisfying (\ref{condition1}). Let $\omega_t \in \pi_1 \mathcal{S}_0 (X, \omega )$ be the image of an element $f_t \in \pi_1 \mathrm{Diff}_0 (X)$ under the map $\pi_1 \mathrm{Diff}_0 (X) \to \pi_1 \mathcal{S}_0 (X, \omega )$ induced by (\ref{fibration}). 
Fix a homological orientation of $X$. Then
\[
Q_{e} (\omega_t ) = FSW_{e} (f_t ) .
\]
\end{lemma}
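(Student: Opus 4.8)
The plan is to exhibit both sides as the signed count of one and the same finite set of Seiberg--Witten solutions, assembled from identical data, and then to reconcile orientation conventions.

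First I would choose a convenient lift of the loop $\omega_t = f_t^\ast\omega$ to $\mathcal{AK}_0(X,\omega)$: fix an $\omega$--compatible metric $g$ and take the based loop $(\omega_t,g_t) := f_t^\ast(\omega,g)$, which lies in $\mathcal{AK}_0(X,\omega)$ since $f_0=f_1=\mathrm{id}$. As $Q_e$ may be computed from any lift, we are free to use this one. The canonical configuration $(A_0,\Phi_0)$ attached to an almost--Kähler pair is natural under symplectomorphisms, so the canonical configuration associated to $(\omega_t,g_t)$ is $(A_{0,t},\Phi_{0,t}) = f_t^\ast(A_0,\Phi_0)$, and hence the Taubes perturbation loop of \S\ref{subsubsection:taubes} is simply the $f_t$--orbit of a single fixed perturbation:
\[
\gamma(t) \;=\; f_t^\ast\big(g,\ \eta_r\big), \qquad \eta_r := F_{\widehat{A}_0}^{+} - r\,\sigma(\Phi_0)\ \in\ \mathscr{P}.
\]
By Lemma~\ref{lemma:taubes}, for $r$ sufficiently large the fixed perturbation $(g,\eta_r)$ admits \emph{no} solution of (\ref{SW1})--(\ref{SW2}) in the spin--c structure $\mathfrak{s}_\omega+e$, and the computation in \S\ref{subsubsection:taubes} shows $\gamma$ satisfies (\ref{condition_pert}). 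Fix such an $r$.

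Next I would translate $FSW_e(f_t)$ into a disk count. Via clutching, the loop $f_t$ determines a smooth bundle $X\to\mathscr{X}\to S^2$, with $S^2 = D_-\cup_{S^1}D_+$ and the two trivial pieces $X\times D_\mp$ glued over $S^1$ by $f_t$. To compute $FSW_e$ I would use the family of perturbations equal to the constant family $(g,\eta_r)$ over $D_-$ and, over $D_+$, a smooth extension $\widetilde f\colon D_+\to\mathscr{P}$ of $\gamma = \widetilde f|_{\partial D_+}$: such an extension exists because $\mathscr{P}$ (an affine bundle over the convex space of metrics) is contractible, and by Lemma~\ref{lemma:chambers} together with the contractibility of $\mathbb{K}_{[\omega]}(X)$ it can be chosen with $\widetilde f(D_+)\subset p^{-1}(\mathbb{K}_{[\omega]}(X))$, i.e.\ staying in the chamber $\phi_\omega$ and avoiding $\mathscr{P}_{e}^{\mathrm{red}}$. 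For $r$ as chosen, Lemma~\ref{lemma:taubes} forces the parametrised moduli space over $D_-$ to be empty, so the whole family Seiberg--Witten moduli space of $(\mathscr{X}/S^2,\mathfrak{t})$ sits over $D_+$, where---after undoing the clutching identification---it is canonically the fiber product $\mathrm{Fib}(\widetilde f,\pi_e)$. Putting $\widetilde f$ in generic position, $FSW_e(f_t)$ is the signed count of $\mathrm{Fib}(\widetilde f,\pi_e)$; but $\widetilde f$ is exactly an admissible extension of $\gamma$ in the sense used to define Kronheimer's invariant, so $Q_e(\omega_t)=\mathrm{lk}(\gamma,\Sigma_e)=\widetilde f\cdot\pi_e$ is the signed count of the very same $0$--manifold. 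The case distinction $b^+(X)\geq 4$ versus $b^+(X)<4$ needs no separate treatment, since $\widetilde f$ was built inside $p^{-1}(\mathbb{K}_{[\omega]}(X))$, which is simultaneously the region defining $\mathrm{lk}(\gamma,\Sigma_e)$ and the chamber $\phi_\omega$ for $FSW_e$.

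What remains is to check that the two signed counts use the same orientation of $\mathrm{Fib}(\widetilde f,\pi_e)$. Both are pinned down by the chosen homological orientation of $X$---for $Q_e$ through the Quillen determinant line of $\pi_e$, for $FSW_e$ through the standard orientation conventions for family Seiberg--Witten moduli spaces---and they should agree because over $D_+$ the family moduli space \emph{is} the pullback along $\widetilde f$ of the universal moduli space $\mathscr{M}_{e}^\ast$. I expect this orientation bookkeeping, rather than the identification of the underlying finite sets, to be the only delicate point; it is precisely what is carried out in the proof of Lemma~3 of \cite{Smirnov}, which the argument above follows.
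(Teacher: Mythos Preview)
Your argument is correct and is essentially the same as the one the paper has in mind: the paper simply defers to \cite[Proof of Lemma~3]{Smirnov}, and what you have written is precisely that argument spelled out---pull back a fixed Taubes perturbation by $f_t$, use Lemma~\ref{lemma:taubes} to empty the $D_-$ hemisphere, and identify the remaining $D_+$ count simultaneously as $\mathrm{lk}(\gamma,\Sigma_e)$ and as the family invariant in the chamber $\phi_\omega$. Your caution about the orientation bookkeeping is well-placed but, as you note, this is exactly what Smirnov's proof handles.
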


\subsubsection{Charge-conjugation symmetry}

We refer to $(X, -\omega )$ as the \textit{conjugate} symplectic $4$-manifold of $(X, \omega )$. (Note that the orientation on the $4$-manifold $X$ induced by both $\pm \omega$ is the same). Observe that condition (\ref{condition1}) is preserved under replacing $e$ by $-e$ and $\omega$ by its conjugate $-\omega$, since $K_{-\omega} = - K_\omega$. In what follows, we shall canonically identify the following groups:
\begin{align}
\pi_1 \mathcal{S}_0 (X, \omega ) \cong \pi_1 \mathcal{S}_0 (X, -\omega ) \quad \omega_t \mapsto - \omega_t . \label{pi1Sconj}
\end{align}

\begin{lemma}\label{lemma:symplecticconjugation}
Let $(X,\omega )$ be a closed symplectic $4$-manifold, and $e \in H^2 (X, \mathbb{Z} )$ a class satisfying (\ref{condition1}). Then, under the identification (\ref{pi1Sconj}), we have:
\[
Q_{e}^{(X,\omega )} = -(-1)^{\frac{K_{\omega}^2 - \sigma(X) }{8}} Q_{-e}^{(X,- \omega )} .
\]
Suppose further that $K_\omega = 0$. Then 
\[
Q_{e}^{(X,\omega )} =  - Q_{-e}^{(X,-\omega )} .
\]
\end{lemma}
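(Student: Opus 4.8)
The plan is to exploit the \emph{charge-conjugation symmetry} of the Seiberg--Witten equations and to keep track of the effect it has on the orientation data entering the definition of $Q_e$. Fix a conjugate-linear isomorphism from the spinor bundle of $\mathfrak{s}_\omega+e$ onto that of the conjugate spin-c structure $\overline{\mathfrak{s}_\omega+e}$, intertwining the Clifford module structures $\rho$ and $\bar\rho$. Since the almost-complex structure compatible with $-\omega$ (for a fixed compatible metric) is the conjugate $-J$ of that compatible with $\omega$, one has $\overline{\mathfrak{s}_\omega}=\mathfrak{s}_{-\omega}$, hence $\overline{\mathfrak{s}_\omega+e}=\mathfrak{s}_{-\omega}+(-e)$; note that $(-e)^2-K_{-\omega}\cdot(-e)=e^2-K_\omega\cdot e=-2$ and $[-\omega]\cdot(-e)=[\omega]\cdot e\le 0$, so $-e$ satisfies (\ref{condition1}) on $(X,-\omega)$. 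The assignment $(A,\Phi)\mapsto(\bar A,\bar\Phi)$, $(g,\eta)\mapsto(g,-\eta)$ carries solutions of (\ref{SW1})--(\ref{SW2}) in $\mathfrak{s}_\omega+e$ to solutions in $\mathfrak{s}_{-\omega}-e$ (using $F^+_{\widehat{\bar A}}=-F^+_{\widehat A}$, $\sigma(\bar\Phi)=-\sigma(\Phi)$, $D_{\bar A}\bar\Phi=\overline{D_A\Phi}$); it sends the canonical configuration of an almost-Kähler pair $(\omega_t,g_t)$ to that of $(-\omega_t,g_t)$; and therefore it sends the Taubes perturbation $(g_t,\,F^+_{\widehat A_{0,t}}-r\sigma(\Phi_{0,t}))$ of Lemma \ref{lemma:taubes} to the corresponding Taubes perturbation of the conjugate problem on $(X,-\omega)$.

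Hence charge conjugation induces compatible diffeomorphisms $\mathscr P^{(X,\omega)}\xrightarrow{\sim}\mathscr P^{(X,-\omega)}$ and $\mathscr M^{(X,\omega)}_e\xrightarrow{\sim}\mathscr M^{(X,-\omega)}_{-e}$ over it, carrying the discriminant $\Sigma_e$ to $\Sigma_{-e}$ and the reducible locus $\mathscr{P}_e^{\mathrm{red}}$ to $\mathscr{P}_{-e}^{\mathrm{red}}$; and since $c_1(\mathfrak{s}_{-\omega}-e)=-c_1(\mathfrak{s}_\omega+e)$, it takes the map $p$ of Lemma \ref{lemma:chambers} to $-p$, hence the chamber $\mathbb K_{[\omega]}(X)$ to $\mathbb K_{[-\omega]}(X)$ (so the argument is uniform in $b^+(X)$). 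In particular it carries a loop of perturbations computing $Q_e^{(X,\omega)}(\ell)$ to one computing $Q_{-e}^{(X,-\omega)}(\bar\ell)$, where $\ell=\{\omega_t\}$ is a loop in $\mathcal S_0(X,\omega)$ and $\bar\ell=\{-\omega_t\}$ its image under (\ref{pi1Sconj}). Thus
\[
Q_e^{(X,\omega)}(\ell)=\epsilon\cdot Q_{-e}^{(X,-\omega)}(\bar\ell),
\]
where $\epsilon=\pm1$ is the universal sign obtained by comparing, on the Quillen determinant line over the irreducible locus, the trivialisation fixed by the chosen homological orientation of $X$ with the one obtained by transporting, through charge conjugation, the trivialisation of $\det(\pi_{-e})$ fixed by the same homological orientation.

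The remaining point is to compute $\epsilon$, and this is the crux. Using the standard splitting of the determinant line of the Seiberg--Witten deformation operator into a complex Dirac part $D_A\colon\Gamma(S^+_e)\to\Gamma(S^-_e)$ and a real part $d^+\oplus d^*\colon\Omega^1(i\mathbb R)\to\Omega^+(i\mathbb R)\oplus\Omega^0(i\mathbb R)$, charge conjugation acts on the Dirac part by a conjugate-linear isomorphism, contributing a factor $(-1)^{\operatorname{ind}_{\mathbb C}D_A}$, and on the real part by negation of imaginary-valued harmonic forms, contributing $(-1)^{b^1(X)+b^+(X)+1}$ (from $H^1$, $H^+$ and the constants $H^0$). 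By the index theorem and (\ref{condition1}), $c_1(\mathfrak{s}_\omega+e)^2=K_\omega^2-8$, so
\[
\operatorname{ind}_{\mathbb C}D_A=\tfrac18\big(c_1(\mathfrak{s}_\omega+e)^2-\sigma(X)\big)=\tfrac{K_\omega^2-\sigma(X)}{8}-1 .
\]
Moreover $\tfrac{K_\omega^2-\sigma(X)}{8}=\operatorname{ind}_{\mathbb C}D_{A_0}$ for the canonical spin-c structure is an integer, and equals $\tfrac{1-b^1(X)+b^+(X)}{2}$ because $X$ is almost-complex; hence $b^1(X)+b^+(X)$ is odd and the real-part contribution is $+1$. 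Therefore $\epsilon=(-1)^{\operatorname{ind}_{\mathbb C}D_A}=-(-1)^{(K_\omega^2-\sigma(X))/8}$, which is the asserted identity. For the final statement, if $K_\omega=0$ then $c_1(\mathfrak{s}_\omega)=0$, so $X$ is spin and $16\mid\sigma(X)$ by Rokhlin's theorem; then $(K_\omega^2-\sigma(X))/8=-\sigma(X)/8$ is even and $(-1)^{(K_\omega^2-\sigma(X))/8}=1$, giving $Q_e^{(X,\omega)}=-Q_{-e}^{(X,-\omega)}$.

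I expect the sign computation of $\epsilon$ to be the main obstacle: it requires careful use of the conventions (as in \cite{morgan,KM,kronheimer,monocont}) relating homological orientations to orientations of the Quillen determinant line, in particular the exact sign by which a conjugate-linear automorphism acts on the determinant line of a complex Fredholm operator and the bookkeeping for the one-dimensional space of constants $H^0$. Everything else --- the existence of the charge-conjugation diffeomorphisms and their compatibility with Taubes' perturbations and with the chamber structure of Lemma \ref{lemma:chambers} --- is routine once the identification $\overline{\mathfrak{s}_\omega}=\mathfrak{s}_{-\omega}$ is in place.
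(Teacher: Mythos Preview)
Your proposal is correct and follows essentially the same approach as the paper's proof: both set up the charge-conjugation diffeomorphism between the universal moduli spaces for $\mathfrak{s}_\omega+e$ and $\mathfrak{s}_{-\omega}-e$, verify it intertwines Taubes' perturbations, and then compute the orientation sign on the Quillen determinant as $(-1)^{1+b^1+b^+ +\operatorname{ind}_{\mathbb C}D_A}$, using the almost-complex parity $1+b^1+b^+\equiv 0\pmod 2$ and the Atiyah--Singer computation $\operatorname{ind}_{\mathbb C}D_A=\tfrac{K_\omega^2-\sigma(X)}{8}-1$. The paper cites \cite[Corollary~6.8.4]{morgan} directly for the sign formula whereas you rederive it by hand, and you are slightly more explicit about the chamber compatibility when $b^+<4$; but these are differences of presentation, not of substance.
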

(Note: by the identity $K^2 = 2\chi(X) + 3 \sigma(X)$, which holds on any closed almost-complex $4$-manifold, we can rewrite $\frac{K^2 - \sigma(X) }{8}$ as the topological invariant $\frac{\chi(X) + \sigma(X)}{4}$ or as the Todd genus $\frac{K^2 + \chi (X)}{12}$.)

\begin{proof}
Given an almost Kähler pair $(g, \omega )$ on $X$, denote by $ ( S^{\omega}_{ e} , \rho^{\omega}_{e} )$ denote the spinor bundle, regarded as a Clifford bundle over $(X, g)$, corresponding to the spin-c structure $\mathfrak{s}_\omega + e$. The `charge-conjugation' (\cite[\S 6.8]{morgan}) provides a complex anti-linear isomorphism $c : S^{\omega}_{ e} \to S^{-\omega}_{-e} $ which preserves the Clifford multiplication and conjugates the hermitian metric:
\begin{align}
\rho^{-\omega}_{ -e}\circ c =c \circ \rho^{\omega}_{ e} \quad , \quad  c^\ast h^{-\omega}_{- e} = \overline{h^{\omega}_{ e}}.\label{cliffordconj}
\end{align}

Denote the universal Seiberg--Witten moduli space in the spin-c structure $\mathfrak{s}_\omega +e$ by $\mathscr{M}^{\omega}_{ e} \xrightarrow{\pi^{\omega}_{ e}} \mathscr{P}$. Note that, because $\mathfrak{s}_{-\omega} = \mathfrak{s}_\omega + K_\omega$ then $\mathfrak{s}_{\omega} + e \cong \mathfrak{s}_{-\omega} + e - K_\omega $ and thus the corresponding moduli spaces (and their determinant line bundles) are the same:
\[
\mathscr{M}^{\omega}_{ e} = \mathscr{M}^{-\omega}_{ e - K_\omega} .
\]
It is a basic fact that the perturbed Seiberg--Witten equations are invariant under simultaneously applying charge-conjugation and negating the perturbation, furnishing a diffeomorphism between the universal moduli spaces $c : \mathscr{M}^{\omega}_{ e} \to \mathscr{M}^{-\omega}_{ -e}$ defined by $(A, \Phi, g , \eta ) \to (c(A), c(\Phi ) , g , - \eta )$. Defining $c : \mathscr{P} \to \mathscr{P}$ to be the map $(g , \eta ) \mapsto (g , -\eta )$, we thus obtain a commutative diagram whose horizontal maps are diffeomorphisms:
\begin{equation}\label{conjugation-diagram}
\begin{tikzcd}
\mathscr{M}^{\omega}_{ e} \arrow{d}{\pi^{\omega}_{ e}}\arrow{r}{c} & \mathscr{M}^{-\omega}_{ -e} \arrow{d}{\pi^{-\omega}_{- e}}\\
\mathscr{P} \arrow{r}{c} & \mathscr{P}
\end{tikzcd}.
\end{equation}

Consider now a loop $\omega_t \in \pi_1 \mathcal{S}_0 (X, \omega )$. The construction in \S \ref{subsubsection:taubes} then yields a loop $\gamma^{\omega_t} : S^1 \to \mathscr{P}$ (avoiding the discriminant loci $\Sigma_e$). This was obtained by choosing a family $g_t$ of $\omega_t$--compatible metrics and a constant $r \gg 0$, and setting
\[
\gamma^{\omega_t} (t ) = (g_t , F_{\widehat{A}^{\omega_t}_{0,t}}^+ - r \sigma (\Phi_{0,t}^{\omega_t}) ).
\]
where $(A^{\omega_t}_{0,t} ,\Phi_{0,t}^{\omega_t} )$ denotes the canonical configuration associated to the almost-Kähler pair $(g_t , \omega_t )$ (cf. \S \ref{subsubsection:moduliboundary}). The pair $(A^{\omega_t}_{0,t} ,\Phi_{0,t}^{\omega_t} )$ is characterised uniquely up to gauge-equivalence by the following properties:
\[
| \Phi_{0,t}^{\omega_t}| = 1 \quad , \quad \rho_{0}^{\omega_t} (\omega ) \Phi_{0,t}^{\omega_t} = -2i \Phi_{0,t}^{\omega_t} \quad , \quad D_{A_{0,t}^{\omega_t}}\Phi_{0,t}^{\omega_t } = 0.
\]
By this and (\ref{cliffordconj}) it follows that $c$ takes $(A^{\omega_t}_{0,t} ,\Phi_{0,t}^{\omega_t} )$ to $(A^{-\omega_t}_{0,t} ,\Phi_{0,t}^{-\omega_t} )$. From this and the identities $F_{\widehat{c(A)}} = - F_{\widehat{A}}$, $ \sigma(c(\Phi )) = - \sigma (\Phi )$ we thus have $F_{\widehat{A}_{0,t}^{-\omega_t}} = - F_{\widehat{A}_{0,t}^{\omega_t} }$, $\sigma( \Phi_{0,t}^{-\omega_t} ) = - \sigma ( \Phi_{0,t}^{\omega_t} )$. That is, we have a commuting diagram:
\begin{equation}\label{loop-diagram}
\begin{tikzcd}
\mathscr{P} \arrow{rr}{c} & &  \mathscr{P}\\
& S^1 \arrow{lu}{\gamma^{\omega_t}} \arrow{ru}{\gamma^{-\omega_t}} &  
\end{tikzcd}.
\end{equation}

Combining (\ref{conjugation-diagram}) and (\ref{loop-diagram}) we thus see that the homomorphisms $Q_{e}^{(X,\omega)}$ and $Q_{-e}^{(X,-\omega)}$ agree up to an overall sign $(-1)^{\sigma_e}$ which we determine below.

By (\ref{conjugation-diagram}) there is an induced isomorphism (also denoted $c$) of the associated Quillen determinant line bundles of the Fredholm maps $\pi^{\omega}_{ e}$ and $\pi^{-\omega}_{ -e}$:
\begin{equation}\label{conjugation-det}
\begin{tikzcd}
\mathrm{det}\pi^{\omega}_{ e} \arrow{r}{c}\arrow{d} & \mathrm{det}\pi^{-\omega}_{- e} \arrow{d}\\
\mathscr{M}^{\omega}_{ e} \arrow{r}{c} & \mathscr{M}^{-\omega}_{ -e} 
\end{tikzcd}.
\end{equation}
The homological orientation of $X$ orients the Quillen determinants. By the argument in \cite[Proof of Corollary 6.8.4]{morgan}, it follows that the isomorphism of Quillen determinants (\ref{conjugation-det}) is orientation-preserving or orientation-reversing according as to whether the following mod-2 integer $\sigma_e \in \mathbb{Z}/2$ is $0$ or $1$:
\begin{align*}
\sigma_e = 1 + b^1 (X) + b^{+}(X) + \mathrm{ind}_{\mathbb{C}} D_A \quad \in \mathbb{Z}/2 
\end{align*}
where $\mathrm{ind}_{\mathbb{C}} D_A$ denotes the complex index of the Dirac operator in the spin-c structure $\mathfrak{s}_\omega + e$ (coupled to any spin-c connection $A$). Recall that $1+b^1 (X) + b^+ (X) = 0 \in \mathbb{Z}/2$ for any closed almost-complex $4$-manifold $X$. (Indeed, by a computation using the Hirzebruch Signature Theorem we have
\[
-1 + b^1 (X) - b^+ (X) + \frac{1}{4} ( K^2 - \sigma (X) ) = 0 ,
\]
and $\frac{1}{4} ( K^2 - \sigma (X) ) $ is even since by the Atiyah--Singer Index Theorem $\frac{1}{4} ( K^2 - \sigma (X) ) $ is twice the complex index of the Dirac operator for the canonical spin-c structure associated to the almost complex structure). 
On the other hand, $\mathrm{ind}_\mathbb{C} D_A$ can be computed by the Atiyah--Singer Index Theorem as
\[
\mathrm{ind}_{\mathbb{C}} D_A = \frac{1}{8} \big( c_1 ( \mathfrak{s}_{\omega} +e )^2 - \sigma (X)  \big) 
= -1 + \frac{K_{\omega}^2 - \sigma(X) }{8} .
\]
Putting all together we thus have:
\begin{align}
\sigma_e = 1 + \frac{K_{\omega}^2 - \sigma(X) }{8} \in \mathbb{Z}/2  .\label{sigmae}
\end{align}

In the case $K_\omega = 0 $ then $X$ is spin and Rokhlin's Theorem says that $\sigma(X)$ is divisible by $16$. Thus from (\ref{sigmae}) we have $\sigma_e = 1 \in \mathbb{Z}/2$ in this case. (Thus, the isomorphism (\ref{conjugation-det}) of Quillen determinants is \textit{orientation-reversing} when $K_\omega = 0$).
\end{proof}

\subsubsection{Smirnov's modification of Kronheimer's invariant}

We consider the following modification of Kronheimer's invariant, previously considered by Smirnov \cite{Smirnov} valued in $\mathbb{Z}/2$:

\begin{definition}\label{definition:qtilde}
Let $(X, \omega )$ be a closed symplectic $4$-manifold, and $e\in H^2 (X, \mathbb{Z} )$ a class satisfying $e^2 = -2$, $K_\omega \cdot e = 0$ and $[\omega ]\cdot e = 0$. (In particular, the class $\pm e$ satisfies condition (\ref{condition1}) in $(X,\omega )$; hence both invariants $Q_{\pm e}^{(X, \omega )}$ are defined). Define the homomorphism
\[
q_e  : \pi_1 \mathcal{S}_0 (X, \omega ) \to \mathbb{Z} \quad , \quad q_e = Q_{e}^{(X, \omega )} + Q_{-e}^{(X,\omega )} .
\]
\end{definition}

\begin{proposition}\label{proposition:q}
Let $(X, \omega )$ be a symplectic Calabi--Yau surface (i.e. $K_\omega = 0$) with $b^{+}(X) > 1$. Let $e \in H^2 (X, \mathbb{Z} )$ be a class satisfying $e^2 = -2 $ and $[\omega ]\cdot e = 0$. Then $q_e$ descends to a (unique) homomorphism from the smoothly trivial symplectic mapping class group:
\begin{center}
\begin{tikzcd}
(\pi_1 \mathcal{S}_0 (X, \omega ) )^{\mathrm{ab}} \arrow{r}{q_e} \arrow[twoheadrightarrow]{d}{(\ref{fibration})} &  \mathbb{Z}\\
(\pi_0 \mathrm{Symp}_0 (X, \omega ))^{\mathrm{ab}} \arrow{ru}{q_e} & \, 
\end{tikzcd}.
\end{center}
\end{proposition}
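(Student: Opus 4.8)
The plan is to show that the homomorphism $q_e : \pi_1 \mathcal{S}_0 (X, \omega ) \to \mathbb{Z}$ vanishes on the kernel of the connecting map $\delta : \pi_1 \mathcal{S}_0 (X, \omega ) \to \pi_0 \mathrm{Symp}_0 (X, \omega )$ of the fibration (\ref{fibration}); since $\mathbb{Z}$ is abelian, $q_e$ then factors through $(\pi_0 \mathrm{Symp}_0 (X, \omega))^{\mathrm{ab}}$, and uniqueness is automatic because (\ref{fibration}) induces a surjection on fundamental groups onto $\pi_0 \mathrm{Symp}_0(X,\omega)$ (so the factorization is forced). By exactness of the homotopy sequence of (\ref{fibration}), $\ker \delta$ is precisely the image of $p_\ast : \pi_1 \mathrm{Diff}_0 (X) \to \pi_1 \mathcal{S}_0 (X, \omega )$. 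Thus it suffices to prove that $q_e \circ p_\ast = 0$, i.e. that $Q_e(p_\ast f_t ) + Q_{-e}(p_\ast f_t ) = 0$ for every $f_t \in \pi_1 \mathrm{Diff}_0 (X)$.

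The key input is Lemma \ref{lemma:FSW}, which identifies $Q_{\pm e}(p_\ast f_t ) = FSW_{\pm e}(f_t )$, reducing the claim to $FSW_{e}(f_t ) = - FSW_{-e}(f_t )$ for all $f_t \in \pi_1 \mathrm{Diff}_0(X)$. Here I would invoke the charge-conjugation symmetry of the family Seiberg--Witten invariant: conjugating a spin-c structure $\mathfrak{s}_\omega + e$ to $\mathfrak{s}_\omega + K_\omega - e$ identifies the two families of moduli spaces, and since $K_\omega = 0$ this sends $e \mapsto -e$. The relevant comparison of \emph{chambers} is clean because the symplectic preferred chamber $\phi_\omega$ is, under charge-conjugation, taken to the chamber $\phi_{-\omega}$, which — since $[\omega]\cdot e = 0$ forces $\omega$ itself to be orthogonal to no positive class relevant here, but more to the point since $FSW$ in the chamber $\phi_\omega$ of $(X,\omega)$ is compared against $FSW$ in chamber $\phi_{-\omega}$ of $(X,-\omega)$ — matches up exactly as in the proof of Lemma \ref{lemma:symplecticconjugation}. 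Concretely, the argument of Lemma \ref{lemma:symplecticconjugation} already establishes the analogous sign statement at the level of Kronheimer's invariant: $Q_{e}^{(X,\omega)} = -(-1)^{(K_\omega^2 - \sigma(X))/8} Q_{-e}^{(X,-\omega)}$, and for $K_\omega = 0$ the sign is $-1$ by Rokhlin's theorem. So I would transport this sign computation to the family SW side — either by re-running the charge-conjugation diagram (\ref{conjugation-diagram}), (\ref{loop-diagram}) in the family setting over $B = S^2$, or, more economically, by observing that for a loop $\omega_t = p_\ast f_t$ coming from $\pi_1 \mathrm{Diff}_0(X)$ the conjugate loop $-\omega_t$ is again of the form $p_\ast f_t$ for the \emph{same} $f_t$ (acting now on $(X,-\omega)$), so that Lemma \ref{lemma:FSW} applied on both $(X,\omega)$ and $(X,-\omega)$ together with Lemma \ref{lemma:symplecticconjugation} gives
\[
FSW_{e}(f_t ) = Q_{e}^{(X,\omega)}(p_\ast f_t) = - Q_{-e}^{(X,-\omega)}(-p_\ast f_t) = - FSW_{-e}(f_t ),
\]
using that $FSW_{-e}$ on $(X,-\omega)$ computes the same integer as $FSW_{-e}$ on $(X,\omega)$ since the family SW invariant depends only on the smooth bundle and the family spin-c structure restricted to the fiber, not on $\omega$ (and the chambers $\phi_{\pm\omega}$ lie in the same component of $S(\mathscr H^+_g(X))$ precisely when $b^+(X) \neq 1, 3$; for $b^+(X) > 1$ with the Calabi--Yau hypothesis this is where one must be slightly careful, see below). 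Summing, $q_e(p_\ast f_t) = FSW_e(f_t) + FSW_{-e}(f_t) = 0$, which is what we wanted.

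The main obstacle I anticipate is the bookkeeping of \emph{chambers and homological orientations} in the charge-conjugation comparison: one must check that the chamber $\phi_\omega$ used to define $FSW_e$ on $(X,\omega)$ corresponds under conjugation to the chamber used to define $FSW_{-e}$ on $(X,-\omega)$ without an extra sign, and that the homological orientation is transported consistently — exactly the point where the mod-$2$ index count $\sigma_e = 1 + (K_\omega^2 - \sigma(X))/8$ in (\ref{sigmae}) enters, and where the $K_\omega = 0$ plus Rokhlin's theorem combination is essential to pin the sign to $-1$ rather than $+1$. A secondary subtlety is that the definition of $q_e$ requires \emph{both} $Q_{e}$ and $Q_{-e}$ on $(X,\omega)$ to be defined, which uses $[\omega]\cdot e = 0$ (so $[\omega]\cdot(\pm e) \leq 0$) together with $e^2 = -2$, $K_\omega\cdot e = 0$ giving $(\pm e)^2 - K_\omega\cdot(\pm e) = -2$ — this is why the hypotheses of Proposition \ref{proposition:q} are stated as they are, and I would remark on this compatibility at the outset. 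Everything else is a formal diagram chase with the exact sequence of (\ref{fibration}).
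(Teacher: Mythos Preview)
Your proposal is correct and follows essentially the same approach as the paper: reduce to vanishing on the image of $p_\ast$, invoke Lemma~\ref{lemma:FSW} to pass to family Seiberg--Witten invariants, and combine Lemma~\ref{lemma:symplecticconjugation} with the observation that $K_\omega = 0$ gives $\mathfrak{s}_\omega = \mathfrak{s}_{-\omega}$ and $b^+(X) > 1$ gives $\phi_\omega = \phi_{-\omega}$ to conclude $FSW_e(f_t) + FSW_{-e}(f_t) = 0$. One minor clarification: your parenthetical about $b^+(X) \neq 1, 3$ is not the relevant condition here---that governs when $[S^2, S^{b^+-1}]$ is a singleton---whereas what you actually need is only that the \emph{constant} maps at $\omega$ and $-\omega$ are homotopic, which holds exactly when $S^{b^+-1}$ is connected, i.e.\ $b^+(X) > 1$.
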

\begin{proof}
By (\ref{fibration}) we need to show that $q_e$ vanishes on the image of $\pi_1 \mathrm{Diff}(X) \to \pi_1 \mathcal{S}_0 (X, \omega )$. Letting $\omega_t$ be an element of the latter group given as the image of an element $f_t$ in the former, by Lemma \ref{lemma:FSW} we have
\[
Q_{e}^{(X,\omega )} (\omega_t ) = FSW_{e}^\omega (f_t )\quad , \quad Q_{e}^{(X,-\omega )}(-\omega_t  ) = FSW_{e}^{-\omega}(f_t ).
\]
The condition $K = 0$ is equivalent to $\mathfrak{s}_\omega \cong \mathfrak{s}_{-\omega}$; hence $\mathfrak{s}_\omega +e  \cong \mathfrak{s}_{-\omega} +e$. The chambers $\phi_\omega$ and $\phi_{-\omega}$ are the same if and only if $b^+ (X) >1$. Thus, under the stated assumptions on $(X,\omega )$ (i.e. $K_\omega = 0$ and $b^+ (X) >1$) the above shows:
\[
Q_{e}^{(X,\omega )} (\omega_t ) = Q_{e}^{(X,-\omega )}(-\omega_t  ).
\]
On the other hand, by Lemma \ref{lemma:symplecticconjugation} (the `charge-conjugation' symmetry) we have
\[
Q_{e}^{(X,-\omega)}(- \omega_t ) = - Q_{-e}^{(X, \omega )}(\omega_t ).
\]
All combined we obtain $Q^{(X,\omega )}_e (\omega_t ) + Q_{-e}^{(X, \omega )} (\omega_t ) = 0$, as required.
\end{proof}

The invariant $q_e$ on $\pi_0 \mathrm{Symp}_0 (X, \omega )$ is natural in the following sense:
\begin{proposition}\label{proposition:naturalityq}
Let $(X, \omega )$ be symplectic Calabi--Yau surface with $b^+ (X) >1$. Let $f  \in \pi_0 \mathrm{Symp}(X, \omega )$. Let $e \in H^2 (X  , \mathbb{Z} )$ be a class such that $e^2 - K_{\omega} \cdot e = -2$ and $[\omega ] \cdot e \leq 0$. Fix any homological orientation of $(X, \omega )$. Let $\sigma(f)  = 0$ or $1$ according as to whether $f^\ast$ preserves or reverses the orientation of $H^1 (X, \mathbb{R} ) \oplus H^+ (X ,\mathbb{R} )$. Then for any $\phi \in \pi_0 \mathrm{Symp}_0 (X, \omega )$ we have:
\begin{align*}
q_{e}^{(X , \omega )} ( f \phi f^{-1} ) = (-1)^{\sigma(f)}  \cdot q_{f^\ast e }^{(X, \omega)} ( \phi ) .
\end{align*}
In particular, if $f$ is a product of Dehn--Seidel twists on Lagrangian spheres embedded in $(X, \omega )$ then $\sigma(f) = 0$ in the above formula. 
\end{proposition}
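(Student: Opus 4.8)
The plan is to reduce Proposition~\ref{proposition:naturalityq} to the naturality of Kronheimer's invariant on loops of symplectic forms (Corollary~\ref{corollary:naturalityQ}), combined with the fact (Proposition~\ref{proposition:q}) that $q_e = Q_e^{(X,\omega)} + Q_{-e}^{(X,\omega)}$ descends from $\pi_1\mathcal{S}_0(X,\omega)$ to a well-defined homomorphism on $\pi_0\mathrm{Symp}_0(X,\omega)$. The strategy is to lift $\phi$ to a loop of symplectic forms, transport the conjugation to the $\pi_1\mathcal{S}_0$-level, and then apply the already-established naturality of $Q_e$ term by term.

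First I would lift $\phi$. Since the connecting map $\delta$ of the fibration~(\ref{fibration}) is surjective (the relevant portion of the long exact sequence ends with $\pi_0\mathrm{Diff}_0(X) = 1$), choose $\{\omega_t\}\in\pi_1\mathcal{S}_0(X,\omega)$ with $\delta(\{\omega_t\}) = \phi$; by Proposition~\ref{proposition:q} the value $q_e(\phi)$ does not depend on this choice. Lift $\{\omega_t\}$ further to a path $g_t\in\mathrm{Diff}_0(X)$ with $g_0 = \mathrm{Id}$ and $g_t^\ast\omega = \omega_t$, so that $g_1$ represents $\phi$. The key observation is that the conjugated loop $\{(f^{-1})^\ast\omega_t\}$ represents $f\phi f^{-1}$ under $\delta$: the path $h_t := f g_t f^{-1}\in\mathrm{Diff}_0(X)$ has $h_0 = \mathrm{Id}$ and, using $f^\ast\omega = \omega$, satisfies $h_t^\ast\omega = (f^{-1})^\ast g_t^\ast f^\ast\omega = (f^{-1})^\ast\omega_t$, while $h_1 = f g_1 f^{-1}$ represents $f\phi f^{-1}$ (which lies in $\mathrm{Symp}_0(X,\omega)$, the latter being a normal subgroup of $\mathrm{Symp}(X,\omega)$). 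Moreover, since $f$ is a symplectomorphism, $f^\ast$ fixes $[\omega]$ and $K_\omega$ and is an isometry of the intersection form, so $f^\ast e$ and $f^\ast(-e) = -f^\ast e$ satisfy the same hypotheses as $e$ and $q_{f^\ast e}$ is defined.

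With this in hand, the computation is immediate. Applying Corollary~\ref{corollary:naturalityQ} termwise:
\begin{align*}
q_e(f\phi f^{-1}) &= q_e\big(\{(f^{-1})^\ast\omega_t\}\big) = Q_e^{(X,\omega)}\big((f^{-1})^\ast\omega_t\big) + Q_{-e}^{(X,\omega)}\big((f^{-1})^\ast\omega_t\big)\\
&= (-1)^{\sigma(f)}\Big(Q_{f^\ast e}^{(X,\omega)}(\omega_t) + Q_{-f^\ast e}^{(X,\omega)}(\omega_t)\Big) = (-1)^{\sigma(f)}q_{f^\ast e}(\phi),
\end{align*}
where $\sigma(f)$ is the same mod-$2$ quantity in both statements (measuring whether $f^\ast$ preserves the orientation of $H^1(X,\mathbb{R})\oplus H^+(X,\mathbb{R})$), and the first and last equalities invoke Proposition~\ref{proposition:q}. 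For the final assertion, when $f = \tau_{L_1}\cdots\tau_{L_k}$ is a product of Dehn--Seidel twists one has $\sigma(f) = 0$ by the argument recorded just after Corollary~\ref{corollary:naturalityQ}: each $\tau_{L_i}$ is supported in a neighborhood $D(T^\ast L_i)\cong D(T^\ast S^2)$ which is negative-definite with boundary a rational homology sphere ($\cong\mathbb{R}P^3$), hence $\tau_{L_i}$ acts trivially on $H^+(X,\mathbb{R})$, and it acts trivially on $H^1(X,\mathbb{Z})$; so $f^\ast$ preserves the orientation of $H^1(X,\mathbb{R})\oplus H^+(X,\mathbb{R})$.

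The only genuinely delicate point is the middle paragraph: one must verify that conjugation by $f$ on $\pi_0\mathrm{Symp}_0(X,\omega)$ corresponds under $\delta$ to the $(f^{-1})^\ast$-action on $\pi_1\mathcal{S}_0(X,\omega)$, and that the defining hypotheses of $q_e$ transfer to $f^\ast e$; both are routine once one unwinds the connecting homomorphism of~(\ref{fibration}) and uses that $f$ is a symplectomorphism. Everything else is a formal consequence of Corollary~\ref{corollary:naturalityQ} and Proposition~\ref{proposition:q}.
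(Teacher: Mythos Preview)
Your proof is correct and follows essentially the same approach as the paper: lift $\phi$ to a loop $\omega_t$ via the connecting map $\delta$, verify that $\delta((f^{-1})^\ast\omega_t) = f\phi f^{-1}$ by the same Moser-path computation, and then apply Corollary~\ref{corollary:naturalityQ} termwise to $Q_e + Q_{-e}$. The paper's proof is slightly terser but identical in content, including the same observation about the $\delta$-equivariance and the same appeal to the argument after Corollary~\ref{corollary:naturalityQ} for the Dehn--Seidel twist case.
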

\begin{proof}
This follows from Corollary \ref{corollary:naturalityQ}, together with the following observation: if a loop $\omega_t \in \pi_1 \mathcal{S}_0 (X, \omega )$ maps to $\phi \in \pi_0 \mathrm{Symp}_0 (X, \omega )$ under the connecting map $\delta : \pi_1 \mathcal{S}_{0}(X, \omega ) \to \pi_0 \mathrm{Symp}_0 (X, \omega )$ of the fibration (\ref{fibration}), then the loop $(f^{-1})^\ast \omega_t$ maps to $f \phi f^{-1}$. For how to see the latter, note that $\delta$ can be described as follows. For a loop $\omega_t \in \pi_1 \mathcal{S}_{0} (X, \omega )$ Moser's argument provides an isotopy $\phi_t$, $t \in [0,1]$, through diffeomorphisms with $\phi_0 = \mathrm{Id}$ and $\omega_t = \phi_{t}^\ast \omega$ for all $t \in [0,1]$. Thus $\phi := \phi_1$ is a smoothly trivial symplectomorphism, and by definition $\delta ( [\omega_t ] ) = \phi $. Since
\begin{align*}
(f^{-1} )^\ast \omega_t =(f^{-1} )^\ast \phi_{t}^\ast \omega = (f^{-1} )^\ast \phi_{t}^\ast f^\ast \omega =( f \phi_t f^{-1} )^\ast \omega 
\end{align*}
it follows that $\delta ( (f^{-1})^\ast \omega_t ) = f \phi_1  f^{-1} = f \phi f^{-1}$, as required.
\end{proof}

The next result describes the symmetries of the invariant $q_e$, which follow readily from its definition and Lemma \ref{lemma:symplecticconjugation}:

\begin{corollary}\label{corollary:symmetries}
Let $(X, \omega )$ be a symplectic Calabi--Yau surface (i.e. $K_\omega = 0$) with $b^{+}(X) >1$. Let $e \in H^2 (X, \mathbb{Z} )$ be a class satisfying $e^2 = -2 $ and $[\omega ]\cdot e = 0$. Then
\begin{enumerate}
\item $q_{e}^{(X,\omega )} = q_{-e}^{(X, \omega )}$
\item $q_{e}^{(X, \omega )} = - q_{e}^{(X, -\omega )}$.
\end{enumerate}
\end{corollary}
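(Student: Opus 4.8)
The plan is to deduce both identities directly from the definition $q_e = Q_{e}^{(X,\omega)} + Q_{-e}^{(X,\omega)}$ together with the charge-conjugation symmetry already established in Lemma \ref{lemma:symplecticconjugation}; no new geometric input is required, and the argument is essentially a bookkeeping computation.

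For assertion (1) I would argue purely formally. The class $-e$ satisfies the same numerical hypotheses as $e$, namely $(-e)^2 = -2$, $K_\omega \cdot (-e) = 0$ and $[\omega]\cdot(-e) = 0$, so $q_{-e}$ is defined, and unwinding Definition \ref{definition:qtilde} gives
\[
q_{-e}^{(X,\omega)} = Q_{-e}^{(X,\omega)} + Q_{-(-e)}^{(X,\omega)} = Q_{-e}^{(X,\omega)} + Q_{e}^{(X,\omega)} = q_{e}^{(X,\omega)}.
\]
For assertion (2) I would invoke Lemma \ref{lemma:symplecticconjugation} twice. Since $K_\omega = 0$, the specialised form of that lemma gives, under the canonical identification (\ref{pi1Sconj}) $\pi_1\mathcal{S}_0(X,\omega)\cong\pi_1\mathcal{S}_0(X,-\omega)$, $\omega_t\mapsto -\omega_t$, the two identities
\[
Q_{e}^{(X,\omega)} = -\,Q_{-e}^{(X,-\omega)}, \qquad Q_{-e}^{(X,\omega)} = -\,Q_{e}^{(X,-\omega)},
\]
the second being the first applied with $e$ replaced by $-e$, which is legitimate since $-e$ also satisfies the standing hypotheses (and, via $K_{-\omega}=-K_\omega=0$, condition (\ref{condition1}) for both $\pm\omega$). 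Adding these and regrouping according to Definition \ref{definition:qtilde} yields
\[
q_{e}^{(X,\omega)} = Q_{e}^{(X,\omega)} + Q_{-e}^{(X,\omega)} = -\big(Q_{-e}^{(X,-\omega)} + Q_{e}^{(X,-\omega)}\big) = -\,q_{e}^{(X,-\omega)},
\]
which is the claim.

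The only point requiring any care — and the closest thing to an obstacle — is consistency of the auxiliary data across the two symplectic forms: one must check that all four invariants in the computation are defined, that the underlying oriented $4$-manifold and the space $H^1(X,\mathbb{R})\oplus H^+(X,\mathbb{R})$ orienting the Quillen determinant are literally the same for $(X,\omega)$ and $(X,-\omega)$, so that a single homological orientation serves throughout, and that the identification (\ref{pi1Sconj}) is applied consistently when comparing loops on the two sides. All of these are routine given the setup of \S\ref{section:kronheimer}, so I expect the corollary to follow in a few lines once Lemma \ref{lemma:symplecticconjugation} is in hand.
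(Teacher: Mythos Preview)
Your proof is correct and matches the paper's own reasoning: the paper simply remarks that the corollary ``follow[s] readily from its definition and Lemma \ref{lemma:symplecticconjugation}'', and your computation is exactly the unwinding of that sentence.
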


\begin{remark}
We will use the invariant $q_e$ to probe the non-triviality of the squared Dehn--Seidel twist element $\tau_{L}^2 \in \pi_0 \mathrm{Symp}_0 (X, \omega )$. 
For that we shall employ $e = \mathrm{PD}([L])$ where $[L]$ is the fundamental class of $L$ associated to a choice of orientation. From this point of view, in order for $q_e (\tau_{L}^2 )$ to be non-vanishing for said class $e$ the symmetries of $q_e$ described in Corollary \ref{corollary:symmetries} are a necessary requirement, as they are analogous to the symmetries of the Dehn--Seidel twist (cf. (\ref{symmetry1}) and Lemma \ref{lemma:symmetries}). 

\end{remark}

\subsection{An Excision Property}


The next result describes an \textit{excision property} of Kronheimer's invariant, which was suggested in \cite[\S 5]{kronheimer}. We will make extensive use of this property when performing calculations.

\begin{proposition}\label{proposition:excision}
Let $(X, \omega )$ be a closed symplectic $4$-manifold, and let $M \subset X$ be a codimension zero compact symplectic submanifold with convex boundary on a contact $3$-manifold $(\partial M  , \xi )$. Let $\iota_\ast : H^2 (M , \partial M  , \mathbb{Z}) \to H^2 (X , \mathbb{Z} )$ denote the natural map. Let $e \in H^2 (X, \mathbb{Z} )$ be a cohomology class such that $e^2 - K_\omega \cdot e = -2$, $[\omega ]\cdot e \leq 0$ and $e \in \mathrm{Im}\iota_\ast $. Let $\omega_t $ be a loop in $\mathcal{S}_0 (X, \omega )$ which equals $\omega$ on a neighborhood of $X \setminus \mathrm{Int}M$. Fix homological orientations of $X$ and the compact $4$-manifold $(M , \xi )$ with convex contact boundary. Then
\[
Q_{e}^{(X,\omega )} (\omega_t ) = \pm \sum_{\widetilde{e} \in (\iota_\ast )^{-1} (e) } Q_{\widetilde{e}}^{(M, \omega|_M ) } ( \omega_{t}|_M ).
\]
(Furthermore, the sign $\pm$ only depends on the homological orientations of $X$ and the compact $4$-manifold $(M,\xi )$ with convex contact boundary, but not on the class $e$ or the loop $\omega_t$.)
\end{proposition}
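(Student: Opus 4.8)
\emph{Proof strategy.} The plan is to realise both sides of the asserted identity as intersection numbers of filling disks with the Fredholm projections of the relevant universal moduli spaces, and then to match those intersection numbers using the neck-stretching/gluing analysis of Mrowka--Rollin \cite{mrowka-rollin}. First I would lift the loop $\omega_t$ to a loop of almost-Kähler pairs $(\omega_t, g_t)$ in $\mathcal{AK}_0(X,\omega)$ whose metric $g_t$ equals a fixed metric $g$ on a neighbourhood of $X\setminus\mathrm{Int}\,M$; this is possible precisely because $\omega_t=\omega$ there. Fixing $r\gg0$ as in Lemma~\ref{lemma:taubes}, the associated Taubes loop $\gamma(t)=(g_t,\ F^{+}_{\widehat A_{0,t}}-r\,\sigma(\Phi_{0,t}))$ in the perturbation space $\mathscr{P}$ of $X$ avoids $\Sigma_e$ and represents $Q^{(X,\omega)}_e(\omega_t)=\mathrm{lk}(\gamma,\Sigma_e)$. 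Restricting these data to $M$ and extending over the symplectisation end of $M$ by the canonical configuration of $(\omega,g)$, one obtains for each relative class $\widetilde e$ with $\iota_\ast\widetilde e=e$ a loop $\gamma_M$ in the perturbation space $\mathscr{P}^{(M)}$ avoiding $\Sigma^{(M)}_{\widetilde e}$, with $\mathrm{lk}(\gamma_M,\Sigma^{(M)}_{\widetilde e})=Q^{(M,\omega|_M)}_{\widetilde e}(\omega_t|_M)$ (note $M$ has non-empty boundary, so no chamber is involved there, and all $\widetilde e\in(\iota_\ast)^{-1}(e)$ satisfy condition~(\ref{condition1}) since $\iota_\ast$ preserves the pairings $\widetilde e^2$, $K\cdot\widetilde e$, $[\omega]\cdot\widetilde e$).

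Next I would choose a generic filling disk $f_M\colon D\to\mathscr{P}^{(M)}$ of $\gamma_M$, transverse to $\pi^{(M)}_{\widetilde e}$ for all $\widetilde e\in(\iota_\ast)^{-1}(e)$ simultaneously (there is no reducible locus to avoid on $M$), and build from it a filling disk $f_X\colon D\to\mathscr{P}$ of $\gamma$ as follows: insert a long neck $[-T,T]\times\partial M$ along $\partial M\subset X$; over $M$ (with part of its symplectisation end identified with half of the neck) use the family $f_M$; over $X\setminus\mathrm{Int}\,M$ together with the other half of the neck use the fixed canonical data of $(\omega,g)$ with the same parameter $r$. For $r,T$ large this is a legitimate filling: it restricts to $\gamma$ on $\partial D$, its image avoids $\mathscr{P}^{\mathrm{red}}_e$, and it can be taken transverse to $\pi_e$. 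When $b^{+}(X)<4$, one arranges in addition that $f_X$ stays in the preferred chamber $\mathbb{K}_{[\omega]}(X)$: on $X\setminus\mathrm{Int}\,M$ the self-dual harmonic part of the perturbation is $r[\omega]$ up to a bounded error, which dominates for $r$ large (cf. the computation in \S\ref{subsubsection:taubes}), and the fixed filling $f_M$ contributes only a bounded perturbation on $M$.

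The heart of the argument is then the identification of fibre products. By Taubes' a priori estimates, for $r,T$ large every solution $([A,\Phi],f_X(z))\in\mathrm{Fib}(f_X,\pi_e)$ has $\Phi$ exponentially close, over $X\setminus\mathrm{Int}\,M$ and over the neck, to the unit-length canonical spinor, so $([A,\Phi])$ is a gluing of the (unique, rigid, unobstructed) canonical solution on $X\setminus\mathrm{Int}\,M$ to a finite-energy solution on the symplectisation completion $\widehat M$ for the perturbation $f_M(z)$; conversely, Mrowka--Rollin's gluing theorem \cite{mrowka-rollin} shows this gluing is a bijection for $T\gg0$, with no Floer-type multiplicities intervening because the limiting configuration along $\partial M$ is the canonical contact generator, which is non-degenerate. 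Since $e\in\mathrm{Im}\,\iota_\ast$, the restriction of $\mathfrak{s}_\omega+e$ to $M$ is a relative spin-c structure of the form $\mathfrak{s}_\omega+\widetilde e$ with $\widetilde e\in(\iota_\ast)^{-1}(e)$, so this yields a bijection
\[
\mathrm{Fib}(f_X,\pi_e)\ \cong\ \coprod_{\widetilde e\in(\iota_\ast)^{-1}(e)}\mathrm{Fib}\big(f_M,\pi^{(M)}_{\widetilde e}\big),
\]
the disjoint union being effectively finite by a uniform energy bound over the compact set $f_M(D)$. Mrowka--Rollin's gluing also provides a compatible isomorphism of the Quillen determinant lines (the line $\Lambda(\partial M,\xi)$ of the symplectisation end occurring once on each side), so the bijection changes signs by a single global factor $\varepsilon=\pm1$ depending only on the fixed homological orientations of $X$ and of $(M,\xi)$, not on $e$ or on $\omega_t$. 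Taking signed counts gives $Q^{(X,\omega)}_e(\omega_t)=f_X\cdot\pi_e=\varepsilon\sum_{\widetilde e}f_M\cdot\pi^{(M)}_{\widetilde e}=\varepsilon\sum_{\widetilde e}Q^{(M,\omega|_M)}_{\widetilde e}(\omega_t|_M)$, as claimed.

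The main obstacle is the precise invocation of the gluing theorem. One must (i) establish the concentration and exponential-decay estimates forcing a solution on $X$ to degenerate, along the stretched neck, into the canonical solution on $X\setminus\mathrm{Int}\,M$ glued to a finite-energy solution on $\widehat M$ --- here the convexity of $(\partial M,\xi)$ and Taubes' estimates, applied with large $r$ on the region $X\setminus\mathrm{Int}\,M$ where the symplectic form is the fixed $\omega$ and the spin-c structure is canonical, are what is needed; (ii) prove that the linearised gluing map is an isomorphism for $T\gg0$, using that the canonical configuration on the symplectisation end is unobstructed; and (iii) track the determinant-line gluing carefully enough to see that the sign discrepancy is uniform. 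Parts (i)--(ii) are the substantive analytic input supplied by \cite{mrowka-rollin} (compare also the neck-stretching arguments for the Seiberg--Witten equations on symplectic manifolds with contact boundary in \cite{monocont}); part (iii) is delicate but routine bookkeeping. A minor additional point, for closed $X$ with $b^{+}(X)<4$, is to check that the preferred-chamber condition is preserved under this gluing, which follows from the discussion in the second paragraph.
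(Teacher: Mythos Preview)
Your proposal is correct and follows essentially the same route as the paper, which simply invokes the parametric version of Mrowka--Rollin's gluing theorem \cite{mrowka-rollin} to identify the relevant moduli spaces on $X$ and on the symplectisation completion $\widehat{M}$. The paper's own argument is only a two-paragraph sketch; you have filled in considerably more detail (the chamber check when $b^{+}(X)<4$, the orientation bookkeeping, the finiteness of the sum) that the paper leaves implicit.

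One small difference of framing: the paper phrases the degeneration not as inserting a long neck $[-T,T]\times\partial M$ but as regarding $Z':=X\setminus\mathrm{Int}\,M$ with $\omega$ as an \emph{asymptotically flat almost-K\"ahler} (AFAK) end in the sense of \cite{mrowka-rollin}, and then performing a \emph{large dilation} of that end. This is the native language of Mrowka--Rollin's theorem (their Theorem~E compares moduli spaces on a manifold with AFAK end $Z'$ to those on the same manifold with $Z'$ replaced by a standard conical AFAK end $Z$, after sufficient dilation). Your neck-stretching picture is morally equivalent and the underlying analysis is the same, but if you want to cite \cite{mrowka-rollin} directly you may find it cleaner to adopt their AFAK/dilation setup rather than the cylindrical neck-stretching description.
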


\begin{remark}
By standard results in Seiberg--Witten theory (see \cite[Theorem 5.2.4]{morgan}) it follows that, for a given loop $\omega_t$, only finitely many classes $\widetilde{e} \in (\iota_\ast )^{-1} (e)$ have $Q_{\widetilde{e}}^{(M , \omega|_M )} (\omega_{t}|_M) \neq 0$. Hence the right hand side in the formula from Proposition \ref{proposition:excision} is well-defined.
\end{remark}

Proposition \ref{proposition:excision} is a consequence of the parametric version of a Gluing Theorem by Mrowka and Rollin \cite[Theorem E]{mrowka-rollin} (see also \cite[Remark 2.2.3]{mrowka-rollin}). Briefly, one regards $Z^\prime := ( X \setminus \mathrm{Int}M , \omega  )$ as an \textit{asymptotically flat almost-Kähler end} (AFAK, for short) in the sense of \cite[Definition 2.1.2]{mrowka-rollin}, and then considers the \textit{non-compact} $4$-manifold $\widehat{M}$ obtained by replacing the AFAK end $Z^\prime$ by the non-compact AFAK end $Z$ given by the symplectisation of $(\partial M, \xi )$. Thus, we have decompositions
\[
X = M \cup Z^\prime  \quad , \quad \widehat{M} = M \cup Z .
\]
The techniques of \cite{mrowka-rollin}, straightforwardly generalised to parametric families, establish a diffeomorphism of the relevant moduli spaces on $X$ and $\widehat{M}$ (after a suitably large dilation of the AFAK end $(X \setminus \mathrm{Int}M , \omega )$, see \cite[\S 2]{mrowka-rollin}). 

We also note that the gluing theory of Mrowka--Rollin has been developed further in more recent work by Echeverria \cite{echeverria}.

\subsection{Calculations}

This subsection discusses calculations of Kronheimer's invariant in two simple situations involving a single Lagrangian sphere (`$A_1$--configuration') or two Lagrangian spheres which intersect at a single point only and transversely (`$A_2$--configuration'). For this we combine the Excision Property (Proposition \ref{proposition:excision}) with Kronheimer's calculation (Proposition \ref{proposition:kronheimercalculation} below). In particular, we will establish that the Dehn--Seidel twist $\tau_L$ always has infinite order for a symplectic $K3$ surface (Proposition \ref{proposition:calculation}).

\subsubsection{$A_1$ degenerations and simultaneous resolution}

What follows is a description of Kronheimer's calculation (\cite[\S 3-4]{kronheimer}) with some additions.\\

Consider a complex-analytic family of closed complex surfaces $X_t$ over the disk in the complex plane:
 \[
 X_t \to \mathscr{X} \xrightarrow{\pi} B(\mathbb{C} ) \ni t .
 \]
\begin{definition}\label{definition:A1}
We shall the family $\mathscr{X} \xrightarrow{\pi} B(\mathbb{C} )$ a \textit{projective $A_1$ degeneration} if it satisfies the following assumptions:
\begin{itemize}
\item $X_t$ is non-singular when $t \neq 0$.
\item $X_0$ has an ordinary double-point singularity (an $A_1$ singularity) at a point $x_0 \in X_0$. Namely, the germ of $(X_0 , x_0 )$ is analytically equivalent to $(\{z_{1}^2 + z_{2}^2 + z_{3}^2 = 0 \} , 0 )$). In addition, $X_0 \setminus \{ x_0 \}$ is non-singular.
\item The total space $\mathscr{X}$ is embedded in $\mathbb{C}P^N \times B(\mathbb{C})$ (for some $N >0$), with $\pi$ given by the restriction to $\mathscr{X}$ of the projection map $\mathbb{C}P^N \times B(\mathbb{C}) \to B(\mathbb{C} )$. (In particular, the fibers $X_t \subset \mathbb{C}P^N$ are projective complex  algebraic surfaces).
\item The germ of the map $\pi : (\mathscr{X} , x_0 ) \to (\mathbb{C} , 0 )$ is analytically equivalent to the semi-universal deformation of the ordinary double-point:
\[
(\{ z_{1}^2 + z_{2}^2 + z_{3}^2 = t \} , 0 ) \xrightarrow{t} ( \mathbb{C} , 0 ) .
\]
\end{itemize}
\end{definition}


Let $\pi^\prime : \mathscr{X}^\prime \to B(\mathbb{C} )$ be the family of complex surfaces $X_{u}^\prime = (\pi^\prime )^{-1}(u)$ over the disk $B(\mathbb{C} )\ni u$ given by the base change of $\pi : \mathscr{X} \to B(\mathbb{C} )$ by $u \mapsto t = u^2$. Thus $X_{u}^\prime = X_{u^2}$.

Consider then the \textit{simultaneous resolution} of the universal unfolding of the ordinary double-point (\cite{atiyah}):
\[
\begin{tikzcd}
\widetilde{\mathscr{X}} \arrow{rd}{\widetilde{\pi}} \arrow{rr}{\rho} &  &  \mathscr{X}^\prime \arrow{ld}{\pi^\prime} \\
& B(\mathbb{C} ) & 
\end{tikzcd}.
\]
In the analytic local model $(\mathscr{X}^\prime, 0 )  = ( \{ z_{1}^2 + z_{2}^2 + z_{3}^2 = u^2 \} , 0 ) \subset \mathbb{C}^3 \times B(\mathbb{C} )$ this can be described as follows. Use $\xi_- : \xi_+$ to denote homogeneous coordinates on $\mathbb{C}P^1$. Then $\widetilde{\mathscr{X}} \subset \mathbb{C}^3 \times \mathbb{C}P^1 \times B(\mathbb{C} )$ is given by the two equations
\begin{align*}
\xi_- \cdot  (z_1 + i z_2 ) = \xi_+ \cdot (u + z_3 ) \quad , \quad \xi_+ \cdot  (z_1 - i z_2 ) = \xi_- \cdot (u - z_3 )
\end{align*}
with $\widetilde{\pi}$ given by projection to $u$, and $\rho$ given by the projection to $(z_1 , z_2 , z_3 , u )$. 

One can easily check that $\widetilde{X}_u = X_{u}^\prime$ for $u \neq 0$ and $\widetilde{X}_0$ is now a non-singular surface, given by the blowing up of the singular point $x_0 \in X_{0}^\prime  = X_{0}$ to a rational curve $E_0 \subset \widetilde{X}_0$ with self-intersection number $E_0 \cdot E_0 = -2$. The family $\widetilde{\pi}:\widetilde{\mathscr{X}} \to B(\mathbb{C})$ is now a smooth fiber bundle over the disk and hence is diffeomorphic to a product bundle (in a homotopically canonical fashion): 
\[
\widetilde{\mathscr{X}} \cong X \times B(\mathbb{C}),
\]
where we denote $X := \widetilde{X}_1 = X^{\prime}_1 = X_1$.

We let $\omega_{\mathbb{C}P^N}$ be the Fubini--Study form on $\mathbb{C}P^N$; and for $u \neq 0$ we let $\omega_u$ be its restriction to $X_{u}^\prime \subset \mathbb{C}P^N$. Using the product structure of $\widetilde{\mathscr{X}}$, we can regard $\{\omega_u \}_{u \in \partial B(\mathbb{C} }$ as a loop of Kähler forms on the fixed manifold $X$ based at $\omega = \omega_1$, such that the symplectic class $[\omega_u ] \in H^2 (X, \mathbb{Z} )$ \textit{remains constant}; hence $\{ \omega_u \}_{u \in \partial B(\mathbb{C} )}$ defines an element of $\pi_1 \mathcal{S}_0 (X, \omega )$.

\begin{proposition}\label{proposition:kronheimercalculation}
In the above setting of a projective $A_1$ degeneration (cf. Definition \ref{definition:A1}), suppose $e \in H^2 (X, \mathbb{Z} )$ satisfies (\ref{condition1}) (i.e. $e^2 - K_\omega \cdot e = -2$ and $[\omega ] \cdot e \leq 0$). Fix a homological orientation of $X$. Then:
\[
Q_{e}^{(X,\omega )} (\{  \omega_u  \}_{u \in \partial B (\mathbb{C}) }) = 
\begin{cases} \pm 1 & \text{ if } e = PD( [E_0] )\\
0 & \text{ otherwise}
\end{cases}.
\]
\end{proposition}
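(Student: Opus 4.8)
The plan is to follow Kronheimer's computation \cite[\S 3--4]{kronheimer}. By construction $Q_e^{(X,\omega)}(\{\omega_u\}_{u\in\partial B(\mathbb{C})}) = \mathrm{lk}(\gamma,\Sigma_e)$, where $\gamma\colon \partial B(\mathbb{C})\to\mathscr{P}$ is the loop of perturbations produced from $\{\omega_u\}$ by Taubes' construction of \S\ref{subsubsection:taubes}; the task is to compute this linking number by exhibiting an explicit disk $f\colon B(\mathbb{C})\to\mathscr{P}$ with $f|_{\partial B(\mathbb{C})}=\gamma$ built out of the $2$-parameter family carried by the simultaneous resolution $\widetilde{\mathscr{X}}\to B(\mathbb{C})$ itself, and counting Seiberg--Witten solutions over it. Over the part of $B(\mathbb{C})$ where $\rho^\ast\omega_{\mathbb{C}P^N}$ stays non-degenerate one uses the genuine Kähler pairs $(g_u,\omega_u)$ (Fubini--Study restricted, so $[\omega_u]=[\omega]$ is fixed); near $u=0$, where $\rho^\ast\omega_{\mathbb{C}P^N}$ degenerates along $E_0$, one modifies the construction by gluing a rescaled Eguchi--Hanson (ALE hyperkähler) metric onto the $D_\varepsilon(T^\ast S^2)$-neighbourhood of $E_0$, producing for each $u$ an almost-Kähler pair $(\widetilde g_u,\widetilde\omega_u)$ on $X$ which agrees with $(g_u,\omega_u)$ for $\lvert u\rvert\geq\delta$ and which at $u=0$ makes $\widetilde{X}_0$ a Kähler surface in which $E_0$ is a holomorphic $(-2)$-curve of small positive symplectic area. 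Feeding $(\widetilde g_u,\widetilde\omega_u)$ and Taubes' perturbation with $r\gg 0$ into \S\ref{subsubsection:taubes} defines $f$; after a small generic perturbation fixing $\partial B(\mathbb{C})$ (and staying in the chamber determined by $[\omega]$ when $b^{+}(X)<4$) the fibre product $\mathrm{Fib}(f,\pi_e)$ is a compact oriented $0$-manifold and $Q_e(\{\omega_u\})=\#\,\mathrm{Fib}(f,\pi_e)$.

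The next step is to localize the count at $u=0$. Over the closed set of $u$ for which $\widetilde\omega_u$ has vanishing $E_0$-area (containing $\partial B(\mathbb{C})$ and everything outside a small disk about $0$) the pairs $(\widetilde g_u,\widetilde\omega_u)$ form a compact family in $\mathcal{AK}_0(X,\omega)$, so Lemma \ref{lemma:taubes} rules out solutions there once $r\gg 0$. For $u$ near $0$ one argues by rescaling: a sequence of solutions with $u_i\to 0$ subconverges, after rescaling the $E_0$-neighbourhood, to a finite-energy solution on the Eguchi--Hanson space — equivalently, via Taubes' compactness, to a $J_0$-holomorphic curve on $\widetilde{X}_0$ in the class $e$. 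On the Eguchi--Hanson space the only finite-energy solution lives in the spin-c structure $\mathfrak{s}_\omega+\mathrm{PD}([E_0])$ (detected by the unique $L^2$, necessarily anti-self-dual, harmonic $2$-form, which is dual to $E_0$), and on $\widetilde{X}_0$ the only irreducible curve in a class $e$ with $e^2-K\cdot e=-2$ and $[\widetilde\omega_0]\cdot e$ small is $E_0$ itself; either way this forces $e=\mathrm{PD}([E_0])$, and then there is exactly one solution, built from the holomorphic section of $\mathscr{O}_{\widetilde{X}_0}(E_0)$. Hence when $e\neq\mathrm{PD}([E_0])$ the disk $f$ misses $\Sigma_e$ altogether and $Q_e(\{\omega_u\})=0$.

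When $e=\mathrm{PD}([E_0])$ it remains to check that this single solution contributes $\pm1$. For this I would verify that the $2$-parameter moduli space is regular at the solution over $u=0$: the linearization of the Seiberg--Witten equations at the $E_0$-solution on $\widetilde{X}_0$ has $2$-dimensional cokernel (the solution is infinitesimally rigid, while $\mathrm{ind}(\pi_e)=-2$), and the $u$-derivative of $f$ at $0$ — which records both the switching-off of the Kähler area of $E_0$ and the loss of holomorphicity of $E_0$ across the family — is shown to surject onto that cokernel; granting this, $\mathrm{Fib}(f,\pi_e)$ is a single transverse point whose sign depends only on the chosen homological orientation of $X$, giving $Q_e(\{\omega_u\})=\pm1$. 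A convenient alternative organization of the same argument uses the Excision Property, Proposition \ref{proposition:excision}: a parametric Moser argument on the complement of a Weinstein neighbourhood $M\cong D_\varepsilon(T^\ast S^2)$ of the Lagrangian vanishing cycle (using that $[\omega_u]$ is constant) shows $\{\omega_u\}_{u\in\partial B(\mathbb{C})}$ is homotopic in $\mathcal{S}_0(X,\omega)$ to a loop equal to $\omega$ off $M$, so that $Q_e^{(X,\omega)}(\{\omega_u\})=\pm Q_{\widetilde e}^{(M,\omega|_M)}(\text{local loop})$ for the unique $\widetilde e\in H^2(M,\partial M;\mathbb{Z})$ over $e$ (when $e\in\mathrm{Im}\,\iota_\ast$; and the count is visibly $0$ otherwise, since $f$ then misses $\Sigma_e$) — reducing everything to the model invariant on $M$, whose moduli space contains only irreducibles and where the Eguchi--Hanson computation is cleanest.

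The technical heart, and the step I expect to be the main obstacle, is this adiabatic analysis as $u\to 0$: ruling out spurious solutions for small $u\neq 0$, identifying the limiting solution, and establishing transversality of the $2$-parameter family at the Eguchi--Hanson/divisor solution. This is precisely the gluing analysis of \cite[\S 4]{kronheimer}, and in the excision formulation it is the parametric Mrowka--Rollin gluing theorem \cite{mrowka-rollin} underlying Proposition \ref{proposition:excision}. The remaining ingredients — Taubes' vanishing away from $u=0$, the Chern-class and adjunction bookkeeping $K\cdot[E_0]=0$, $[E_0]^2=-2$, $[\omega]\cdot[E_0]=0$, and the orientation and sign accounting — are routine.
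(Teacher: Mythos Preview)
Your proposal is correct in outline, but it takes a considerably more laborious route than the paper's proof. The key simplification you miss is that the extension of $\{\omega_u\}$ over the disk can be chosen to be \emph{genuinely K\"ahler} on every fibre: the paper sets
\[
\widetilde{\omega}_u = \rho^\ast(\omega_{\mathbb{C}P^N}|_{X'_u}) + \beta(u)\,\nu_u,
\]
where $\nu_u$ is any smooth family of K\"ahler forms on the fibres of the smooth bundle $\widetilde{\mathscr{X}} \to B(\mathbb{C})$ and $\beta$ is a cutoff supported near $u=0$. Because each $(\widetilde{X}_u, \widetilde{\omega}_u)$ is K\"ahler, the Seiberg--Witten solutions for Taubes' perturbation are in bijection with effective divisors $E$ on the fibres in the class $\mathrm{PD}(e)$. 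The rest is then elementary: on a non-central fibre (where one may take $\beta(u)=0$) no non-empty effective $E$ can have $[\omega]\cdot E \leq 0$; on the central fibre, writing $E = nE_0 + E'$ with $E'$ not containing $E_0$, the degenerate class $[\rho^\ast\omega_{\mathbb{C}P^N}]$ forces $E'=\emptyset$, and then $e^2-K\cdot e = -2$ with $E_0^2=-2$, $K\cdot E_0=0$ forces $n=1$. Transversality of this single solution is the content of \cite[\S 4]{kronheimer}, cited but not reproduced.

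Your Eguchi--Hanson grafting and the adiabatic rescaling you flag as ``the technical heart'' would work, but importing it into the \emph{identification} step is unnecessary: staying in the K\"ahler category replaces that analysis with a two-line divisor count. What your approach would buy is independence from the projectivity hypothesis in Definition~\ref{definition:A1} --- but that hypothesis is present precisely to enable the paper's shortcut. Your excision alternative is also valid in principle, though note that in the paper's logic this Proposition is what is used (via Corollary~\ref{corollary:A1} and the excision step in Proposition~\ref{proposition:calculation}) to compute the local model invariant, so reducing to the model first would invert that dependency unless you supply an independent computation on $D_\varepsilon(T^\ast S^2)$.
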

\begin{proof}
Following Kronheimer \cite[Proof of Theorem 3.1]{kronheimer}, fix a family $\nu_u$ of Kähler forms on the fibers of $\widetilde{\mathscr{X}}$, and let $\beta (u)$ be a smooth function on the disk $B(\mathbb{C} )$ which is non-negative, equals one on a neighborhood of $u = 0$, and vanishes on a neighborhood of $\partial B(\mathbb{C} )$. Then consider the family $\widetilde{\omega}_u$ of Kähler forms on the fibers of $\widetilde{\mathscr{X}}$ given by 
\[
\widetilde{\omega}_u = \rho^\ast ( \omega_{\mathrm{\mathbb{C}P^N}}|_{X^{\prime}_u} ) + \beta(u) \cdot \nu_u \quad , \quad u \in B(\mathbb{C} ) .
\]

This family provides an extension to the disk of the loop $\{ \omega_u \}_{u \in \partial B(\mathbb{C})}$. Equipped with the natural family of compatible Kähler metrics, it gives rise to a family of perturbations parametrised by the disk $B(\mathbb{C})$ for the Seiberg--Witten equations on $X$ (using the product structure on $\widetilde{\mathscr{X}}$) via Taubes' perturbation (\ref{perturbation}). By standard arguments \cite[\S 7]{morgan}, solutions to the Seiberg--Witten equations on $X$ for perturbations in this family are in correspondence with effective divisors $E$ on the fibers $\widetilde{X}_u$, $u \in B(\mathbb{C} )$, such that $[E ] = PD ( e )$.

Consider first a non-central fiber $\widetilde{X}_u$, $u \neq 0$. We claim there are no such (non-empty) effective divisors $E$ contained in $\widetilde{X}_u = X_u$. Indeed, considering the family $\widetilde{\omega}_u$ with $\beta$ sufficiently concentrated around the origin so that $\beta (u) = 0$, we would have $[\widetilde{\omega}_u ] \cdot E = [\omega ]\cdot E \leq 0 $ (using (\ref{condition1})), which is impossible for an effective divisor $E$.

It remains to describe the effective divisors $E$ in the central fiber $\widetilde{X}_0$ in class $PD (e)$. We may write $E = nE_0 + E^\prime$ where $n \geq 0$ is the multiplicity of the rational curve $E_0$ as an irreducible component of $E$, and $E^\prime$ is an effective divisor on $\widetilde{X}_0$ which does not contain $E_0$ as an irreducible component. 
We have $[\omega]\cdot e \leq 0$ by (\ref{condition1}) and hence $n  [\omega] \cdot E_0  + [\omega ] \cdot E^\prime \leq 0$. Because $[\omega ]\cdot E_0 = 0$ then $[\omega ] \cdot E^\prime \leq 0$. The class $[\omega]$ is represented by the closed $2$-form $\omega_0 := \rho^\ast ( \omega_{\mathbb{C}P^N}|_{X_0 })$ on $\widetilde{X}_0$, and the symmetric tensor $g_0 := \omega_0 (\cdot , I_0 \cdot )$ on $\widetilde{X}_0$ (where $I_0$ is the complex structure on $\widetilde{X}_0$) is positive on $\widetilde{X}_0 \setminus E_0$. This, the fact that $E^\prime$ does not contain $E_0$ as an irreducible component, and $[\omega_0 ] \cdot E^\prime \leq 0$ imply that $E^\prime$ is empty. Hence $E = n E_0$. Since $e^2 - K_\omega \cdot e = -2$ (by (\ref{condition1})), $E_{0}\cdot E_0 = -2$ and $K_\omega \cdot E_0 = 0$, it follows that $n = \pm 1$. But $-E_0$ is not an efffective divisor, so $n = 1$.


Thus $E_0$ is the only effective divisor in a class $e$ satisfying (\ref{condition1}) contained in a fiber of the family $\widetilde{\mathscr{X}}$. In particular, $Q_e ( \omega_u ) = 0$ for $e \neq PD ([E_0])$. Kronheimer's calculation \cite[\S 4]{kronheimer} shows that $E_0$ is transversely cut-out as a solution to the Seiberg--Witten equations; hence $Q_{PD ([ E_0  ])} (\omega_u ) = \pm 1$.
\end{proof}

In the above situation, we also obtain a natural symplectic form $\Omega$ on $\mathscr{X} \subset \mathbb{C}P^N \times B(\mathbb{C} )$, obtained by restriction of $ \omega_{\mathbb{C}P^N } \oplus ( dx \wedge dy  )$. Using $\Omega$ we obtain a natural connection on the fiber bundle $\mathscr{X}\setminus X_0 \to B(\mathbb{C} )\setminus \{ 0\} $, whose horizontal subbundle is defined as the $\Omega$--orthogonal complement to the vertical subbundle $\mathrm{Ker}d\pi \subset T\mathscr{X} $. Associated to a path $\gamma$ in $B(\mathbb{C} )$ avoiding $0$, we have the diffeomorphism $P_\gamma : X_{\gamma(0)} \to X_{\gamma(1)}$ given by the parallel transport map using the connection. Clearly, $P_\gamma$ is a \textit{symplectomorphism} of $(X_{\gamma (0)} , \omega_{\gamma(0)})$ with $(X_{\gamma (1)} , \omega_{\gamma(1)}) $. Letting $c : [0,1) \to B(\mathbb{C} )$ be the path $c (t) = 1-t$, we obtain a Lagrangian sphere $L \subset ( X  , \omega ) = (X_1 , \omega_1 ) $ as the \textit{vanishing cycle} of $c$ \cite[\S 1.3]{seidel:exactseq}:
\[
L := \Big\{ x \in X_1 \, \, | \, \, \lim_{t \to 1} P_{[0, t]} ( x) = x_0 \Big\} .
\]
The fundamental class of $L$ is $[L] = [E_0 ] $. We have a \textit{monodromy} symplectomorphism $\psi \in \pi_0 \mathrm{Symp}(X, \omega )$ associated to the loop $\ell (t) = (\cos (t) , \sin (t) )$, given by $\psi := P_\ell $. By \cite[Proposition 1.15]{seidel:exactseq}, the monodromy agrees with the Dehn--Seidel twist along the vanishing cycle $L$:
\[
\psi = \tau_L \text{  in  } \pi_0 \mathrm{Symp}(X, \omega ).
\]

The monodromy squared $\psi^2 $ is the image of the loop $\{ \omega_u \}_{u \in \partial B(\mathbb{C} )}$ under the connecting map $\pi_1 \mathcal{S}_0 (X, \omega ) \to \pi_0 \mathrm{Symp}_0 (X, \omega )$ of the fibration (\ref{fibration}). Thus, from $\psi^2  = \tau_{L}^2$, \ Proposition \ref{proposition:kronheimercalculation} and Proposition \ref{proposition:q} we obtain:

\begin{corollary}\label{corollary:A1}
In the situation of Proposition \ref{proposition:kronheimercalculation}, assume further that $(X, \omega )$ is a symplectic Calabi--Yau surface with $b^{+}(X) >1$. Let $e \in H^2 (X, \mathbb{Z} )$ be a class satisfying $e^2 = -2$ and $[\omega ]\cdot e = 0$. If $L$ is the Lagrangian sphere given by the vanishing cycle of $c$, then
\[
q_e ( \tau_{L}^2 ) = \begin{cases} \pm 1 & \text{  if  } e = \mathrm{PD} ([E_0 ]) \text{ or } -\mathrm{PD}([E_0 ])\\
0 & \text{otherwise}
\end{cases}.
\]
\end{corollary}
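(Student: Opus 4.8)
The plan is to deduce the statement by combining Proposition~\ref{proposition:kronheimercalculation}, which contains all of the gauge-theoretic content, with Proposition~\ref{proposition:q}, which transports a computation in $\pi_1\mathcal{S}_0(X,\omega)$ down to $\pi_0\mathrm{Symp}_0(X,\omega)$; what remains is short bookkeeping. First I would check that the hypotheses of the corollary are precisely what is needed to invoke both propositions for the two classes $e$ and $-e$ simultaneously. Since $K_\omega = 0$, condition~(\ref{condition1}) for a class $\widetilde e$ reads simply $\widetilde e^{\,2} = -2$ and $[\omega]\cdot\widetilde e \leq 0$; the hypotheses $e^2 = -2$ and $[\omega]\cdot e = 0$ therefore guarantee that \emph{both} $e$ and $-e$ satisfy~(\ref{condition1}), so $Q_{e}^{(X,\omega)}$ and $Q_{-e}^{(X,\omega)}$ are defined and $q_e = Q_e^{(X,\omega)} + Q_{-e}^{(X,\omega)}$ is meaningful. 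Moreover, $K_\omega = 0$ together with $b^{+}(X) > 1$ are exactly the hypotheses of Proposition~\ref{proposition:q}, so $q_e$ descends along the connecting map $\delta$ of~(\ref{fibration}) to a homomorphism on $(\pi_0\mathrm{Symp}_0(X,\omega))^{\mathrm{ab}}$, which reduces the computation of $q_e(\tau_L^2)$ to that of $q_e$ on any loop of symplectic forms lifting $\tau_L^2$ along $\delta$.

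Next I would use the discussion preceding the corollary to produce such a loop and to recognise that Proposition~\ref{proposition:kronheimercalculation} applies to it. By Seidel's identification of the monodromy with the Dehn--Seidel twist along the vanishing cycle one has $\psi = \tau_L$, hence $\psi^2 = \tau_L^2$; and by construction $\psi^2 = \delta(\ell)$ with $\ell = \{\omega_u\}_{u\in\partial B(\mathbb{C})}\in\pi_1\mathcal{S}_0(X,\omega)$. Therefore
\[
q_e(\tau_L^2) = q_e(\ell) = Q_{e}^{(X,\omega)}(\ell) + Q_{-e}^{(X,\omega)}(\ell).
\]
Now I would apply Proposition~\ref{proposition:kronheimercalculation} twice: to $e$, giving $Q_{e}^{(X,\omega)}(\ell) = \pm 1$ if $e = \mathrm{PD}([E_0])$ and $0$ otherwise; and to $-e$ (which again satisfies~(\ref{condition1}) by the previous paragraph), giving $Q_{-e}^{(X,\omega)}(\ell) = \pm 1$ if $e = -\mathrm{PD}([E_0])$ and $0$ otherwise.

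Finally I would note that the two cases $e = \mathrm{PD}([E_0])$ and $e = -\mathrm{PD}([E_0])$ cannot occur simultaneously: since $E_0\cdot E_0 = -2 \neq 0$ we have $[E_0] \neq -[E_0]$, so at most one of the summands $Q_{\pm e}^{(X,\omega)}(\ell)$ is nonzero, and the case distinction in the statement follows, the value being $\pm 1$ in the nontrivial case. I do not expect a genuine obstacle here: all the geometric input is already packaged into Propositions~\ref{proposition:kronheimercalculation} and~\ref{proposition:q}, and the only points needing care are the compatibility of hypotheses (so that $Q_{\pm e}$ are both defined, which is where $[\omega]\cdot e = 0$ and $K_\omega = 0$ are used) and the elementary disjointness of the two cases; the ambiguous sign $\pm 1$ is simply inherited from Proposition~\ref{proposition:kronheimercalculation} and the fixed homological orientation of $X$.
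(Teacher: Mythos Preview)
Your proposal is correct and follows exactly the paper's approach: the paper's proof is essentially the one-line observation that $\psi^2 = \tau_L^2$ is the image of the loop $\{\omega_u\}_{u\in\partial B(\mathbb{C})}$ under the connecting map $\delta$, so the result follows by combining Proposition~\ref{proposition:kronheimercalculation} with Proposition~\ref{proposition:q}. Your write-up simply makes explicit the hypothesis checks and the disjointness of the two cases that the paper leaves implicit.
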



\subsubsection{$A_1$ and $A_2$ configurations}

\begin{proposition}\label{proposition:calculation}
Let $(X, \omega )$ be a closed symplectic $4$-manifold
, and $L \subset (X, \omega )$ an embedded and oriented Lagrangian sphere. Let $e = \mathrm{PD}([L])$. Fix a homological orientation of $X$. Then
\begin{align*}
 & Q_{ e} (\mathcal{O}_{ L} ) = Q_{-e}(\mathcal{O}_{-L}) = \pm 1 \\
 & Q_{ -e} (\mathcal{O}_{ L} ) = Q_{ e} (\mathcal{O}_{ -L} ) = 0 . 
 \end{align*}
From this, it follows that:
\begin{enumerate}
\item The elements $\mathcal{O}_{\pm L}$ induce a summand $\mathbb{Z}^2 \subset (\pi_1 \mathcal{S}_0 (X, \omega ))^{\mathrm{ab}}$.
\item Suppose that $(X, \omega )$ is a symplectic Calabi--Yau surface with $b^+ (X) >1$. Then $q_e ( \tau_{L}^2 ) = \pm 1$. Thus $\tau_{L}^2$ has infinite order in $(\pi_0 \mathrm{Symp}_0 (X, \omega ))^{\mathrm{ab}}$.
\end{enumerate}
\end{proposition}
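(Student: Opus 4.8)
\emph{Strategy.} The plan is to prove the following universal statement and deduce everything from it: \emph{for every closed symplectic $4$--manifold $(X,\omega)$, every embedded \textbf{oriented} Lagrangian sphere $L\subset(X,\omega)$ with $e=\mathrm{PD}([L])$, and every homological orientation of $X$, one has $Q_e(\mathcal{O}_L)=\pm1$ and $Q_{-e}(\mathcal{O}_L)=0$.} Since $\mathcal{O}_{-L}$ is the canonical lift built from $L$ with the reversed orientation and $\mathrm{PD}([-L])=-e$, applying this to $L$ and to $-L$ yields all four displayed identities at once. Observe first that $\pm e$ both satisfy condition~(\ref{condition1}) in $(X,\omega)$: indeed $[\omega]\cdot e=\int_L\omega=0$ as $L$ is Lagrangian, while $e\cdot e=-2$ (the normal bundle of $L$ is $T^\ast S^2$, of Euler number $-2$) and $K_\omega\cdot e=-\langle c_1(TX,\omega)|_L,[L]\rangle=0$ (the restriction of $(TX,\omega)$ to $L$ is $TL\otimes_{\mathbb R}\mathbb C$, which has $c_1=0$); hence all four invariants are defined.

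\emph{Step 1 (localisation via excision).} By construction (Definition~\ref{definition:modelO}) the loop $\mathcal{O}_L$ equals $\omega$ outside a Weinstein neighbourhood $M$ of $L$, and an identification $M\cong(D_\varepsilon(T^\ast S^2),\omega_{\mathrm{can}})$ sending the oriented $L$ to the oriented zero section carries $\mathcal{O}_L|_M$ to the model canonical lift $\mathcal{O}$. The neighbourhood $M$ has convex contact boundary $(\mathbb{R}P^3,\xi_{\mathrm{std}})$; moreover $H^2(M,\partial M;\mathbb Z)\cong H_2(S^2;\mathbb Z)\cong\mathbb Z$, generated by the class $\theta$ that is $\mathrm{PD}$ of the oriented zero section, with $\iota_\ast\theta=e$. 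As $e^2=-2$ is non-torsion, $\iota_\ast$ is injective, so $(\iota_\ast)^{-1}(\pm e)=\{\pm\theta\}$ is a single class and the sum in the Excision Property (Proposition~\ref{proposition:excision}) has one term. It follows that
\[
Q^{(X,\omega)}_{e}(\mathcal{O}_L)=s\cdot Q^{(D_\varepsilon(T^\ast S^2),\,\omega_{\mathrm{can}})}_{\theta}(\mathcal{O}),\qquad Q^{(X,\omega)}_{-e}(\mathcal{O}_L)=s\cdot Q^{(D_\varepsilon(T^\ast S^2),\,\omega_{\mathrm{can}})}_{-\theta}(\mathcal{O}),
\]
with one and the same sign $s\in\{\pm1\}$ (by Proposition~\ref{proposition:excision} it depends only on the homological orientations of $X$ and of $(M,\xi_{\mathrm{std}})$, not on $(X,\omega)$, $L$, or the sign of $e$). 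Thus the whole computation is reduced to the two model numbers $Q_{\pm\theta}(\mathcal{O})$.

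\emph{Step 2 (evaluation of the model numbers).} Fix any projective $A_1$ degeneration (Definition~\ref{definition:A1}), with smooth fibre $X_0$, vanishing cycle $L_0\subset(X_0,\omega_0)$, exceptional $(-2)$--curve $E_0$ in the resolved central fibre (so $[L_0]=[E_0]$), and associated loop $\{\omega_u\}_{u\in\partial B(\mathbb C)}\in\pi_1\mathcal S_0(X_0,\omega_0)$. The key geometric input is that, near the node, Atiyah's simultaneous resolution realises exactly the model deformation~(\ref{Omega}) of $(T^\ast S^2,\omega_{\mathrm{can}})$ --- in which the zero section becomes symplectic of small \emph{positive} area --- and that the monodromy of the double cover $u\mapsto u^2$ is the time--$2\pi$ Hamiltonian wrap of Definition~\ref{definition:modelO}; consequently $\{\omega_u\}$ coincides with $\mathcal{O}_{L_0}$ in $\pi_1\mathcal S_0(X_0,\omega_0)$, with $L_0$ oriented so that $[L_0]=[E_0]$. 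Applying Step~1 to $(X_0,\omega_0,L_0)$ and combining with Kronheimer's calculation (Proposition~\ref{proposition:kronheimercalculation}) --- which gives $Q^{(X_0,\omega_0)}_{\mathrm{PD}([E_0])}(\{\omega_u\})=\pm1$ and $Q^{(X_0,\omega_0)}_{e'}(\{\omega_u\})=0$ for every other class $e'$ satisfying~(\ref{condition1}), in particular $e'=-\mathrm{PD}([E_0])$ (which does satisfy~(\ref{condition1}) since $E_0^2=-2$ and $K_{\omega_0}\cdot E_0=0$, the vanishing cycle being Lagrangian) --- we obtain $Q_{\theta}(\mathcal{O})=\pm1$ and $Q_{-\theta}(\mathcal{O})=0$. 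Feeding this back into Step~1 proves the universal statement. I expect this last identification --- matching the complex-geometric simultaneous resolution, orientations included, with the moment-map construction of $\mathcal{O}$ in \S\ref{section:symplectic} --- to be the main technical obstacle. An alternative that bypasses any closed example is to compute $Q_{\pm\theta}(\mathcal{O})$ directly on the symplectisation completion $T^\ast S^2$, where the Taubes perturbation attached to $\mathcal{O}$ detects the zero section as the unique compact pseudo-holomorphic curve --- necessarily in the class $+\theta$, never $-\theta$ --- cut out transversely by Kronheimer's local analysis of the discriminant.

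\emph{Consequences.} For (1), the homomorphism $(Q_e,Q_{-e})\colon(\pi_1\mathcal S_0(X,\omega))^{\mathrm{ab}}\to\mathbb Z^2$ sends $\mathcal{O}_L\mapsto(\pm1,0)$ and $\mathcal{O}_{-L}\mapsto(0,\pm1)$; these span a $\mathbb Z$--basis of $\mathbb Z^2$, so $\mathbb Z\{\mathcal{O}_L\}\oplus\mathbb Z\{\mathcal{O}_{-L}\}$ is a free rank--$2$ direct summand of $(\pi_1\mathcal S_0(X,\omega))^{\mathrm{ab}}$, split by $(Q_e,Q_{-e})$ composed with the inverse of that basis change. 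For (2), suppose $K_\omega=0$ and $b^+(X)>1$; then $e=\mathrm{PD}([L])$ satisfies $e^2=-2$ and $[\omega]\cdot e=0$, so $q_e=Q_e+Q_{-e}$ is defined and, by Proposition~\ref{proposition:q}, descends to $(\pi_0\mathrm{Symp}_0(X,\omega))^{\mathrm{ab}}$. Since $\delta(\mathcal{O}_L)=\tau_L^2$ (Lemma~\ref{lemma:naturality}(2)), we get $q_e(\tau_L^2)=Q_e(\mathcal{O}_L)+Q_{-e}(\mathcal{O}_L)=\pm1\neq0$; a homomorphism to $\mathbb Z$ non-zero on $\tau_L^2$ forces $\tau_L^2$ to have infinite order in $(\pi_0\mathrm{Symp}_0(X,\omega))^{\mathrm{ab}}$.
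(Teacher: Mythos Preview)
Your overall strategy via excision matches the paper's, but there is a genuine gap at exactly the point you flag: the identification of the simultaneous-resolution loop $\{\omega_u\}$ with the canonical lift $\mathcal{O}_{L_0}$ as elements of $\pi_1\mathcal{S}_0(X_0,\omega_0)$. The paper never proves (or claims) this identification, and instead sidesteps it entirely.

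First, the vanishing $Q_{-e}(\mathcal{O}_L)=0$ is obtained \emph{directly} from Lemma~\ref{lemma:kerO}: the loop $\mathcal{O}_L$ bounds a disk in $\mathcal{S}_L(X,\omega)$ consisting of symplectic forms whose cohomology classes pair non-positively with $-e$, so by Lemma~\ref{lemma:taubes} the relevant parametrised moduli space is empty and the count is zero. This requires no closed model and no comparison with $\{\omega_u\}$. Second, for $Q_e(\mathcal{O}_L)$ the paper uses excision exactly as you do to reduce to a single ambient manifold, chosen to be a symplectic $K3$ arising from a projective $A_1$ degeneration with vanishing cycle $L$; but then, rather than compare loops in $\pi_1\mathcal{S}_0$, it passes to the invariant $q_e=Q_e+Q_{-e}$, which by Proposition~\ref{proposition:q} descends to $(\pi_0\mathrm{Symp}_0(X,\omega))^{\mathrm{ab}}$ (this is where $K_\omega=0$ and $b^+>1$ for the chosen $K3$ are used). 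Since both $\{\omega_u\}$ and $\mathcal{O}_L$ map under $\delta$ to the same element $\tau_L^2$, Corollary~\ref{corollary:A1} gives $q_e(\tau_L^2)=\pm1$, and combining with the already-established $Q_{-e}(\mathcal{O}_L)=0$ yields $Q_e(\mathcal{O}_L)=q_e(\tau_L^2)=\pm1$. The remaining equality $Q_e(\mathcal{O}_L)=Q_{-e}(\mathcal{O}_{-L})$ is naturality under $\tau_L$ (Corollary~\ref{corollary:naturalityQ}), which swaps $\pm L$ and $\pm e$.

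In short: your Step~2 relies on an unproved loop-level identification, whereas the paper only needs the much weaker (and immediate) statement that the two loops have the same image in $\pi_0\mathrm{Symp}_0$. The price is that one must first prove $Q_{-e}(\mathcal{O}_L)=0$ by the direct bounding-disk argument of Lemma~\ref{lemma:kerO}, and then filter the remaining computation through $q_e$ rather than $Q_e$.
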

\begin{proof}
Each loop $\mathcal{O}_{\pm L}$ bounds a $D^2$--family of symplectic forms whose cohomology class pairs non-positively with $\mp e$ (Lemma \ref{lemma:kerO}).
Thus, by Lemma \ref{lemma:taubes}, choosing any $D^2$--family of Riemannian metrics compatible with the symplectic forms and taking $r \gg 1$, we see that the moduli spaces contributing to the counts $Q_{\mp e }(\mathcal{O}_{\pm L} )$ are empty, and thus $Q_{\mp e }(\mathcal{O}_{\pm L} )=0$.

On the other hand, $\tau_L$ takes $L$ to $-L$ and $e$ to $-e$, and thus by the naturality of Kronheimer's invariant (Corollary \ref{corollary:naturalityQ}) we have $Q_e ( \mathcal{O}_L ) = Q_{-e} (\mathcal{O}_{-L} )$. Thus, it only remains to calculate $Q_e ( \mathcal{O}_L )$.

A compact neighborhood of $L$ in $(X, \omega )$ is symplectomorphic to the disk cotangent bundle of $L \cong S^2$, $(M, \omega) := (D^\ast S^2 , \omega )$. The latter is equipped with the canonical symplectic form and has convex contact boundary diffeomorphic to $\mathbb{R}P^3$. Since $b^1 (\mathbb{R}P^3 )=0$ then the map $\iota_\ast : H^2 (M , \partial M , \mathbb{Z} ) \to H^2 (X, \mathbb{Z} )$ is injective. By the Excision property (Proposition \ref{proposition:excision}) we then have $Q_{e}^{(X , \omega )}(\mathcal{O}_L ) = \pm Q_{e}^{(M,\omega )}(\mathcal{O}_L )$, from which we see that $Q_{e}^{(X, \omega )}(\mathcal{O}_L )$ is, up to an overall sign, independent of the ambient $(X, \omega )$. It therefore suffices to choose any particular $(X, \omega )$ that is most suitable, 
and we can certainly find $(X, \omega )$ which is a symplectic $K3$ surface arising as the smooth fiber of a projective $A_1$ degeneration (Definition \ref{definition:A1}) with vanishing cycle $L$. By $Q_{-e}(\mathcal{O}_L ) = 0 $ and Corollary \ref{corollary:A1} we thus have
\begin{align*}
Q_e (\mathcal{O}_L ) = q_e (\tau_{L}^2 ) = \pm 1 \, ,
\end{align*}
and the proof is complete.
\end{proof}

\begin{proposition}\label{proposition:A2}
Let $(X, \omega )$ be a closed symplectic $4$-manifold
, and $L_1 , L_2 \subset (X, \omega )$ two embedded Lagrangian spheres which intersect at a single point only with transverse intersection (an `$A_2$--configuration' of Lagrangian spheres). Fix orientations on each $L_i$, and let $e_i = \mathrm{PD}([L_i])$. Fix a homological orientation of $X$. Then:
\begin{align*}
& Q_{\pm e_2}(\mathcal{O}_{L_1}) = Q_{\pm e_2}(\mathcal{O}_{- L_1}) = 0\\
& Q_{\pm e_1}(\mathcal{O}_{L_2}) = Q_{\pm e_1}(\mathcal{O}_{- L_2}) = 0 .
\end{align*}
\end{proposition}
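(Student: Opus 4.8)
\emph{Proof strategy.} The plan is to deduce the vanishing from the fact that $\mathcal{O}_{L_1}$ is supported in an arbitrarily small neighborhood $M_1$ of $L_1$, together with the observation that $[L_2]$ is not a multiple of $[L_1]$, via the gluing theory behind the Excision Property (Proposition~\ref{proposition:excision}) --- applied in the degenerate case of a spin-c class that is \emph{not} supported in the excised region.

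\emph{Setup and the homological input.} First I would fix a compact Weinstein neighborhood $M_1 \cong D(T^\ast S^2)$ of $L_1$, with convex contact boundary $(\partial M_1, \xi_1) \cong (\mathbb{R}P^3, \xi_{\mathrm{std}})$. By construction the loops $\mathcal{O}_{\pm L_1}$ equal $\omega$ on $X \setminus M_1$; and by Lemma~\ref{lemma:kerO} applied to $(M_1, \omega|_{M_1})$ --- whose proof, in the model case, produces a null-homotopy through symplectic forms constant near $\partial D_\varepsilon(T^\ast S^2)$ --- each $\mathcal{O}_{\pm L_1}$ bounds a $D^2$-family of symplectic forms on $M_1$ agreeing with $\omega$ near $\partial M_1$, which extends by $\omega$ to a $D^2$-family on $X$ equal to $\omega$ on $X\setminus M_1$. (The invariants $Q_{\pm e_2}$ are defined on these loops, as condition~(\ref{condition1}) holds: $e_2 \cdot e_2 - K_\omega \cdot e_2 = -2$ for the Lagrangian sphere $L_2$, and $[\omega]\cdot(\pm e_2) = \pm\int_{L_2}\omega = 0$.) Since $\partial M_1$ is a rational homology sphere the map $\iota_\ast : H^2(M_1, \partial M_1; \mathbb{Z}) \to H^2(X;\mathbb{Z})$ is injective, and Poincar\'e--Lefschetz duality identifies $H^2(M_1, \partial M_1;\mathbb{Z}) \cong H_2(M_1;\mathbb{Z}) = \mathbb{Z}\,[L_1]$, so $\mathrm{Im}\,\iota_\ast = \mathbb{Z}\,e_1$. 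The $A_2$ hypothesis gives $e_1 \cdot e_2 = [L_1]\cdot[L_2] = \pm1$, which is odd, so $\pm e_2$ is not an integer multiple of $e_1$ (any $k e_1$ pairs with $e_1$ in $2\mathbb{Z}$); hence $\pm e_2 \notin \mathrm{Im}\,\iota_\ast$.

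\emph{The gluing argument, and the obstacle.} Finally I would run the Mrowka--Rollin gluing argument underlying Proposition~\ref{proposition:excision}. Choose a $D^2$-family of almost-K\"ahler structures $(\omega_t, g_t)$ bounding $\mathcal{O}_{\pm L_1}$ and equal to a fixed $(\omega, g_0)$ off $M_1$; for $r \gg 0$ the Taubes perturbations~(\ref{perturbation}) give a disk $f : D^2 \to \mathscr{P}$ extending the loop $\gamma$ associated to $\mathcal{O}_{\pm L_1}$, equal outside $M_1$ to the canonical Taubes perturbation of $(\omega, g_0)$. This disk avoids the reducible locus, and --- when $b^+(X) < 4$ --- satisfies the chamber condition~(\ref{condition_pert}): although the classes $[\omega_t] \in [\omega] + \mathbb{Z}\,e_1$ vary along $f$, using $\sigma(\Phi_{0,t}) = i\omega_t$ and $e_1 \cdot [\omega] = \int_{L_1}\omega = 0$ the relevant pairing equals $r\,[\omega_t]\cdot[\omega] + O(1) = r\,[\omega]^2 + O(1) > 0$. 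Applying the parametric Mrowka--Rollin gluing theorem to $X = M_1 \cup (X \setminus \mathrm{Int}\,M_1)$ (with the second piece an asymptotically flat almost-K\"ahler end, after a large dilation), any Seiberg--Witten solution on $X$ for a perturbation in $f(D^2)$ is concentrated in $M_1$ and corresponds to a solution on $\widehat{M_1}$ in some relative spin-c structure $\widehat{\mathfrak{s}}_\omega + \widehat e$ with $\widehat e \in H^2(M_1, \partial M_1; \mathbb{Z})$ --- which lives over the spin-c structure $\mathfrak{s}_\omega + \iota_\ast(\widehat e)$ on $X$, whereas here the relevant spin-c structure on $X$ is $\mathfrak{s}_\omega \pm e_2$ with $\pm e_2 \notin \mathrm{Im}\,\iota_\ast$. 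Hence the moduli space over $f(D^2)$ is empty, so $f(D^2)$ misses the discriminant $\Sigma_{\pm e_2}$ and $Q_{\pm e_2}(\mathcal{O}_{\pm L_1}) = \mathrm{lk}(\gamma, \Sigma_{\pm e_2}) = 0$. The identities $Q_{\pm e_1}(\mathcal{O}_{\pm L_2}) = 0$ follow by the symmetric argument with a Weinstein neighborhood $M_2 \cong D(T^\ast S^2)$ of $L_2$, using $\pm e_1 \notin \mathbb{Z}\,e_2$. The main obstacle is making this gluing argument precise in the "off-image" case --- Proposition~\ref{proposition:excision} is stated only for classes in $\mathrm{Im}\,\iota_\ast$ --- i.e. verifying that Mrowka--Rollin's identification of moduli spaces genuinely forces emptiness when no relative spin-c structure on $M_1$ restricts to the given one on $X$, together with the routine but necessary check that the Taubes disk meets the chamber and reducibility constraints despite its varying cohomology class.
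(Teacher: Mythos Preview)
Your approach differs from the paper's, and the obstacle you flag is a genuine gap rather than a routine check. The Mrowka--Rollin gluing underlying Proposition~\ref{proposition:excision} is set up so that the spin-c structure on the AFAK end $Z' = X\setminus M_1$ is the \emph{canonical} one $\mathfrak{s}_\omega|_{Z'}$, with solutions asymptotic to the canonical configuration. But $(\mathfrak{s}_\omega \pm e_2)|_{Z'}$ is \emph{not} canonical: since $e_1\cdot e_2 = \pm 1$ is odd, $e_2|_{\partial M_1}$ is the nontrivial element of $H^2(\mathbb{R}P^3;\mathbb{Z})=\mathbb{Z}/2$, so the spin-c structure does not even restrict to $\mathfrak{s}_\xi$ on the contact boundary. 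Thus the gluing theorem as stated says nothing here, and ``off-image implies empty'' would require a separate compactness/vanishing analysis on $Z'$ in a non-canonical spin-c structure --- which is not supplied.

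The paper sidesteps this entirely. After using the symmetries of Lemma~\ref{lemma:symmetriesO} and Lemma~\ref{lemma:symplecticconjugation} to reduce to $Q_{\pm e_2}(\mathcal{O}_{L_1})$ (say with $e_1\cdot e_2 = 1$), it treats the two signs differently. For $Q_{e_2}(\mathcal{O}_{L_1})$ it uses exactly your disk from Lemma~\ref{lemma:kerO}, but argues directly via Taubes: each form has class $[\omega]-c e_1$ with $c\geq 0$, so pairs with $e_2$ as $-c\leq 0$, whence Lemma~\ref{lemma:taubes} gives an empty moduli space. For $Q_{-e_2}(\mathcal{O}_{L_1})$ that inequality goes the wrong way, so instead the paper applies excision with the \emph{plumbing} neighborhood $M$ of $L_1\cup L_2$ (boundary $L(3,2)$), in which both $e_1,e_2$ lie in $\mathrm{Im}\,\iota_\ast$ --- so Proposition~\ref{proposition:excision} applies as stated. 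This makes $Q_{-e_2}(\mathcal{O}_{L_1})$ ambient-independent, and one finishes by choosing $(X,\omega)$ to be a $K3$ from a projective $A_1$ degeneration with $L_1$ the vanishing cycle: Corollary~\ref{corollary:A1} gives $q_{e_2}(\tau_{L_1}^2)=0$, hence $Q_{-e_2}(\mathcal{O}_{L_1}) = -Q_{e_2}(\mathcal{O}_{L_1}) = 0$. The key idea you are missing is to enlarge the excision region to contain $L_2$ so that $e_2$ becomes supported, rather than to extend the excision statement itself.
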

\begin{proof}
Without loss of generality, we suppose that the unique intersection point of $L_1$ and $L_2$ is positive, so $e_1 \cdot e_2 = 1 $. By the symmetries described in Lemma \ref{lemma:symmetriesO} and Lemma \ref{lemma:symplecticconjugation}, it suffices to show $Q_{\pm e_2}(\mathcal{O}_{L_1}) = 0$. 

We first show $Q_{e_2}( \mathcal{O}_{L_1}) = 0$. By Lemma \ref{lemma:kerO} the loop $\mathcal{O}_{L_1}$ bounds a disk $D^2$ in $\mathcal{S}_L (X, \omega )$. A closer examination of the proof of Lemma \ref{lemma:kerO} shows that the cohomology class of each symplectic form in this $D^2$--family is of the form $[\omega ] - c e_1$ for a non-negative number $c$ (which varies with the symplectic form in the disk). Thus, 
\[
([\omega] - ce_1)\cdot e_2 = 0 - c \leq 0 
\]
which by Lemma \ref{lemma:taubes} implies that (for any $D^2$--family of Riemannian metrics compatible with the symplectic forms, and taking $r \gg 1$) the moduli spaces relevant to the counts in $Q_{e_2}(\mathcal{O}_{L_1})$ are empty; hence $Q_{e_2}(\mathcal{O}_{L_1}) = 0$. 

We now show $Q_{-e_2}(\mathcal{O}_{L_1}) = 0$. The configuration of spheres $L_1 \cup L_2 \subset X$ admits a compact neighborhood with convex contact boundary $(M, \omega ) \subset (X, \omega )$ symplectomorphic to the \textit{plumbing} of two disk cotangent bundles of spheres (\cite[\S 7.6]{geiges-book}). The boundary $\partial M \cong L(3,2)$ is a rational homology sphere, and thus $H^2 (M , \partial M , \mathbb{Z} ) \to H^2 (X)$ is injective. Applying the Excision property (Proposition \ref{proposition:excision}) we have $Q_{-e_2}^{X} (\mathcal{O}_{L_1}) = \pm Q_{-e_2}^M (\mathcal{O}_{L_1} )$. Thus, again $Q_{-e_2}^{X}(\mathcal{O}_{L_1})$ is independent of the ambient $(X, \omega )$ up to an overall sign. Therefore, it suffices to assume that $(X, \omega )$ is any particular closed symplectic $4$-manifold of our choice, containing an $A_2$ configuration $L_1 , L_2$. Thus, we suppose that $(X,\omega )$ is a symplectic $K3$ surface which arises as a fiber in a projective $A_1$ degeneration (Definition \ref{definition:A1}) with $L_1$ Lagrangian isotopic to the vanishing cycle, and $L_2$ is another Lagrangian sphere meeting $L_1$ at a single point only and transversely (examples of such abound). By Proposition \ref{corollary:A1}, since $e_1 \neq \pm e_2$ we thus have
\[
0 = q_{e_2}(\tau_{L_1}^2) = Q_{e_2} (\mathcal{O}_{L_1 }) + Q_{-e_2} (\mathcal{O}_{L_1})
\]
and since we already proved $Q_{e_2} (\mathcal{O}_{L_1 }) =0$ then this implies $Q_{-e_2} (\mathcal{O}_{L_1 }) = 0$.
\end{proof}

\begin{remark}
Proposition \ref{proposition:A2} will later be `overrun' by the more general Proposition \ref{proposition:switching}, whose proof makes use of a different gluing result in Seiberg--Witten theory: the `Family Switching Formula' (\cite[Theorem 5.3]{JLIN2022} or \cite{liu_switching}). Proposition \ref{proposition:calculation} may also be proved by appealing instead to J. Lin's calculation from \cite[Proposition 8.2]{JLIN2022} based on the Family Switching Formula, using an argument similar to the proof of Proposition \ref{proposition:switching}.
\end{remark}

\section{$ADE$ configurations }\label{section:ADE}

\subsection{Recollections on Generalised Braid groups}

In this subsection we first discuss background material on generalised Braid groups of $ADE$ type. We then discuss in more details the abelianisation of pure braid groups, from various points of view that will be relevant to us (geometric, group-theoretic, representation-theoretic). The reader is assumed to have some familiarity with basic facts about root systems of $ADE$ type (\cite{humphreys-lie,humphreys-coxeter}).

\subsubsection{Weyl groups and root systems of ADE type}\label{subsection:rootsystems}

Let $\Gamma$ be a Dynkin diagram of type $ADE$. Its vertex set is denoted by $I$.

\begin{definition}
The \textit{Weyl group} $W = W(\Gamma )$ associated to $\Gamma$ is the group given by the presentation
\begin{align}
W = \Big\langle \, s_i , \, i \in I \, | \, s_{i}^2 = 1 \, \text{ and } \begin{cases} s_i s_j s_i = s_j s_i s_j & \text{if } i \text{ and } j \text{ are adjacent in } \Gamma \\
s_i s_j = s_i s_j & \text{otherwise}
\end{cases}
\Big\rangle .\label{weyl}
\end{align}
\end{definition}

Consider free $\mathbb{Z}$--module $V_\mathbb{Z} = \mathbb{Z}^I$ with a generator $e_i$ for each vertex $i \in I$. For a given field $\mathbb{F}$ (those relevant to us are $\mathbb{F} = \mathbb{R}$ or $\mathbb{C}$), we will also consider the associated $\mathbb{F}$--vector space $V_\mathbb{F} = V_{\mathbb{Z}}\otimes_{\mathbb{Z}} \mathbb{F}$. We equip $V_\mathbb{R}$ with a \textit{negative-definite} inner product denoted by $v\cdot w$ for a pair $v,w \in V$, and defined by 
\begin{align}
e_i \cdot e_j := \begin{cases}
-2 & \text{if } i = j\\
1 & \text{if } i \text{ and } j \text{ are adjacent in } \Gamma\\
0 & \text{otherwise}.
\end{cases}\label{ADElattice}
\end{align}

The Weyl group $W$ acts on $V_\mathbb{R}$ preserving the lattice $V_\mathbb{Z}$: for each $i \in I$ the generator $s_i \in W$ acts on $v \in V_\mathbb{R}$ as the \textit{reflection} through the hyperplane normal to $e_i$,
\begin{align}
s_i \cdot v := v + (v \cdot e_i ) e_i .\label{picard-lefschetz}
\end{align}

The following result is standard, but we didn't find a convenient reference:

\begin{lemma}\label{lemma:rootADE}
Let $\Phi \subset V_\mathbb{R}$ be the subset given by 
\[
\Phi = \Big\{ \alpha \in V_\mathbb{Z} \, | \, \alpha\cdot \alpha = -2 \Big\} . 
\]
Then:
\begin{enumerate}
    \item $\Phi$ is finite
    \item Every $\alpha \in \Phi$ is (uniquely) expressed as $\alpha = \sum_{i \in I} k_i e_i$ either with $k_i \in \mathbb{Z}_{\geq 0}$ for all $i\in I$, or with $k_i \in \mathbb{Z}_{\leq 0}$ for all $i\in I$
    \item $\Phi$ is preserved by the $W$--action on $V_\mathbb{R}$.
    \end{enumerate}
\end{lemma}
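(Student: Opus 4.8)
The plan is to recognize $(V_\mathbb{Z}, -(\cdot))$ as the root lattice of the semisimple Lie algebra of type $\Gamma$, so that $\Phi$ is exactly its root system; then all three assertions are classical. Concretely: let $(\cdot,\cdot) = -(\,\cdot\,)$ be the positive-definite form, so $(e_i,e_i) = 2$ and $(e_i,e_j) \in \{0,1\}$ for $i\neq j$ — this is precisely the Cartan matrix data for a simply-laced root system with simple roots $\{e_i\}_{i\in I}$. The reflection $(\ref{picard-lefschetz})$ is the standard simple reflection $s_i(v) = v - \frac{2(v,e_i)}{(e_i,e_i)} e_i = v - (v,e_i)e_i = v + (v\cdot e_i)e_i$, so the $W$-action defined here agrees with the Weyl group action on the root lattice, and $W$ is finite since $\Gamma$ is of finite ($ADE$) type.

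For (1): every $\alpha\in\Phi$ satisfies $(\alpha,\alpha) = 2$. Since $(\cdot,\cdot)$ is positive-definite on $V_\mathbb{R}$ and $V_\mathbb{Z}$ is a lattice (a discrete subgroup) in $V_\mathbb{R}$, the set $\{\alpha\in V_\mathbb{Z} : (\alpha,\alpha) = 2\}$ is the intersection of a compact sphere with a discrete set, hence finite. For (3): $W$ preserves the form $(\cdot,\cdot)$ (reflections are isometries) and preserves the lattice $V_\mathbb{Z}$ by $(\ref{picard-lefschetz})$ (the reflection formula has integer coefficients since $e_i\cdot e_j\in\mathbb{Z}$), so $W$ preserves $\Phi = \{\alpha\in V_\mathbb{Z}: (\alpha,\alpha)=2\}$. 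For (2): this is the statement that every root is a nonnegative or nonpositive integer combination of the simple roots $e_i$. I would cite the standard theory (e.g. Humphreys, \emph{Introduction to Lie Algebras and Representation Theory}, or \emph{Reflection Groups and Coxeter Groups}): the simple roots $\{e_i\}$ form a base of the root system $\Phi$ generated by the reflections $s_i$, and every element of $\Phi$ lies in $\mathbb{Z}_{\geq0}\{e_i\}$ or $\mathbb{Z}_{\leq0}\{e_i\}$. Alternatively one gives a direct argument: $\Phi$ is $W$-invariant and spans $V_\mathbb{R}$; pick a generic linear functional to split $\Phi = \Phi^+ \sqcup \Phi^-$ into "positive" and "negative" elements with $\Phi^- = -\Phi^+$; show the indecomposable elements of $\Phi^+$ are exactly $\{e_i\}$ by using that $(e_i,e_j)\leq 0$... wait, here $(e_i,e_j)\geq 0$, so one should instead note that the natural "simple system" for this presentation is $\{-e_i\}$ or adjust signs — in any case, one shows $\{e_i\}$ (up to global sign) is a base and then every positive root is a nonnegative combination by induction on height, using that if $\alpha\in\Phi^+$ is not simple then $(\alpha,e_i)>0$ for some $i$ (else $\alpha$ would be too short) and $s_i(\alpha) = \alpha - (\ldots)e_i$ has smaller height.

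The main obstacle is part (2): parts (1) and (3) are essentially immediate from positive-definiteness and isometry-invariance, but (2) requires genuinely invoking (or reproving) the structure theory of root systems — specifically that the $e_i$ form a base. I expect the cleanest write-up simply quotes Humphreys for (2), having first verified that $(\ref{ADElattice})$–$(\ref{picard-lefschetz})$ literally reproduce the Cartan-matrix/Weyl-reflection setup of a simply-laced root system. The one point of care is the sign convention: since the form here is \emph{negative}-definite and $e_i\cdot e_j = +1$ for adjacent $i,j$ (rather than $-1$), one should double-check whether $\{e_i\}$ or $\{-e_i\}$ is the honest base — but this does not affect the statement of (2), which is symmetric under $\alpha \mapsto -\alpha$.
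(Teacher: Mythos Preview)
Your arguments for (1) and (3) match the paper's: finiteness from definiteness plus discreteness, and $W$--invariance from isometry. The difference is in (2).

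Your primary route for (2) --- citing Humphreys for the decomposition of roots into nonnegative or nonpositive combinations of simple roots --- has a gap as written. That theorem applies to the root system $\Phi' := W\cdot\{e_i\}$ (the $W$--orbit of the simple roots), which certainly sits inside $\Phi$ by (3). But $\Phi$ is defined here as the set of \emph{all} lattice vectors of square $-2$, and you have not argued $\Phi \subset \Phi'$. That containment is true for $ADE$ lattices, but it is essentially the content of (2) itself, so invoking the base decomposition for $\Phi'$ does not yet say anything about an arbitrary $\alpha\in\Phi$. Your alternative sketch (generic functional, indecomposables, height induction) would eventually close this, but you yourself flag the sign confusion, and it is considerably more work than needed.

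The paper instead gives a two-line direct argument that avoids any appeal to root-system theory. Write $\alpha = \sum_{i\in I_+} k_i e_i + \sum_{j\in I_-} k_j e_j$ with $I_\pm = \{i : \pm k_i > 0\}$. If both $I_+$ and $I_-$ are nonempty, then
\[
-2 = \alpha^2 = \Big(\sum_{I_+} k_i e_i\Big)^2 + \Big(\sum_{I_-} k_j e_j\Big)^2 + 2\sum_{i\in I_+,\, j\in I_-} k_i k_j\, e_i\cdot e_j.
\]
The first two terms are each $\leq -2$ (the lattice is even and negative-definite), and the cross term is $\leq 0$ since $k_ik_j<0$ while $e_i\cdot e_j\in\{0,1\}$. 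This forces $-2\leq -4$, a contradiction. This is both shorter and logically cleaner than the citation route, since it works directly from the definition of $\Phi$.
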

\begin{proof}
(1) follows from the fact that the inner product $v\cdot w$ on $V$ is negative-definite. (3) follows from the observation that $W$ acts by isometries on $V_\mathbb{R}$. For (2), let $\alpha = \sum_{i \in I }k_i e_i \in \Phi$ and set $I_{\pm} = \{ i\in I \, | \, \pm k_i >0 \}$. Observe that
\begin{align}
-2  = \alpha^2 = (\sum_{i \in I_+} k_i e_i )^2 + (\sum_{j \in I_-}k_j e_j )^2 + 2 \sum_{i \in I_+ , j \in I_-} k_i k_j e_i \cdot e_j  .\label{alpha2}
\end{align}
If both $I_+ \neq \emptyset$, $I_- \neq \emptyset$ then the first and second terms on the right-hand side of (\ref{alpha2}) are negative even integers because the quadratic form on $V_\mathbb{Z}$ defined by (\ref{ADElattice}) is even and negative-definite. The third term is non-positive (since $k_i k_j <0$ and $e_i \cdot e_j= 0$ or $1$). Thus, (\ref{alpha2}) yields the contradiction $-2 \leq -2  -2 + 0$; hence (2) follows.
\end{proof}

Lemma \ref{lemma:rootADE}(1) implies that $\Phi$ is a (reduced, crystallographic) \textit{root system} in the inner product space $V_\mathbb{R}$ \cite[\S 9]{humphreys-lie}. Lemma \ref{lemma:rootADE}(2) shows that the collection $\Delta = \{ e_i \, | \, i\in I\}$ forms a \textit{basis} of the root system $\Phi$ \cite[\S 10]{humphreys-lie}, whose associated Dynkin diagram \cite[\S 11]{humphreys-lie} is $\Gamma$.  
From now on we refer to $\Phi$ as the \textit{root system of type $\Gamma$}. The elements of $\Phi$ will be called \textit{roots}, and the elements of $\Delta = \{ e_i \, | \, i \in I\} $ will be called \textit{simple roots} of $\Phi$. By Lemma \ref{lemma:rootADE}(2), we have a decomposition of the set of roots $\Phi = \Phi_+ \sqcup \Phi_-$ according to the sign of the integer coefficients $k_i$; elements of $\Phi_{+}$ (resp. $\Phi_-$) are called \textit{positive roots} (resp. negative roots) of $\Phi$.

It is easy to see that $W$ acts by isometries on $V_\mathbb{R}$. By \cite[Theorem 1.5, Theorem 1.9]{humphreys-coxeter} $W$ acts \textit{faithfully} on $V_\mathbb{R}$; hence $W$ is naturally a subgroup of the orthogonal group $O(V_\mathbb{R})$, generated by the reflections through the hyperplanes normal to $e_i$. Thus, by Lemma \ref{lemma:rootADE}(3) $W$ is also a subgroup of the permutation group on the finite set $\Phi$; hence $W$ is \textit{finite}.

For future reference, we include here the following standard result,

\begin{lemma}\label{lemma:positive+simple}
Let $\alpha \in \Phi_+$ be a positive root. If $\alpha$ is not simple, then there exists a positive root $\beta \in \Phi_+$ and a simple root $e_i$ such that $\alpha = \beta + e_i$. For any such $\beta$ and $e_i$ then $s_i \cdot \alpha = \beta$.
\end{lemma}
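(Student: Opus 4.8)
The plan is to establish the two assertions in turn, using only the negative-definite pairing $(\ref{ADElattice})$, the reflection formula $(\ref{picard-lefschetz})$, and the description of $\Phi$ and $\Phi_{\pm}$ from Lemma \ref{lemma:rootADE}.

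\textbf{I would do the last sentence first.} Suppose $\alpha = \beta + e_i$ with $\beta \in \Phi_+$ and $e_i$ a simple root. Since $\alpha, \beta \in \Phi$ and $e_i \in \Delta$ all satisfy $x\cdot x = -2$, expanding
\[
-2 = \alpha\cdot\alpha = (\beta+e_i)\cdot(\beta+e_i) = \beta\cdot\beta + 2\,(\beta\cdot e_i) + e_i\cdot e_i = -4 + 2\,(\beta\cdot e_i)
\]
forces $\beta\cdot e_i = 1$, hence $\alpha\cdot e_i = (\beta+e_i)\cdot e_i = 1 - 2 = -1$. Feeding this into $(\ref{picard-lefschetz})$ gives $s_i\cdot\alpha = \alpha + (\alpha\cdot e_i)\,e_i = \alpha - e_i = \beta$, which is exactly the claimed identity, valid for \emph{any} such $\beta$ and $e_i$.

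\textbf{Then existence of the decomposition.} Write $\alpha = \sum_{i\in I} k_i e_i$ with all $k_i \in \mathbb{Z}_{\geq 0}$ (Lemma \ref{lemma:rootADE}(2)), and set $J = \{ i\in I \mid k_i > 0\}$. If $|J| = 1$, say $\alpha = k e_i$, then $-2 = \alpha\cdot\alpha = -2k^2$ gives $k=1$, so $\alpha = e_i$ is simple, against the hypothesis; hence $|J|\geq 2$. Now $-2 = \alpha\cdot\alpha = \sum_{i\in J} k_i\,(\alpha\cdot e_i) < 0$, so $\alpha\cdot e_i < 0$ for at least one $i\in J$; fix such an $i$. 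Since $\alpha$ is positive and not simple we have $\alpha\neq\pm e_i$, and because $\Phi$ is simply-laced (every root has square $-2$) the standard bound on pairings of non-proportional roots (e.g. \cite[\S 9.4]{humphreys-lie}) gives $\alpha\cdot e_i \in \{-1,0,1\}$, so in fact $\alpha\cdot e_i = -1$. Set $\beta := s_i\cdot\alpha = \alpha + (\alpha\cdot e_i)e_i = \alpha - e_i$; this lies in $\Phi$ because $W$ preserves $\Phi$ (Lemma \ref{lemma:rootADE}(3)). Picking $i_0\in J$ with $i_0\neq i$ (possible as $|J|\geq 2$), the $e_{i_0}$-coefficient of $\beta=\alpha-e_i$ equals $k_{i_0}>0$, so by Lemma \ref{lemma:rootADE}(2) all coefficients of $\beta$ are $\geq 0$, i.e. $\beta\in\Phi_+$. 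Then $\alpha = \beta + e_i$ is the desired decomposition, and by the first part $s_i\cdot\alpha = \beta$.

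\textbf{The main obstacle} is not a substantive one: the proof is a routine root-system argument, and the only points demanding care are the sign bookkeeping forced by the negative-definite convention (so that $(\ref{picard-lefschetz})$ genuinely \emph{subtracts} $e_i$ rather than adding it) and the verification that $\beta = \alpha - e_i$ stays a \emph{positive} root, which I extract from Lemma \ref{lemma:rootADE}(2) using that a non-simple positive root has support on at least two simple roots.
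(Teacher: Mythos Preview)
Your proof is correct. Compared with the paper's argument, you take a more self-contained route on both halves. For the second assertion, the paper reduces to showing $\beta\cdot e_i = 1$ via a case analysis on the sign of $\beta\cdot e_i$ together with the bound $|\alpha\cdot e_i|\leq 2$ from \cite[\S 9.4]{humphreys-lie}; your expansion of $-2 = (\beta+e_i)\cdot(\beta+e_i)$ cuts straight to $\beta\cdot e_i = 1$ and is cleaner. For the existence assertion, the paper simply cites \cite[\S 10.2, Lemma A]{humphreys-lie}, whereas you supply a direct argument from Lemma~\ref{lemma:rootADE}: find $i$ with $\alpha\cdot e_i < 0$ via $\alpha\cdot\alpha = \sum k_i(\alpha\cdot e_i) = -2$, pin down $\alpha\cdot e_i = -1$ by the simply-laced bound, and check $\beta = \alpha - e_i$ stays positive using the support criterion. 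Your version is thus essentially independent of external references beyond the one inequality from \cite{humphreys-lie}, at the cost of a few more lines.
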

\begin{proof}
The first assertion is proved in \cite[\S 10.2, Lemma A]{humphreys-lie}. By (\ref{picard-lefschetz}), the second assertion is equivalent to $\beta  \cdot e_i =1$. To show this, suppose first that $\beta \cdot e_i <0$. Then $\alpha \cdot e_i = \beta \cdot e_i + e_{i}^2 < -2 $. The latter is impossible in the $ADE$ root system, where the inner product of any two roots equals $0, \pm1$ or $\pm2$ \cite[\S 9.4, Table 1]{humphreys-lie}. If $\beta \cdot e_i = 0$ then $\alpha^2 = \beta^2 + e_{i}^2 = -4$, which contradicts the fact that $\alpha$ is a root. Thus we have $\beta \cdot e_i >0$. By \cite[\S 10.2, Lemma A]{humphreys-lie}, then $\beta \cdot e_i = 1$ is the only possibility in an $ADE$ root system.
\end{proof}

\subsubsection{Braid groups}

Let $\Gamma$ be an ADE Dynkin diagram. 

\begin{definition}
The \textit{generalised Braid group of type $\Gamma$} (or `Artin--Tits group of type $\Gamma$') \cite{brieskorn-braid,deligne,brieskorn-tresses} is the group $B= B(\Gamma)$ given by the presentation
\begin{align}
B = \Big\langle \, s_i , \, i \in I \, | \, \begin{cases} s_i s_j s_i = s_j s_i s_j & \text{if } i \text{ and } j \text{ are adjacent in } \Gamma \\
s_i s_j = s_i s_j & \text{otherwise}
\end{cases}
\Big\rangle . \label{braidgroup}
\end{align}
(When $\Gamma$ is the $A_n$ Dynkin diagram then $B$ is Artin's classical Braid group on $n+1$ strands; a standard reference on this is \cite{farb-margalit}). There is a canonical surjective homomorphism $B \to W$ whose kernel is---by definition---the \textit{generalised pure Braid group of type $\Gamma$}, which we denote $P = P(\Gamma)$. Thus, $P$ is the subgroup of $B$ normally generated by the elements $s_{i}^2$ with $i \in I$. (For a presentation of $P$, see \cite{digne-gomi}). 
\end{definition}

Thus, $B$ gives an extension of $W$ by $P$: we have a short exact sequence of groups
\begin{align}
1 \to P \to B \to W \to 1 . \label{SES1}
\end{align}
By a fundamental result of Brieskorn \cite{brieskorn-braid} and Deligne \cite{deligne}, the extension (\ref{SES1}) can be interpreted geometrically as follows. Associated to each positive root $\alpha \in \Phi_+$ there is the hyperplane $H(\alpha) \subset V_\mathbb{R}$ normal to $\alpha$, and a corresponding complex hyperplane $H_{\mathbb{C}}(\alpha) := H(\alpha) \otimes \mathbb{C}$ in $V_\mathbb{C}$. The $W$--action on $V_\mathbb{C}$ permutes the hyperplanes, and the map from $V_\mathbb{C} \setminus \bigcup_{\alpha \in \Phi_+} H_\mathbb{C} (\alpha )$ to its quotient by $W$ is a regular covering with deck group $W$. This gives an associated short exact sequence:
\begin{align}
1 \to \pi_ 1 \Big(  V_\mathbb{C} \setminus \bigcup_{\alpha \in \Phi_+} H_\mathbb{C} (\alpha ) \, , \,  x_0  \Big) \to \pi_ 1 \Big(\, \big( V_\mathbb{C} \setminus \bigcup_{\alpha \in \Phi_+} H_\mathbb{C} (\alpha ) \big) / W \,  , \,   [x_0 ] \Big)\to W \to 1 , \label{SES2}
\end{align}
for any choice of basepoint $x_0$ avoiding the hyperplanes. From now on, the basepoint $x_0$ will be chosen to lie in the real locus $V_\mathbb{R} \subset V_\mathbb{C}$ .

\begin{theorem}[Brieskorn--Deligne] \label{theorem:hyperplane}
There is an isomorphism of the short exact sequences (\ref{SES1}) and (\ref{SES2}), inducing the identity map on $W$.
\end{theorem}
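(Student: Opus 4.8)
The plan is to recognise Theorem~\ref{theorem:hyperplane} as a packaging of the classical computations of Brieskorn \cite{brieskorn-braid} and Deligne \cite{deligne}, and to pin down why those are compatible with the projections to $W$ on both sides. Write $N_W := V_{\mathbb{C}}\setminus\bigcup_{\alpha\in\Phi_+}H_{\mathbb{C}}(\alpha)$ and $M_W := N_W/W$. First I would check that $W$ acts freely on $N_W$: by Steinberg's fixed-point theorem the isotropy group in $W$ of any $x\in V_{\mathbb{C}}$ is generated by the reflections $s_\alpha$ with $x\in H_{\mathbb{C}}(\alpha)$, so a point lying off every hyperplane has trivial isotropy. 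Since $N_W$ is the complement of finitely many complex (real-codimension-two) hyperplanes it is connected, and $W$ is finite, so $N_W\to M_W$ is a connected regular covering with deck group $W$; the exact sequence attached to this covering is exactly (\ref{SES2}), the surjection $\pi_1(M_W,[x_0])\to W$ being the covering monodromy --- lift a based loop of $M_W$ to a path in $N_W$ from $x_0$ to $w\cdot x_0$, and send it to $w$.

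Next I would construct the comparison homomorphism $\phi\colon B\to\pi_1(M_W,[x_0])$. Taking $x_0$ in the interior of the fundamental Weyl chamber inside the real locus $V_{\mathbb{R}}$, for each node $i$ I would use the path $\gamma_i$ in $V_{\mathbb{C}}$ that runs from $x_0$ towards the wall $H(e_i)$ and then detours around $H_{\mathbb{C}}(e_i)$ by a small half-circle in a transverse complex line, ending at $s_i\cdot x_0$; the $\gamma_i$ are arranged to avoid all the hyperplanes and to turn in a consistent direction. Projecting to $M_W$ gives based loops $\sigma_i$, and I set $\phi(s_i)=[\sigma_i]$. The work is to see that the $\sigma_i$ obey the relations (\ref{braidgroup}), which I would do by the standard codimension-two (Zariski--van Kampen) analysis of a hyperplane-arrangement complement: restricting the arrangement to a generic complex $2$-plane transverse to $H_{\mathbb{C}}(e_i)\cap H_{\mathbb{C}}(e_j)$ produces two lines through a point when $i,j$ are non-adjacent (yielding $\sigma_i\sigma_j=\sigma_j\sigma_i$) and three concurrent lines, of type $A_2$, when they are adjacent (yielding $\sigma_i\sigma_j\sigma_i=\sigma_j\sigma_i\sigma_j$) --- this is precisely Brieskorn's argument. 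Moreover, since the lift of $\sigma_i$ starting at $x_0$ ends at $s_i\cdot x_0$, the monodromy of $\sigma_i$ is $s_i$, so $\phi$ intertwines the canonical surjection $B\to W$ with the monodromy $\pi_1(M_W,[x_0])\to W$.

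It then remains to prove $\phi$ is an isomorphism and to assemble the conclusion. Surjectivity of $\phi$ I would get from the Salvetti cell structure on $M_W$ furnished by the real arrangement, in which the $\sigma_i$ generate $\pi_1$ (again this is in \cite{brieskorn-braid}). Injectivity is the genuinely hard step, and it is where I would appeal to Deligne \cite{deligne}: that for a finite Coxeter group the complement $N_W$, hence $M_W$, is a $K(\pi,1)$, which together with the above generators and codimension-two relations forces the presentation (\ref{braidgroup}) to be complete. Granting this, $\phi$ is an isomorphism, and I would finish with the commutative ladder whose rows are (\ref{SES1}) and (\ref{SES2}), whose middle vertical map is $\phi$ and whose right vertical map is $\mathrm{id}_W$: the five lemma then gives an induced isomorphism $P\xrightarrow{\cong}\pi_1(N_W,x_0)$, so $(\phi,\mathrm{id}_W)$ is an isomorphism of short exact sequences inducing the identity on $W$. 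The main obstacle is the injectivity/completeness step --- the asphericity of the Coxeter arrangement complement --- for which there is no elementary argument and one must invoke \cite{brieskorn-braid,deligne}.
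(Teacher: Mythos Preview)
The paper does not prove this theorem: it is stated as a classical result attributed to Brieskorn and Deligne, with citations to \cite{brieskorn-braid} and \cite{deligne}, and the paragraph following the statement only describes explicitly what the isomorphism does on the generators $s_i$ (via the loops $\gamma_i$). So there is no proof in the paper to compare your proposal against.

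Your sketch is a reasonable outline of the classical argument, matching the paper's description of the generators $\gamma_i$. One small correction on attribution and logic: the injectivity of $\phi$ (equivalently, the completeness of the Artin presentation for $\pi_1(M_W)$) does not require Deligne's $K(\pi,1)$ theorem. Brieskorn \cite{brieskorn-braid} already computed $\pi_1(M_W)$ and obtained the Artin presentation (by a case-by-case argument for Weyl groups); Deligne \cite{deligne} gave a uniform proof and, separately, established the asphericity of $N_W$. The $K(\pi,1)$ property concerns the higher homotopy groups and is logically independent of the $\pi_1$ computation you need here.
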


The isomorphism is canonical. The generator $s_i \in B$, $i \in I$, corresponds to the following homotopy class of loop $\gamma_i$ based at $[x_0 ]$ in $\big( V_\mathbb{C} \setminus \bigcup_{\alpha \in \Phi_+} H_\mathbb{C} (\alpha ) \big) / W$. The line $\ell$ containing $x_0 \in V_\mathbb{R} $ and $s_i (x_0 )\in V_\mathbb{R}$ meets the hyperplane $H (e_i )$ transversely at a single point $x_i$; and the segment of $\ell$ joining $x_0$ and $s_i (x_0 )$ does not meet any other hyperplanes $H(\alpha )$ for $\alpha \neq e_i$. The loop $\gamma_i$ is represented by the path in $V_\mathbb{C} \setminus \bigcup_{\alpha \in \Phi_+} H_\mathbb{C} (\alpha )$ from $x_0$ to $s_i (x_0 )$ which follows $\ell$ from $x_0$ to $s_i (x_0)$ except that, just before arriving to $x_i$, it avoids $x_i$ by instead doing a positive half-rotation about $x_i$ in the complex line $\ell \otimes \mathbb{C}$.


\subsubsection{Abelianisation of Braid groups}

We now describe the abelianisation of the pure Braid group $P = P(\Gamma )$ associated to an $ADE$ Dynkin diagram $\Gamma$. It should be noted that the abelianisation of the Braid group $B = B(\Gamma )$ is essentially uninteresting: we have an isomorphism $B^{\mathrm{ab}} \cong \mathbb{Z}$, with a generator given by any $s_i$. In turn, as we will explain momentarily, the pure Braid group $P$ has a large abelianisation.

First, observe that the Weyl group $W$ acts by automorphisms on the abelian group $P^{\mathrm{ab}}$. Namely, given $w \in W$ and $\beta \in P$, we choose some lift $\tilde{w} \in B$ of $w \in W$ (cf. (\ref{SES1}) and consider $\tilde{w} \beta \tilde{w}^{-1} \in P $. It is clear that we have a well-defined action
\begin{align}
W \times P^{\mathrm{ab}} \to P^{\mathrm{ab}} \quad (w , [\beta]) \to [ \tilde{w} \beta \tilde{w}^{-1} ].\label{actionPab}
\end{align}
(More generally, there is a well-defined action of $W$ on the group $B / [P , P]$ where $[P , P]$ is the commutator subgroup of $P$, defined in a similar fashion).

Returning to the hyperplane arrangement complement $V_\mathbb{C} \setminus \bigcup_{\alpha \in \Phi_+ } H_{\mathbb{C}}(\alpha )$, it is an elementary observation that the abelianisation of its fundamental group---the singular homology with $\mathbb{Z}$ coefficients---is the free abelian group with one generator for each hyperplane:
\begin{align}
H_1 \Big(  V_\mathbb{C} \setminus \bigcup_{\alpha \in \Phi_+ } H_{\mathbb{C}}(\alpha ) , \mathbb{Z} \Big) =   \bigoplus_{\alpha \in \Phi_+} \mathbb{Z} . \label{homologyhyperplane}
\end{align}
The generator $T_\alpha$ of this group associated to the hyperplane $H_\mathbb{C} (\alpha)$ is a small loop in $V_\mathbb{C}$ linking once and positively with the hyperplane $H_{\mathbb{C}}(\alpha )$. (This much is true for an arbitrary complex hyperplane arrangement.) 


The Weyl group $W$ acts on $V_\mathbb{C}$ preserving the hyperplane arrangement, hence acts on the homology group (\ref{homologyhyperplane}). This action
\begin{align}
W \times  \bigoplus_{\alpha \in \Phi_+} \mathbb{Z} \to \bigoplus_{\alpha \in \Phi_+} \mathbb{Z} \label{actionpositive} \end{align}
can be described explicitly: for a standard generator $s_i \in W$ and a positive root $\alpha \in \Phi_+$, we have
\[
s_i \cdot t_\alpha = \begin{cases} t_{s_i \cdot \alpha } & \text{ if } \alpha \neq e_i \\
t_{e_i} & \text{ if } \alpha = e_i 
\end{cases}.
\]
(In the above formula, note that if $\alpha \in \Phi_+$ and $\alpha \neq e_i$ then indeed $s_i \cdot \alpha \in \Phi_+ $. This is easily verified, e.g. see \cite[\S 10.2, Lemma B]{humphreys-lie}.)


With this in place, Theorem \ref{theorem:hyperplane} therefore yields an isomorphism $P^{\mathrm{ab}} \cong \bigoplus_{\alpha \in \Phi_+} \mathbb{Z}$. Most succinctly, this can be characterised as follows:

\begin{proposition}\label{proposition:characterisation}
With respect to the $W$--actions (\ref{actionPab}) and (\ref{actionpositive}), there exists a unique $W$--equivariant isomorphism of abelian groups $P^{\mathrm{ab}} \cong \bigoplus_{\alpha \in \Phi_+} \mathbb{Z}$ sending $[ s_{i}^2 ] \in P^{\mathrm{ab}}$ to $T_{e_i} \in \bigoplus_{\alpha \in \Phi_+} \mathbb{Z}$ for each $i \in I$.
\end{proposition}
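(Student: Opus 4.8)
The plan is to transport the whole statement across the Brieskorn--Deligne isomorphism (Theorem~\ref{theorem:hyperplane}) into the hyperplane-arrangement complement $U := V_\mathbb{C}\setminus\bigcup_{\alpha\in\Phi_+}H_\mathbb{C}(\alpha)$, where by (\ref{homologyhyperplane}) the target group $\bigoplus_{\alpha\in\Phi_+}\mathbb{Z}$ is literally $H_1(U,\mathbb{Z})$ and the $W$-action is the manifest one (\ref{actionpositive}). Existence of the claimed isomorphism, and its value on $[s_i^2]$, then reduce to a short covering-space computation; uniqueness is a formal module-generation argument.

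First I would produce the isomorphism and check $W$-equivariance. Theorem~\ref{theorem:hyperplane} gives an isomorphism of the short exact sequences (\ref{SES1}) and (\ref{SES2}) inducing $\mathrm{id}_W$; in particular the induced isomorphism $P\xrightarrow{\cong}\pi_1(U,x_0)$ intertwines the conjugation actions of $B$ and of $\pi_1(U/W,[x_0])$ on the respective kernels. Since the conjugation action of an ambient group on the abelianisation of a normal subgroup factors through the quotient (inner automorphisms act trivially on an abelianisation), passing to abelianisations yields a $W$-equivariant isomorphism $P^{\mathrm{ab}}\xrightarrow{\cong}H_1(U,\mathbb{Z})$. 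It then remains to identify the $W$-action on $H_1(U,\mathbb{Z})$ obtained this way with (\ref{actionpositive}): by the standard description of deck transformations of a regular covering, the action of $w\in W$ on $\pi_1(U)$ arising from conjugation in $\pi_1(U/W)$ coincides, up to an inner automorphism, with the automorphism of $\pi_1(U)$ induced by the deck transformation $w$, i.e.\ by the linear action of $w$ on $U$; after passing to $H_1$ the inner ambiguity disappears, and functoriality of $H_1$ identifies it with (\ref{actionpositive}).

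Next I would compute the image of $[s_i^2]$. The loop in $U/W$ defining $s_i$ lifts, starting at $x_0$, to the path $\gamma_i$ described after Theorem~\ref{theorem:hyperplane}, which ends at $s_i(x_0)$; hence $s_i^2\in P\cong\pi_1(U,x_0)$ is represented by the loop based at $x_0$ obtained by first traversing $\gamma_i$ and then its deck translate $s_i\cdot\gamma_i$. Working in the complex line $\ell\otimes\mathbb{C}$, formula (\ref{picard-lefschetz}) shows that $\ell\cap H(e_i)$ is the midpoint $x_i$ of $x_0$ and $s_i(x_0)$, so $s_i$ acts on $\ell\otimes\mathbb{C}$ — in a complex coordinate $z$ centred at $x_i$ — simply as $z\mapsto -z$. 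Consequently $s_i\cdot\gamma_i$ again runs along $\ell$, away from every $H_\mathbb{C}(\alpha)$ with $\alpha\neq e_i$, its half-turn around $x_i$ is again positive and continues that of $\gamma_i$ to a full positive turn, and the two intermediate radial segments of $\ell$ cancel. What survives is a small loop in $\ell\otimes\mathbb{C}\subset V_\mathbb{C}$ encircling $H_\mathbb{C}(e_i)$ once positively and linking no other hyperplane — that is, the generator $T_{e_i}$. This establishes existence.

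Uniqueness is immediate once one observes that $P$ is, by definition, normally generated in $B$ by $\{s_i^2\}_{i\in I}$, so that $P^{\mathrm{ab}}$ is generated over $\mathbb{Z}[B]$ — and hence, since the $B$-action factors through $W$, over $\mathbb{Z}[W]$ — by the classes $[s_i^2]$; any $W$-equivariant homomorphism out of $P^{\mathrm{ab}}$ is therefore determined by its values on the $[s_i^2]$. (As a consistency check the $T_{e_i}$ likewise generate $\bigoplus_{\alpha\in\Phi_+}\mathbb{Z}$ over $\mathbb{Z}[W]$: for non-simple $\alpha\in\Phi_+$, Lemma~\ref{lemma:positive+simple} writes $\alpha=\beta+e_i$ with $\beta\in\Phi_+$ and $s_i\cdot\alpha=\beta$, whence $\beta\neq e_i$ and $T_\alpha=s_i\cdot T_\beta$, so induction on the height of $\alpha$ applies.) The step I expect to be most delicate is the matching of the two a priori different $W$-actions on $H_1(U)$ — the algebraic one coming from conjugation in $B$ and the geometric one induced by the linear $W$-action — together with the orientation bookkeeping in the last step needed to be sure the explicit loop represents $+T_{e_i}$ rather than $-T_{e_i}$ or $T_{e_i}$ corrected by further linking terms; everything else is formal.
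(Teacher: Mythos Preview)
Your proof is correct and follows the same route as the paper's: existence via the Brieskorn--Deligne isomorphism (Theorem~\ref{theorem:hyperplane}) and the explicit description of the generators $s_i$ given just after it, and uniqueness via the fact that $P^{\mathrm{ab}}$ is generated over $\mathbb{Z}[W]$ by the classes $[s_i^2]$. The paper's proof is more terse---it simply asserts that existence follows from Theorem~\ref{theorem:hyperplane} and the subsequent paragraph---whereas you spell out the covering-space computation showing $[s_i^2]\mapsto T_{e_i}$ and the identification of the conjugation $W$-action with the geometric one; these details are implicit in the paper's citation and your treatment of them is sound.
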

\begin{proof}
The existence readily follows from Theorem \ref{theorem:hyperplane} and the paragraph that follows it. Uniqueness follows from: (1) the fact that $P^{ab}$ is generated as an abelian group by the $W$--orbits of the elements $[s_{i}^2 ]$ (because $P$ is the subgroup of $B$ normally generated by the $s_{i}^2$); and (2) the fact that for every $\alpha \in \Phi_+$ there exists $w \in W$ such that $w \cdot \alpha = e_i$ for some $i \in I$ \cite[\S 10.3, Theorem(c)]{humphreys-lie}. (For a root system of type $ADE$ more is true: $W$ acts transitively on $\Phi$ \cite[\S 10.4, Lemma C]{humphreys-lie}).
\end{proof}

The $W$--representation structure on $P^{\mathrm{ab}} = \bigoplus_{\alpha \in \Phi_+} \mathbb{Z}$ given by (\ref{actionpositive}) (equivalently, by (\ref{actionPab})) arises in a natural way as a sub-representation of a larger one. Consider the free abelian group generated by all the roots: 
\[
\mathbb{Z}\Phi := \bigoplus_{\alpha \in \Phi} \mathbb{Z},
\]
with one generator $T_\alpha$ for each root $\alpha$. The Weyl group $W$ naturally acts on it by automorphisms by permuting the roots: $s_i \cdot T_\alpha = T_{s_i \cdot \alpha }$. As a $W$--representation, $P^{\mathrm{ab}}$ is isomorphic to the sub-representation of $\mathbb{Z}\Phi$ generated by the elements $t_\alpha : = T_\alpha + T_{-\alpha}.$
There is another $W$--representation structure that the abelian group $\bigoplus_{\alpha \in \Phi_+}\mathbb{Z}$ can be endowed with, which we denote $\overline{P}^{\mathrm{ab}}$ and refer to as the \textit{conjugate} $W$--action, which is given by the sub-representation of $\mathbb{Z}\Phi$ generated by the elements $\overline{t}_\alpha : = T_\alpha - T_{-\alpha},$
which satisfy
\[
s_i \cdot \overline{t}_\alpha  = \begin{cases} \overline{t}_{s_i \cdot \alpha } & \text{ if } \alpha \neq e_i \\
- \overline{t}_{e_i } & \text{ if } \alpha = e_i 
\end{cases}.
\]

The three $W$--representations described above fit into a short exact sequence of $W$--representations:
\begin{align}
0 \to \overline{P}^{\mathrm{ab}}\to \mathbb{Z}\Phi \to P^{\mathrm{ab}}\to 0 .\label{SESreps}
\end{align}
(The first map is $\overline{t}_\alpha \mapsto T_\alpha - T_{-\alpha}$, and the last map is $T_\alpha \mapsto t_{\pm \alpha}$ for $\alpha \in \Phi_{\pm }$.) As a short exact sequence of abelian groups, (\ref{SESreps}) is split. 
\begin{lemma}\label{lemma:nonsplit}
As a sequence of $W$--representations, we have that (\ref{SESreps}) is:
\begin{enumerate}
\item non-split
\item split after localising away from $2$ (i.e. inverting $2$). 
\end{enumerate}
\end{lemma}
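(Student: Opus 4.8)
The plan is to analyze the short exact sequence (\ref{SESreps}) of $W$--representations by extracting a concrete, manageable invariant. First I would reduce to a question about a single $W$--orbit. Since $W$ acts transitively on $\Phi$ (for $ADE$ type, cited from \cite[\S 10.4, Lemma C]{humphreys-lie}), the permutation representation $\mathbb{Z}\Phi$ is the induced representation $\mathrm{Ind}_{W_\alpha}^{W}\mathbb{Z}$ where $W_\alpha = \mathrm{Stab}_W(\alpha)$ for a fixed root $\alpha$, and the involution $\alpha \mapsto -\alpha$ is realized by $-\mathrm{Id}\in W$ when $-\mathrm{Id}\in W$ (true for all $ADE$ types except $A_n$ with $n\geq 2$, $D_{2k+1}$, and $E_6$), or more robustly by the longest element $w_0$ composed with a diagram automorphism. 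Rather than chase this case distinction, I would instead work directly with the generators: $\mathbb{Z}\Phi$ has basis $\{T_\alpha\}_{\alpha\in\Phi}$; the subgroup $\overline{P}^{\mathrm{ab}}$ has basis $\{\overline{t}_\alpha\}_{\alpha\in\Phi_+}$ and the quotient $P^{\mathrm{ab}}$ has basis the images of $\{T_\alpha\}_{\alpha\in\Phi_+}$, identified with $\{t_\alpha\}$.

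For part (2), the splitting after inverting $2$ is immediate: the map $\mathbb{Z}[1/2]\Phi \to \overline{P}^{\mathrm{ab}}[1/2]$ sending $T_\alpha \mapsto \tfrac12\overline{t}_\alpha$ for $\alpha\in\Phi_+$ and $T_{-\alpha}\mapsto -\tfrac12\overline{t}_\alpha$ is a $W$--equivariant retraction onto the first term of (\ref{SESreps}); equivariance is checked on generators using $s_i\cdot\overline{t}_\alpha = \overline{t}_{s_i\cdot\alpha}$ for $\alpha\neq e_i$ and $s_i\cdot\overline{t}_{e_i} = -\overline{t}_{e_i}$, together with the known behavior of $s_i$ on $T_\beta$. (Alternatively, invoke Maschke's theorem once $2\nmid \#W$... but $\#W$ is generally even, so one does need the explicit retraction; only the prime $2$ is problematic, and inverting it suffices because $T_\alpha = \tfrac12(t_\alpha + \overline t_\alpha)$ recovers a splitting $\mathbb{Z}[1/2]\Phi \cong P^{\mathrm{ab}}[1/2]\oplus\overline P^{\mathrm{ab}}[1/2]$.) I would spell this out in one or two lines.

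The substantive part is (1), non-splitness over $\mathbb{Z}$. The plan is to show that any $W$--equivariant section $s: P^{\mathrm{ab}}\to \mathbb{Z}\Phi$ would force a contradiction mod $2$. Such a section must send $t_\alpha \mapsto T_\alpha + T_{-\alpha} + (\text{something in } \overline P^{\mathrm{ab}})$, i.e. $s(t_\alpha) = T_\alpha + T_{-\alpha} + c_\alpha(T_\alpha - T_{-\alpha})$ for integers $c_\alpha$; equivalently $s(t_\alpha) = (1+c_\alpha)T_\alpha + (1-c_\alpha)T_{-\alpha}$, and reducing mod $2$ gives $s(t_\alpha)\equiv T_\alpha + T_{-\alpha}\pmod 2$, independent of the $c_\alpha$. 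So mod $2$ the sequence $0\to\overline P^{\mathrm{ab}}\otimes\mathbb{F}_2 \to \mathbb{Z}\Phi\otimes\mathbb{F}_2\to P^{\mathrm{ab}}\otimes\mathbb{F}_2\to 0$ has a canonical $\mathbb{F}_2$--splitting given by $t_\alpha\mapsto T_\alpha+T_{-\alpha}$ — but this is NOT the obstruction; rather, I need to find the obstruction class in $\mathrm{Ext}^1_{\mathbb{Z}W}(P^{\mathrm{ab}},\overline P^{\mathrm{ab}})$ and show it is nonzero. The cleanest route: apply a suitable $W$--fixed-point or coinvariants functor. Consider $H^0(W,-)$ and $H_0(W,-) = (-)_W$. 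Taking $W$--coinvariants of (\ref{SESreps}) and using that $\mathbb{Z}\Phi_W \cong \mathbb{Z}$ (as $W$ is transitive on $\Phi$), while $(P^{\mathrm{ab}})_W\cong\mathbb{Z}$ (one generator $t_\alpha$ up to the $W$--action, again by transitivity) and $(\overline P^{\mathrm{ab}})_W$ is computed from the relation $\overline t_{e_i} = -\overline t_{e_i}$ after applying $W$, forcing $2$--torsion: the coinvariants $(\overline P^{\mathrm{ab}})_W \cong \mathbb{Z}/2$ (since $W$ is transitive on $\Phi_+$ up to sign and the sign-flip relation makes the coinvariant group $2$-torsion). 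Then the $6$-term sequence in $W$--(co)homology for (\ref{SESreps}) reads $\cdots\to H_1(W,P^{\mathrm{ab}})\to (\overline P^{\mathrm{ab}})_W \to \mathbb{Z}\Phi_W \to (P^{\mathrm{ab}})_W\to 0$, i.e. $\cdots\to \mathbb{Z}/2 \to \mathbb{Z}\to\mathbb{Z}\to 0$; the map $\mathbb{Z}\to\mathbb{Z}$ is an isomorphism (both $\cong\mathbb{Z}$ generated by the class of $T_\alpha$), hence $(\overline P^{\mathrm{ab}})_W = \mathbb{Z}/2\to\mathbb{Z}\Phi_W=\mathbb{Z}$ is the zero map, which means it does not inject; but if (\ref{SESreps}) were $W$--split then $\overline P^{\mathrm{ab}}$ would be a direct summand and $(\overline P^{\mathrm{ab}})_W = \mathbb{Z}/2$ would inject into $(\mathbb{Z}\Phi)_W = \mathbb{Z}$, which is torsion-free — contradiction. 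I expect the main obstacle to be the careful computation of $(\overline P^{\mathrm{ab}})_W$ (equivalently $H_0(W;\overline P^{\mathrm{ab}})$): one must show it is exactly $\mathbb{Z}/2$ and nonzero, using transitivity of $W$ on $\Phi$ together with the existence, for each pair $\alpha$, of $w\in W$ with $w\alpha = -\alpha$ or more simply that some $s_i$ fixes the $W$--orbit with a sign flip — this is where the $ADE$ hypothesis and the structure of the root system really enter, and it should be handled by picking, for a simple root $e_i$, the element $s_i$ which gives $\overline t_{e_i}\sim -\overline t_{e_i}$ in coinvariants, hence $2\overline t_{e_i}\sim 0$, while transitivity shows every $\overline t_\alpha$ is equivalent to $\pm\overline t_{e_i}$, so the coinvariants are cyclic of order dividing $2$; nonvanishing ($\neq 0$) then follows from the reduction mod $2$ computation above where $t_\alpha\mapsto T_\alpha+T_{-\alpha}$ shows the quotient map $\mathbb{Z}\Phi\otimes\mathbb{F}_2 \to P^{\mathrm{ab}}\otimes\mathbb{F}_2$ has a section, so over $\mathbb{F}_2$ the sequence splits and $\overline P^{\mathrm{ab}}\otimes\mathbb{F}_2$ is a genuine summand, giving a surjection onto its $\mathbb{F}_2$--coinvariants which is nonzero.

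Summarizing the steps in order: \textbf{(i)} establish the explicit retraction over $\mathbb{Z}[1/2]$, proving (2); \textbf{(ii)} observe $\mathbb{Z}\Phi$ is a transitive permutation module, so $(\mathbb{Z}\Phi)_W \cong\mathbb{Z}$ is torsion-free; \textbf{(iii)} compute $(\overline P^{\mathrm{ab}})_W\cong\mathbb{Z}/2$ using transitivity on $\Phi$ and the sign-flip relation $s_i\cdot\overline t_{e_i} = -\overline t_{e_i}$, checking it is nonzero via the mod-$2$ reduction; \textbf{(iv)} conclude that a $W$--equivariant splitting of (\ref{SESreps}) would realize the nonzero torsion group $(\overline P^{\mathrm{ab}})_W$ as a subgroup of the torsion-free group $(\mathbb{Z}\Phi)_W$, a contradiction, proving (1). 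If the coinvariants computation proves delicate for some small cases (e.g. $A_1$, where $\Phi = \{\pm e_1\}$, $W = \mathbb{Z}/2$, and one can check everything by hand), I would dispatch those directly.
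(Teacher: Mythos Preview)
Your proposal is correct, and part (2) matches the paper exactly. For part (1), however, you take a genuinely different route from the paper.

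The paper's argument for (1) is a three-line direct computation --- essentially the coefficient analysis you began and then abandoned. It writes a hypothetical $W$--equivariant section as $s(t_\alpha) = \sum_{\beta\in\Phi} s_{\alpha,\beta}T_\beta$ (allowing all roots, not just $\pm\alpha$), observes that the section condition forces $s_{\alpha,\alpha} + s_{\alpha,-\alpha} = 1$, and then uses equivariance under the reflection fixing $t_\alpha$ (e.g.\ $s_i$ when $\alpha = e_i$, which swaps $T_{e_i}\leftrightarrow T_{-e_i}$) to conclude $s_{\alpha,\alpha} = s_{\alpha,-\alpha}$, an immediate parity contradiction. Your error in that paragraph was assuming $s(t_\alpha)$ involves only $T_{\pm\alpha}$; the paper avoids this by simply reading off the two relevant coefficients from the general expression.

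Your coinvariants argument is a valid and more conceptual alternative: it locates the obstruction as the nonzero torsion class $(\overline{P}^{\mathrm{ab}})_W \cong \mathbb{Z}/2$ inside the torsion-free $(\mathbb{Z}\Phi)_W \cong \mathbb{Z}$. This buys you a structural explanation of why exactly the prime $2$ is the obstruction, at the cost of the extra computation of $(\overline{P}^{\mathrm{ab}})_W$. (For that computation, note that the nonvanishing is slightly cleaner than your mod-$2$ splitting argument: the map $\overline{P}^{\mathrm{ab}}\to\mathbb{Z}/2$, $\overline{t}_\alpha\mapsto 1$, is visibly $W$--invariant and nonzero, hence factors through $(\overline{P}^{\mathrm{ab}})_W$.) The paper's approach is quicker; yours generalizes more readily and makes the homological nature of the obstruction explicit.
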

\begin{proof}
(1) Suppose that a splitting existed, and denote it $s : P^{\mathrm{ab}}\to \mathbb{Z}\Phi$. That is: $s$ is $W$--equivariant, and the composition $P^{\mathrm{ab}}\xrightarrow{s} \mathbb{Z}\Phi \to P^{\mathrm{ab}}$ is the identity. Writing $s(t_\alpha ) = \sum_{\beta \in \Phi } s_{\alpha , \beta } T_\beta$ with $s_{\alpha, \beta }\in \mathbb{Z}$, the second property of $s$ says that 
\[
s_{\alpha , \alpha} + s_{\alpha , - \alpha } = 1 \quad , \quad s_{\alpha , \beta} + s_{\alpha , - \beta} = 0 \text{ if } \beta \neq \alpha .
\]
But the equivariance condition implies that $s_{\alpha , \alpha } = s_{\alpha , - \alpha }$, which is a contradiction with the above. (2) After localising away from $2$ we have a splitting $ s: P^{\mathrm{ab}}[\frac{1}{2}] \to \mathbb{Z}\Phi [\frac{1}{2}]$ sending $t_\alpha$ to $\frac{1}{2}(T_\alpha + T_{-\alpha })$.
 \end{proof}

\subsubsection{Example: the case $\Gamma = A_n$}\label{subsubsection:exampleA}

For the sake of concreteness, we briefly spell out here how the previous discussion looks like when $\Gamma$ is the $A_n$ Dynkin diagram ($n\geq 1$). Then $W \cong S_{n+1}$ is the symmetric group on $n+1$ elements $\{ 1 , 2 , \ldots , n+1\}$, and the generators $s_i$ ($i = 1 , \ldots , n$) from the presentation (\ref{weyl}) can be identified with the transposition $s_i = ( i  ,  i +1 )$. Now $B \cong B_{n+1}$ is the `classical' Artin Braid group on $n+1$ strands, and the generator $s_i$ ($i = 1 , \ldots , n$) from the presentation (\ref{braidgroup}) can be identified with the braid $s_i$ where the $(i+1)$st strand passes in front of the $i$st strand; and $P = P_{n+1}$ is the `classical' pure Braid group, consisting of braids which induce the trivial permutation of the strands. (For details, see e.g. \cite{farb-margalit}). 

The set of positive roots consists of $n+1\choose2$ elements:
\[
\Phi_+ = \Big\{ e_{i}+e_{i+1} + \cdots + e_{j-1} + e_{j} \, \, | \, \, 1\leq i \leq j\leq n \Big\} \subset \mathbb{R}^{n}.
\]
As a representation of $W = S_{n+1}$, $P^{\mathrm{ab}} \cong  \mathbb{Z}^{n+1\choose2}$ is the permutation representation associated to the action of $S_{n+1}$ on the set of unordered pairs of distinct elements (or $2$-element subsets) of $\{ 1 , 2, \ldots , n+1 \}$: the generator of $P^{\mathrm{ab}}$ corresponding to the positive root $e_{i} + e_{i+1}+\cdots +e_{j-1} +e_j$ is identified with the unordered pair $\{ i , j +1 \} $. (The $S_{n+1}$--representation $\mathbb{Z}\Phi$ has a similar description involving ordered pairs). Working over $\mathbb{Q}$, the $S_{n+1}$--representation $P^{\mathrm{ab}}\otimes\mathbb{Q}$ splits as follows as a sum of irreducible representations (\cite[Example 7.18.9]{stanley}): denoting a representation of $S_{n+1}$ by its corresponding Young diagram (so that e.g. $(n+1)$ denotes the trivial representation, $(n,1)$ denotes the `standard' representation $\{ x \in \mathbb{Q}^{n+1} \, |\, \sum_{i=1}^{n+1} x_i = 0\}$) we have:
\begin{align*}
P^{\mathrm{ab}}\otimes\mathbb{Q} \cong (n+1) \oplus (n,1) \oplus (n-1,2) \quad \text{   if } n \geq 3 \, , \\
P^{\mathrm{ab}}\otimes\mathbb{Q} \cong (3) \oplus (2,1) \quad \text{   if } n = 2 \quad , \quad P^{\mathrm{ab}}\otimes \mathbb{Q}\cong (2) \quad \text{    if  } n =1 .
\end{align*}

\subsubsection{Mapping out of $P^{\mathrm{ab}}$}

We conclude this subsection by discussing the following result, which gives a `universal property' of the $W$--representation $P^{\mathrm{ab}}$:

\begin{lemma}\label{lemma:mapfromPab}
Let $\Gamma$ be an $ADE$ Dynkin digram (with vertex set denoted $I$). Let $A$ be an abelian group on which the Weyl group $W = W(\Gamma )$ acts by automorphisms. Let $a_i \in A$ be elements indexed by $i \in I$, such that 
\[
s_i \cdot a_j =  \begin{cases} a_j & \text{ if there is no edge connecting } i \text{ and } j\\
s_{j}\cdot a_i &  \text{ if there is an edge connecting } i \text{ and } j 
\end{cases}.
\]
Then there exists a unique $W$--equivariant homomorphism $P^{\mathrm{ab}} \to A$ sending $[ s_{i}^2 ] \mapsto a_i$.
\end{lemma}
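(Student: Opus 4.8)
The plan is to realize $P^{\mathrm{ab}}$ as a quotient of the free abelian group $\mathbb{Z}\Phi$ on the set of all roots, carrying the permutation $W$-action, and to build the homomorphism $P^{\mathrm{ab}}\to A$ by first constructing a $W$-equivariant map $\mathbb{Z}\Phi\to A$ and then checking it descends. Concretely, recall from Proposition~\ref{proposition:characterisation} and the surjection $\mathbb{Z}\Phi\to P^{\mathrm{ab}}$ in (\ref{SESreps}) that $P^{\mathrm{ab}}$ is the quotient of $\mathbb{Z}\Phi$ by the subgroup generated by $T_\alpha - T_{-\alpha}$, with $[s_i^2]$ corresponding to the image of $T_{e_i}$. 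So the first step is: \textbf{define $\widetilde{a}\colon \mathbb{Z}\Phi\to A$} by setting $\widetilde{a}(T_{e_i}) := a_i$ on simple roots and extending $W$-equivariantly, i.e. $\widetilde{a}(T_{w\cdot e_i}) := w\cdot a_i$. Since $W$ acts transitively on $\Phi$ (cf. \cite[\S 10.4, Lemma C]{humphreys-lie}), every root is of the form $w\cdot e_i$, so this prescription covers all generators $T_\alpha$; the issue is well-definedness.

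The main obstacle, as expected, is precisely this well-definedness: if $w\cdot e_i = w'\cdot e_j$ then we need $w\cdot a_i = w'\cdot a_j$. Equivalently, writing $u = (w')^{-1}w$, whenever $u\cdot e_i = e_j$ we need $u\cdot a_i = a_j$. The stabilizer of $e_i$ in $W$ is the parabolic subgroup generated by those $s_k$ with $k$ not adjacent to $i$ in $\Gamma$ (a standard fact for $ADE$ root systems — the reflections $s_k$ fixing $e_i$ are exactly those with $e_k\cdot e_i = 0$, and they generate the full stabilizer by a theorem on reflection subgroups), and for such $k$ the hypothesis gives $s_k\cdot a_i = a_i$. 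Thus $a_i$ is fixed by $\mathrm{Stab}_W(e_i)$. To handle a general $u$ with $u\cdot e_i = e_j$, I would reduce to a composite of simple reflections using a gallery/minimal-word argument: express $u$ as a product of $s_k$'s and track how each one acts, using the third relation pattern in the hypothesis — namely $s_k\cdot a_i = s_i\cdot a_k$ when $k,i$ are adjacent — which is exactly the relation needed to commute the index along an edge, together with invariance along non-edges. More cleanly, one can appeal to the fact that $\{(w, i) : w\in W, i\in I\}/{\sim}$, where $(w s_k, i)\sim(w, s_k\cdot \text{(something)})$, is in bijection with $\Phi$, and verify the assignment respects the relation; this is the routine-but-technical core I would not grind through here.

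Once $\widetilde{a}\colon\mathbb{Z}\Phi\to A$ is well-defined and $W$-equivariant, the remaining steps are short. \textbf{Step two:} check $\widetilde{a}$ kills the kernel of $\mathbb{Z}\Phi\to P^{\mathrm{ab}}$, i.e. that $\widetilde{a}(T_\alpha - T_{-\alpha}) = 0$ for all $\alpha\in\Phi$. By $W$-equivariance it suffices to check this on simple roots $\alpha = e_i$: we need $a_i = \widetilde{a}(T_{-e_i})$. Now $-e_i = s_i\cdot e_i$, so $\widetilde{a}(T_{-e_i}) = s_i\cdot a_i$; and $s_i\cdot a_i = a_i$ follows by setting $i = j$ in the hypothesis (there is no edge from $i$ to itself, so the first case applies, or one argues directly that $a_i$ is $\langle s_i\rangle$-invariant — indeed from the construction $T_{e_i}$ and $T_{-e_i}$ map to the same element). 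This gives the induced $W$-equivariant homomorphism $\rho\colon P^{\mathrm{ab}}\to A$ with $\rho([s_i^2]) = \widetilde{a}(T_{e_i}) = a_i$. \textbf{Step three (uniqueness):} since $P$ is normally generated in $B$ by the $s_i^2$, the group $P^{\mathrm{ab}}$ is generated as an abelian group by the $W$-orbits of the $[s_i^2]$ (the action being that of (\ref{actionPab})); hence any $W$-equivariant homomorphism out of $P^{\mathrm{ab}}$ is determined by the images of the $[s_i^2]$, giving uniqueness. This mirrors verbatim the uniqueness argument in the proof of Proposition~\ref{proposition:characterisation}.
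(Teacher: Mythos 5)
Your strategy---define a $W$-equivariant map $\widetilde{a}\colon\mathbb{Z}\Phi\to A$ on generators by $T_{w\cdot e_i}\mapsto w\cdot a_i$ and then descend---is a reasonable alternative to the paper's, and your Steps two and three are fine. But the load-bearing step, well-definedness of $\widetilde{a}$, contains a genuine error and the rest of it is explicitly deferred (``I would not grind through here''). The error: it is false that $\mathrm{Stab}_W(e_i)$ is the standard parabolic subgroup generated by the simple reflections $s_k$ with $k$ not adjacent to $i$. By Steinberg's fixed-point theorem $\mathrm{Stab}_W(e_i)$ is indeed generated by the reflections it contains, namely the $s_\beta$ with $\beta\in\Phi$ orthogonal to $e_i$; but such $\beta$ need not be simple, and the resulting reflection subgroup need not be a standard parabolic. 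Concretely, take $\Gamma = A_3$ with vertices $1$--$2$--$3$ and $i=2$: no simple root other than $e_2$ itself is orthogonal to $e_2$ (both $e_1\cdot e_2$ and $e_3\cdot e_2$ equal $1$), so your proposed parabolic is trivial; yet $\mathrm{Stab}_W(e_2)$ has order $|W|/|\Phi|=24/12=2$, generated by $s_{\theta}$ with $\theta = e_1+e_2+e_3$ the highest root (indeed $\theta\cdot e_2 = 1-2+1 = 0$). Your hypotheses control $s_k\cdot a_i$ only for $k$ a vertex of $\Gamma$, so ``$a_i$ is fixed by $\mathrm{Stab}_W(e_i)$'' does not follow from what you wrote, and would itself require an inductive argument.

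Thus the well-definedness question reduces to showing that whenever $u\cdot e_i = e_j$ one has $u\cdot a_i = a_j$---including the case $j=i$ with $u$ a reflection in a non-simple orthogonal root---and this is exactly the non-trivial combinatorics the lemma is encoding, which you have left unproved. The paper avoids the stabilizer route entirely and instead inducts on the \emph{height} of positive roots: it builds $f_h$ on the subgroup generated by $t_\alpha$ with $\alpha$ of height at most $h$, and carries along a pair of consistency conditions (properties (2) and (3) of the proof), using Lemma \ref{lemma:positive+simple} as the inductive engine. That argument works directly from the given relations, at the cost of a lengthy case analysis. To repair your approach you would need to replace the stabilizer claim with an honest induction (say on the length of $u$ in $W$) showing that $u\cdot a_i$ depends only on $u\cdot e_i$; this ends up doing casework of comparable complexity, so the two routes are roughly equivalent in effort once the gap is filled.
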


\begin{remark}\label{remark:mapfromPab}
There is an analogous statement---with very similar proof---that holds instead for the representations $\mathbb{Z}\Phi$ and $\overline{P}^{\mathrm{ab}}$: 
given elements $a_{i}\in A $ for each $i \in I$ such that
\begin{align*}
& s_i \cdot a_j = s_j \cdot a_i \,\, \,  \,\,\, , \,\,\,  i , j \in I  \\
\text{resp. }\,\, &   s_i \cdot a_j =  \begin{cases} -a_j & \text{ if } i =j\\
a_j & \text{ if } i \neq j \text{ and there is no edge connecting } i \text{ and } j\\
s_{j}\cdot a_i &  \text{ if there is an edge connecting } i \text{ and } j 
\end{cases} \, \, ,  
\end{align*}
there exists a unique $W$--equivariant homomorphism $\mathbb{Z}\Phi \to A$, resp. $\overline{P}^\mathrm{ab} \to A$, sending $T_{e_i} \mapsto a_i$, resp. $\overline{t}_{e_i} := T_{e_i} - T_{e_i} \mapsto a_i$.
\end{remark}

\begin{proof}
The uniqueness assertion is proved as in Proposition \ref{proposition:characterisation}. To establish existence we use induction on the height of positive roots. The \textit{height} of a positive root $\alpha \in \Phi_+$ is defined as the positive integer $\sum_{i\in I} k_i $ where $\alpha = \sum_{i \in I} k_i e_i$ is the unique expression of $\alpha$ in terms of the simple roots, where $k_i \in \mathbb{Z}_{\geq 0}$ by Lemma \ref{lemma:rootADE}(2). We use Proposition \ref{proposition:characterisation} to identify $P^{\mathrm{ab}} = \bigoplus_{\alpha \in \Phi_+} \mathbb{Z}$. For a given integer $h \geq 1$, let $P_{h}^{\mathrm{ab}}$ be the subgroup of $P^{\mathrm{ab}}$ generated by the elements $t_{\alpha}$ where $\alpha \in \Phi_+$ has height $\leq h$.

We will establish the following assertion by induction on $h \geq 1$: there exists a homomorphism $f_h : P_{h}^{\mathrm{ab}} \to A$ such that
\begin{enumerate}
\item $f_h (t_{e_i} ) = a_i$
\item if $\alpha $ is a positive root of height $\leq h$ and $s_i \cdot \alpha$ is a positive root of height $\leq h$ then 
\[
    f_h ( s_i \cdot t_\alpha ) = s_i \cdot f_h ( t_\alpha ).
\]
\item if $\beta, \gamma$ are positive roots of height $\leq h$ such that $\beta +e_j$ and $\gamma + e_k$ are positive roots and $\beta + e_j = \gamma + e_k$, then
\[
s_j \cdot f_h (t_\beta ) = s_k \cdot f_h (t_\gamma ).
\]
\end{enumerate}
(Because there is an upper bound on the height of positive roots, the existence of a $W$--equivariant homomorphism $f: P^{\mathrm{ab}}\to A$ with the required properties will follow from the above assertion with $h \gg 0$.)

The base case $h = 1$ is obtained by setting $f_1 (t_{e_j} ) = a_j$ (the positive roots of height one are the simple roots $e_j$, $j \in I$), and hence $f_1$ satisfies (1). It satisfies (2) because if $s_i \cdot e_j$ is a positive root of height one then there is no edge connecting $i$ and $j$; thus $f_1 (s_i \cdot t_{e_j } ) = f_1 ( t_{e_j} ) = a_j  = s_i \cdot a_j = s_i \cdot f_1 ( t_{e_j} )$. It also satisfies (3), because $e_i + e_j$ is a positive root precisely when there is an edge connecting $i$ and $j$, hence $s_j \cdot f_1 ( t_{e_i} ) = s_j \cdot a_i = s_i \cdot a_j = s_i \cdot f_1 (t_{e_j} )$.

Suppose that $f_h$ has been constructed for a given $h \geq 1$. Then we define $f_{h+1}$ as follows. For a positive root $\alpha$ of height $\leq h$ we define $f_{h+1} (t_\alpha ) = f_h (t_\alpha )$. Suppose that $\alpha \in \Phi_+$ is a positive root of height $h+1$. Then $\alpha = \beta + e_j$ for a positive root $\beta \in \Phi_+$ and a simple root $e_j$ by Lemma \ref{lemma:positive+simple} (hence $\beta$ has height $h$). We define
\[
f_{h+1}(t_\alpha ) = s_j \cdot f_h (t_\beta ).
\]
We must verify that $f_{h+1} (t_\alpha )$ is indeed well-defined. Write $\alpha = \beta + e_j = \gamma + e_k$ for another positive root $\gamma$ and simple root $e_k$. By Lemma \ref{lemma:positive+simple} then $s_j \cdot \beta = \alpha = s_k \cdot \gamma $. Then $s_j \cdot f_h ( t_\beta ) = s_k \cdot f_h (t_\gamma )$ holds by property (3) of $f_h$; thus $f_{h+1}$ is well-defined. Clearly, (1) holds for $f_{h+1}$ because it holds for $f_h$. 

We now verify property (2) for $f_{h+1}$. Let $\alpha$ be a positive root of height $\leq h+1$ such that $s_i \cdot \alpha$ is also a positive root of height $\leq h+1$. We write $\alpha = \beta + e_j$ for a positive root $\beta$ and simple root $e_j$, which by Lemma \ref{lemma:positive+simple} satisfy $s_j \cdot \alpha = \beta$ (or equivalently, $\beta \cdot e_j = 1$). We want to prove $f_{h+1}(s_i \cdot t_\alpha ) = s_i \cdot f_{h+1}( t_\alpha )$. We have the following cases:

\textbf{Case 1.} Both $\alpha$ and $s_i \cdot \alpha$ have height $\leq h$. Then the required identity follows by property (2) of $f_h$. \\

\textbf{Case 2.} $\alpha$ has height $\leq h$ and $s_i \cdot \alpha$ has height $h+1$. By \cite[\S 9.4, Table 1]{humphreys-lie}, since $\Phi$ is an $ADE$ root system it follows that $\alpha$ has height $h$ and $\alpha \cdot e_i = 1$. Hence
\[
f_{h+1}(s_i \cdot t_\alpha ) = f_{h+1}(T_{\alpha + e_i }) = f_{h} (t_\alpha )
\]
by definition of $f_{h+1}$.\\

\textbf{Case 3.} $\alpha$ and $s_i \cdot \alpha$ both have height $h+1$. Then $\alpha \cdot e_i = 0$. We have the following sub-cases:\\

\textbf{Sub-case 3.1.} $i =j$. Then $f_{h+1} ( s_i \cdot t_\alpha ) = f_{h+1} (t_\beta ) = s_i \cdot s_i \cdot f_{h}(t_\beta ) = s_i \cdot f_{h+1}(t_\alpha )$.\\

\textbf{Sub-case 3.2.} $i \neq j$ and there is no edge from $i$ to $j$. Then $\beta \cdot e_i = 0$. Hence 
\begin{align*}
f_{h+1}(s_i \cdot t_\alpha  ) & = f_{h+1} (t_\alpha ) = s_j \cdot f_h (t_\beta ) \\
& = s_j \cdot f_h (s_i \cdot t_\beta ) = s_j  s_i \cdot f_h ( t_\beta ) \\
&  = s_i  s_j \cdot f_h (t_\beta ) = s_i \cdot f_{h+1}(t_\alpha )
\end{align*}
using (3) for $f_h$.\\

\textbf{Sub-case 3.3.} $i \neq j$ and there is an edge from $i$ to $j$. Since $s_i \cdot \alpha = \alpha$ and $e_i \cdot e_j =1$, then $\beta \cdot e_i = -1$. Thus $\beta = \gamma + e_i$ for a positive root $\gamma$; namely, $\gamma = s_i \cdot \beta$, which is a positive root because $\beta\neq e_i$. Thus, using (2) for $f_h$ we obtain
\begin{align*}
f_{h+1}(s_i \cdot t_\alpha ) & = f_{h+1}(t_\alpha ) = s_j \cdot f_h (t_\beta )  = s_j  s_i \cdot f_h (t_\gamma ) 
\end{align*}
and
\begin{align*}
s_i \cdot f_{h+1}(t_\alpha ) & = s_i s_j f_h (t_\beta ) = s_i s_j s_i \cdot f_{h} (t_\gamma )\\
& = s_j s_i s_j \cdot f_h (t_\gamma ) = s_j s_i \cdot f_h (s_j \cdot t_\gamma ).
\end{align*}
We claim that $s_j \cdot \gamma = \gamma$. From this we have $s_j \cdot t_\gamma = t_\gamma$ and hence the two calculations above give the required result. To verify the claim observe that
\begin{align*}
\gamma + e_i & = \beta  = s_j \cdot \alpha\\
& = s_j \cdot ( \gamma + e_i + e_j )  = s_j \cdot \gamma + (e_i + e_j ) - e_j\\
& = s_j \cdot \gamma + e_i
\end{align*}
and hence $s_j \cdot \gamma = \gamma$.\\

Finally, we verify property (3) for $f_{h+1}$. Let $\beta, \gamma$ be positive roots of height $\leq h+1$ such that $\beta +e_j$ and $\gamma + e_k$ are positive roots and $\beta + e_j = \gamma + e_k$. We need to prove $s_j \cdot f_{h+1} (t_\beta ) = s_k \cdot f_{h+1} (t_\gamma )$. By the corresponding property for $f_h$, it remains only to consider the case when both $\beta$ and $\gamma$ have height $h+1$. By Lemma \ref{lemma:positive+simple}, we can write $\beta = \beta_0 + e_l$ and $\gamma = \gamma_0 + e_m$ for positive roots $\beta_0$ and $\gamma_0$ and simple roots $e_l$ and $e_m$. We need to prove that
\begin{align}
s_j s_l \cdot  f_h (t_{\beta_0 }) = s_k s_m \cdot f_h (t_{\gamma_0} ).\label{3ind}
\end{align}
We consider again three cases:\\

\textbf{Case 1.} $j = k$. Then $\beta = \gamma$ and by (3) for $f_h$ we obtain $s_l \cdot  f_h (t_{\beta_0 }) = s_m f_h (t_{\gamma_0} )$ and hence (\ref{3ind}) after applying $s_j = s_k$ on both sides. \\

\textbf{Case 2.} $j \neq k$ and there is no edge connecting $j$ and $k$. Then by Lemma \ref{lemma:positive+simple} we have
\[
\beta = s_j \cdot (\beta + e_j ) = s_j \cdot (\gamma + e_k ) = s_j \cdot \gamma + e_k .
\]
Since $\beta \cdot e_j =1$ and $e_k \cdot e_j = 0$ then $(s_j \cdot \gamma ) \cdot e_j  = 1$, and hence $\gamma \neq  e_j$. It follows by \cite[\S 10.2, Lemma B]{humphreys-lie} that $\gamma \cdot e_j$ is a positive root. Then by (3) applied to $f_h$ we obtain
\begin{align*}
s_l \cdot f_h (t_{\beta_0} ) & = s_k \cdot f_h ( t_{s_j \cdot \gamma } ) = s_k s_j  \cdot f_{h+1}(t_\gamma )\\
& = s_j  s_k \cdot f_{h}(t_\gamma ) = s_j s_k s_m \cdot f_{h} (t_{\gamma_0}) 
\end{align*}
from which (\ref{3ind}) follows by applying $s_j$ to both sides.\\

\textbf{Case 3.} $ j \neq k$ and there is an edge connecting $j$ and $k$. Then
\[
-1 = (\beta + e_j ) \cdot e_j = (\gamma + e_k ) \cdot e_j = \gamma \cdot e_j +1
\]
and hence $\gamma \cdot e_j = -2$. By \cite[\S 9.4, Table 1]{humphreys-lie} it follows that $\gamma = e_j$. But $\gamma$ has height $h+1 \geq 2$, which gives a contradiction. (Hence Case 3 doesn't appear.) \end{proof}

\subsection{$ADE$ configurations of Lagrangian spheres and Weyl group representations}\label{subsection:ADEconfs}

We now describe various Braid group representations in symplectic mapping class groups, together with associated Weyl group representation structures, associated to $ADE$ configurations of Lagrangian spheres. 
In what follows, let $(X, \omega )$ be a compact symplectic $4$-manifold; and let $\Gamma$ be an $ADE$ Dynkin diagram, with vertex set denoted $I$.
\subsubsection{Braid group representations associated to $ADE$ configurations of Lagrangian spheres}

\begin{definition}For an $ADE$ Dynkin diagram $\Gamma$, by a \textit{$\Gamma$--configuration of Lagrangian spheres} in $(X, \omega )$ we shall mean a collection $L_i \quad  ( i \in I )$ of embedded Lagrangian spheres in $(X, \omega )$ such that for $i \neq j$ the Lagrangian spheres $L_i $ and $ L_j$ have non-empty intersection if and only if there is an edge connecting $i$ and $j$, and in this case $L_i$ meets $L_j$ only at a single point and transversely. A \textit{consistent orientation} of a $\Gamma$--configuration of Lagrangian spheres shall mean a choice of orientation of every Lagrangian $L_i$, such that whenever there is an edge connecting $i$ and $j$ the intersection number of the corresponding Lagrangians is $L_i \cdot L_j = +1$. (Since $\Gamma$ is a tree, there exist exactly two consists orientations.)
\\
\end{definition}

When two Lagrangian spheres $L, L^\prime$ in $(X, \omega )$ are \textit{disjoint} then obviously their Dehn--Seidel twists give commuting elements of $\pi_0 \mathrm{Symp}(X, \omega )$: 
\begin{align}
\tau_L \tau_{L^\prime} = \tau_{L^\prime}\tau_L.\label{rel1}
\end{align}
When instead they \textit{intersect but only at a single point and transversely}, then by \cite[Proposition A.1]{seidelknotted} their Dehn--Seidel twists satisfy the braid relation in $\pi_0 \mathrm{Symp}(X, \omega )$:
\begin{align}
\tau_{L}\tau_{L^\prime}\tau_L = \tau_{L^\prime }\tau_L \tau_{L^\prime } . \label{rel2}
\end{align}

It follows from the presentation (\ref{braidgroup}) and (\ref{rel1}-\ref{rel2}) that a $\Gamma$--configuration of Lagrangian spheres $\{ L_i \, | \, i \in I \}$ in $(X, \omega )$ induces a group homomorphism from the generalised Braid group of type $\Gamma$ into the symplectic mapping class group:
\begin{align}
\rho : B \to \pi_0 \mathrm{Symp}(X, \omega ) \quad , \quad s_i \mapsto \tau_{L_i}.\label{rho}
\end{align}
Since the squared Dehn--Seidel twists $\tau_{L_i}^2$ are trivial in $\pi_0 \mathrm{Diff}(X)$, and the generalised pure braid group $P$ of type $\Gamma$ is the subgroup of $B$ normally generated by $s_{i}^2$, it follows that the restriction of (\ref{rho}) to the subgroup $P \leq B$ has image contained in the smoothly trivial symplectic mapping class group:
\begin{align}
\rho_0 := \rho|_P : P \to \pi_0 \mathrm{Symp}_0 (X, \omega ). \label{rho_0}
\end{align}

\subsubsection{Weyl group actions on abelianisations}

\begin{proposition}\label{proposition:weylaction}
Let $\{ L_i \, | \, i \in I\}$ be an $ADE$ configuration of Lagrangian spheres in $(X, \omega )$ of type $\Gamma$. Then:
\begin{enumerate}
\item The Weyl group $W = W(\Gamma )$ acts by automorphisms on $(\pi_0 \mathrm{Symp}_0 (X, \omega ))^{\mathrm{ab}}$ as follows: for $i \in I$ and $[f] \in (\pi_0 \mathrm{Symp}_0 (X, \omega ))^{\mathrm{ab}}$
\[
s_i \cdot [f] := [\tau_{L_i}f \tau_{L_i}^{-1}].
\]

\item The Weyl group $W = W(\Gamma )$ acts by automorphisms on $(\pi_1 \mathcal{S}_{0} (X, \omega ))^{\mathrm{ab}}$ as follows: for $i \in I$ and a homotopy class of loop $[\omega_t] \in (\pi_1 \mathcal{S}_{0} (X, \omega ))^{\mathrm{ab}}$
\[
s_i \cdot [\omega_t] := [(\tau_{L_i}^{-1})^\ast \omega_t ].
\]
(Equivalently: $s_i \cdot [\omega_t] = [\tau_{L_i}^\ast \omega_t ]$, since $\tau_{L_i}^2$ is trivial in $\pi_0 \mathrm{Diff}(X)$.)
\item The map $(\pi_1 \mathcal{S}_{0} (X, \omega ))^{\mathrm{ab}} \to (\pi_0 \mathrm{Symp}_0 (X, \omega ))^{\mathrm{ab}}$ induced by the connecting homomorphism $\delta$ of the fibration (\ref{fibration}) is $W$--equivariant for the $W$--actions defined by (1-2).
\end{enumerate}
\end{proposition}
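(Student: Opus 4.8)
The plan is to verify each of the three assertions by checking that the proposed formulas genuinely define actions and that the connecting map respects them. For part (1), the point is that for any $g \in \pi_0 \mathrm{Symp}(X,\omega)$ and any $f \in \pi_0 \mathrm{Symp}_0(X,\omega)$ the conjugate $g f g^{-1}$ again lies in $\pi_0 \mathrm{Symp}_0(X,\omega)$, since $\pi_0 \mathrm{Symp}_0(X,\omega)$ is the kernel of $\pi_0 \mathrm{Symp}(X,\omega) \to \pi_0 \mathrm{Diff}(X)$, hence a normal subgroup; so conjugation by $\tau_{L_i}$ is an automorphism of this group, and descends to an automorphism of its abelianisation. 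What remains is that these automorphisms of $(\pi_0 \mathrm{Symp}_0(X,\omega))^{\mathrm{ab}}$ satisfy the Weyl relations: $s_i^2$ acts trivially because $\tau_{L_i}^2$ is \emph{smoothly trivial}, hence lies in $\pi_0\mathrm{Symp}_0(X,\omega)$, hence acts trivially by conjugation on the \emph{abelianisation}; and the braid/commutation relations among the $s_i$ follow from the relations (\ref{rel1})--(\ref{rel2}) among the $\tau_{L_i}$ in $\pi_0\mathrm{Symp}(X,\omega)$, which conjugation is a homomorphism out of. So the $W$-action factors through $B \to W$, using precisely the presentations (\ref{braidgroup}) and (\ref{weyl}).

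For part (2), the analogous structure: a symplectomorphism $g$ of $(X,\omega)$ acts on the space $\mathcal{S}_0(X,\omega)$ of symplectic forms isotopic to $\omega$ by pullback $\omega' \mapsto (g^{-1})^\ast \omega'$ (this preserves $\mathcal{S}_0$ because $g$ is smoothly isotopic through the relevant spaces — more carefully, $(g^{-1})^\ast$ is a self-homotopy-equivalence of $\mathcal{S}_0(X,\omega)$ fixing the basepoint $\omega$ since $g^\ast \omega = \omega$), hence induces an automorphism of $\pi_1 \mathcal{S}_0(X,\omega)$ and of its abelianisation. Taking $g = \tau_{L_i}$ gives the desired operators $s_i \cdot [\omega_t] = [(\tau_{L_i}^{-1})^\ast \omega_t]$; the parenthetical remark that this equals $[\tau_{L_i}^\ast \omega_t]$ follows because $\tau_{L_i}^2$ is smoothly trivial, so $(\tau_{L_i}^{-1})^\ast$ and $\tau_{L_i}^\ast$ induce the same map on $\pi_1 \mathcal{S}_0(X,\omega)$ (their composite is $(\tau_{L_i}^2)^\ast$, induced by a nullhomotopic based self-map of $\mathcal{S}_0(X,\omega)$ — nullhomotopic through based maps since $\tau_{L_i}^2$ is isotopic to the identity, and the isotopy can be taken to preserve the basepoint as $\tau_{L_i}^2$ fixes $\omega$ up to isotopy; alternatively one can invoke that $\tau_{L_i}^2$ acts trivially on $\pi_0\mathrm{Symp}_0$ hence on $\pi_1\mathcal{S}_0$ after abelianising, which is all that is needed). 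Then the Weyl relations on the abelianisation follow exactly as in part (1), from (\ref{rel1})--(\ref{rel2}) and the triviality of $[\tau_{L_i}^2]$-conjugation on the abelian group $(\pi_1\mathcal{S}_0(X,\omega))^{\mathrm{ab}}$.

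For part (3), $W$-equivariance of the connecting map $\delta_\ast : (\pi_1 \mathcal{S}_0(X,\omega))^{\mathrm{ab}} \to (\pi_0 \mathrm{Symp}_0(X,\omega))^{\mathrm{ab}}$ is the naturality of the connecting homomorphism of the fibration (\ref{fibration}) with respect to the $\mathrm{Symp}(X,\omega)$-action on the whole fibration by conjugation (on $\mathrm{Symp}_0$ and $\mathrm{Diff}_0$) and pullback (on $\mathcal{S}_0$). Concretely, given a loop $\omega_t$ with Moser isotopy $\phi_t$ ($\phi_0 = \mathrm{id}$, $\phi_t^\ast \omega = \omega_t$, $\delta([\omega_t]) = [\phi_1]$), the loop $(\tau_{L_i}^{-1})^\ast \omega_t$ has Moser isotopy $\tau_{L_i}\phi_t \tau_{L_i}^{-1}$ — this is exactly the computation already carried out in the proof of Proposition \ref{proposition:naturalityq}, namely $(f^{-1})^\ast \omega_t = (f\phi_t f^{-1})^\ast \omega$ for a symplectomorphism $f$ — so $\delta([(\tau_{L_i}^{-1})^\ast \omega_t]) = [\tau_{L_i}\phi_1 \tau_{L_i}^{-1}] = s_i \cdot \delta([\omega_t])$. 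Abelianising gives the claim.

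I do not expect a genuine obstacle here: the whole statement is a bookkeeping exercise showing that two a priori $B$-actions descend to $W$-actions on abelian groups and that a natural map intertwines them. The one point deserving care is the reduction ``$s_i^2$ acts trivially'': it is crucial that one works on the \emph{abelianisation}, since $\tau_{L_i}^2$, though smoothly trivial and hence in $\pi_0\mathrm{Symp}_0(X,\omega)$, need not be central, so conjugation by it is only trivial after passing to the abelianisation — and this is precisely why the Weyl relations (not just the braid relations) hold for the induced operators. The parenthetical identification in part (2) and the Moser-isotopy computation in part (3) are the only mildly technical ingredients, and both are already present in the excerpt (the latter verbatim in the proof of Proposition \ref{proposition:naturalityq}).
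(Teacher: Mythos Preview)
Your proposal is correct and follows essentially the same approach as the paper. Both arguments observe that conjugation/pullback by symplectomorphisms defines a $B$-action (via $\rho$) which descends to a $W$-action on the abelianisation precisely because $\tau_{L_i}^2$ lies in $\pi_0\mathrm{Symp}_0(X,\omega)$ (for part (1)) and is trivial in $\pi_0\mathrm{Diff}(X)$ (for part (2)); and both verify (3) by the same Moser-isotopy computation $(\tau_{L_i}^{-1})^\ast\omega_t = (\tau_{L_i}\phi_t\tau_{L_i}^{-1})^\ast\omega$. The only cosmetic difference is that the paper phrases the descent as ``$P$ acts trivially on the abelianisation'' (using that $P$ is normally generated by the $s_i^2$), whereas you phrase it as ``the Weyl relations hold''---these are equivalent, and your emphasis on why the abelianisation is essential for $s_i^2=1$ is well placed.
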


\begin{proof} (1) The group $\pi_0 \mathrm{Symp}(X, \omega )$ acts on itself (by group automorphisms) via left-conjugation: $\phi \cdot f := \phi f \phi^{-1} $. This action preserves the normal subgroup given by the smoothly trivial symplectic mapping class group $\pi_0 \mathrm{Symp}_0 (X, \omega ) \leq \pi_0 \mathrm{Symp}(X, \omega )$. Using the homomorphism $\rho : B \to \pi_0 \mathrm{Symp}(X, \omega )$ from (\ref{rho}) we have an induced action of $B$ on $\pi_0 \mathrm{Symp}_0 (X, \omega )$ given by $s_i \cdot f := \tau_{L_i} f \tau_{L_i}^{-1}$. The subgroup $P \leq B$ acts trivially on the abelianisation $(\pi_0 \mathrm{Symp}_0 (X, \omega ) )^{\mathrm{ab}}$ because $\tau_{L_i}^2$ lies in $\pi_0 \mathrm{Symp}_0 (X, \omega )$ and $P\leq B$ is the subgroup normally generated by the $s_{i}^2$ for $i \in I$; hence by the short exact sequence (\ref{SES1}) the $P$--action on $(\pi_0 \mathrm{Symp}_0 (X, \omega ) )^{\mathrm{ab}}$ descends to a $W$--action. 

(2) The symplectic mapping class group $\pi_0 \mathrm{Symp}(X, \omega)$ acts on $\pi_1 \mathcal{S}_{0}(X, \omega )$ by $\phi \cdot [\omega_t ] = [(\phi^{-1})^\ast \omega_t ]$. Via the homomorphism $\rho : B \to \pi_0 \mathrm{Symp}(X, \omega )$ this induces an action of $B$ on $\pi_1 \mathcal{S}_{0} (X, \omega )$ by $s_i \cdot [\omega_t ] = [(\tau_{L_i}^{-1})^\ast \omega_t ]$. Again, the induced action of $P \leq B$ on the abelianisation $(\pi_0 \mathcal{S}_{0} (X, \omega ) )^{\mathrm{ab}}$ is trivial because $\tau_{L_i}^2$ is trivial in $\pi_0 \mathrm{Diff} (X )$; hence by the short exact sequence (\ref{SES1}) the $P$--action on $(\pi_0 \mathcal{S}_{0} (X, \omega ) )^{\mathrm{ab}}$ descends to a $W$--action. 

(3) The connecting map $\delta : \pi_1 \mathcal{S}_{0}(X, \omega ) \to \pi_0 \mathrm{Symp}_0 (X, \omega )$ can be described as follows. For a loop $[\omega_t ]\in \pi_1 \mathcal{S}_{0} (X, \omega )$ Moser's argument provides an isotopy $f_t$, $t \in [0,1]$, through diffeomorphisms with $f_0 = \mathrm{Id}$ and $\omega_t = f_{t}^\ast \omega$ for all $t \in [0,1]$. Thus $f_1$ is a smoothly trivial symplectomorphism, and by definition $\delta ( [\omega_t ] ) = f_1 $. Since
\begin{align*}
(\tau_{L_i}^{-1} )^\ast \omega_t =(\tau_{L_i}^{-1} )^\ast f_{t}^\ast \omega = (\tau_{L_i}^{-1} )^\ast f_{t}^\ast \tau_{L_i}^\ast \omega =( \tau_{L_i} f_t \tau_{L_i}^{-1} )^\ast \omega 
\end{align*}
it follows that $\delta ( s_i \cdot [\omega_t ] ) = s_i \cdot \delta ([\omega_t ] )$, as required.
\end{proof}

\subsubsection{Equivariance properties}

\begin{proposition}\label{proposition:equivariancerho0}
Let $\{ L_i \, | \, i \in I\}$ be an $ADE$ configuration of Lagrangian spheres in $(X, \omega )$ of type $\Gamma$. The homomorphism $\rho_0 : P^{\mathrm{ab}} \to ( \pi_0 \mathrm{Symp}_0 (X, \omega ))^{\mathrm{ab}} $ induced by (\ref{rho_0}) is $W$--equivariant for the Weyl group representations defined in (\ref{actionPab}) and Proposition \ref{proposition:weylaction}(1).
\end{proposition}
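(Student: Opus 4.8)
The plan is to deduce the $W$--equivariance of $\rho_0$ from the more basic fact that the unabelianised map $\rho_0 : P \to \pi_0 \mathrm{Symp}_0 (X, \omega )$ of (\ref{rho_0}) intertwines the conjugation actions of $B$ on source and target, together with the observation (already recorded in the proof of Proposition \ref{proposition:weylaction}(1) and in the definition of the action (\ref{actionPab})) that both of these $B$--actions descend to the asserted $W$--actions after abelianising.

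First I would set up the $B$--equivariance at the level of groups. The group $B$ acts on its normal subgroup $P$ by conjugation, $b \cdot \beta := b\beta b^{-1}$ (this lands in $P$ since $P \trianglelefteq B$), and this is the action used to define (\ref{actionPab}) on $P^{\mathrm{ab}}$. On the other side, $B$ acts on $\pi_0 \mathrm{Symp}_0 (X, \omega )$ through $\rho$ of (\ref{rho}) and left conjugation in $\pi_0 \mathrm{Symp} (X, \omega )$, namely $b \cdot f := \rho (b) f \rho (b)^{-1}$; this preserves the normal subgroup $\pi_0 \mathrm{Symp}_0 (X, \omega )$, and for $b = s_i$ it is exactly the action $s_i \cdot f = \tau_{L_i} f \tau_{L_i}^{-1}$ appearing in Proposition \ref{proposition:weylaction}(1) (since $\rho (s_i) = \tau_{L_i}$). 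Because $\rho$ is a group homomorphism, for all $b \in B$ and $\beta \in P$ we have $\rho_0 (b\beta b^{-1}) = \rho (b)\,\rho_0 (\beta)\,\rho (b)^{-1}$, i.e. $\rho_0 : P \to \pi_0 \mathrm{Symp}_0 (X, \omega )$ is $B$--equivariant. Passing to abelianisations, the induced map $\rho_0 : P^{\mathrm{ab}} \to (\pi_0 \mathrm{Symp}_0 (X, \omega ))^{\mathrm{ab}}$ is $B$--equivariant for the induced $B$--actions on the two abelian groups.

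Finally I would note that both induced $B$--actions on the abelianisations factor through the quotient $B \twoheadrightarrow W$ of (\ref{SES1}): the subgroup $P \leq B$ is normally generated by the $s_i^2$, so conjugation by an element of $P$ is an inner automorphism of $P$ and hence acts trivially on $P^{\mathrm{ab}}$; and likewise $P$ acts trivially on $(\pi_0 \mathrm{Symp}_0 (X, \omega ))^{\mathrm{ab}}$ because $\tau_{L_i}^2 = \rho (s_i^2) \in \pi_0 \mathrm{Symp}_0 (X, \omega )$ (this is the argument in the proof of Proposition \ref{proposition:weylaction}(1)). Since a $B$--equivariant homomorphism between two $B$--modules on which $P$ acts trivially is automatically $W$--equivariant for the induced $W$--actions, and since these induced $W$--actions are precisely those of (\ref{actionPab}) and Proposition \ref{proposition:weylaction}(1) (matching lifts via $\rho (s_i) = \tau_{L_i}$), the proposition follows. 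There is no genuine obstacle here --- the argument is purely formal --- and the only point to keep straight is the compatibility of the two descriptions of the $W$--actions, which is immediate from $\rho (s_i) = \tau_{L_i}$; if desired it can be checked on the generators $[s_i^2] \in P^{\mathrm{ab}}$, where $s_j \cdot [s_i^2] = [s_j s_i^2 s_j^{-1}] \mapsto [\tau_{L_j}\tau_{L_i}^2\tau_{L_j}^{-1}] = s_j \cdot \rho_0 ([s_i^2])$, and these generate $P^{\mathrm{ab}}$ as a $W$--module.
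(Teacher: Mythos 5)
Your argument is correct and is exactly the paper's own approach: the paper's proof records precisely that $\rho_0 : P \to \pi_0 \mathrm{Symp}_0 (X,\omega)$ is $B$--equivariant for the two left-conjugation actions and concludes from there, with the passage to abelianisations and the observation that the $B$--actions descend to $W$--actions deferred to the proof of Proposition \ref{proposition:weylaction}. You have simply written out the steps the paper leaves implicit.
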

\begin{proof}Evidently, the homomorphism $\rho_0 : P \to \pi_0 \mathrm{Symp}_0 (X, \omega )$ is $B$--equivariant for the left-conjugation $B$--action on $P$, and the left-conjugation $B$--action on $\pi_0 \mathrm{Symp}_0 (X, \omega )$ defined via $\rho : B \to \pi_0 \mathrm{Symp}(X, \omega )$ (see the proof of Proposition \ref{proposition:weylaction}). From this the assertion follows immediately.
\end{proof}

Below, for an oriented Lagrangian sphere $L \subset (X, \omega )$ we consider the canonical lifts $\mathcal{O}_{\pm L} \in \pi_1 \mathcal{S}_{0}(X, \omega )$ of the squared Dehn--Seidel twist associated to $L$ and its orientation-reversal $-L$.

\begin{lemma}\label{lemma:relationsO}
Let $\{ L_i \, | \, i \in I\}$ be an $ADE$ configuration of Lagrangian spheres in $(X, \omega )$ of type $\Gamma$, equipped with a consistent orientation. Then, with respect to the $W$--action defined in Proposition \ref{proposition:weylaction}(2), the elements $\mathcal{O}_{\pm L_i} \in (\pi_1 \mathcal{S}_{0}(X, \omega ))^{\mathrm{ab}}$ satisfy: for $i , j \in I$
\[
s_i \cdot  \mathcal{O}_{\pm L_j} = \begin{cases}
\mathcal{O}_{\mp L_j} & \text{ if } i = j\\
\mathcal{O}_{\pm L_j} & \text{ if } i \neq j \text{ and there is no edge connecting } i \text{ and } j \\
s_j \cdot \mathcal{O}_{\pm L_i} & \text{ if there is an edge connecting } i \text{ and } j 
\end{cases}.
\]
\end{lemma}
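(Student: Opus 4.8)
The plan is to reduce the three cases to naturality of the canonical lift (Lemma \ref{lemma:naturality}) together with the topological facts about Dehn--Seidel twists established earlier. The basic principle is: if $f$ is a symplectomorphism then $f^\ast \mathcal{O}^{\omega}_{f(L)} = \mathcal{O}^{\omega}_{L}$ with $f(L)$ given the induced orientation; so $s_i \cdot \mathcal{O}_{\pm L_j} = [(\tau_{L_i}^{-1})^\ast \mathcal{O}_{\pm L_j}] = \mathcal{O}^{\omega}_{\tau_{L_i}^{-1}(\pm L_j)}$, where $\tau_{L_i}^{-1}(\pm L_j)$ carries the orientation pushed forward from $\pm L_j$ along $\tau_{L_i}^{-1}$. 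Thus the entire lemma becomes a computation, at the level of oriented smooth isotopy classes, of the Lagrangian sphere $\tau_{L_i}^{-1}(L_j)$ in each of the three cases; once we identify this oriented sphere with $\pm L_j$ (resp.\ $\tau_{L_j}^{-1}(L_i)$) up to oriented Lagrangian isotopy, we are done, because $\mathcal{O}_L$ depends only on the oriented Lagrangian isotopy class of $L$ (this follows from the construction via the Weinstein neighborhood theorem and Lemma \ref{lemma:naturality}(1)).

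First I would dispose of the two easy cases. When $i\neq j$ and there is no edge connecting $i$ and $j$, the spheres $L_i$ and $L_j$ are disjoint, so $\tau_{L_i}$ is supported away from $L_j$ and fixes $L_j$ pointwise together with its orientation; hence $\tau_{L_i}^{-1}(\pm L_j) = \pm L_j$ as oriented Lagrangian spheres, giving $s_i\cdot\mathcal{O}_{\pm L_j} = \mathcal{O}_{\pm L_j}$. When $i=j$, I use the fact that $\tau_{L_i}$ maps $L_i$ to itself but \emph{reverses} its orientation (this is the standard computation for the model Dehn twist on $T^\ast S^2$: the antipodal map on the zero section reverses orientation), and that $\tau_{L_i}(L_i)$ is Lagrangian isotopic to $L_i$ with the reversed orientation. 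Therefore $\tau_{L_i}^{-1}(\pm L_i) = \mp L_i$ up to oriented Lagrangian isotopy, so $s_i\cdot\mathcal{O}_{\pm L_i} = \mathcal{O}_{\mp L_i}$.

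The substantive case is when there is an edge connecting $i$ and $j$, so $L_i$ and $L_j$ meet transversely at one point with $L_i\cdot L_j = +1$ (consistent orientation). Here I want $s_i\cdot\mathcal{O}_{\pm L_j} = s_j\cdot\mathcal{O}_{\pm L_i}$, i.e.\ $\mathcal{O}^{\omega}_{\tau_{L_i}^{-1}(\pm L_j)} = \mathcal{O}^{\omega}_{\tau_{L_j}^{-1}(\pm L_i)}$, which will follow if $\tau_{L_i}^{-1}(L_j)$ and $\tau_{L_j}^{-1}(L_i)$ are Lagrangian isotopic as \emph{oriented} spheres. The unoriented statement $\tau_{L_i}^{-1}(L_j) \simeq \tau_{L_j}^{-1}(L_i)$ is essentially the braid relation $\tau_{L_i}\tau_{L_j}\tau_{L_i} = \tau_{L_j}\tau_{L_i}\tau_{L_j}$ of \cite{seidelknotted}: rewriting it as $\tau_{L_i}^{-1}\tau_{L_j}\tau_{L_i} = \tau_{L_j}\tau_{L_i}\tau_{L_j}^{-1}$, and noting that $\tau_{L_j}\tau_{L_i}\tau_{L_j}^{-1} = \tau_{\tau_{L_j}(L_i)}$ while $\tau_{L_i}^{-1}\tau_{L_j}\tau_{L_i} = \tau_{\tau_{L_i}^{-1}(L_j)}$ (Dehn twists transform naturally under symplectomorphisms), one sees $\tau_{\tau_{L_i}^{-1}(L_j)} = \tau_{\tau_{L_j}(L_i)}$; combined with the observation that $\tau_{L_j}$ and $\tau_{L_j}^{-1}$ act the same way up to isotopy on a sphere meeting $L_j$ once (both differ from the identity by the half-twist, producing Lagrangian-isotopic results when orientations are tracked carefully), this pins down the unoriented isotopy class. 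The orientation bookkeeping is what I expect to be the main obstacle: I need to verify that with the consistent orientations ($L_i\cdot L_j = +1$), pushing the orientation of $L_j$ forward along $\tau_{L_i}^{-1}$ and pushing the orientation of $L_i$ forward along $\tau_{L_j}^{-1}$ yield \emph{matching} orientations on the common isotopy class, with the same overall sign in front; the cleanest route is probably to reduce to the local model of the $A_2$ Milnor fiber $D(T^\ast S^2)\cup_{\text{plumbing}} D(T^\ast S^2)$ — where the configuration and all the twists are explicit — invoke the Excision-type locality (a neighborhood of $L_i\cup L_j$ is standard by the plumbing model), and carry out the orientation computation there once and for all, then transport it back by Lemma \ref{lemma:naturality}(1). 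An alternative, perhaps slicker, is to appeal to Theorem \ref{theorem:generaliseddehntwist}, writing everything in terms of the differences $\mathcal{O}_{L}\cdot\mathcal{O}_{-L}^{-1} = p_\ast(\gamma_L)$, which depend only on the \emph{smooth} oriented isotopy class, reducing the orientation subtlety to the corresponding (already understood) statement for generalised Dehn twists $\gamma_S$ on $(-2)$-spheres together with the $i=j$ case to fix the remaining sign ambiguity.
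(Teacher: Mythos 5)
Your overall strategy matches the paper's exactly: use the naturality identity $s_i \cdot \mathcal{O}_{\pm L_j} = (\tau_{L_i}^{-1})^\ast \mathcal{O}_{\pm L_j} = \mathcal{O}_{\pm \tau_{L_i}(L_j)}$ (you wrote $\tau_{L_i}^{-1}(L_j)$ rather than $\tau_{L_i}(L_j)$, which is harmless since $\tau_{L_i}^\ast = (\tau_{L_i}^{-1})^\ast$ on the abelianisation, but note the unwinding of Lemma~\ref{lemma:naturality}(1) actually gives $\tau_{L_i}(L_j)$) and then reduce each case to identifying the oriented Lagrangian isotopy class of the twisted sphere. The $i=j$ and disjoint cases are handled the same way.

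In the edge case, however, there is a genuine gap in your first proposed route. From $\tau_{L_j}\tau_{L_i}\tau_{L_j}^{-1} = \tau_{\tau_{L_j}(L_i)}$ and the braid relation you derive an equality of Dehn--Seidel twists $\tau_{\tau_{L_i}^{-1}(L_j)} = \tau_{\tau_{L_j}(L_i)}$ in $\pi_0\mathrm{Symp}(X,\omega)$, and then conclude the underlying Lagrangian spheres are isotopic. That inference is not valid: equality of Dehn--Seidel twist mapping classes does not determine the Lagrangian isotopy class of the sphere, and a fortiori cannot recover its orientation (the map $L \mapsto \tau_L$ in fact \emph{loses} orientation information, $\tau_L = \tau_{-L}$). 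The further remark that ``$\tau_{L_j}$ and $\tau_{L_j}^{-1}$ act the same way up to isotopy on a sphere meeting $L_j$ once'' is also not something the braid relation gives you; it is a nontrivial geometric fact. What is actually needed is the explicit statement that $\tau_{L_i}(L_j)$ and $\tau_{L_j}^{-1}(L_i)$ are \emph{oriented} Lagrangian isotopic (both to the Lagrangian surgery sphere $L_i \# L_j$), which is precisely \cite[Propositions~A.2 and~A.3]{seidelknotted} --- the local-model computation you correctly anticipate would be needed, but which is already available in the literature. The paper's proof invokes those two propositions directly, at which point the identity $(\tau_{L_i}^{-1})^\ast\mathcal{O}_{\pm L_j} = \tau_{L_j}^\ast\mathcal{O}_{\pm L_i}$ follows and abelianisation (using $s_j^{-1} = s_j$ in $W$) finishes the case. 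Your second suggested alternative, routing through Theorem~\ref{theorem:generaliseddehntwist}, would only constrain the differences $\mathcal{O}_L\cdot\mathcal{O}_{-L}^{-1}$ and leaves the full statement about $\mathcal{O}_{\pm L}$ individually undetermined, so it also cannot close the argument on its own.
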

\begin{proof}
By Proposition \ref{lemma:naturality} we have the following identity in $\pi_1 \mathcal{S}_{0}(X, \omega )$:
\begin{align}
(\tau_{L_i}^{-1})^\ast \mathcal{O}_{\pm L_j} = \mathcal{O}_{\pm \tau_{L_i} (L_j )}. \label{formula:naturality} 
\end{align}

When $i = j$ then $\tau_{L_i}$ preserves $L_j = L_i$ but reverses its orientation; thus from (\ref{formula:naturality}) we have:
\begin{align}
(\tau_{L_i}^{-1})^\ast \mathcal{O}_{\pm L_i} = \mathcal{O}_{\mp L_i } .\label{O1}
\end{align}

When $i \neq j$ and there is no edge connecting $i$ and $j$, then $\tau_{L_i} (L_j ) = L_j$ and thus from (\ref{formula:naturality}) we have:
\begin{align}
(\tau_{L_i}^{-1})^\ast \mathcal{O}_{\pm L_j} = \mathcal{O}_{\mp L_j }. \label{O2}
\end{align}

When there is an edge connecting $i$ and $j$ then $L_i$ and $L_j$ intersect at a single point only, both transversely and positively. By \cite[Proposition A.2 and Proposition A.3]{seidelknotted} the spheres $\tau_{L_i} (L_j )$ and $\tau_{L_j}^{-1} (L_i )$ are oriented Lagrangian isotopic (they are both oriented Lagrangian isotopic to the sphere $L_i \# L_j$ obtained by \textit{Lagrangian surgery}), and hence
\[
\mathcal{O}_{\pm \tau_{L_i} (L_j )} = \mathcal{O}_{\pm \tau_{L_j}^{-1} (L_i )}.
\]
Hence from (\ref{formula:naturality}) and the similar identity $\tau_{L_j}^\ast \mathcal{O}_{\pm L_i }=  \mathcal{O}_{\pm \tau_{L_j}^{-1} (L_i )}$ we obtain:
\begin{align}
(\tau_{L_i}^{-1})^\ast \mathcal{O}_{\pm L_j} = \tau_{L_j}^\ast \mathcal{O}_{\pm L_i}. \label{O3}
\end{align}

Passing to $(\pi_1 \mathcal{S}_{0} (X, \omega ) )^{\mathrm{ab}}$, the identities (\ref{O1}), (\ref{O2}) and (\ref{O3}) say: $s_i \cdot \mathcal{O}_{L_i} = \mathcal{O}_{-L_i}$, $s_i \cdot \mathcal{O}_{L_j} = \mathcal{O}_{L_j}$ and $s_i \cdot \mathcal{O}_{L_j} = s_{j}^{-1} \cdot \mathcal{O}_{L_i}$, respectively. Since $s_{j}^2 = 1 \in W$, then the last identity reads $s_i \cdot \mathcal{O}_{L_j } = s_j \cdot \mathcal{O}_{L_i }$. This completes the proof.
\end{proof}

Using this, we obtain an analogue of the homomorphism $P^\mathrm{ab}\to (\pi_0 \mathrm{Symp}_0 (X, \omega ))^\mathrm{ab}$ now at the level of $\pi_1 \mathcal{S}_0 (X, \omega )$ :
\begin{proposition}\label{proposition:mapZPhi}
Let $\{ L_i \, | \, i \in I\}$ be a consistently oriented $ADE$ configuration of Lagrangian spheres in $(X, \omega )$ of type $\Gamma$. Then, with respect to the $W = W(\Gamma)$--actions defined in Proposition \ref{proposition:weylaction}, there exists a unique $W$--equivariant homomorphism $\rho_0 : \mathbb{Z}\Phi \to (\pi_1 \mathcal{S}_0 (X, \omega ))^{\mathrm{ab}}$ which sends $T_{e_i}\mapsto \mathcal{O}_{L_i}$.
\end{proposition}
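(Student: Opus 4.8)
The plan is to obtain $\rho_0$ as a formal consequence of the `universal property' of the $W$--representation $\mathbb{Z}\Phi$: concretely, I would invoke the $\mathbb{Z}\Phi$--analogue of Lemma \ref{lemma:mapfromPab} recorded in Remark \ref{remark:mapfromPab}, applied to the abelian group $A := (\pi_1 \mathcal{S}_0 (X, \omega))^{\mathrm{ab}}$ equipped with the $W = W(\Gamma)$--action of Proposition \ref{proposition:weylaction}(2) (that is, $s_i \cdot [\omega_t] = [(\tau_{L_i}^{-1})^\ast \omega_t]$), and to the elements $a_i := \mathcal{O}_{L_i} \in A$ for $i \in I$, where each $L_i$ carries the orientation prescribed by the fixed consistent orientation of the configuration. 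Granting that the $a_i$ satisfy the hypotheses of Remark \ref{remark:mapfromPab}, that remark produces a unique $W$--equivariant homomorphism $\mathbb{Z}\Phi \to A$ sending $T_{e_i} \mapsto \mathcal{O}_{L_i}$, which is exactly the assertion (both existence and uniqueness).

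The only thing left to verify is therefore that the elements $\mathcal{O}_{L_i}$ obey the compatibility relations demanded by Remark \ref{remark:mapfromPab}, and these are precisely the identities established in Lemma \ref{lemma:relationsO} (taking the $+$ signs throughout). Spelled out: if $i$ and $j$ are joined by an edge of $\Gamma$ then $s_i \cdot \mathcal{O}_{L_j} = s_j \cdot \mathcal{O}_{L_i}$; if $i \neq j$ are not joined by an edge then $s_i \cdot \mathcal{O}_{L_j} = \mathcal{O}_{L_j}$; and $s_i \cdot \mathcal{O}_{L_i} = \mathcal{O}_{-L_i}$, the last identity being what forces the image of the generator $T_{-e_i}$ of $\mathbb{Z}\Phi$ to equal $\mathcal{O}_{-L_i}$. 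Feeding these three families of relations into the (inductive, height-of-roots) mechanism behind Lemma \ref{lemma:mapfromPab} and Remark \ref{remark:mapfromPab} then yields $\rho_0$.

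I do not expect a genuine obstacle here: the substantive inputs have already been isolated in earlier results, namely the naturality relation $(\tau_{L_i}^{-1})^\ast \mathcal{O}_{\pm L_j} = \mathcal{O}_{\pm \tau_{L_i}(L_j)}$ of Lemma \ref{lemma:naturality}(1) and the identification of $\tau_{L_i}(L_j)$ with $\tau_{L_j}^{-1}(L_i)$ up to oriented Lagrangian isotopy, which is what Lemma \ref{lemma:relationsO} uses to handle the edge case. The points that need care are purely organisational: one must ensure that the $W$--action on $A$ used to invoke Remark \ref{remark:mapfromPab} is exactly the one of Proposition \ref{proposition:weylaction}(2) in which Lemma \ref{lemma:relationsO} is phrased; one must use the \emph{consistent} orientation hypothesis, since it is what guarantees $L_i \cdot L_j = +1$ on edges and hence the validity of the sign conventions in Lemma \ref{lemma:relationsO}; and uniqueness should be attributed to transitivity of the $W$--action on $\Phi$ together with the fact that the $T_{e_i}$ generate $\mathbb{Z}\Phi$ as a $W$--module, so that the prescribed values determine $\rho_0$. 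If one preferred to avoid citing Remark \ref{remark:mapfromPab} wholesale, one could instead construct $\rho_0$ directly by induction on the height of positive roots, following verbatim the argument that proves Lemma \ref{lemma:mapfromPab} and inserting the relations of Lemma \ref{lemma:relationsO} at each step.
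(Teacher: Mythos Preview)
Your proposal is correct and follows exactly the same route as the paper's proof: invoke the $\mathbb{Z}\Phi$--version of Lemma~\ref{lemma:mapfromPab} recorded in Remark~\ref{remark:mapfromPab}, and verify its hypotheses using the relations among the $\mathcal{O}_{L_i}$ established in Lemma~\ref{lemma:relationsO}. Your write-up is in fact more explicit than the paper's one-line proof in spelling out the three cases (edge, non-edge, $i=j$) and in noting where the consistent-orientation hypothesis enters.
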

\begin{proof}
The existence and uniqueness of such map follows from the analogue of Lemma \ref{lemma:mapfromPab} for $\mathbb{Z}\Phi$ (see Remark \ref{remark:mapfromPab}) combined with the relations coming from Lemma \ref{lemma:relationsO}.
\end{proof}

\subsection{$ADE$ configurations and Kronheimer's invariant}

Let $(X, \omega )$ be a closed symplectic $4$-manifold. Let $\Gamma$ be an $ADE$ Dynkin diagram with vertex set denoted $I$, and let $\{ L_i \, | \, i \in I\}$ be an $ADE$ configuration of Lagrangian spheres of type $\Gamma$ equipped with a consistent orientation.

Let $[L_i ] \in H^2 (X, \mathbb{Z} ) $ be the fundamental class of $L_i$. 
Consider the $\mathbb{R}$-vector subspace of $H^2 ( X, \mathbb{R} )$ generated by classes $\mathrm{PD}([L_i ])$ over all $i \in I$, equipped with the quadratic form inherited from the intersection product. This is isometrically identified with $V_\mathbb{R}$ (cf. \S \ref{subsection:rootsystems}) by identifying the classes $\mathrm{PD}([L_i ])$ with the simple roots $e_i$ in the $ADE$ root system $\Phi \subset V_\mathbb{R}$. We thus have an embedding of the $ADE$ root system $ \Phi $ in the cohomology $H^{2} (X, \mathbb{Z} )$. 

\begin{definition}
Let $\{ L_i \, | \, i \in I\}$ be a consistently oriented $\Gamma$--configuration of Lagrangian spheres in a closed symplectic $4$-manifold $(X, \omega )$. Let 
\begin{align}
\widetilde{q} : ( \pi_1 \mathcal{S}_0 (X, \omega ) )^{\mathrm{ab}} \to \bigoplus_{\alpha \in \Phi_+ } \mathbb{Z}\quad , \quad \widetilde{q} = \bigoplus_{\alpha \in \Phi_+} q_\alpha \label{qtildeADE}
\end{align}
be the homomorphism defined as the sum of the homomorphisms $q_\alpha$ (cf. Definition \ref{definition:qtilde}) over all positive roots $\alpha \in \Phi_+ \subset H^2 (X, \mathbb{Z})$.
By Proposition \ref{proposition:q}, if $(X, \omega )$ is a symplectic Calabi--Yau surface with $b^{+} (X) \neq 1$ then the homomorphism $q$ descends to a (unique) homomorphism
\begin{align}
q : ( \pi_0 \mathrm{Symp}_0 (X, \omega ) )^{\mathrm{ab}} \to \bigoplus_{\alpha \in \Phi_+ } \mathbb{Z}. \label{qADE} 
\end{align}
\end{definition}

\begin{proposition}\label{proposition:equivarianceq}
Let $\{ L_i \, | \, i \in I\}$ be a consistently oriented $ADE$ configuration of Lagrangian spheres of type $\Gamma$ in a closed symplectic $4$-manifold $(X, \omega )$. Let $W = W(\Gamma)$ be the Weyl group of type $\Gamma$. Then:
\begin{enumerate}
\item The homomorphism $\widetilde{q}$ from (\ref{qtildeADE}) is $W$--equivariant, with respect to the $W$--representation on $( \pi_1 \mathcal{S}_0 (X, \omega ) )^{\mathrm{ab}}$ defined by Proposition \ref{proposition:weylaction}(2) and the $W$--representation on $\bigoplus_{\alpha \in \Phi_+} \mathbb{Z}$ given by (\ref{actionpositive}).
\item Suppose $(X, \omega )$ is a symplectic Calabi--Yau surface with $b^{+}(X) > 1$. Then the homomorphism $q$ from (\ref{qADE}) is $W$--equivariant, with respect to the $W$--representation on $( \pi_0 \mathrm{Symp}_0 (X, \omega ) )^{\mathrm{ab}}$ defined by Proposition \ref{proposition:weylaction}(1) and the $W$--representation on $\bigoplus_{\alpha \in \Phi_+} \mathbb{Z}$ given by (\ref{actionpositive}).
\end{enumerate}
\end{proposition}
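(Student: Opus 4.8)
The plan is to prove both equivariance statements by reducing them to the naturality of Kronheimer's invariant under symplectomorphisms, specifically Corollary~\ref{corollary:naturalityQ} and Proposition~\ref{proposition:naturalityq}, combined with the explicit description of how Dehn--Seidel twists act on fundamental classes of Lagrangian spheres via the Picard--Lefschetz formula.

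First I would unwind the definitions. Statement (1) concerns $\widetilde{q} = \bigoplus_{\alpha \in \Phi_+} q_\alpha$ where $q_\alpha = Q_\alpha + Q_{-\alpha}$. Fix a generator $s_i \in W$. On the target $\bigoplus_{\alpha \in \Phi_+}\mathbb{Z}$ the action of $s_i$ permutes the basis vectors $t_\alpha$ according to the reflection $s_i$ on positive roots (sending $t_{e_i}$ to itself and $t_\alpha$ to $t_{s_i\cdot\alpha}$ for $\alpha\neq e_i$), by~(\ref{actionpositive}). On the source $(\pi_1\mathcal{S}_0(X,\omega))^{\mathrm{ab}}$, $s_i$ acts by $[\omega_t]\mapsto [(\tau_{L_i}^{-1})^\ast\omega_t]$, by Proposition~\ref{proposition:weylaction}(2). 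So for a loop $\omega_t$ I must compare the $\alpha$-component of $\widetilde q(s_i\cdot[\omega_t])$, namely $q_\alpha((\tau_{L_i}^{-1})^\ast\omega_t) = Q_\alpha((\tau_{L_i}^{-1})^\ast\omega_t) + Q_{-\alpha}((\tau_{L_i}^{-1})^\ast\omega_t)$, with the $\alpha$-component of $s_i\cdot\widetilde q([\omega_t])$. By Corollary~\ref{corollary:naturalityQ} applied to $f = \tau_{L_i}$ (with $\sigma(f)=0$ since $\tau_{L_i}$ is a Dehn--Seidel twist), we have $Q_e((\tau_{L_i}^{-1})^\ast\omega_t) = Q_{\tau_{L_i}^\ast e}(\omega_t)$ for any admissible class $e$. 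The key geometric input is that $\tau_{L_i}$ acts on $H^2(X,\mathbb{Z})$ by the reflection $\tau_{L_i}^\ast(e) = e + (e\cdot [L_i])\,\mathrm{PD}([L_i])$, which under the isometric identification of the span of the $\mathrm{PD}([L_i])$ with $V_\mathbb{R}$ is exactly the Picard--Lefschetz formula~(\ref{picard-lefschetz}) for $s_i$ acting on roots: $\tau_{L_i}^\ast(\alpha) = s_i\cdot\alpha$ for $\alpha\in\Phi$. Hence $Q_\alpha((\tau_{L_i}^{-1})^\ast\omega_t) = Q_{s_i\cdot\alpha}(\omega_t)$, and so $q_\alpha((\tau_{L_i}^{-1})^\ast\omega_t) = Q_{s_i\cdot\alpha}(\omega_t) + Q_{-(s_i\cdot\alpha)}(\omega_t) = q_{s_i\cdot\alpha}(\omega_t)$, using $s_i\cdot(-\alpha) = -(s_i\cdot\alpha)$. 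Now I just need to match this with the target action: if $\alpha\neq e_i$ then $s_i\cdot\alpha\in\Phi_+$ and the $\alpha$-component of $s_i\cdot\widetilde q([\omega_t])$ is by definition the $(s_i\cdot\alpha)$-component of $\widetilde q([\omega_t])$, i.e. $q_{s_i\cdot\alpha}([\omega_t])$; if $\alpha = e_i$ then $s_i\cdot e_i = -e_i$, but $q_{e_i} = q_{-e_i}$ by Corollary~\ref{corollary:symmetries}(1), so again $q_{s_i\cdot\alpha} = q_{e_i}$, matching the fixed-point behavior of the target action on $t_{e_i}$. This establishes (1).

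For statement (2), the argument is formally identical but now applied to $\pi_0\mathrm{Symp}_0(X,\omega)$ under the additional hypothesis that $(X,\omega)$ is a symplectic Calabi--Yau surface with $b^+(X)>1$, so that by Proposition~\ref{proposition:q} each $q_\alpha$ (and hence $\widetilde q$) descends to $(\pi_0\mathrm{Symp}_0(X,\omega))^{\mathrm{ab}}$. Here I would invoke Proposition~\ref{proposition:naturalityq} in place of Corollary~\ref{corollary:naturalityQ}: for $\phi\in\pi_0\mathrm{Symp}_0(X,\omega)$ and $f=\tau_{L_i}$ we have $q_e(\tau_{L_i}\phi\tau_{L_i}^{-1}) = q_{\tau_{L_i}^\ast e}(\phi)$, again since $\sigma(\tau_{L_i})=0$. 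Then the same Picard--Lefschetz computation $\tau_{L_i}^\ast\alpha = s_i\cdot\alpha$ and the symmetry $q_{e_i} = q_{-e_i}$ from Corollary~\ref{corollary:symmetries}(1) give $q_\alpha(\tau_{L_i}\phi\tau_{L_i}^{-1}) = q_{s_i\cdot\alpha}(\phi)$, and comparing with the $W$-action on the target via~(\ref{actionpositive}) and with Proposition~\ref{proposition:weylaction}(1) concludes that $\widetilde q(s_i\cdot[\phi]) = s_i\cdot\widetilde q([\phi])$. Consistency of part (2) with part (1) is automatic since both $\widetilde q$'s are compatible along the surjection $(\pi_1\mathcal{S}_0)^{\mathrm{ab}}\twoheadrightarrow(\pi_0\mathrm{Symp}_0)^{\mathrm{ab}}$, which is $W$-equivariant by Proposition~\ref{proposition:weylaction}(3). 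Since the $s_i$ generate $W$, equivariance under all of $W$ follows.

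The only genuine subtlety — and the step I would be most careful about — is the bookkeeping at the reflecting root $\alpha = e_i$: one must check that the fixed-point behavior $s_i\cdot t_{e_i} = t_{e_i}$ on the target is correctly reproduced, which is precisely where the symmetry $q_{e_i} = q_{-e_i}$ (Corollary~\ref{corollary:symmetries}(1)) is essential, since naively $\tau_{L_i}^\ast e_i = -e_i\neq e_i$. One should also double-check that the admissibility conditions $e^2 - K_\omega\cdot e = -2$ and $[\omega]\cdot e\leq 0$ needed to even define $Q_{\tau_{L_i}^\ast e}$ are preserved, but this is immediate: $\tau_{L_i}^\ast$ is an isometry fixing $K_\omega$ and $[\omega]$ (as $\tau_{L_i}$ is symplectic and acts trivially on $[\omega]$ since $L_i$ is Lagrangian), so it preserves both conditions. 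Everything else is a routine unwinding of definitions.
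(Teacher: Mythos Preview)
Your proof is correct and follows essentially the same approach as the paper: both reduce equivariance to the naturality of Kronheimer's invariant (Corollary~\ref{corollary:naturalityQ}) combined with the Picard--Lefschetz formula $\tau_{L_i}^\ast \alpha = s_i\cdot\alpha$, with the special case $\alpha = e_i$ handled via the symmetry $q_{e_i} = q_{-e_i}$. The only cosmetic difference is that the paper deduces (2) from (1) directly via the $W$--equivariance of the connecting map (Proposition~\ref{proposition:weylaction}(3)) rather than reproving it via Proposition~\ref{proposition:naturalityq}, a shortcut you in fact note yourself.
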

\begin{proof}
It suffices to prove (1), because the map $(\pi_1 \mathcal{S}_{0} (X, \omega ))^{\mathrm{ab}} \to (\pi_0 \mathrm{Symp}_0 (X, \omega ))^{\mathrm{ab}}$ induced by the connecting homomorphism $\delta$ of the fibration (\ref{fibration}) is $W$--equivariant (by Proposition \ref{proposition:weylaction}(3)). The statement that $\widetilde{q}$ is $W$--equivariant amounts to saying: for any loop $\omega_t$ in $\mathcal{S}_0 (X, \omega )$, $i \in I$ and $\alpha \in \Phi_+$ there is the identity
\begin{align}
Q_{\alpha} ((\tau_{L_i}^{-1})^\ast \omega_t )  + Q_{-\alpha} ((\tau_{L_i}^{-1})^\ast \omega_t )
= \begin{cases} Q_{s_i \cdot \alpha}  (\omega_t ) + Q_{ - s_i \cdot \alpha}  (\omega_t ) & \text{ if } \alpha \neq e_i\\
Q_{e_i} ( \omega_t ) +Q_{-e_i} ( \omega_t )& \text{ if } \alpha = e_i .
\end{cases} \label{eq:equivarianceq}
\end{align}

Using the naturality of Kronheimer's invariant (Corollary \ref{corollary:naturalityQ})
applied to $f = \tau_{L_i}$ and $e = \alpha \in \Phi_+$, and the Picard--Lefschetz formula $\tau_{L_i}^\ast \alpha = s_i \cdot \alpha$, we have:
\[
Q_\alpha ((\tau_{L_i}^{-1})^\ast \omega_t ) = Q_{s_i \cdot \alpha} ( \omega_t ).
\]
(Note that when $\alpha = \pm e_i$ this says $Q_{\pm e_i} ((\tau_{L_i}^{-1})^\ast \omega_t ) = Q_{\mp e_i} ( \omega_t )$). From this, (\ref{eq:equivarianceq}) follows.
\end{proof}

\subsection{Equivariant splittings}

In this subsection we prove Theorems \ref{theorem:ADEK3}-\ref{theorem:ADE}. More generally, we shall establish a fundamental connection between the short exact sequence of $W$--representations (\ref{SESreps}) and the following short exact sequence derived from the fibration (\ref{fibration}):\\
\begin{align}
0 \to \pi_1 \mathrm{Symp}_0 (X,\omega ) \backslash \pi_1 \mathrm{Diff}_0 (X,\omega )\to \pi_1 \mathcal{S}_0 (X,\omega ) \to \pi_0 \mathrm{Symp}_0 (X,\omega )\to 0 .\label{SESdiffsymp}
\end{align}
$ $

Our main result is:

\begin{theorem}\label{theorem:splitting1}
Let $(X, \omega )$ be a symplectic Calabi--Yau surface with $b^{+}(X) \neq 1$. Let $\{ L_i \, | \, i \in I \}$ be a consistently oriented $ADE$ configuration of Lagrangian spheres of type $\Gamma$. Fix a homological orientation on $X$ such that $Q_{\mathrm{PD}([L_i ])} (\mathcal{O}_{L_i } ) = 1$ for some $i \in I$ (cf. Proposition \ref{proposition:calculation}), and define the homomorphism $q$ from (\ref{qADE}) using this homological orientation. Then the composition
\[
P^{\mathrm{ab}} \xrightarrow{\rho_0}  (\pi_0 \mathrm{Symp}_0 (X, \omega ))^{\mathrm{ab}} \xrightarrow{q} \bigoplus_{\alpha \in \Phi_+} \mathbb{Z} \overset{\text{Prop. } \ref{proposition:characterisation}}{\cong}  P^{\mathrm{ab}}
\]
is the identity map. Hence $q$ is a $W$--equivariant splitting of $\rho_0$ (cf. Proposition \ref{proposition:equivarianceq}).
\end{theorem}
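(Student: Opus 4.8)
The plan is to exploit the universal property of the $W$--representation $P^{\mathrm{ab}}$ provided by Lemma \ref{lemma:mapfromPab}, together with the two basic computations of Kronheimer's invariant on the canonical lifts $\mathcal{O}_{\pm L_i}$ carried out in Propositions \ref{proposition:calculation} and \ref{proposition:A2}, and the equivariance statements of Propositions \ref{proposition:equivariancerho0} and \ref{proposition:equivarianceq}. First I would observe that the composition $q \circ \rho_0 : P^{\mathrm{ab}} \to \bigoplus_{\alpha \in \Phi_+}\mathbb{Z}\cong P^{\mathrm{ab}}$ is a $W$--equivariant endomorphism of $P^{\mathrm{ab}}$: this is immediate since $\rho_0$ is $W$--equivariant by Proposition \ref{proposition:equivariancerho0}, $q$ is $W$--equivariant by Proposition \ref{proposition:equivarianceq}(2) (here is where $b^+(X)\neq 1$ and $K_\omega=0$ are used), and the isomorphism $\bigoplus_{\alpha\in\Phi_+}\mathbb{Z}\cong P^{\mathrm{ab}}$ of Proposition \ref{proposition:characterisation} is $W$--equivariant. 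By the uniqueness clause in Proposition \ref{proposition:characterisation} (or directly Lemma \ref{lemma:mapfromPab}), a $W$--equivariant endomorphism of $P^{\mathrm{ab}}$ is determined by the images of the generators $[s_i^2]$, so it suffices to check that $q(\rho_0([s_i^2])) = T_{e_i}$ for each $i\in I$, i.e. that under $q$, the image $\tau_{L_i}^2 = \delta(\mathcal{O}_{L_i})$ maps to the generator corresponding to the simple root $e_i$.

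The heart of the argument is therefore a local computation: I would compute, for each positive root $\alpha\in\Phi_+\subset H^2(X,\mathbb{Z})$, the value $q_\alpha(\tau_{L_i}^2) = Q_\alpha(\mathcal{O}_{L_i}) + Q_{-\alpha}(\mathcal{O}_{L_i})$. When $\alpha = e_i$, Proposition \ref{proposition:calculation} gives $Q_{e_i}(\mathcal{O}_{L_i}) = 1$ (with the homological orientation fixed so that this sign is $+1$) and $Q_{-e_i}(\mathcal{O}_{L_i}) = 0$, hence $q_{e_i}(\tau_{L_i}^2) = 1$. When $\alpha = e_j$ with $j\neq i$, if $i$ and $j$ are adjacent then $L_i, L_j$ form an $A_2$--configuration and Proposition \ref{proposition:A2} gives $Q_{\pm e_j}(\mathcal{O}_{L_i}) = 0$; if $i$ and $j$ are non-adjacent then $L_j$ is disjoint from a neighborhood of $L_i$, so by Lemma \ref{lemma:kerO} the loop $\mathcal{O}_{L_i}$ bounds a disk of symplectic forms with constant cohomology class on which $\pm e_j$ pairs trivially, and by the Taubes estimate (Lemma \ref{lemma:taubes}, via Lemma \ref{lemma:kerO} more precisely the pairing $([\omega] - c\,e_i)\cdot(\pm e_j) = \mp c\,(e_i\cdot e_j) = 0$ since $e_i\cdot e_j=0$) the relevant moduli spaces are empty so $Q_{\pm e_j}(\mathcal{O}_{L_i})=0$. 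For a general positive root $\alpha$ of height $\geq 2$, I would argue that either $\alpha\cdot e_i \leq 0$, in which case one invokes Lemma \ref{lemma:kerO} and Lemma \ref{lemma:taubes} directly (the disk bounding $\mathcal{O}_{L_i}$ has cohomology classes $[\omega] - c\,e_i$ with $c\geq 0$, and $([\omega]-c\,e_i)\cdot\alpha = -c\,(e_i\cdot\alpha) \leq 0$), or $\alpha\cdot e_i = 1$ (the only remaining possibility in an $ADE$ root system for $\alpha\neq e_i$, by the table of inner products), in which case one uses the $W$--equivariance: there is $w\in W$ with $w\cdot\alpha$ of smaller height or equal to some $e_j$, and one reduces via the naturality formula $Q_\alpha((\tau_{L_i}^{-1})^\ast\omega_t) = Q_{s_i\cdot\alpha}(\omega_t)$. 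Concretely, I would run an induction on the height of $\alpha$, using Lemma \ref{lemma:positive+simple} to write $\alpha = \beta + e_k$ with $s_k\cdot\alpha = \beta$, and the equivariance (\ref{eq:equivarianceq}) to relate $q_\alpha(\mathcal{O}_{L_i})$ to $q_\beta$ evaluated on a canonical lift for $\tau_{L_k}(L_i)$ — which is again a Lagrangian sphere of some class in $\Phi$, so the inductive hypothesis applies. The upshot is $q_\alpha(\tau_{L_i}^2) = \delta_{\alpha, e_i}$, i.e. $q(\rho_0([s_i^2])) = T_{e_i}$, which is what we wanted.

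Assembling: once $q\circ\rho_0^{\mathrm{ab}}$ and the identity agree on the generators $[s_i^2]$ and both are $W$--equivariant endomorphisms of $P^{\mathrm{ab}}$, the uniqueness in Lemma \ref{lemma:mapfromPab} (applied with $A = P^{\mathrm{ab}}$, $a_i = [s_i^2]$, the defining relations being exactly the ones satisfied by the $[s_i^2]$) forces $q\circ\rho_0^{\mathrm{ab}} = \mathrm{Id}$. This exhibits $q$ as a $W$--equivariant left-inverse, hence $\rho_0^{\mathrm{ab}}$ is $W$--equivariantly split-injective, which is Theorem \ref{theorem:ADEK3}; Theorem \ref{theorem:ADE} follows by the same scheme run at the level of $\pi_1\mathcal{S}_0(X,\omega)$ using $\widetilde{q}$ and Proposition \ref{proposition:mapZPhi}, lifting the relations from Lemma \ref{lemma:relationsO} and using the $\mathbb{Z}\Phi$--version of Lemma \ref{lemma:mapfromPab} from Remark \ref{remark:mapfromPab}.

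I expect the main obstacle to be the inductive vanishing computation $q_\alpha(\tau_{L_i}^2) = 0$ for positive roots $\alpha\neq e_i$ of large height: one must be careful that the Lagrangian spheres produced by applying Dehn--Seidel twists $\tau_{L_k}$ to the $L_i$'s in the configuration really do represent roots of $\Phi$ and that their canonical lifts transform as dictated by Lemma \ref{lemma:relationsO}, and one must verify that at each step of the induction exactly one of the two alternatives ($\alpha\cdot e_i\leq 0$, handled by Lemma \ref{lemma:kerO}+Lemma \ref{lemma:taubes}; or $\alpha\cdot e_i = 1$, handled by equivariance and descent in height) actually applies — this is where the combinatorics of $ADE$ root systems (the inner-product table, Lemma \ref{lemma:positive+simple}, and transitivity of $W$ on roots) must be used carefully. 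An alternative, possibly cleaner, route that avoids the explicit induction is to first establish the identity $q_\alpha(\mathcal{O}_{L_i}) = 0$ for all $\alpha\notin\{\pm e_i\}$ by the Excision Property (Proposition \ref{proposition:excision}), reducing to a plumbing neighborhood of the configuration and an explicit model $K3$ arising from a projective $A_1$ degeneration (Definition \ref{definition:A1}) as in the proof of Proposition \ref{proposition:A2}; I would likely present the proof this way, using the $A_2$ case as the prototype and only invoking the root-system induction where unavoidable.
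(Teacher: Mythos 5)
Your overall strategy matches the paper's: establish that $q\circ\rho_0$ is $W$--equivariant (via Propositions \ref{proposition:equivariancerho0} and \ref{proposition:equivarianceq}), pin it down by evaluating on the generators $[s_i^2]$, and invoke a uniqueness statement. You also correctly observe that for non-adjacent $i,j$ the vanishing $q_{e_j}(\tau_{L_i}^2)=0$ follows directly from Lemmas \ref{lemma:kerO} and \ref{lemma:taubes} (the paper cites only Proposition \ref{proposition:A2} here, which covers the adjacent case, so your explicit treatment of the disjoint case is a small but genuine improvement). However, there are two gaps in the key reduction.

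First, there is a sign error that propagates through the argument. The cohomology classes of the symplectic forms in the disk bounding $\mathcal{O}_{L_i}$ are $[\omega]-ce_i$ with $c\geq 0$, so $([\omega]-ce_i)\cdot\alpha = -c(e_i\cdot\alpha)$ is $\leq 0$ precisely when $e_i\cdot\alpha\geq 0$, not when $\alpha\cdot e_i\leq 0$ as you wrote. In particular, when $\alpha\cdot e_i=-1$ the Taubes mechanism kills only $Q_{-\alpha}(\mathcal{O}_{L_i})$, not $Q_\alpha(\mathcal{O}_{L_i})$, and when $\alpha\cdot e_i=1$ it kills only $Q_\alpha$; in both cases one summand of $q_\alpha = Q_\alpha+Q_{-\alpha}$ is left untouched and requires a separate argument. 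Your dichotomy treats each branch as if the entire $q_\alpha$ vanishes from the direct estimate, which is not the case.

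Second, and more seriously, the proposed induction on the height of $\alpha$ is not obviously well-founded. Applying equivariance to $\alpha = \beta + e_k$ with $s_k\cdot\alpha=\beta$ relates $q_\alpha(\mathcal{O}_{L_i})$ to $q_\beta(\mathcal{O}_{L'})$ with $L'=\tau_{L_k}^{-1}(L_i)$, whose homology class is $s_k\cdot e_i$. When $e_i\cdot e_k=1$ this class is $e_i+e_k$, which has height two, so the \emph{second} index of the induction strictly increases even as the first decreases. A single-variable induction on the height of $\alpha$ does not close up; the bookkeeping that makes this work is precisely the combinatorial content that the paper isolates as Lemma \ref{lemma:splitting}. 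That lemma is purely algebraic: a $W$--equivariant endomorphism of $\bigoplus_{\alpha\in\Phi_+}\mathbb{Z}$ that acts by $\pm 1$ on the diagonal simple-root slots and by $0$ on off-diagonal simple-root slots is automatically $\pm\mathrm{Id}$. Once this is established, the theorem reduces to the simple-root verifications you have already (Propositions \ref{proposition:calculation}, \ref{proposition:A2}, and the Taubes argument for disjoint spheres), and no computation of $q_\alpha(\tau_{L_i}^2)$ for non-simple $\alpha$ is needed at all. I would encourage you to extract and prove that algebraic lemma first; the careful case analysis in its proof (organized by the sign of $\beta\cdot\alpha_k$ and the position of the largest index $j$ with $\beta\cdot\alpha_j=1$) is exactly what resolves the well-foundedness issue you flag but do not resolve.
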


\begin{lemma}\label{lemma:splitting}
Let $\Phi$ be the $ADE$ root system of type $\Gamma$. Let $f : \bigoplus_{\alpha \in \Phi_+} \mathbb{Z} \to \bigoplus_{\alpha \in \Phi_+} \mathbb{Z}$ be a homomorphism $\mathbb{Z}$--modules. Write $f_{\beta} (t_\alpha ) \in \mathbb{Z}$ for the coefficient of $t_\beta$ in $f(t_\alpha )$. Suppose $f$ has the following properties:
\begin{enumerate}
\item $f$ is equivariant with respect to the action of the Weyl group $W = W(\Gamma )$ on $\bigoplus_{\alpha \in \Phi_+} \mathbb{Z}$ defined in (\ref{actionpositive})
\item For every simple root $\alpha = e_i$ we have $f_{\alpha} ( t_\alpha ) \in \{ + 1 , -1 \}$
\item For every pair of distinct simple roots $\alpha , \beta$ we have $f_\beta (t_\alpha ) = 0$.
\end{enumerate}
Then $f = \mathrm{Id}$ or $- \mathrm{Id}$. 
\end{lemma}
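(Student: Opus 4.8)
The plan is to run an induction on the height of positive roots, mirroring the structure of the existence argument in Lemma~\ref{lemma:mapfromPab}, and to show that a $W$--equivariant $f$ satisfying (2)--(3) is forced to act as multiplication by a single global scalar $\varepsilon \in \{+1,-1\}$ on every generator $t_\alpha$. First I would fix a vertex $i_0 \in I$ and set $\varepsilon := f_{e_{i_0}}(t_{e_{i_0}}) \in \{+1,-1\}$ by hypothesis (2). The claim to prove by induction on $h \geq 1$ is: for every positive root $\alpha$ of height $\leq h$ one has $f(t_\alpha) = \varepsilon\, t_\alpha$.

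For the base case $h = 1$, the positive roots of height one are precisely the simple roots $e_i$. By (3), $f(t_{e_i})$ has vanishing $t_{e_j}$--coefficient for $j \neq i$, so $f(t_{e_i}) = f_{e_i}(t_{e_i})\, t_{e_i} =: \varepsilon_i t_{e_i}$ with $\varepsilon_i \in \{\pm 1\}$ by (2). To see all $\varepsilon_i$ agree with $\varepsilon$: since $\Gamma$ is connected, it suffices to show $\varepsilon_i = \varepsilon_j$ whenever $i,j$ are adjacent. If $i \sim j$, then $e_i + e_j \in \Phi_+$ and $s_j \cdot e_i = e_i + e_j$ (Picard--Lefschetz~(\ref{picard-lefschetz}), since $e_i \cdot e_j = 1$). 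Applying $W$--equivariance~(1) to $f(t_{e_i})$ gives $f(s_j \cdot t_{e_i}) = s_j \cdot f(t_{e_i})$, i.e. $f(t_{e_i + e_j}) = \varepsilon_i\, s_j \cdot t_{e_i} = \varepsilon_i\, t_{e_i+e_j}$; symmetrically, using $s_i \cdot e_j = e_i + e_j$, $f(t_{e_i+e_j}) = \varepsilon_j\, t_{e_i+e_j}$. Comparing, $\varepsilon_i = \varepsilon_j$. So $f(t_{e_i}) = \varepsilon\, t_{e_i}$ for all $i$, and moreover $f(t_{e_i+e_j}) = \varepsilon\, t_{e_i+e_j}$ for all adjacent $i,j$ (height two).

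For the inductive step, assume the claim holds for all positive roots of height $\leq h$ with $h \geq 1$, and let $\alpha$ have height $h+1 \geq 2$. By Lemma~\ref{lemma:positive+simple}, $\alpha = \beta + e_j$ for some positive root $\beta$ (of height $h$) and simple root $e_j$, with $s_j \cdot \beta = \alpha$. Then $W$--equivariance gives $f(t_\alpha) = f(s_j \cdot t_\beta) = s_j \cdot f(t_\beta) = s_j \cdot (\varepsilon\, t_\beta) = \varepsilon\, (s_j \cdot t_\beta) = \varepsilon\, t_\alpha$, using the inductive hypothesis for $\beta$ and the fact that $\beta \neq e_j$ (so the action~(\ref{actionpositive}) sends $t_\beta \mapsto t_{s_j \cdot \beta} = t_\alpha$, not into the exceptional case). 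This closes the induction. Since every positive root has bounded height, we conclude $f(t_\alpha) = \varepsilon\, t_\alpha$ for all $\alpha \in \Phi_+$, hence $f = \varepsilon \cdot \mathrm{Id}$, i.e. $f = \mathrm{Id}$ or $-\mathrm{Id}$.

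The only subtle point---and the main thing to be careful about---is the well-definedness of the inductive step: $\alpha$ may be written as $\beta + e_j$ in more than one way, but this causes no problem here because we are not \emph{constructing} $f$ (as in Lemma~\ref{lemma:mapfromPab}) but merely \emph{evaluating} the given $f$; any valid choice of $(\beta, e_j)$ computes the same value $f(t_\alpha)$ automatically, and each such choice yields $\varepsilon\, t_\alpha$ by the argument above. A second point worth verifying is that in applying~(\ref{actionpositive}) we always land in the generic branch $t_\gamma \mapsto t_{s_i\cdot\gamma}$ rather than the exceptional branch $t_{e_i}\mapsto t_{e_i}$; this holds because at every step the root being acted upon is not the simple root fixed by the reflection (in the base case we arrange $e_i \neq e_j$ since $i \sim j$; in the inductive step $\beta$ has positive height $\geq 1$ with $\beta \neq e_j$ by Lemma~\ref{lemma:positive+simple}). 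I do not anticipate any further obstacle; this is essentially a rigidity statement forced by transitivity of $W$ on $\Phi$ together with the normalization~(2)--(3).
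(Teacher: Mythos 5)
Your base case has a genuine gap. You assert that by condition~(3), $f(t_{e_i})$ has vanishing $t_{e_j}$--coefficient for $j \neq i$, and then conclude $f(t_{e_i}) = f_{e_i}(t_{e_i})\, t_{e_i}$. But that equality is an identity of vectors in $\bigoplus_{\alpha\in\Phi_+}\mathbb{Z}$ and requires $f_\gamma(t_{e_i}) = 0$ for \emph{every} positive root $\gamma \neq e_i$. Condition~(3) only gives this for $\gamma$ a \emph{simple} root; it says nothing about the coefficients of $f(t_{e_i})$ at non-simple positive roots. $W$--equivariance alone does not force them to vanish either (e.g.\ $t_\alpha \mapsto \sum_{\gamma\in\Phi_+} t_\gamma$ is $W$--equivariant), so ruling out those contributions is not free. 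Consequently the inductive hypothesis $f(t_\beta) = \varepsilon\, t_\beta$ is never legitimately established for $\beta$ simple, and the induction you set up cannot begin, even though the inductive step itself is sound granted the hypothesis.

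That missing step is the entire content of the lemma, and it is where the paper's proof spends essentially all of its effort. The paper first equalizes the signs $f_{e_i}(t_{e_i})$ at simple roots using the relation $s_j s_i\cdot e_j = e_i$ applied purely to the coefficient $f_{e_i}(t_{e_i})$ (which is valid without knowing the rest of the vector $f(t_{e_i})$, unlike your version of this step); then reduces, via equivariance and transitivity of $W$ on $\Phi_+$, to showing $f_\beta(t_\alpha) = 0$ when $\beta$ is simple and $\alpha$ is a \emph{non-simple} positive root; and proves this by an induction on the height of $\alpha$ with a case analysis organized around the largest index $i$ with $\alpha_i = \beta$ in an iterated decomposition $\alpha = \alpha_1+\cdots+\alpha_n$ coming from Lemma~\ref{lemma:positive+simple}. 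That combinatorial argument is precisely what is absent from your proof; the two ``subtle points'' you flag at the end (well-definedness of the step; avoiding the exceptional branch of (\ref{actionpositive})) are real but minor, and are not where the difficulty lies.
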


\begin{proof}

If $i , j \in \Gamma$ are connected by an edge, then $s_j s_i \cdot e_j = s_j \cdot ( e_i + e_j ) = e_i$; thus by (1) $f_{e_i } (t_{e_i } ) = f_{e_j } (t_{e_j } )$. This, the fact that $\Gamma$ is a connected graph, and (2) imply that either $f_{\alpha} (t_\alpha ) =1$ for every simple root $\alpha  $ or else $-1$ for all simple roots. Hence, we may assume that $f_{\alpha} (t_\alpha ) = 1$ and then prove that $f = \mathrm{Id}$. 

We need to prove that for positive roots $\alpha , \beta \in \Phi_{+}$, we have 
\[
f_\beta ( t_\alpha ) = \begin{cases} 1 & \text{ if } \alpha = \beta \\ 0 & \text{ if } \alpha \neq \beta \end{cases}.
\]
To prove this it suffices to suppose that $\beta$ is a simple root, because of (1) and the fact that for any positive root $\beta$ there is $w\in W$ such that $w \cdot \alpha$ is a simple root \cite[\S 10.3, Theorem(c)]{humphreys-lie}. 
By (3), it only remains to show that if $\beta$ is a simple root and $\alpha$ is a positive root which is not simple then $f_\beta (t_\alpha ) = 0$. 

Because $\alpha$ is non-simple then by Lemma \ref{lemma:positive+simple} we may write $\alpha = \alpha_1 + \ldots + \alpha_n$ where each of $\alpha_k$ is a simple root, and $s_{\alpha_k} \cdot (\alpha_1 + \ldots + \alpha_k ) = \alpha_1 + \ldots + \alpha_{k-1}$ for all $k > 1$. Recall that $n$ is called the height of the positive root $\alpha$. We prove $f_\beta (t_\alpha ) = 0$ for all simple roots $\beta$ and all non-simple positive roots $\alpha$, by induction on the height $n$. (The base case $n = 1$ is proved already).

Let $i$ denote the maximum value in $  \{ 1 , \ldots , n\}$ such that $\beta = \alpha_i$ (with $i = 0$ if $\beta \neq \alpha_k$ for all $k$). Note that $\beta \cdot \alpha $ must be either $\pm 1$ or $0$, and for each $k$ we have $\beta \cdot \alpha_k $ equals $0$ or $1$ or $-2$. Thus, we distinguish the following cases:\\

\textbf{Case 1.} $i = 0$. Then there are two possibilities:\\

\textbf{Sub-case 1.1.} $\beta \cdot \alpha_k = 0$ for all $k \in \{ 1 , \ldots , n \}$. Thus, by (1)
\begin{align*}
f_\beta (t_\alpha ) = f_\beta ( t_{s_{\alpha_n} \cdots s_{\alpha_2} \cdot \alpha_1 } ) = f_{s_{\alpha_n} \cdots s_{\alpha_2}  \cdot \beta }  (t_{\alpha_1 } ) = f_{\beta} (T_{\alpha_1} )  = 0
\end{align*}
using $\beta \cdot \alpha_k = 0$ and $\beta \neq \alpha_1$.\\

\textbf{Sub-case 1.2.} There exists a unique $j \in \{ 1, \ldots , n \}$ such that $\beta \cdot \alpha_j = 1$ (and $\beta \cdot \alpha_k = 0$ for $k \neq j$). Then using (1) we have
\begin{align*}
f_\beta (t_\alpha ) & = f_\beta ( t_{s_{\alpha_n}\cdots s_{\alpha_{j+1}} \cdot (\alpha_1 + \cdots \alpha_j ) } ) = f_\beta ( t_{\alpha_1  + \cdots \alpha_j } )\\
& = f_{\beta + \alpha_j } (t_{\alpha_1 + \cdots + \alpha_{j-1}} ) = f_{\alpha_j} ( t_{\alpha_1 + \cdots \alpha_{j-1}} ) = 0
\end{align*}
where $=0$ follows by induction, since the height of $\alpha_1 +\cdots +  \alpha_{j-1}$ is $j-1 < n$.\\

\textbf{Case 2.} $i = 1$. Then since $\beta \cdot \alpha = -2 + \beta \cdot (\alpha_2 + \cdots \alpha_n )$ we deduce that $\beta \cdot (\alpha_2 + \cdots \alpha_n )$ must be either $1$, $2$ or $3$. Hence, there exists at least one $j \in \{ 1 , \cdots , n \}$ and at most three such that $\beta \cdot \alpha_j  = 1$. Let $j$ denote the maximum such. We have $j >1$. Then proceeding as before, we have
\begin{align*}
f_\beta (t_\alpha ) = f_{\beta + \alpha_j } (t_{\alpha_1 + \cdots + \alpha_{j-1}} ) = f_{\alpha_j} ( s_\beta \cdot t_{\alpha_1 + \cdots + \alpha_{j-1} } )  .
\end{align*}
If $j =2$ then $s_\beta \cdot t_{\alpha_1 + \cdots + \alpha_{j-1} } = t_{\alpha_1 }$ and since $\alpha_1$ is a simple root distinct from $\alpha_j$ then the above gives $f_\beta (t_\alpha ) = 0$. If $j >2$ then $s_\beta \cdot t_{\alpha_1 + \cdots + \alpha_{j-1} } = t_{s_\beta \cdot ( \alpha_1 + \cdots \alpha_{j-1})}$ and 
\[
s_\beta \cdot ( \alpha_1 + \cdots \alpha_{j-1}) = \alpha_1 + \cdots + \alpha_{j-1} + (\beta \cdot (\alpha_1 + \cdots + \alpha_{j-1})) \beta
\]
with $(\beta \cdot (\alpha_1 + \cdots + \alpha_{j-1})) \leq -2 + 2 = 0$. Hence $s_\beta \cdot ( \alpha_1 + \cdots \alpha_{j-1})$ has height $\leq j-1 < j \leq n$. Hence $f_{\alpha_j} ( s_\beta \cdot t_{\alpha_1 + \cdots + \alpha_{j-1} } ) = 0$ by induction, and thus $f_\beta (t_\alpha ) = 0$.\\

\textbf{Case 3.} $i >1$. Then $\beta \cdot (\alpha_1 + \cdots + \alpha_i ) = -1$ so we deduce that $ \beta\cdot (\alpha_{i+1} + \cdots + \alpha_n ) $ is either $0$ or $1$ or $2$.\\

\textbf{Sub-case 3.1.} $\beta \cdot \alpha_k = 0$ for all $k >i$. Then, proceeding as before 
\begin{align*}
f_\beta (t_\alpha ) = f_{\beta  } (t_{\alpha_1 + \cdots + \alpha_{i}} ) = f_{\alpha_i} ( t_{\alpha_1 + \cdots + \alpha_{i-1}} )
\end{align*}
which vanishes by induction because the height of $\alpha_1 + \cdots + \alpha_{i-1}$ is $< i \leq n$.\\

\textbf{Sub-case 3.2.} There exists at least one $j >i $, and at most two, such that $\beta \cdot \alpha_j = 1$. Let $j$ denote the maximum such. Then, as before
\begin{align*}
f_\beta ( t_\alpha ) = f_{\alpha_j}  ( s_\beta \cdot t_{\alpha_1 + \cdots + \alpha_{j-1} } ).
\end{align*}
Since $1<i<j$ then $2 \leq j-1$, so $\beta \neq \alpha_1 + \cdots + \alpha_{j-1}$ and hence $s_\beta \cdot t_{\alpha_1 + \cdots + \alpha_{j-1} } = t_{s_\beta \cdot (\alpha_1 + \cdots + \alpha_{j-1})} $. Then 
\begin{align*}
s_\beta \cdot (\alpha_1 + \cdots + \alpha_{j-1}) = (\alpha_1 + \cdots + \alpha_{j-1}) + ( \beta \cdot (\alpha_1 + \cdots + \alpha_{j-1} ))\beta 
\end{align*}
with $( \beta \cdot (\alpha_1 + \cdots + \alpha_{j-1} )) \leq -1 +1 = 0$. Hence $s_\beta \cdot (\alpha_1 + \cdots + \alpha_{j-1})$ has height $\leq j-1 < n$, so by induction $f_{\alpha_j}  (  t_{s_\beta \cdot ( \alpha_1 + \cdots + \alpha_{j-1} )} ) = 0$ and hence also $f_\beta ( t_\alpha ) = 0$.\\

The above comprise all possible cases; hence the proof is complete.
\end{proof}

\begin{proof}[Proof of Theorem \ref{theorem:splitting1}]
Let $f : \bigoplus_{\alpha \in \Phi_+} \mathbb{Z} \to \bigoplus_{\alpha \in \Phi_+} \mathbb{Z}$ be the homomorphism $f := q\circ \rho_0$ (where we identify $P^{\mathrm{ab}} \cong \bigoplus_{\alpha \in \Phi_+} \mathbb{Z}$ via Proposition \ref{proposition:characterisation}). Assumption (1) of Lemma \ref{lemma:splitting} holds for $f$ by Proposition \ref{proposition:characterisation}, Proposition \ref{proposition:equivariancerho0} and Proposition \ref{proposition:equivarianceq}. Assumption (2) holds by Proposition \ref{proposition:calculation}. Assumption (3) holds by Proposition \ref{proposition:A2}. Thus by Lemma \ref{lemma:splitting} we have $f = \pm \mathrm{Id}$, and because we have $Q_{[L_i]} (\mathcal{O}_{L_i}) =1$ for at least one $i$, then $f = \mathrm{Id}$.
\end{proof}

\begin{proof}[Proof of Theorems \ref{theorem:ADEK3}-\ref{theorem:ADE}]
Theorem \ref{theorem:ADEK3} follows immediately from Theorem \ref{theorem:splitting1}. For Theorem \ref{theorem:ADE}, 
we consider instead the homomorphism
\begin{align*}
Q : (\pi_1 \mathcal{S}_0 (X, \omega ) )^{\mathrm{ab}}\to \mathbb{Z}\Phi \quad , \quad Q  = \bigoplus_{\alpha \in \Phi} Q_{\alpha}
\end{align*}
which is also $W$--equivariant, by an argument similar to Proposition \ref{proposition:equivarianceq}. We can now show that $Q \circ \rho_0 = \mathrm{Id}$ by an argument almost identical to the proof of Theorem \ref{theorem:splitting1}, replacing Lemma \ref{lemma:splitting} with its obvious analogue involving instead the $W$--representation $\mathbb{Z}\Phi$.
\end{proof}



Similarly, given a consistently-oriented $ADE$ configuration of Lagrangian spheres in a closed symplectic $4$-manifold $(X, \omega )$, the abelian group $\pi_1 \mathrm{Diff}_0 (X )$ becomes a $W$--representation: $s_i \in W$ acts on $f$ by $f \mapsto \tau_{L_i}f\tau_{L_i}^{-1}$ (again, this defines $W$--action by the smooth triviality of $\tau_{L_i}^2$ and the Braid relations). By Lemma \ref{lemma:gammasymmetries} and the analogue of Lemma \ref{lemma:mapfromPab} for $\overline{P}^\mathrm{ab}$ there exists a unique $W$--equivariant homomorphism
\begin{align*}
\rho_0 : \overline{P}^{\mathrm{ab}} \to \pi_1 \mathrm{Diff}_0 (X) 
\end{align*}
uniquely characterised by $\overline{t}_{e_i}  \mapsto \gamma_{L_i}$, where $\gamma_{L_i}$ is the generalised Dehn twist defined in \S\ref{subsection:generaliseddehntwist}.


By Lemma \ref{lemma:FSW} we have that $Q$ precomposed with $p_\ast : \pi_1 \mathrm{Diff}_0 (X) \to \pi_1 \mathcal{S}_0 (X, \omega )$ (cf. (\ref{fibration})) coincides with 
\[
Q \circ p_\ast = FSW := \bigoplus_{\alpha \in \Phi} FSW_{\alpha}
\]
which furthermore vanishes on the image of $\pi_1 \mathrm{Symp}_0 (X, \omega ) \to \pi_1 \mathrm{Diff}_0 (X )$ by Lemma \ref{lemma:taubes}. By Lemma \ref{lemma:symplecticconjugation} when $(X, \omega )$ is a symplectic Calabi--Yau surface with $b^+ (X) >1$ the homomorphism $FSW$ takes values in the subrepresentation $\overline{P}^{\mathrm{ab}}\subset \mathbb{Z}\Phi$. 

The following result---whose proof follows by similar arguments as above---sums up the relationship between the short exact sequences (\ref{SESdiffsymp}) and (\ref{SESreps}):

\begin{theorem}\label{theorem:diagram}
Let $(X, \omega )$ be a symplectic Calabi--Yau surface with $b^+ (X) >1$ with a consistently oriented $ADE$ configuration of Lagrangian spheres. Then the above maps yield a commutative diagram 
\begin{equation*}
\begin{tikzcd}
 0 \arrow{r} & \overline{P}^{\mathrm{ab}} \arrow{r} & \mathbb{Z}\Phi \arrow{r} & P^{\mathrm{ab}} \arrow{r} & 0 \\
  & \pi_1 \mathrm{Symp}_0 (X, \omega ) \backslash \pi_1 \mathrm{Diff}_0 (X)  \arrow{u}{FSW} \arrow{r} & (\pi_1 \mathcal{S}_0 (X, \omega ))^{\mathrm{ab}} \arrow{u}{Q} \arrow{r} & (\pi_0 \mathrm{Symp}_0 (X, \omega ))^{\mathrm{ab}} \arrow{u}{q} \arrow{r} & 0 \\
  0 \arrow{r} & \overline{P}^{\mathrm{ab}} \arrow{u}{\rho_0} \arrow{r} & \mathbb{Z}\Phi \arrow{u}{\rho_0} \arrow{r} & P^{\mathrm{ab}} \arrow{u}{\rho_0} \arrow{r} & 0
\end{tikzcd}
\end{equation*}
where all maps are $W$--equivariant, rows are exact (except possibly the middle one on the left-most term), and compositions along each column equal the identity map.
\end{theorem}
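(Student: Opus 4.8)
The plan is to assemble the commutative diagram from the pieces already established and then verify the three claimed properties (commutativity, exactness, and the column identities) one at a time, largely by invoking earlier results.

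First I would set up the three rows. The top row is the short exact sequence of $W$--representations \eqref{SESreps}, already proven to be exact. The bottom row is the same sequence \eqref{SESreps}, once more. The middle row is the short exact sequence \eqref{SESdiffsymp} coming from the fibration \eqref{fibration} (with the caveat, already flagged, that exactness of the middle term of that row is not asserted---the image of $\pi_1 \mathrm{Symp}_0$ need not be all of the kernel). The vertical maps are: on the left column, the $W$--equivariant $\rho_0 : \overline{P}^{\mathrm{ab}} \to \pi_1 \mathrm{Symp}_0 (X, \omega ) \backslash \pi_1 \mathrm{Diff}_0 (X)$ factoring the map $\overline{t}_{e_i} \mapsto \gamma_{L_i}$ constructed just above via Lemma \ref{lemma:gammasymmetries} and the $\overline{P}^{\mathrm{ab}}$--analogue of Lemma \ref{lemma:mapfromPab}, followed by $FSW : \pi_1 \mathrm{Symp}_0 (X, \omega ) \backslash \pi_1 \mathrm{Diff}_0 (X) \to \overline{P}^{\mathrm{ab}}$ (which lands in $\overline{P}^{\mathrm{ab}} \subset \mathbb{Z}\Phi$ by Lemma \ref{lemma:symplecticconjugation} under the stated hypotheses, and is well-defined on the quotient by Lemma \ref{lemma:taubes}); on the middle column, $\rho_0 : \mathbb{Z}\Phi \to (\pi_1 \mathcal{S}_0 (X, \omega ))^{\mathrm{ab}}$ from Proposition \ref{proposition:mapZPhi}, followed by $Q = \bigoplus_{\alpha \in \Phi} Q_\alpha$; on the right column, $\rho_0^{\mathrm{ab}} : P^{\mathrm{ab}} \to (\pi_0 \mathrm{Symp}_0 (X, \omega ))^{\mathrm{ab}}$ from \eqref{rho_0}, followed by $q$ from \eqref{qADE}.

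Next I would check $W$--equivariance of every map: for the $\rho_0$'s this is Proposition \ref{proposition:equivariancerho0}, Proposition \ref{proposition:mapZPhi}, and the $\overline{P}^{\mathrm{ab}}$--version; for $q$ and its $\mathbb{Z}\Phi$--lift $Q$ this is Proposition \ref{proposition:equivarianceq} and the analogous statement for $Q$ used in the proof of Theorem \ref{theorem:ADE}; for $FSW$ this follows from the naturality of Family Seiberg--Witten invariants together with the Picard--Lefschetz formula $\tau_{L_i}^\ast \alpha = s_i \cdot \alpha$ (same argument as Proposition \ref{proposition:equivarianceq}, via Lemma \ref{lemma:FSW}). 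Commutativity of the lower-left and lower-middle squares holds because the two composite maps out of $\overline{P}^{\mathrm{ab}}$ (resp. $\mathbb{Z}\Phi$) are $W$--equivariant and agree on the generators $\overline{t}_{e_i}$ (resp. $T_{e_i}$)---here one uses Theorem \ref{theorem:generaliseddehntwist}, which identifies $p_\ast(\gamma_{L_i})$ with $\mathcal{O}_{-L_i}^{-1}\cdot \mathcal{O}_{L_i}$, i.e. with the image of $\overline{t}_{e_i} = T_{e_i} - T_{-e_i}$ under $\rho_0 : \mathbb{Z}\Phi \to (\pi_1 \mathcal{S}_0)^{\mathrm{ab}}$ composed with the inclusion of $\overline{P}^{\mathrm{ab}}$---so the uniqueness clauses of Lemma \ref{lemma:mapfromPab} and Remark \ref{remark:mapfromPab} force the squares to commute. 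Commutativity of the two lower squares of the outer diagram (the maps into the middle row) then reduces to: the left map of \eqref{SESdiffsymp} is induced by $p_\ast$ and $\delta$-kernel considerations, and the right map of \eqref{SESdiffsymp} is $\delta$; compatibility with $\rho_0$ on the $P$-- and $\overline{P}$--pieces is exactly the statement that $\delta(\mathcal{O}_{L_i}) = \tau_{L_i}^2$ (Lemma \ref{lemma:naturality}(2)) together with Theorem \ref{theorem:generaliseddehntwist}. For the upper squares I would run the same logic with $FSW$, $Q$, $q$ in place of the $\rho_0$'s: all three are $W$--equivariant, and $Q \circ (\text{inclusion } \overline{P}^{\mathrm{ab}}\hookrightarrow \mathbb{Z}\Phi) = FSW \circ (\text{left map of }\eqref{SESdiffsymp})$ and $(\text{top right map})\circ Q = q \circ (\text{right map of }\eqref{SESdiffsymp})$ follow from Lemma \ref{lemma:FSW} (relating $Q_\alpha$ to $FSW_\alpha$ on loops coming from $\pi_1\mathrm{Diff}_0$) and Definition \ref{definition:qtilde} together with Proposition \ref{proposition:q} respectively.

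Finally, the column identities: the right column composite $q \circ \rho_0^{\mathrm{ab}} = \mathrm{Id}_{P^{\mathrm{ab}}}$ is precisely Theorem \ref{theorem:splitting1}; the middle column composite $Q \circ \rho_0 = \mathrm{Id}_{\mathbb{Z}\Phi}$ is what was proven in the proof of Theorem \ref{theorem:ADE} (the $\mathbb{Z}\Phi$--analogue of Lemma \ref{lemma:splitting}); and the left column composite $FSW \circ \rho_0 = \mathrm{Id}_{\overline{P}^{\mathrm{ab}}}$ then follows formally: by commutativity of the left-hand squares and exactness of the top and bottom rows, the map $FSW \circ \rho_0 : \overline{P}^{\mathrm{ab}} \to \overline{P}^{\mathrm{ab}}$ is the restriction of $Q \circ \rho_0 = \mathrm{Id}$ to the subrepresentation $\overline{P}^{\mathrm{ab}} \subset \mathbb{Z}\Phi$ (using that $\overline{P}^{\mathrm{ab}} \to \mathbb{Z}\Phi$ is injective, which is the exactness of \eqref{SESreps} on the left). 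I expect the main obstacle to be bookkeeping rather than mathematics: namely being careful that $FSW$ genuinely descends to the quotient $\pi_1 \mathrm{Symp}_0 \backslash \pi_1 \mathrm{Diff}_0$ and genuinely takes values in $\overline{P}^{\mathrm{ab}}$ (not merely in $\mathbb{Z}\Phi$) under the Calabi--Yau, $b^+>1$ hypotheses---this is where Lemma \ref{lemma:symplecticconjugation} (the charge-conjugation sign being $-1$ when $K_\omega = 0$) and Lemma \ref{lemma:taubes} are doing real work---and that the uniqueness-of-$W$-equivariant-extension arguments are applied to maps whose source is genuinely generated as a $W$--representation by the relevant orbit of generators, so that commutativity of each square can be checked on generators alone.
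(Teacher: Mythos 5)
Your assembly of the diagram is correct, and it uses exactly the ingredients the paper is gesturing at when it says the proof ``follows by similar arguments as above'': the rows are \eqref{SESreps} and the abelianised \eqref{SESdiffsymp}; commutativity of the two lower squares follows from the uniqueness clauses of Lemma~\ref{lemma:mapfromPab} / Remark~\ref{remark:mapfromPab} once you compute on generators via $\delta(\mathcal{O}_{L_i})=\tau_{L_i}^2$ (Lemma~\ref{lemma:naturality}) and $p_\ast(\gamma_{L_i})=\mathcal{O}_{-L_i}^{-1}\mathcal{O}_{L_i}$ (Theorem~\ref{theorem:generaliseddehntwist}); commutativity of the two upper squares is Lemma~\ref{lemma:FSW} and Proposition~\ref{proposition:q}; the right and middle column identities are Theorem~\ref{theorem:splitting1} and its $\mathbb{Z}\Phi$--analogue from the proof of Theorem~\ref{theorem:ADE}. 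Your formal derivation of the left column identity, $i\circ FSW\circ\rho_0 = Q\circ\rho_0\circ i = i$ together with injectivity of $i:\overline{P}^{\mathrm{ab}}\hookrightarrow\mathbb{Z}\Phi$, is a clean way to avoid rerunning the calculation on $\gamma_{L_i}$ and is perfectly valid. Two small corrections of wording that don't affect the argument: the identity you want for the upper-left square is $i\circ FSW = Q\circ(\text{left map of \eqref{SESdiffsymp}})$ rather than the non-composable $Q\circ i$ you wrote; and the caveat on the middle row is not that ``the image of $\pi_1\mathrm{Symp}_0$ need not be all of the kernel'' (\eqref{SESdiffsymp} is genuinely short exact) but that injectivity of its first map can be lost after abelianising $\pi_1\mathcal{S}_0$ and $\pi_0\mathrm{Symp}_0$, which is the standard failure of left-exactness of abelianisation.
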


There is a simple application of Theorem \ref{theorem:diagram} that is worth mentioning. Recall that the short exact sequence of $W$--representations (\ref{SESreps}) does not split by Lemma \ref{lemma:nonsplit}(1) (both on the left or the right, since we are working in an abelian category). 
Thus, the homomorphism $\pi_1 \mathrm{Symp}_0 (X, \omega ) \backslash \pi_1 \mathrm{Diff}_0 (X)  \to (\pi_1 \mathcal{S}_0 (X, \omega ))^{\mathrm{ab}}$ is not $W$--equivariantly left-split, and the surjective homomorphism $(\pi_1 \mathcal{S}_0 (X, \omega ))^{\mathrm{ab}} \to (\pi_0 \mathrm{Symp}_0 (X, \omega ))^{\mathrm{ab}}$ is not $W$--equivariantly right-split. In particular:

\begin{corollary}\label{corollary:nonsplit}
In the category whose objects are symplectic $K3$ surfaces and morphisms are symplectomorphisms, the short exact sequence of groups (associated to the fibration (\ref{fibration}))
\[
0 \to \pi_1 \mathrm{Symp}_0 (X, \omega ) \backslash \pi_1 \mathrm{Diff}_0 (X) \to \pi_1 \mathcal{S}_0 (X, \omega )\to \pi_0 \mathrm{Symp}_0 (X, \omega ) \to 0
\]
does not split in a natural way: both on the left or the right.
\end{corollary}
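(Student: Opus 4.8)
The plan is to deduce Corollary~\ref{corollary:nonsplit} directly from Theorem~\ref{theorem:diagram} together with the non-splitting of (\ref{SESreps}) as a sequence of $W$--representations, i.e. Lemma~\ref{lemma:nonsplit}(1). The key observation is that naturality of the splitting (or retraction) translates into $W$--equivariance once one has a specific symplectic $K3$ surface carrying an appropriate $ADE$ configuration: the conjugation action of the Dehn--Seidel twists $\tau_{L_i}$ on the relevant homotopy groups is induced by symplectomorphisms of $(X,\omega)$, hence a natural splitting of the short exact sequence (\ref{SESdiffsymp}) would have to commute with these conjugations, i.e. be $W$--equivariant.

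In more detail, I would argue by contradiction. Suppose the sequence (\ref{SESdiffsymp}) splits naturally on the right; that is, for every symplectic $K3$ surface $(X,\omega)$ there is a homomorphism $\sigma_{(X,\omega)} : (\pi_0\mathrm{Symp}_0(X,\omega))^{\mathrm{ab}} \to \pi_1\mathcal{S}_0(X,\omega)^{\mathrm{ab}}$ splitting the surjection $(\pi_1\mathcal{S}_0(X,\omega))^{\mathrm{ab}} \to (\pi_0\mathrm{Symp}_0(X,\omega))^{\mathrm{ab}}$, and these are compatible with the maps induced by symplectomorphisms. Fix a symplectic $K3$ surface $(X,\omega)$ that carries a consistently oriented $ADE$ configuration of Lagrangian spheres of type $\Gamma$ for which the associated $\rho_0 : P^{\mathrm{ab}} \to (\pi_0\mathrm{Symp}_0(X,\omega))^{\mathrm{ab}}$ is split-injective with retraction $q$ as in Theorem~\ref{theorem:splitting1}; such surfaces exist (e.g. one arising as a smooth fiber of a projective $A_1$ degeneration, or more generally iterated such degenerations for any $\Gamma$). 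Applying naturality to the symplectomorphisms $\tau_{L_i}$ shows that $\sigma_{(X,\omega)}$ is $W$--equivariant for the $W$--actions of Proposition~\ref{proposition:weylaction}. Then, using the rightmost square of the diagram in Theorem~\ref{theorem:diagram} and the fact that $\rho_0 : P^{\mathrm{ab}}\to(\pi_0\mathrm{Symp}_0(X,\omega))^{\mathrm{ab}}$ is $W$--equivariantly split-injective with $W$--equivariant retraction $q$, one obtains a $W$--equivariant splitting of the sequence
\[
0 \to \overline{P}^{\mathrm{ab}} \to \mathbb{Z}\Phi \to P^{\mathrm{ab}} \to 0
\]
on the right: explicitly, the composition $\mathbb{Z}\Phi \xrightarrow{Q}$ no---rather, pull back the natural splitting $\sigma_{(X,\omega)}$ of the middle row along $\rho_0$ on the target and $Q$ on the source, using that $Q\circ\rho_0 = \mathrm{Id}$ on $\mathbb{Z}\Phi$ and $q\circ\rho_0 = \mathrm{Id}$ on $P^{\mathrm{ab}}$ (Theorem~\ref{theorem:diagram} and Theorem~\ref{theorem:splitting1}), to produce a $W$--equivariant map $P^{\mathrm{ab}}\to\mathbb{Z}\Phi$ right-splitting $\mathbb{Z}\Phi\to P^{\mathrm{ab}}$. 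This contradicts Lemma~\ref{lemma:nonsplit}(1). The argument for the left-splitting is entirely symmetric, using instead the leftmost square, the homomorphism $FSW : \pi_1\mathrm{Symp}_0(X,\omega)\backslash\pi_1\mathrm{Diff}_0(X)\to\overline{P}^{\mathrm{ab}}$, and the fact that $FSW\circ\rho_0 = \mathrm{Id}$ on $\overline{P}^{\mathrm{ab}}$, to contradict the non-splitting of (\ref{SESreps}) on the left (which holds since we work in the abelian category of $W$--representations, where left- and right-splittings of a short exact sequence are equivalent).

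The step I expect to be the main (though minor) obstacle is pinning down precisely what "splits in a natural way" means and verifying that it forces $W$--equivariance in the chosen example: one must be careful that the naturality hypothesis is applied only to \emph{symplectomorphisms} $(X,\omega)\to(X,\omega)$ (the morphisms in the stated category), that the conjugation $W$--actions of Proposition~\ref{proposition:weylaction} are indeed induced by such symplectomorphisms (namely the $\tau_{L_i}$), and that the chosen $(X,\omega)$ genuinely admits a $\Gamma$--configuration realizing the hypotheses of Theorem~\ref{theorem:splitting1} — for the $A_n$ case this is the degeneration picture of \S\ref{subsection:rootsystems}--\S\ref{subsection:ADEconfs}, and for general $\Gamma$ one can appeal to $K3$ surfaces with $ADE$ singularities, which always admit smoothings \cite{burns-wahl}. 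Once these points are in place the deduction is purely formal diagram-chasing in the abelian category of $W$--representations.
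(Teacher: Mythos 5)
Your proposal is correct and takes essentially the same route as the paper: the paper reads off the non-splitting as an immediate consequence of Theorem~\ref{theorem:diagram}, noting that a natural splitting of the sequence (\ref{SESdiffsymp}) would, via the columns of that diagram (which compose to the identity), produce a $W$--equivariant splitting of (\ref{SESreps}), contradicting Lemma~\ref{lemma:nonsplit}(1). The one refinement you flag — that ``natural'' must be interpreted as functoriality with respect to symplectomorphisms, hence in particular with respect to the Dehn--Seidel twists $\tau_{L_i}$, which is exactly what gives $W$--equivariance via Proposition~\ref{proposition:weylaction} — is precisely what is implicit in the paper's statement, and your care in checking the existence of a symplectic $K3$ surface with the required $ADE$ configuration is the correct (and necessary) sanity check; no gaps.
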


We saw that to the squared Dehn--Seidel twist $\tau_{L}^2$ we could associate a lift given by the loop of symplectic forms $\mathcal{O}_L$, but this required the additional choice of an orientation of $L$. Corollary \ref{corollary:nonsplit} says---for example---that such a natural construction would be impossible without this additional choice.

On the other hand, the short exact sequence (\ref{SESreps}) does split after localising away from $2$, by Lemma \ref{lemma:nonsplit}(2). Precomposing $\rho_0 : (\mathbb{Z}\Phi)[\frac{1}{2}]\to (\pi_1 \mathcal{S}_{0} (X, \omega ))^{\mathrm{ab}} [\frac{1}{2}]$ with the splitting described in the proof of that result yields the (unique) $W$--equivariant homomorphism
\[
\widetilde{\rho}_0 : P^{\mathrm{ab}}[\frac{1}{2}] \to (\pi_1 \mathcal{S}_{0} (X, \omega ))^{\mathrm{ab}} [\frac{1}{2}]
\]
sending $t_{e_i} \mapsto \frac{1}{2} ( \mathcal{O}_{L_i} + \mathcal{O}_{- L_{i}})$. Note that the definition of $\widetilde{\rho}_0$ does not involve orientations of the $L_i$'s anymore. The homomorphism $\widetilde{\rho}_0$ may also be regarded as an analogue of the symplectic representation $\rho_0 :   P^{\mathrm{ab}} \to ( \pi_0 \mathrm{Symp}_0 (X, \omega ))^{\mathrm{ab}} $ at the level of $(\pi_1 \mathcal{S}_0 (X, \omega ))^\mathrm{ab}$, since it lifts the symplectic representation along the connecting homomorphism associated to the fibration (\ref{fibration}):
\begin{center}
\begin{tikzcd}
\, & ( \pi_1 \mathcal{S}_{0}(X, \omega ))^{\mathrm{ab}}[\frac{1}{2}] \arrow{d}{(\ref{fibration})}\\
P^{\mathrm{ab}}[\frac{1}{2}] \arrow[dashed]{ru}{\widetilde{\rho_0}} \arrow{r}{\rho_0 [\frac{1}{2}]} & ( \pi_0 \mathrm{Symp}_{0}(X, \omega ))^{\mathrm{ab}}[\frac{1}{2}] .
\end{tikzcd}
\end{center}
It follows from Theorem \ref{theorem:diagram} that $\widetilde{\rho}_0$ splits $W$--equivariantly as well, for every closed symplectic $4$-manifold.

\section{Infinite-generation results}\label{section:infinite}

Let $(X, \omega )$ be a closed symplectic $4$-manifold. Consider once again the short exact sequence (\ref{SESdiffsymp}):
\begin{align*}
0 \to \pi_1 \mathrm{Symp}_0 (X,\omega ) \backslash \pi_1 \mathrm{Diff}_0 (X,\omega )\to \pi_1 \mathcal{S}_0 (X,\omega ) \to \pi_0 \mathrm{Symp}_0 (X,\omega )\to 0 .
\end{align*}
Through this, the group $\pi_1 \mathcal{S}_0 (X, \omega )$ can be regarded as a `hybrid' combination of the smoothly-trivial symplectic mapping class group $\pi_0 \mathrm{Symp}_0 (X, \omega )$ and the abelian group $\pi_1 \mathrm{Symp}_0 (X,\omega ) \backslash \pi_1 \mathrm{Diff}_0 (X,\omega )$ of `non-symplectic loops of diffeomorphisms'. The latter abelian group has been recently studied by J. Lin \cite{JLIN2022}, and the results in this section generalise his infinite-generation result \cite[Theorem E]{JLIN2022} to the groups $\pi_1 \mathcal{S}_0 (X, \omega )$ and $\pi_0 \mathrm{Symp}_0 (X, \omega )$.

\subsection{An application of the Family Switching Formula}

The next result `overrides' Propositions \ref{proposition:calculation}-\ref{proposition:A2}. Its proof relies on a different gluing result: the `Family Switching Formula' (\cite[Theorem 5.3]{JLIN2022} or \cite{liu_switching}).

\begin{proposition}\label{proposition:switching}
Let $(X, \omega )$ be a closed symplectic $4$-manifold, and $L$ an oriented Lagrangian sphere. Suppose $e \in H^2 (X, \mathbb{Z} )$ is a class of the form $e = \mathrm{PD}([S])$ for a smoothly embedded and oriented sphere $S \subset X$ with $S^2 - K \cdot S = -2$ and $[\omega]\cdot S \leq 0$
(for example, this holds if $S$ is a Lagrangian sphere). Fix a homological orientation of $X$. Then
\[
Q_e (\mathcal{O}_L ) = \begin{cases} \pm 1 & \text{  if  } e = \mathrm{PD}([L])\\
0 & \text{  otherwise}
\end{cases}.
\]
\end{proposition}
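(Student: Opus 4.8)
\textbf{Proof plan for Proposition \ref{proposition:switching}.}

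The plan is to relate $Q_e(\mathcal{O}_L)$ to a Family Seiberg--Witten invariant of the generalised Dehn twist loop $\gamma_L$ using Theorem \ref{theorem:generaliseddehntwist}, and then compute the latter using the Family Switching Formula, following the strategy of \cite[Proposition 8.2]{JLIN2022}. First I would record the consequences of Theorem \ref{theorem:generaliseddehntwist}: the two canonical lifts $\mathcal{O}_{\pm L}$ commute and $p_\ast(\gamma_L) = \mathcal{O}_{-L}^{-1}\cdot \mathcal{O}_L$ in $\pi_1 \mathcal{S}_0(X, \omega)$. Passing to the abelianisation and applying $Q_e$, which is a homomorphism, this gives $Q_e(p_\ast \gamma_L) = Q_e(\mathcal{O}_L) - Q_e(\mathcal{O}_{-L})$. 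By Lemma \ref{lemma:symmetriesO} combined with Lemma \ref{lemma:symplecticconjugation} (the charge-conjugation symmetry, with $K_\omega = 0$ not needed here since I only need the sign-and-relabeling structure), one has $Q_{e}(\mathcal{O}_{-L}) = Q_{-e}(\mathcal{O}_{L})$ up to a controlled sign; more directly, by the naturality of Kronheimer's invariant under $\tau_L$ (Corollary \ref{corollary:naturalityQ}), which sends $L$ to $-L$ and $e$ to $-e$, one gets $Q_{e}(\mathcal{O}_L) = Q_{-e}(\mathcal{O}_{-L})$. Hence it suffices to show $Q_e(p_\ast \gamma_L)$ equals $\pm 1$ when $e = \mathrm{PD}([L])$ and $0$ otherwise, together with the vanishing $Q_{-e}(\mathcal{O}_L) = 0$ for $e = \mathrm{PD}([L])$ — the latter being already established in Proposition \ref{proposition:calculation} and reprovable here by the chamber/positivity argument of Lemma \ref{lemma:kerO} and Lemma \ref{lemma:taubes}, since $\mathcal{O}_L$ bounds a $D^2$-family of symplectic forms whose cohomology classes pair non-positively with $-e$.

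The crux is then the computation of $Q_e(p_\ast \gamma_L) = FSW_e(\gamma_L)$, where the equality is Lemma \ref{lemma:FSW}. The loop $\gamma_L \in \pi_1 \mathrm{Diff}_0(X)$ is supported in a neighbourhood of the $(-2)$-sphere $L$, which is identified with a neighbourhood of the zero section in $\mathscr{O}_{\mathbb{C}P^1}(-2)$; the associated $X$-bundle over $S^2$ is obtained by clutching with $\gamma_L$. The Family Switching Formula (\cite[Theorem 5.3]{JLIN2022}, \cite{liu_switching}) computes the family Seiberg--Witten invariant of such a bundle in terms of ordinary Seiberg--Witten invariants of $X$ in spin-c structures differing by the wall-crossing associated to $[L]$: concretely, $FSW_e(\gamma_L)$ is a difference (or sum) of Seiberg--Witten invariants $SW(X, \mathfrak{s}_\omega + e \pm [L])$ evaluated in appropriate chambers. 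For a symplectic $4$-manifold the relevant Taubes-type vanishing and the basic class count then force this expression to be $\pm 1$ exactly when the reflection $s_L$ in $[L]$ relates $\mathfrak{s}_\omega + e$ to its conjugate — equivalently when $e = \mathrm{PD}([L])$ in view of $e^2 - K\cdot e = -2$ and $[\omega]\cdot e \leq 0$ — and $0$ otherwise. I would carry this out by: (i) identifying the precise input data (family spin-c structure, chamber $\phi_\omega$, homological orientation) so that $FSW_e(\gamma_L)$ matches the left side of the Switching Formula; (ii) applying the formula to reduce to unparametrised $SW$ invariants; (iii) invoking Taubes' nonvanishing $SW(X, \mathfrak{s}_\omega) = \pm 1$ and the adjunction/positivity constraints to pin down which $e$ contribute.

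The main obstacle I anticipate is step (ii)–(iii): correctly bookkeeping the chambers and signs in the Family Switching Formula, since the formula involves wall-crossing terms and the $FSW$ invariant is chamber-dependent when $b^+$ is small, and then verifying that for a Lagrangian (or homologically $(-2)$, $[\omega]$-null) sphere the only surviving contribution comes from the symplectic canonical class and its reflection. A clean way to sidestep ambiguity is to first prove the result for one convenient model — a symplectic $K3$ surface arising as the smooth fiber of a projective $A_1$ degeneration with vanishing cycle $L$ — where Corollary \ref{corollary:A1} and Proposition \ref{proposition:kronheimercalculation} already give the answer, and then invoke the Excision Property (Proposition \ref{proposition:excision}) to transport the computation to an arbitrary $(X, \omega)$, exactly as in the proof of Proposition \ref{proposition:calculation}; the role of the Switching Formula is then only to handle the case where $S$ is merely smoothly embedded rather than Lagrangian, i.e. to identify $Q_e(\mathcal{O}_L)$ with a quantity depending only on the smooth class $[S]$ and the homotopy class of $\gamma_L$. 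I would present the argument in this hybrid form, leading with the reduction via Theorem \ref{theorem:generaliseddehntwist} and Lemma \ref{lemma:FSW}, then the model computation plus excision, and finally the Switching Formula to cover the general smoothly-embedded sphere.
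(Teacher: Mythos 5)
Your plan is, in substance, the paper's own proof: split $Q_e(\mathcal{O}_L) = FSW_e(\gamma_L) + Q_e(\mathcal{O}_{-L})$ via Theorem~\ref{theorem:generaliseddehntwist} and Lemma~\ref{lemma:FSW}, handle $e = \mathrm{PD}([L])$ by the model computation plus excision already packaged in Proposition~\ref{proposition:calculation}, use the chamber/positivity vanishing from Lemma~\ref{lemma:kerO} and Lemma~\ref{lemma:taubes}, and invoke the Family Switching Formula for the remaining $e$. The paper does exactly this, and sidesteps the chamber bookkeeping you were (rightly) wary of by citing \cite[Proposition 8.4]{JLIN2022} with $\mathfrak{s} = \mathfrak{s}_\omega$, $S_1 = -S$, $S_0 = L$ to get $FSW_e(\gamma_L) = 0$ for $e \neq \mathrm{PD}([L])$, rather than re-deriving it from \cite[Theorem 5.3]{JLIN2022} directly. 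You also correctly observe that the excision route can only serve for $e = \pm\mathrm{PD}([L])$ (classes supported near $L$), so the Switching Formula is essential when $S$ is an arbitrary smoothly embedded sphere.

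There is, however, a genuine hole in your ``hence it suffices to show'' reduction. Items (A) $Q_e(p_\ast\gamma_L) = \pm 1$ if $e=\mathrm{PD}([L])$ and $0$ otherwise, and (B) $Q_{-\mathrm{PD}([L])}(\mathcal{O}_L) = 0$, do \emph{not} by themselves yield the conclusion when $e \neq \mathrm{PD}([L])$: (A) only gives $Q_e(\mathcal{O}_L) = Q_e(\mathcal{O}_{-L})$, and combining with the naturality $Q_e(\mathcal{O}_L) = Q_{-e}(\mathcal{O}_{-L})$ just says all four numbers $Q_{\pm e}(\mathcal{O}_{\pm L})$ coincide, not that they vanish. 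To close this you must run the Lemma~\ref{lemma:kerO}/Lemma~\ref{lemma:taubes} positivity argument for \emph{general} $e$, not merely for $-\mathrm{PD}([L])$, and split into cases on the sign of $e\cdot[L]$: if $e\cdot[L] \geq 0$ then $\mathcal{O}_L$ bounds a disk of symplectic forms with classes $[\omega] - c\,\mathrm{PD}([L])$, $c \geq 0$, pairing $\leq 0$ with $e$, so $Q_e(\mathcal{O}_L) = 0$ outright and the Switching Formula is not even needed; if $e\cdot[L] < 0$ the same argument gives $Q_e(\mathcal{O}_{-L}) = 0$, whence $Q_e(\mathcal{O}_L) = FSW_e(\gamma_L)$ and \emph{then} you invoke the Switching Formula vanishing. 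This case split on $\mathrm{sign}(e\cdot[L])$ is exactly how the paper completes the argument, and you should make it explicit; everything else in your outline is sound.
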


\begin{proof}
When $e = \mathrm{PD}([L])$ the result was proved in Proposition \ref{proposition:calculation}. Thus, suppose now that $e \neq \mathrm{PD}([L])$. 

We have
\begin{align*}
Q_e ( \mathcal{O}_L ) & = Q_e ( \mathcal{O}_L \cdot \mathcal{O}_{-L}^{-1} ) + Q_e (\mathcal{O}_{-L}) &  \text{(by additivity of } Q_e\text{)}\\
& = FSW_e ( \gamma_L ) + Q_e (\mathcal{O}_{-L})&  \text{(by Theorem \ref{theorem:generaliseddehntwist} and Lemma \ref{lemma:FSW})}.
\end{align*}

By the proof of Lemma \ref{lemma:kerO}, the loop $\mathcal{O}_L$ bounds a disk of symplectic forms on $X$ where the cohomology class of each symplectic form is $[\omega] - c \mathrm{PD}([L])$ for some $c\geq 0$. By Lemma \ref{lemma:taubes} it then follows that if $ e\cdot [L]\geq 0$ then $Q_{ e}(\mathcal{O}_L ) = 0$. Thus, it suffices to assume $e\cdot [L] < 0$, and then by the same argument $Q_e (\mathcal{O}_{-L}) = 0$. Thus, by the above:
\[
Q_e ( \mathcal{O}_L ) = FSW_e ( \gamma_L ) .
\]

Finally, by \cite[Proposition 8.4]{JLIN2022} applied to the case $\mathfrak{s} = \mathfrak{s}_\omega$, $S_1 = -S$, $S_0 = L$ we have $FSW_e ( \gamma_L ) = 0$ (our assumptions on $S$ ensure the hypothesis of that result hold). 
The aforementioned result is a consequence of the `Family Switching Formula' (\cite[Theorem 5.3]{JLIN2022} and \cite{liu_switching}). This concludes the proof.
\end{proof}

\subsection{Algebraic independence of squared Dehn--Seidel twists }

Let $\mathcal{L}(X, \omega )$ be the subset of $H^2 (X, \mathbb{Z} )$ consisting of cohomology classes Poincaré dual to the fundamental class of an oriented Lagrangian sphere. Consider the free abelian group $ \bigoplus_{e \in \mathcal{L}(X,\omega)}\mathbb{Z}$, with the generator corresponding to $e \in \mathcal{L}(X, \omega )$ denoted $T_e$. We have an involution $\iota : T_e \to T_{-e}$ of this abelian group, and corresponding invariant and anti-invariant subgroups: $\Big( \bigoplus_{e \in \mathcal{L}(X,\omega)}\mathbb{Z}\Big)^\pm := \mathrm{Ker}(\iota \mp \mathrm{Id} )$. These are freely generated by the elements $T_e \pm T_{-e}$, respectively. Thus, there is a natural identification of the invariant subgroup with
\[
\Big( \bigoplus_{e \in \mathcal{L}(X,\omega)}\mathbb{Z}\Big)^+  \cong  \bigoplus_{e \in \mathcal{L}(X,\omega)/\pm}\mathbb{Z} .
\]
but to obtain a similar identification of the anti-invariant subgroup we need to make a choice of section of the map $\mathcal{L}(X, \omega ) \to \mathcal{L}(X, \omega )/\pm$. There is a natural short exact sequence
\begin{align}
 0 \to\Big( \bigoplus_{e \in \mathcal{L}(X,\omega)}\mathbb{Z}\Big)^- \xrightarrow{\text{incl.}} \bigoplus_{e \in \mathcal{L}(X,\omega)}\mathbb{Z}  \to \Big( \bigoplus_{e \in \mathcal{L}(X,\omega)}\mathbb{Z}\Big)^+ \to 0  \label{SESsummand}
\end{align}
which splits non-canonically (again, after making a choice of section of $\mathcal{L}(X, \omega ) \to \mathcal{L}(X, \omega )/\pm$).


Now, we make a choice of an (isotopy class of) oriented embedded Lagrangian sphere $L_e$ in $(X, \omega )$ for each class $e \in \mathcal{L}(X, \omega )$, such that $e = \mathrm{PD}([L_e]) $. Using the canonical lifts $\mathcal{O}_{L_e} \in (\pi_1 \mathcal{S}_0 (X, \omega ))^{\mathrm{ab}}$ we define a homomorphism
\[
\mathcal{O} : \bigoplus_{e \in \mathcal{L}(X, \omega )}\mathbb{Z} \to  (\pi_1 \mathcal{S}_0 (X, 
\omega ))^{\mathrm{ab}} .
\]

For a fixed homological orientation of $X$ 
we consider the following homomorphism: 
\begin{align*}
 Q : (\pi_1 \mathcal{S}_0 (X, \omega ))^\mathrm{ab} \to \bigoplus_{e \in \mathcal{L}(X,\omega)} \mathbb{Z} \quad , \quad Q = \bigoplus_{e \in \mathcal{L}(X,\omega)} Q_e 
\end{align*}
\begin{remark}
By standard results in Seiberg--Witten theory (see \cite[Theorem 5.2.4]{morgan}) it follows that, for a given loop $\omega_t \in \pi_1 \mathcal{S}_0 (X, \omega )$ there are only finitely many classes $e \in H^2 (X, \mathbb{Z} )$ satisfying (\ref{condition1}) such that $Q_{e} (\omega_t ) \neq 0$. Thus $Q$ and $FSW$ indeed map to $\bigoplus_{e\in \mathcal{L}(X,\omega )}\mathbb{Z}$ (rather than the infinite direct product $\Pi_{e \in \mathcal{L}(X,\omega)} \mathbb{Z}$).
\end{remark}

\begin{proof}[Proof of Theorem \ref{theorem:summand}]
By Proposition \ref{proposition:calculation} we have $Q_e ( \mathcal{O}_{L_e} ) \pm 1$ ; by Proposition \ref{proposition:switching} we have $Q_{e^\prime} (\mathcal{O}_{L_e}) = 0$ for all $e^\prime \in \mathcal{L}(X,\omega )$ with $e^\prime \neq e$. Thus, the composition $Q \circ \mathcal{O}$ sends $T_e$ to $\pm T_e$ for every $e \in \mathcal{L}(X,\omega )$. Thus, post-composing $Q$ with the automorphism of $\bigoplus_{e \in \mathcal{L}(X,\omega )}\mathbb{Z}$ which sends $T_e$ to $\pm T_e$ if $Q_e (\mathcal{O}_{L_e}) = \pm 1$, we obtain a new homomorphism $Q^\prime$ so that $Q^\prime \circ \mathcal{O} = \mathrm{Id}$, so $Q^\prime$ provides the desired splitting.
\end{proof}

Suppose further that the Lagrangians are chosen so that $L_e = -L_{-e}$. Then, because the generalised Dehn twist $\gamma_{L_e} \in \pi_1 \mathrm{Diff}_0 (X)$ satisfy $\gamma_{L_e} = - \gamma_{-L_e}$ (Lemma \ref{lemma:gammasymmetries}) the restriction of the homomorphism $\mathcal{O}$ to the anti-invariant subgroup factors through the inclusion in (\ref{SESdiffsymp}) as the homomorphism 
\[
\gamma : \Big( \bigoplus_{e \in \mathcal{L}(X,\omega)}\mathbb{Z}\Big)^- \to \pi_1 \mathrm{Symp}_0 (X,\omega ) \backslash \pi_1 \mathrm{Diff}_0 (X,\omega ) \quad , \quad T_{e} - T_{-e} \mapsto \gamma_{L_e} .
\]
Hence, the projection of $\mathcal{O}$ to $(\pi_0 \mathrm{Symp}_0 (X,\omega))^\mathrm{ab}$ descends along the projection in (\ref{SESsummand}) to give the homomorphism
\[
\tau^2 : \Big( \bigoplus_{e \in [\mathcal{L}]} \mathbb{Z}\Big)^+ \to (\pi_0 \mathrm{Symp}_0 (X, \omega ))^\mathrm{ab}\quad , \quad T_e + T_{-e} \mapsto \tau_{L_e}^2 .
\]

The restriction of $Q$ to the subgroup $\pi_1 \mathrm{Symp}_0 (X,\omega ) \backslash \pi_1 \mathrm{Diff}_0 (X,\omega )$ coincides, by Lemma \ref{lemma:FSW}, with the homomorphism
\begin{align*}
FSW : \pi_1 \mathrm{Symp}_0 (X,\omega ) \backslash \pi_1 \mathrm{Diff}_0 (X,\omega ) \to \bigoplus_{e\in \mathcal{L}(X,\omega)} \mathbb{Z} \quad , \quad 
FSW = \bigoplus_{e\in \mathcal{L}(X,\omega)} FSW_e .
\end{align*}
(Recall that $FSW_e$ vanishes on the image $\pi_1 \mathrm{Symp}_0 (X, \omega ) \to \pi_1 \mathrm{Diff}_0 (X )$ by Lemma \ref{lemma:taubes}).
When $(X, \omega )$ is a symplectic Calabi--Yau surface with $b^+ (X)>1$, then by Lemmas \ref{lemma:FSW}-\ref{lemma:symplecticconjugation} we have $FSW_e = -FSW_{-e}$ and thus $FSW$ takes values in the anti-invariant subgroup. 
From this and (\ref{SESsummand}), the homomorphism $Q$ thus descends to a homomorphism 
 \begin{align*}
q : \pi_0 \mathrm{Symp}_0 (X, \omega ) \to \Big( \bigoplus_{e \in \mathcal{L}(X,\omega)} \mathbb{Z}\Big)^+ \cong \bigoplus_{e \in \mathcal{L}(X,\omega )/\pm} \mathbb{Z} .
 \end{align*}

\begin{proof}[Proof of Theorem \ref{theorem:summandK3}]
In the proof of Theorem \ref{theorem:summand} we established $Q^\prime \circ \mathcal{O} = \mathrm{Id}$, where $Q^\prime$ was obtained by postcomposing $Q$ by a further automorphism which simply swapped the signs of the generators $T_e$. Because now $L_e = - L_{-e}$, then by Proposition \ref{proposition:calculation} we have $Q_e ( \mathcal{O}_{L_e} ) = Q_{-e}(\mathcal{O}_{-L_e} ) = \pm 1$, i.e. the signs match. Thus, we can modify the homomorphisms $FSW$ and $q$ to obtain new homomorphisms $FSW^\prime$ and $q^\prime$, defined by post-composing with the automorphisms sending $T_e - T_{-e} \mapsto \pm ( T_e - T_{-e} )$ if $Q_{e}(\mathcal{O}_{L_e}) = \pm 1$ and $T_e + T_{-e} \mapsto \pm (T_e + T_{-e} )$ if $Q_e (\mathcal{O}_{L_e} ) = \pm 1$, respectively. In particular, $FSW^\prime$ still takes values in the anti-invariant subgroup, and from $Q^\prime \circ \mathcal{O} = \mathrm{Id}$ it follows that $FSW^\prime \circ \gamma = \mathrm{Id}$. By (\ref{SESdiffsymp}) and (\ref{SESsummand}) this implies that $q^\prime \circ \tau^2 = \mathrm{Id}$, and hence $q^\prime$ provides the desired splitting.
\end{proof}

Summarising: for $(X, \omega )$ a symplectic Calabi--Yau surface with $b^+ (X) >1$ there is the following commutative diagram where rows are exact and where the vertical maps are split-injective,
\begin{equation*}
\begin{tikzcd}
\text{ } & \pi_1 \mathrm{Symp}_0 (X, \omega ) \backslash \pi_1 \mathrm{Diff}_0 (X)   \arrow{r} & (\pi_1 \mathcal{S}_0 (X, \omega ))^{\mathrm{ab}}  \arrow{r} & (\pi_0 \mathrm{Symp}_0 (X, \omega ))^{\mathrm{ab}} \arrow{r} & 0 \\
  0  \arrow{r} &  \Big( \bigoplus_{e \in \mathcal{L}(X,\omega)}\mathbb{Z}\Big)^- \arrow{r} \arrow{u}{\gamma} &  \bigoplus_{e \in \mathcal{L}(X,\omega)}\mathbb{Z}  \arrow{r}\arrow{u}{\mathcal{O}} &  \Big( \bigoplus_{e \in \mathcal{L}(X,\omega)}\mathbb{Z}\Big)^+ \arrow{r} \arrow{u}{\tau^2} &  0
\end{tikzcd}.
\end{equation*}
Thus, when $\mathcal{L}(X,\omega )$ is an infinite set this provides a common reason for the infinite-generation of the three groups in the top row.

\newpage

\bibliographystyle{alpha}
\bibliography{main.bib}

\end{document}